\documentclass[11pt,a4paper]{article}
\usepackage[top=1in, bottom=1in, left=1in, right=1in]{geometry}
\usepackage[utf8]{inputenc}
\usepackage[english]{babel}
\usepackage{amssymb}
\usepackage{graphicx}
\usepackage[colorlinks=true, allcolors=blue]{hyperref}
\usepackage{tikz}
\usepackage{amsthm}
\usepackage{amsmath}
\usepackage{cancel}
\usepackage{tikz-cd}
\usepackage{mathtools}
\usepackage{tikz}
\usepackage{pgf}
\usepackage{blindtext}
\usepackage{hyperref}
\usetikzlibrary{matrix,arrows,decorations.pathmorphing}
\tikzset{commutative diagrams/.cd}
\usetikzlibrary{cd}
\usepackage{cancel}
\usepackage{nicefrac}
\usepackage{centernot}
\usepackage{mathtools}
\usepackage{ stmaryrd }
\usepackage{wrapfig}
\usepackage{csquotes}
\usepackage{hyperref}
\hypersetup{
    colorlinks=true,       
    linkcolor=BlueGreen,          
    citecolor=BlueGreen,       
    filecolor=BlueGreen,      
    urlcolor=BlueGreen          
}
\usepackage{soul}
\usepackage[dvipsnames]{xcolor}
\usepackage[sorting=nty,style=alphabetic]{biblatex}
\theoremstyle{definition}
\newtheorem{thm}{Theorem}[section]
\newtheorem{lem}[thm]{Lemma}
\newtheorem{prop}[thm]{Proposition}
\newtheorem{cor}[thm]{Corollary}

\newtheorem{quest}[thm]{Question}
\newtheorem*{thm*}{Theorem}
\newtheorem*{cor*}{Corollary}

\newtheorem{theoremalph}{Theorem}
\newtheorem{coralph}[theoremalph]{Corollary}
\newtheorem{conjalph}[theoremalph]{Conjecture}

\newtheorem{rthm}{Theorem}

\newenvironment{reptheorem}[1]
  {\renewcommand\therthm{\ref*{#1}}\rthm}
  {\endrthm}

\theoremstyle{definition}
\newtheorem{defn}[thm]{Definition}
\newtheorem*{defn*}{Definition}
\newtheorem{ex}[thm]{Example}

\theoremstyle{remark}
\newtheorem{rmk}[thm]{Remark}
\theoremstyle{remark}

\newtheorem{nt}[thm]{Notation}

\makeatletter
\DeclareFontFamily{OMX}{MnSymbolE}{}
\DeclareSymbolFont{MnLargeSymbols}{OMX}{MnSymbolE}{m}{n}
\SetSymbolFont{MnLargeSymbols}{bold}{OMX}{MnSymbolE}{b}{n}
\DeclareFontShape{OMX}{MnSymbolE}{m}{n}{
    <-6>  MnSymbolE5
   <6-7>  MnSymbolE6
   <7-8>  MnSymbolE7
   <8-9>  MnSymbolE8
   <9-10> MnSymbolE9
  <10-12> MnSymbolE10
  <12->   MnSymbolE12
}{}
\DeclareFontShape{OMX}{MnSymbolE}{b}{n}{
    <-6>  MnSymbolE-Bold5
   <6-7>  MnSymbolE-Bold6
   <7-8>  MnSymbolE-Bold7
   <8-9>  MnSymbolE-Bold8
   <9-10> MnSymbolE-Bold9
  <10-12> MnSymbolE-Bold10
  <12->   MnSymbolE-Bold12
}{}

\let\llangle\@undefined
\let\rrangle\@undefined
\DeclareMathDelimiter{\llangle}{\mathopen}%
                     {MnLargeSymbols}{'164}{MnLargeSymbols}{'164}
\DeclareMathDelimiter{\rrangle}{\mathclose}%
                     {MnLargeSymbols}{'171}{MnLargeSymbols}{'171}
\makeatother

\makeatletter
\newcommand\ackname{Acknowledgements}
\if@titlepage
  \newenvironment{acknowledgements}{%
      \titlepage
      \null\vfil
      \@beginparpenalty\@lowpenalty
      \begin{center}%
        \bfseries \ackname
        \@endparpenalty\@M
      \end{center}}%
     {\par\vfil\null\endtitlepage}
\else
  \newenvironment{acknowledgements}{%
      \if@twocolumn
        \section*{\abstractname}%
      \else
        \small
        \begin{center}%
          {\bfseries \ackname\vspace{-.5em}\vspace{\z@}}%
        \end{center}%
        \quotation
      \fi}
      {\if@twocolumn\else\endquotation\fi}
\fi
\makeatother

\newcommand{\altfrac}[2]{\ifmmode\def\tmp{$}\else\def\tmp{}\fi\mbox{%
    {\raisebox{.24\ht\strutbox}{\tmp#1\tmp}}%
    \kern-2.2pt\scalebox{1.6}[1.5]{/}\kern-1.8pt%
    {\tmp#2\tmp}%
    }}

\newcommand{\frpp}{\mathop{\scalebox{1.5}{\raisebox{-0.2ex}{$\ast$}}}}%
\newcommand{\Sf}{\mathcal{S}^{\textit{f}}}
\newcommand{\Sn}{\mathfrak{S}_n}
\newcommand{\A}{\mathrm{A}}
\newcommand{\B}{\mathcal{B}}

\newcommand{\W}{\mathrm{W}}
\newcommand{\VB}{\mathcal{VB}}
\newcommand{\VA}{\mathrm{VA}}
\newcommand{\KVA}{\mathrm{KVA}}

\newcommand{\Prod}{\mathrm{Prod}}
\newcommand{\Stab}{\mathrm{Stab}}
\newcommand{\id}{\mathrm{id}}
\newcommand{\hGamma}{\widehat{\Gamma}}
\newcommand{\hGammam}{\widehat{\Gamma_m}}

\newcommand{\hmbg}{\widehat{m}_{\beta,\gamma}}
\newcommand{\D}{\mathrm{D}}
\newcommand{\vD}{\mathrm{vD}}
\newcommand{\cX}{\mathcal{X}}
\newcommand{\hDm}{\D_{\hGammam}}
\newcommand{\idK}{\mathrm{id}_{\mathrm{K}}}
\newcommand{\idW}{\mathrm{id}_{\mathrm{W}}}
\newcommand{\ev}{\mathrm{ev}}
\newcommand{\cl}{\mathfrak{cl}}
\newcommand{\CR}{\mbox{\tiny{$CR$}}}

\usepackage[dvipsnames]{xcolor}

\title{A Deligne complex for virtual Artin groups}
\author{Federica Gavazzi \& Alexandre Martin }
\date{}

\begin{document}

\maketitle
\begin{abstract}
    \noindent Virtual Artin groups were recently introduced by Bellingeri--Paris--Thiel as a generalisation of virtual braid groups. In this article, we initiate a geometric study of these groups by constructing an analogue of the Deligne complex for virtual Artin groups, and we prove that it is CAT(0) for all locally reducible defining graphs (a class that contains in particular two-dimensional graphs and graphs without any label $3$, and which is generic in the sense of Goldsborough--Vaskou).

    \noindent As applications, we classify finite subgroups of locally reducible virtual Artin groups, showing that such groups are conjugated into an isomorphic copy of the corresponding Coxeter subgroup. We also prove an analogue of the $K(\pi, 1)$-conjecture for locally reducible virtual Artin groups: we show that these groups are virtually torsion-free and admit a cocompact model of classifying space for proper actions of minimal dimension, equal to the  virtual cohomological dimension of the group. 
\end{abstract}

\section{Introduction}\label{sectionintro}

\textcolor{red}{}

\noindent Artin groups, also known as Artin-Tits groups, are a widely studied class of groups defined by a presentation with generators and relations. Introduced by Jacques Tits in the sixties, Artin groups first appear as a generalization of another class of interesting groups: Coxeter groups. Both these classes of groups are defined through a presentation with generators and relations.\medskip

\noindent If $m\geq 2$ is an integer and $a,b$ are two letters, we denote by $\Prod_R(s,t;m)$ and $\Prod_L(s,t;m)$ respectively the alternating word $\cdots st$ and the alternating word $st\cdots$, of length $m$. A \textit{Coxeter graph} $\Gamma$ is a finite, simplicial graph $\Gamma$ with a finite vertex set $S$ and edges $\{s,t\}$ labelled by integers $m_{s,t}=m_{t,s}\geq 2$, for $s,t\in S$. Given $\Gamma$ a Coxeter graph on a vertex set $S$, the \textit{Artin group} $\A[\Gamma]$ is the group generated by $S$ with relations $\Prod_R(s,t;m_{s,t})=\Prod_R(t,s;m_{s,t})$ for all $s,t \in S$ such that $s$ and $t$ are connected by an edge. The \textit{Coxeter group} $\W[\Gamma]$ is the quotient of $\A[\Gamma]$ under the normal closure of the relations $\{s^2=\idW,\; \forall\; s\in S\}$. Sometimes, when $s,t\in S$ are not connected by an edge in $\Gamma$, we may say that $m_{s,t}=\infty$. The \textit{rank} of a Coxeter or Artin group associated with $\Gamma$ is $|S|$. If $|S|=2$, then $\Gamma$ is called a \textit{dihedral} Coxeter graph, and it is denoted by $\Gamma=\Gamma_m$, where $m=m_{s,t}$. \medskip \\
\noindent Many natural questions on Artin groups, as determining the torsion, the centre, the classifying space or the word problem, are still open problems (see \cite{Charney2008PROBLEMSRT} for a survey on open problems regarding these groups). Certain classes of Artin groups are better understood than others thanks to approaches of geometric group theory or combinatorial group theory. For instance, \textit{spherical type} Artin groups, which are those whose associated Coxeter group $\W[\Gamma]$ is finite, are well-understood thanks to their Garside structure. Among spherical type (sometimes called \textit{finite type}) Artin groups, we find the well-known braid group on $n$ strands, denoted by $\B_n$.\medskip
\\
A related structure of interest in this work is that of virtual braids, which were introduced by Kauffman in \cite{Kauff} alongside virtual knots and links. The virtual braid group on $n$ strands, denoted here by $\VB_n$, has two families of generators: the classical generators $\{\sigma_i\mid 1\leq i\leq n-1\}$, which satisfy the usual braid relations, and the \textit{virtual} generators $\{\tau_{i}\mid 1\leq i\leq n-1\}$, which satisfy the defining relations of the symmetric group $\Sn$. These two families are linked by a collection of mixed relations, yielding a group with a natural topological interpretation in terms of braid diagrams equipped with an additional type of crossing, called a \textit{virtual} crossing. \medskip
\\
\noindent A generalization of Artin groups appears in a recent article by Bellingeri, Paris and Thiel: virtual Artin groups \cite{BellParThiel}. This new class of groups  extends classical Artin groups in the same way that virtual braid groups extend  classical braid groups. 

\begin{defn}\cite{BellParThiel}\label{def:VA}
Given $\Gamma$ a Coxeter graph with vertex set $S$, consider the two sets: $\mathcal{S}=\{\sigma_s\,|\,s\in S\}$ and $\mathcal{T}=\{\tau_s\,|\,s\in S\}$, both in one-to-one correspondence with $S$. The \textit{virtual Artin group} associated with $\Gamma$ is the abstract group $\VA[\Gamma]$, defined by the presentation with generating set $\mathcal{S} \cup \mathcal{T}$ and the following relations:
\begin{enumerate}
\item[(V1)]\label{v1} $\Prod_R(\sigma_s,\sigma_t;m_{s,t})=\Prod_R(\sigma_t,\sigma_s;m_{s,t})$ $\forall s,t \in S$ such that $\{s,t\}$ is an edge of $\Gamma$;
\item[(V2)]\label{v2} $\Prod_R(\tau_s,\tau_t;m_{s,t})=\Prod_R(\tau_t,\tau_s;m_{s,t})$ $\forall s,t \in S$ such that $\{s,t\}$ is an edge of $\Gamma$; and $\tau_s^2=1$  $\forall s \in S$;
\item [(V3)]\label{v3} $\Prod_R(\tau_s,\tau_t;m_{s,t}-1) \,\sigma_s=\sigma_r \,\Prod_R(\tau_s,\tau_t;m_{s,t}-1)$ $\forall s,t \in S$ such that $\{s,t\}$ is an edge of $\Gamma$, where $r=s$ if $m_{s,t}$ is even and $r=t$ if $m_{s,t}$ is odd.
\end{enumerate}
\end{defn}
\noindent  We emphasize that the term \textit{virtual} in virtual Artin groups derives from the context of virtual braids and \textbf{should not be interpreted as meaning that the group is virtually an Artin group}, except in the special cases described in Subsection \ref{subs-decomp-VA}.\\
\\
\noindent It was shown in \cite{BellParThiel} that, in the virtual Artin group $\VA[\Gamma]$, the sets $\mathcal{S}$ and $\mathcal{T}$ generate isomorphic copies of  $\A[\Gamma]$ and $\W[\Gamma]$ respectively.
Moreover, $\VA[\Gamma]$ admits a semidirect product structure
$\VA[\Gamma]\cong \A[\hGamma]\rtimes\W[\Gamma]$, where the normal subgroup $\A[\hGamma]$ is an Artin group associated with an explicitly defined Coxeter graph $\hGamma$ associated to the root system of $\W[\Gamma]$ (see Subsection \ref{subs-decomp-VA}). This observation provides strong motivation for studying virtual Artin groups, as it allows one to reinterpret and potentially reformulate classical problems in Artin groups theory within this broader algebraic framework.\\
\\
\noindent Classical Artin groups have often been studied via simplicial complexes naturally associated with them. One of the most prominent examples is the well-known \textit{(modified) Deligne complex} of Charney-Davis \cite{CharDav95}, which generalises a construction of Deligne in his solution of the $K(\pi,1)$-conjecture for Artin groups of spherical type \cite{Deligne1972}. The $K(\pi,1)$-conjecture for a given Artin group is equivalent to the contractility of its Deligne complex by \cite{CharDav95}. In \cite{CharDav95}, Ruth Charney and Michael W. Davis proved that, for certain classes of Artin groups, namely \textit{FC-type} Artin groups and \textit{2-dimensional} Artin groups, the Deligne complex $\D_{\Gamma}$ admits a CAT(0) metric. The authors further conjectured that the Moussong metric on $\D_{\Gamma}$ (see Subsection \ref{subs-moussongmetric}) is CAT(0) for every Coxeter graph $\Gamma$. In  \cite{Charney2000}, Charney proved this conjecture result for the class of \textit{locally reducible} Artin groups (see Subsection \ref{subs-from2tolocallyred} for a definition), which   includes in particular 2-dimensional Artin groups and Artin groups with no graph label $3$. The question of whether the classical Deligne complex admits a CAT(0) metric remains one of the central open problems in the theory of Artin groups, with recent progress for three-dimensional Artin groups~\cite{HuangPrzyty}.\\
\\
\noindent  Many questions arising for classical Artin groups can also be formulated for virtual Artin groups, and are sometimes easier to address in this broader setting. For instance, in \cite[Corollary 3.4]{BellParThiel} the authors show
that for any Coxeter graph $\Gamma$, the centre of $\VA[\Gamma]$ is trivial.
Another property that is known to hold for irreducible virtual Artin groups, but is only conjectured to be true for irreducible classical Artin groups, is \textit{indecomposability}, which is studied by the first author in \cite{Gav26}.\\
\\
\noindent In this work, we initiate a geometric study of virtual Artin groups, and introduce an analogue of the Deligne complex for virtual Artin groups, which we call  the \emph{virtual Deligne complex}. As in the Artin case, the virtual Deligne complex $\vD_{\Gamma}$ is a simplicial complex constructed from a poset of cosets of suitable \textit{standard parabolic subgroups} of $\VA[\Gamma]$. Recently, a family of subgroups of $\VA[\Gamma]$ playing the role of standard parabolic subgroups of a virtual Artin group was proposed in \cite{GaGaPa26}, where the authors show that this family of subgroups satisfy an analogue of van der Lek's lemmas (see \cite[Chapter II]{van1983homotopy}). It is possible to equip the virtual Deligne complex with an analogue of the Moussong metric, and as in the Coxeter and Artin case, we expect the following:

\begin{conjalph}\label{conj-vD-CAT0}
For any Coxeter graph $\Gamma$, the virtual Deligne complex $\vD_{\Gamma}$, equipped with the Moussong metric, is CAT(0).
\end{conjalph}

\noindent The main result of this work is a proof of this conjecture in the case of a locally reducible Coxeter graph (see Definition~\ref{def-locred}), a class that contains in particular all the 2-dimensional Coxeter graphs, and all Coxeter graphs with no label equal to $3$, and which is ``generic'' in a suitable probabilistic sense \cite{GoldsboroughVaskou25}. 

\begin{theoremalph}[See Corollary~\ref{cor-CAT0-locred}]
  Let $\Gamma$ be a locally reducible Coxeter graph. Then the virtual Deligne complex~$\vD_{\Gamma}$, equipped with the Moussong metric, is a complete CAT(0) metric space.  
\end{theoremalph}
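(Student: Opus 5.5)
Since $\vD_{\Gamma}$ is built from a poset of cosets of standard parabolic subgroups and each simplex carries a Moussong metric, the complex is an $M_\kappa$-polyhedral complex ($\kappa=0$) with finitely many isometry types of cells. Bridson's theorem therefore shows it is a complete geodesic metric space. By the Cartan--Hadamard type criterion (Gromov's link condition), $\vD_{\Gamma}$ is then CAT(0) once two facts are established: first, that it is simply connected; second, that the link of every vertex is CAT(1).

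The plan is to treat these two facts separately and reduce the curvature check to known results on classical Artin groups. This uses the semidirect product decomposition $\VA[\Gamma]\cong \A[\hGamma]\rtimes\W[\Gamma]$ and the van der Lek type results of \cite{GaGaPa26}.

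\textbf{Simple connectivity.} I would argue as for the classical Deligne complex. The complex $\vD_{\Gamma}$ is the geometric realisation of a poset of cosets $g\VA_T$, where $T$ ranges over spherical subsets (including $\emptyset$). The group $\VA[\Gamma]$ is the colimit (amalgam) of its spherical standard parabolic subgroups of rank at most $2$, since the relations (V1)--(V3) only involve pairs $s,t$. The van der Lek lemmas of \cite{GaGaPa26} ensure that intersections of standard parabolics are standard parabolics. A standard argument on developments of complexes of groups (Bridson--Haefliger, or Charney--Davis) then gives simple connectivity of the realisation.

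\textbf{Vertex links.} There are two types of vertices to handle.
\begin{enumerate}
\item[(i)] Upward parts: the link of a vertex $g\VA_T$ splits as a spherical join. The lower part is determined by the spherical subgroup $\VA_T$, and it suffices to treat $g=1$. The upper part is a copy of the link appearing in the Coxeter/Davis complex of $\W[\Gamma]$, so it is CAT(1) by Moussong's theorem.
\item[(ii)] Lower parts: for spherical $T$, the lower link is the "spherical Deligne complex" of $\VA_T$, namely the poset of cosets of proper parabolics of $\VA_T$ with the Moussong metric. Here I would use that $\VA_T\cong \A[\widehat{\Gamma_T}]\rtimes \W_T$, with $\W_T$ finite. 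I would compare this link with the corresponding link of the classical Deligne complex of the Artin group $\A[\widehat{\Gamma_T}]$, or with a $\W_T$-quotient of it.
\end{enumerate}

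The \textbf{local reducibility} hypothesis enters at this point. Every spherical $T$ of rank $\ge 3$ decomposes as a product of rank $\le 2$ pieces. The link for a product is the spherical join of the links of the factors, and joins of CAT(1) spaces are CAT(1). This reduces everything to the dihedral case $\VA[\Gamma_m]$. In that case the lower link is a metric graph, with edge lengths $\pi - \pi/m$ as in Moussong's metric. Being CAT(1) then amounts to the graph having girth at least $2\pi$, i.e. every embedded cycle has at least $2m/(m-1)$ edges' worth of length.

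I expect this \textbf{dihedral girth computation} to be the main obstacle. It requires understanding reduced relations among the cosets $g\langle\sigma_s,\tau_s\rangle$, $g\langle\sigma_t,\tau_t\rangle$ in $\VA[\Gamma_m]$, which is a statement like "the shortest nontrivial alternating product of elements in the two vertex parabolics equal to $1$ has length $\ge 2m$".

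First I would pass to the finite-index kernel. Using $\VA[\Gamma_m]\cong\A[\hGammam]\rtimes \W[\Gamma_m]$, the link graph becomes a finite cover-type object built from the link of the Artin group $\A[\hGammam]$. That Artin group is known explicitly and is of large type, since its labels come from the root system of the dihedral group. Appel--Schupp small cancellation, or Charney--Davis's treatment of 2-dimensional Artin groups, then gives the girth bound. Alternatively, I would try to exhibit a retraction of the link onto the corresponding Artin and Coxeter links.

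Once the dihedral case is established, the spherical joins handle higher rank locally reducible pieces. The link condition then holds everywhere, and together with simple connectivity and completeness this gives the claimed complete CAT(0) structure.
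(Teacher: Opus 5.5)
Your overall architecture is the paper's. It uses completeness from finitely many isometry types of cells, simple connectivity of the development of a strictly developable simple complex of groups, and Gromov's link condition. It also uses Charney's decomposition of a vertex link as the orthogonal join of an ``upper'' part with the link $\mathcal{B}_X$ of the cone point of $\vD_{\Gamma_X}$; the upper part is CAT(1) by Moussong and Charney--Davis, since it depends only on the metric on $K_\Gamma$. Finally, local reducibility splits $\mathcal{B}_X$ as a join of pieces of rank at most $2$.

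There is, however, a slip in the dihedral metric. In $\mathcal{B}_{\{s,t\}}$ an edge joins $g\VA_s$ to $g\VA_t$ and has length $\pi/m$, the angle of the Moussong cube at its rank-$2$ corner. The angle $\pi-\pi/m$ sits at the rank-$0$ corner and only matters for the link of $\VA_\varnothing$. So CAT(1) means every embedded cycle has at least $2m$ edges, not ``$2m/(m-1)$ edges''. This is exactly the virtual Appel--Schupp statement: a non-trivial normal-form element of $\VA_s\frpp\VA_t$ that is trivial in $\VA_m$ has syllabic length at least $2m$.

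The genuine gap is that this statement is the main technical content of the paper, and you do not prove it; the routes you sketch do not work.

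\begin{itemize}
\item After passing to $\A[\hGammam]$, the $\A[\hGammam]$-stabilisers of the type-one vertices of $\mathcal{B}_{\{s,t\}}$ are conjugates of the free groups $\mathbb{F}(\delta_\beta,\delta_{-\beta})$, $\beta\in\Phi^+_m$. Each is generated by two non-adjacent vertices of $\hGammam$. The graph you must control is therefore the coset graph of these infinite-type subgroups, not a link of $\D_{\hGammam}$, and neither Appel--Schupp nor Charney--Davis says anything directly about it.
\item A normal-form word $g_1\cdots g_\ell$ becomes $a_1\cdots a_\ell$ with $a_i=k_i^{u_i}$. Syllables with trivial Artin component (e.g.\ $g_i=\tau_r$) vanish. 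Neighbouring syllables may then lie in the same edge group and fuse, possibly collapsing the whole word.
\item Retractions do not help. The assignment $\tau\mapsto 1$ is not a homomorphism onto $\A_m$ for odd $m$. The non-trivial syllable $\sigma_s\tau_s\sigma_s^{-1}\tau_s=\delta_{\alpha_s}\delta_{-\alpha_s}^{-1}$ dies both in $\W_m$ and (for even $m$) in $\A_m$, so the images of a relator need not be reduced.
\end{itemize}

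The paper needs two new ingredients here.
\begin{itemize}
\item[(a)] A new polygon of groups over an $m$-gon, with edge groups $\mathbb{F}(\delta_\beta,\delta_{-\beta})$ and vertex groups $\A_m\frpp\A_m$. Its development is CAT(0) by a local extension of Appel--Schupp, and a Gauss--Bonnet disc-diagram argument then shows cyclically reduced relators have at least $2m$ syllables.
\item[(b)] For the non-reduced case, an analysis of the Coxeter components. Using the classification of $w\in\W_m$ with $w(\alpha_s)\in\{\pm\alpha_s,\pm\alpha_t\}$, every fusion interval has length $m$ or at least $2m$. There are also at least two disjoint fusion intervals.
\end{itemize}
The case $m=2$ is handled separately using $\VA_s\cap\VA_t=\{1\}$. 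Without a substitute for (a) and (b), the dihedral link, and hence the theorem, is not established.
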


\noindent In related but independent work, given a virtual Artin group $\VA[\Gamma]$, G\'alvez Mateos and Paris \cite{GalMatPar} construct families of simplicial and cubical complexes~$\Xi_\mathcal{D}$ (for suitable families $\mathcal{D}$ of subsets of $V(\Gamma)$) on which $\VA[\Gamma]$ acts. When $\mathcal{D}$ is the family of subsets of $V(\Gamma)$ of spherical type, the virtual Deligne complex $\vD_\Gamma$ we define in this article coincides with $\Xi_\mathcal{D}$ with its simplicial structure. Although some of our constructions overlap, the perspectives and applications differ: We focus on the Moussong metric on $\vD_\Gamma$, which is conjectured to be CAT(0) in all cases, while G\'alvez Mateos and Paris focus on a different metric on $\Xi_\mathcal{D}$,  characterising precisely when $\Xi_\mathcal{D}$ with its cubical structure is a CAT(0) cube complex. In particular, they show that the virtual Deligne complex $\vD_\Gamma$ with its cubical structure is a CAT(0) cube complex if and only if $\Gamma$ is of FC type (see for instance \cite{CharDav95} for the definition of this class). \\\\
\noindent The existence of a complete CAT(0) metric on the virtual Deligne complex of $\VA[\Gamma]$ has significant consequences for the structure of $\VA[\Gamma]$, which we mention below.\\\\
\noindent We first use this action to classify the finite subgroups of $\VA[\Gamma]$. As observed above, the virtual Artin group $\VA[\Gamma]$ contains an embedded copy, denoted $\iota_{\W}(\W[\Gamma])$, of the associated Coxeter group, and therefore it necessarily contains torsion elements. We recall in Subsection \ref{subs-decomp-VA} that $\VA[\Gamma]$ admits a semidirect product decomposition
$\VA[\Gamma]\cong \A[\hGamma]\rtimes\W[\Gamma]$, where the normal subgroup $\A[\hGamma]$ is an Artin group associated with an explicitly defined Coxeter graph $\hGamma$. Artin groups are conjectured to be torsion-free for every Coxeter graph. (Note that $\A[\hGamma]$ is known to be torsion-free whenever $\Gamma$ is either of spherical or affine type; see Corollary 6.5 and Theorem 6.6 in \cite{BellParThiel}.) Assuming this conjecture, any finite subgroup of $\VA[\Gamma]$ would embed into the corresponding Coxeter group $\W[\Gamma]$. A natural question is whether this embedding can be strengthened to a conjugacy statement. More precisely, we conjecture the following:

\begin{conjalph}[Torsion conjecture for virtual Artin groups]\label{torsion-conjecture}
  Let $\Gamma$ be a Coxeter graph and let $\VA[\Gamma]$ be the associated virtual Artin group. Then every finite subgroup of $\VA[\Gamma]$ is conjugated into a subgroup of $\iota_{\W}(\W[\Gamma])$.  
\end{conjalph}

\noindent Note that the above is unknown even in the case of virtual braid groups. 
In this article, we answer this question affirmatively for locally reducible virtual Artin groups.

\begin{theoremalph}[see Theorem~\ref{thm-finitesubgroups}]
    Let $\Gamma$ be a locally reducible Coxeter graph. Then every finite subgroup of $\VA[\Gamma]$ is conjugated into a subgroup of $\iota_{\W}(\W[\Gamma])$.  
\end{theoremalph}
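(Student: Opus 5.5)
The plan is to use the action of $\VA[\Gamma]$ on the virtual Deligne complex $\vD_{\Gamma}$, which for locally reducible $\Gamma$ is a complete CAT(0) space by Corollary~\ref{cor-CAT0-locred}. Let $H \le \VA[\Gamma]$ be finite. By the Bruhat--Tits fixed point theorem (Bridson--Haefliger II.2.8), $H$ has a bounded orbit and hence fixes a point $x \in \vD_{\Gamma}$: take the circumcentre of any orbit. Because the action is simplicial and, by construction, without inversions (vertex types are preserved, since $\vD_{\Gamma}$ is the geometric realisation of a poset of cosets $gP_T$ and the group preserves the poset structure), $H$ fixes the minimal simplex containing $x$. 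In particular it fixes a vertex of that simplex. So $H \le \Stab(gP_T) = gP_Tg^{-1}$ for some $g \in \VA[\Gamma]$ and some spherical-type $T \subseteq S$, where $P_T$ is the standard parabolic subgroup of \cite{GaGaPa26} indexing the vertices of $\vD_{\Gamma}$.

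After conjugating by $g^{-1}$, it remains to show that every finite subgroup of $P_T$, with $T$ spherical, is conjugate into $\iota_{\W}(\W[\Gamma])$. I would use the description of $P_T$ as (isomorphic to) the virtual Artin group $\VA[\Gamma_T]$, or at least as a semidirect product $\A[\widehat{\Gamma_T}] \rtimes \W[\Gamma_T]$ whose complement is $\iota_{\W}(\W[\Gamma_T]) \le \iota_{\W}(\W[\Gamma])$, which is recalled in Subsection~\ref{subs-decomp-VA}. Since $\Gamma_T$ is of spherical type, the kernel $\A[\widehat{\Gamma_T}]$ is torsion-free by \cite[Corollary 6.5]{BellParThiel}. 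Hence $H$ maps injectively to the finite quotient $\W[\Gamma_T]$.

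To upgrade this embedding to a conjugacy, the plan is to use that $\VA[\Gamma_T]$ acts on its own Deligne-type complex, built from the cosets of $\A[\widehat{\Gamma_T}]$-parabolics, and that this complex is CAT(0). Alternatively, I would use the fact that $\A[\widehat{\Gamma_T}]$ for $T$ spherical is a finite-type Artin group (or an Artin group whose Deligne complex is CAT(0)) on which $\W[\Gamma_T]$ acts by diagram automorphisms. A cleaner route is the following. The semidirect product $\A[\widehat{\Gamma_T}]\rtimes \W[\Gamma_T]$ acts on the Deligne complex $\D_{\widehat{\Gamma_T}}$ of the normal subgroup, with $\W[\Gamma_T]$ permuting standard generators. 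The finite group $H$ then fixes a point there. Its stabiliser is a conjugate of $\A_{R}\rtimes \Stab_{\W}(R)$, where $R$ is a spherical subset of $\widehat{\Gamma_T}$. One then needs that finite subgroups of such a group are conjugate into the complement. This reduces to the statement that $H^1$ of a finite group acting on a spherical Artin group vanishes in the nonabelian sense, and I would derive it using the Garside structure and a fixed point argument in the CAT(0) space, for example via Bestvina's normal form complex.

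The main obstacle is this second step, the passage from ``$H$ embeds in $\W[\Gamma_T]$'' to ``$H$ is conjugate into $\iota_{\W}(\W[\Gamma_T])$''. The first thing I would try is the observation that the vertex $P_T$ of $\vD_{\Gamma}$ might already be fixed by $\iota_{\W}(\W[\Gamma])$ in a suitable sense. The idea would be to refine the choice of fixed point, for instance by taking a point in the interior of a cell whose stabiliser is exactly a conjugate of $\iota_{\W}(\W[\Gamma_T])$, so that the conjugacy falls out directly from the fixed point theorem. If the complex's cell structure does not allow that, I would fall back on the Garside/Bestvina argument inside the spherical piece.
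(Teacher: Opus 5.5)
Your first step matches the paper: Bruhat--Tits on the complete CAT(0) complex $\vD_\Gamma$, together with the absence of inversions, gives $H\le g\VA_T g^{-1}$ for some spherical $T$. The gap is in the second step, which is where the real content lies.

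\textbf{Why the second step fails as proposed.} Nothing in your argument there uses that $T$ sits inside a locally reducible graph. If it worked, it would prove the torsion conjecture for every spherical-type virtual Artin group, including the virtual braid groups $\VB_n$, where this is open. Concretely, two steps are unsupported:
\begin{itemize}
\item You assume $H$ fixes a point of $\D_{\widehat{\Gamma_T}}$. This needs a CAT(0) (or other fixed-point) property of that complex, which you have not established for general spherical $T$.
\item You pass from a vertex stabiliser $a(\A_R\rtimes\Stab_{\W}(R))a^{-1}$ to conjugacy into the complement via an unspecified Garside/Bestvina ``non-abelian $H^1$'' argument. That is not an argument.
\end{itemize}
Your fallback cannot work either. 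Cell stabilisers in $\vD_\Gamma$ are the parabolic subgroups $g\VA_X g^{-1}$: trivial if $X=\varnothing$, and containing some $\sigma_s$ of infinite order otherwise. So no cell has stabiliser conjugate to $\iota_{\W}(\W[\Gamma_T])$.

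\textbf{The missing idea.} Use local reducibility a second time. Write $\VA_T=\VA_{T_1}\times\cdots\times\VA_{T_k}$ with $|T_i|\le 2$. Then $H$ lies in the product of its projections to the factors. Conjugating each projection separately inside its own factor reduces everything to two cases:
\begin{itemize}
\item rank $1$, where $\VA_s\cong\mathbb{Z}\frpp\mathbb{Z}/2$ and finite subgroups of a free product are conjugate into a factor;
\item dihedral $\VA_m$.
\end{itemize}

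\textbf{The dihedral case.} Here your Deligne-complex idea does go through, with no Garside machinery.
\begin{itemize}
\item $\hGammam$ is a $2m$-cycle or a disjoint union of two $m$-cycles, all labels $m$. Hence it is two-dimensional, and $\hDm$ is CAT(0) by Charney--Davis.
\item $\VA_m$ acts on $\hDm$ without inversion via $(k,u)\cdot a\A_{\cX}[\hGammam]=ka^u\A_{u(\cX)}[\hGammam]$.
\item The vertex stabilisers can be computed explicitly from the dihedral root system. The vertex $\A_\varnothing$ has stabiliser $\W_m$. If $|\cX|=2$, no nontrivial element of $\W_m$ preserves $\cX$. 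If $|\cX|=1$, the elements of $\W_m$ preserving $\cX$ form $\{\idW\}$ or $\langle r\rangle$, where $r$ fixes the root and hence commutes with the generator of $\A_{\cX}[\hGammam]\cong\mathbb{Z}$.
\end{itemize}
So every vertex stabiliser is either a conjugate of $\W_m$, or a conjugate of a direct product $\A_{\cX}[\hGammam]\times Q$ with $\A_{\cX}[\hGammam]$ torsion-free and $Q\le\W_m$ finite. In the latter case a finite subgroup projects trivially to the torsion-free factor, so it lies in $Q$. This is exactly how the paper concludes.
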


\noindent A consequence  is the following.
\begin{coralph}[see Corollary~\ref{thm-A[gammahat]-torfree-locred}]
    If $\Gamma$ is locally reducible, then the Artin group $\A[\hGamma]$ is torsion-free and $\VA[\Gamma]$ is virtually torsion-free.
\end{coralph}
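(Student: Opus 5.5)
The corollary follows by combining Theorem~B (the classification of finite subgroups) with the semidirect product decomposition $\VA[\Gamma]\cong \A[\hGamma]\rtimes\W[\Gamma]$ recalled in Subsection~\ref{subs-decomp-VA}. Let $\pi\colon \VA[\Gamma]\to \W[\Gamma]$ be the retraction sending $\sigma_s\mapsto s$ and $\tau_s\mapsto s$. Its kernel is the normal copy of $\A[\hGamma]$, and its restriction to $\iota_{\W}(\W[\Gamma])$ is an isomorphism onto $\W[\Gamma]$. The steps are as follows.

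\emph{Step 1: $\A[\hGamma]$ is torsion-free.} Take $g\in \A[\hGamma]$ of finite order. Then $\langle g\rangle$ is a finite subgroup of $\VA[\Gamma]$. Since $\Gamma$ is locally reducible, Theorem~B gives $h\in\VA[\Gamma]$ with $h g h^{-1}\in \iota_{\W}(\W[\Gamma])$. Normality of $\A[\hGamma]$ also gives $hgh^{-1}\in\A[\hGamma]$. So $hgh^{-1}$ lies in $\A[\hGamma]\cap\iota_{\W}(\W[\Gamma])$. This intersection is trivial, because $\pi$ kills $\A[\hGamma]$ and is injective on $\iota_{\W}(\W[\Gamma])$. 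Hence $g=1$.

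\emph{Step 2: $\VA[\Gamma]$ is virtually torsion-free.} The subgroup $\A[\hGamma]=\ker\pi$ has index $|\W[\Gamma]|$. This index is finite only when $\Gamma$ is spherical, so in general this subgroup does not suffice. Instead, use that the finitely generated Coxeter group $\W[\Gamma]$ is linear via the Tits representation, so Selberg's lemma gives a torsion-free finite-index subgroup $W_0\le \W[\Gamma]$. Set $H=\pi^{-1}(W_0)$, which has finite index in $\VA[\Gamma]$. Let $g\in H$ have finite order. Then $\pi(g)\in W_0$ has finite order, so $\pi(g)=1$ and $g\in\A[\hGamma]$. By Step~1, $g=1$, so $H$ is torsion-free.

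The only substantive step is the first one, and all the difficulty there is carried by Theorem~B, which relies on the CAT(0) geometry of $\vD_\Gamma$. What remains is to check carefully that the identification of $\A[\hGamma]$ with $\ker\pi$, and of $\iota_{\W}(\W[\Gamma])$ with a complement, agrees with the conventions of Subsection~\ref{subs-decomp-VA}.
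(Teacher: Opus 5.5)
Your strategy is essentially the paper's, but you start from the wrong projection, and the identification both of your steps rest on fails. The map $\pi$ with $\sigma_s\mapsto s$, $\tau_s\mapsto s$ is a well-defined homomorphism with $\pi\circ\iota_{\W}=\id$. Indeed, relation (V3) becomes $ws=rw$ for $w=\Prod_R(s,t;m_{s,t}-1)$, which holds by Lemma~\ref{lemma_roots}. But its kernel is not $\A[\hGamma]$. The generator $\delta_{\alpha_s}=\sigma_s$ of $\A[\hGamma]$ has $\pi(\sigma_s)=s\neq\idW$, while $\sigma_s\tau_s\in\ker\pi$ does not lie in $\A[\hGamma]$. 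For virtual braid groups, your $\ker\pi$ is the pure virtual braid group, not the Artin group $\KVA$. So in Step~1 the justification ``$\pi$ kills $\A[\hGamma]$'' is false. In Step~2 the inference ``$\pi(g)=1$, hence $g\in\A[\hGamma]$'' fails: you only learn $g\in\ker\pi$, and Step~1 says nothing about that subgroup. The check you postponed to your last paragraph is exactly where the argument breaks.

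The repair is immediate: take $\pi=\pi_K$, i.e.\ $\sigma_s\mapsto\idW$, $\tau_s\mapsto s$. Then $\ker\pi_K=\KVA[\Gamma]\cong\A[\hGamma]$ by Theorem~\ref{thm-KVA-iso-Agammahat}, $\pi_K\circ\iota_{\W}=\id$, and both of your steps go through verbatim.

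The result is the paper's proof with the two assertions proved in the opposite order. The paper sets $H=\pi_K^{-1}(\W_0)$ and shows directly that a finite-order $h\in H$ is trivial. By Theorem~\ref{thm-finitesubgroups}, $h$ is conjugate into $\iota_{\W}(\W[\Gamma])$, on which $\pi_K$ is injective, so otherwise $\pi_K(h)$ would be a non-trivial torsion element of $\W_0$. It then reads off torsion-freeness of $\A[\hGamma]$ from $\A[\hGamma]\le H$, and it cites Davis where you invoke Selberg's lemma.

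That direct argument works for any homomorphism $\VA[\Gamma]\to\W[\Gamma]$ that is injective on $\iota_{\W}(\W[\Gamma])$. So virtual torsion-freeness survives even with your $\pi$; only the claim about the specific subgroup $\A[\hGamma]$ genuinely needs $\pi_K$.
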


\noindent We now turn to finiteness properties of virtual Artin groups. Let $\cl^f(\Gamma)$ denote the \textit{spherical clique number} of $\Gamma$, that is, the number of vertices in a maximal spherical clique of $\Gamma$. The $K(\pi,1)$-conjecture for the Artin group $\A[\Gamma]$ implies in particular the existence of a compact model of Eilenberg-MacLane space for $\A[\Gamma]$ of dimension $\cl^f(\Gamma)$, namely the \textit{Salvetti complex} of $\A[\Gamma]$. In the case of virtual Artin groups, since these groups contain torsion, a natural replacement is to study the existence of a cocompact model of \textit{classifying space for proper actions} of minimal dimension. In particular, we conjecture the following: 

\begin{conjalph}[Minimal classifying space for proper actions]\label{conj-class-properactions}
Let $\Gamma$ be a  Coxeter graph. Then $\VA[\Gamma]$ admits a cocompact model of classifying space for proper actions of dimension $\cl^f(\Gamma)$.  
\end{conjalph}
\noindent
In this article, we solve this conjecture in the locally reducible case: 

\begin{theoremalph}[see Theorem~\ref{thm:cocompact_EG_bar} and Corollary~\ref{cor_vcd-VA}]
Let $\Gamma$ be a locally reducible Coxeter graph. Then $\VA[\Gamma]$ admits a cocompact model of classifying space for proper actions of dimension~$\cl^f(\Gamma)$, and we have 
$$ \underline{\mathrm{gd}}(\VA[\Gamma]) = \mathrm{vcd}(\VA[\Gamma]) =  \cl^f(\Gamma)$$
where $\underline{\mathrm{gd}}$ denotes the minimal dimension of a classifying space for proper actions, and $\mathrm{vcd}$ denotes the virtual cohomological dimension of a group.
\end{theoremalph}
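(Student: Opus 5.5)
The plan is to take the virtual Deligne complex $\vD_\Gamma$ itself, with the Moussong metric, as the cocompact model of $\underline{E}\VA[\Gamma]$. The dimension is then matched by a cohomological lower bound coming from a free abelian subgroup of rank $\cl^f(\Gamma)$ inside the Artin part.

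\textbf{Upper bound.} By construction, $\vD_\Gamma$ is the geometric realisation of the poset of cosets $gP$, where $P$ runs over the family of spherical-type standard parabolic subgroups of $\VA[\Gamma]$ from \cite{GaGaPa26}. Maximal chains of spherical subsets $\emptyset\subsetneq T_1\subsetneq\cdots\subsetneq T_k$ with $T_k$ a spherical clique give simplices of dimension $k$. Hence $\dim \vD_\Gamma=\cl^f(\Gamma)$, and since $S$ is finite there are finitely many orbits of simplices, so the action is cocompact.

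To see that $\vD_\Gamma$ is a model for proper actions, I would check three things.
\begin{itemize}
\item[(i)] \emph{Finite stabilisers.} Cell stabilisers are conjugates of the vertex stabilisers. These vertex groups should be finite, playing the role of the finite Coxeter parabolics $\W[\Gamma_T]$, i.e.\ the relevant spherical parabolics built from the $\tau$'s. This has to be read off from the definition of the poset.
\item[(ii)] \emph{Rigid action.} The action is without inversions, since it preserves the rank of poset elements. So $\vD_\Gamma$ is a proper $\VA[\Gamma]$-CW-complex.
\item[(iii)] \emph{Contractible fixed sets.} For every finite subgroup $F$, the fixed set $\vD_\Gamma^F$ must be contractible.
\end{itemize}
Step (iii) is where local reducibility enters. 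By Corollary~\ref{cor-CAT0-locred}, $\vD_\Gamma$ is a complete CAT(0) space. $F$ acts by isometries with bounded orbits, so by the Bruhat–Tits fixed point theorem $\vD_\Gamma^F$ is nonempty. It is also convex, because geodesics are unique, hence contractible. Combined with (i) and (ii), this gives $\underline{\mathrm{gd}}\le\cl^f(\Gamma)$.

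\textbf{Lower bound.} From \cite{BellParThiel}, $\VA[\Gamma]\cong\A[\hGamma]\rtimes\W[\Gamma]$, and $\A[\hGamma]$ is torsion-free by Corollary~\ref{thm-A[gammahat]-torfree-locred}. Since $\W[\Gamma]$ is virtually torsion-free, a finite-index torsion-free subgroup $H$ exists. Then
$$\mathrm{vcd}=\mathrm{cd}(H)\le\underline{\mathrm{gd}}.$$
For the reverse inequality, take a spherical clique $T$ with $|T|=\cl^f(\Gamma)$. The spherical Artin group $\A[\Gamma_T]$ embeds via the $\sigma$'s, and it contains a free abelian subgroup $\mathbb{Z}^{|T|}$ (for instance generated by the centres of suitable nested parabolics, or the standard pure-braid-type abelian subgroup). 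It has cohomological dimension exactly $|T|$, via Deligne's $K(\pi,1)$ and Squier's result. Intersecting with $H$ keeps a finite-index subgroup $\mathbb{Z}^{|T|}$, so $\mathrm{cd}(H)\ge|T|$.

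Chaining these gives
$$\cl^f(\Gamma)\le\mathrm{vcd}\le\underline{\mathrm{gd}}\le\cl^f(\Gamma),$$
so all three quantities are equal.

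\textbf{Main obstacle.} I expect the hardest step to be (i): confirming that the vertex stabilisers in $\vD_\Gamma$ are precisely finite groups, conjugate into $\iota_\W(\W[\Gamma])$, rather than infinite virtual parabolics. If the poset instead uses the infinite spherical-type virtual parabolic subgroups, the model must be modified, for example by passing to a finite-index torsion-free subgroup acting on a different complex. In that case I would fall back on the classification of finite subgroups (Theorem~\ref{thm-finitesubgroups}) together with Lück's criterion, to produce the bound from the Artin normal subgroup.
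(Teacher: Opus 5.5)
\textbf{There is a genuine gap at step (i): the claim is false.} The poset defining $\vD_\Gamma$ consists of cosets $g\VA_X$ of the full spherical-type virtual parabolic subgroups, generated by $\sigma_s,\tau_s$ for $s\in X$. So the stabiliser of the vertex $g\VA_X$ is $g\VA_Xg^{-1}$. This contains $g\sigma_sg^{-1}$ and is infinite whenever $X\neq\varnothing$. For a one-vertex graph, $\vD_\Gamma$ is a star whose centre is fixed by $\VA_s\cong\mathbb{Z}\ast\mathbb{Z}/2$. For spherical $\Gamma$, it is a cone whose apex $\VA_S$ is fixed by all of $\VA[\Gamma]$. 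The action is therefore not proper, and $\vD_\Gamma$ is not a model of $\underline{\mathrm{E}}\VA[\Gamma]$. Your upper bound $\underline{\mathrm{gd}}\le\cl^f(\Gamma)$, and the cocompact model, are left unproved.

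Your fallback does not repair this. L\"uck-type finite-index induction multiplies the dimension by the index. A free model for a torsion-free finite-index subgroup would only bound $\mathrm{vcd}$, not $\underline{\mathrm{gd}}$, and would not give a cocompact proper model. The subgroup $\A[\hGamma]$ has infinite index, and is infinitely generated, once $\Gamma$ is non-spherical.

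\textbf{The missing idea is to use $\vD_\Gamma$ as the base of a combination theorem rather than as the model itself.} Theorem~\ref{thm:combination_gdim} applies to a cocompact action on a contractible simplicial complex. If fixed sets of finite subgroups are contractible and simplex stabilisers admit cocompact $\underline{\mathrm{E}}$, it gives a cocompact $\underline{\mathrm{E}}G$ with $\underline{\mathrm{gd}}(G)\le\max_\sigma(\dim\sigma+\underline{\mathrm{gd}}(\Stab\sigma))$.

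\begin{itemize}
\item Your (ii)--(iii), namely CAT(0), Bruhat--Tits and convexity, supply exactly the fixed-set hypothesis.
\item A simplex $g\VA_{X_0}\subset\cdots\subset g\VA_{X_n}$ has stabiliser conjugate to $\VA_{X_0}$, and $n+|X_0|\le|X_n|\le\cl^f(\Gamma)$.
\item So the real work is a cocompact $\underline{\mathrm{E}}\VA_X$ of dimension $|X|$ for each spherical $X$.
\item By local reducibility, $\VA_X$ is a product of copies of $\mathbb{Z}\ast\mathbb{Z}/2$ and dihedral groups $\VA_m$ with $m\ge 3$. The rank-one factors are handled by their Bass--Serre tree, of dimension $1$.
\item For $\VA_m$, extend the action of $\A[\hGammam]$ on its $2$-dimensional CAT(0) Deligne complex $\D_{\hGammam}$ to $\VA_m=\A[\hGammam]\rtimes\W_m$, via $(k,u)\cdot a\A_{\cX}[\hGammam]=ka^u\A_{u(\cX)}[\hGammam]$.
\item Compute that vertex stabilisers are conjugate to $\A_{\cX}[\hGammam]\times Q$ with $Q$ finite, with $Q=\W_m$ when $\cX=\varnothing$. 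Apply the combination theorem once more to get dimension $2$, then take products.
\end{itemize}

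Your lower bound is fine and is essentially the paper's argument: virtual torsion-freeness, plus a $\mathbb{Z}^{\cl^f(\Gamma)}$ inside a spherical $\A[\Gamma_T]$.
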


\noindent Note that the virtual cohomological dimension of virtual Artin groups is known in the spherical case by \cite[Theorem 6.3]{BellParThiel}. By contrast, it remains open in more general settings, such as the affine case, although the same authors establish an upper bound for this dimension.\\

\noindent \textbf{Strategy and structure of the article.} The article is organised as follows. Preliminary Sections~\ref{sec-prelimVA} and~\ref{sec-prelim-complexes} introduce known results and terminology about virtual Artin groups and complexes of groups respectively.\\

\noindent In Section \ref{sec-virtualDeligne} we introduce the virtual Deligne complex $\vD_{\Gamma}$. As in the classical setting, we define $\vD_{\Gamma}$ as the universal cover of a strictly developable simple complex of groups built from the family of standard parabolic subgroups of $\VA[\Gamma]$ (see Subsection \ref{subs-defnofVD}), which we equip with the Moussong metric. In Subsection \ref{subs-CAT0-2dim}, we study the geometry of $\vD_{\Gamma}$ when $\Gamma$ has dimension $2$. The proof that $\vD_{\Gamma}$ is locally CAT(0) can be rephrased purely combinatorially. Namely, we prove the following key result, which generalises to the virtual Artin group case a similar statement of Appel--Schupp for Coxeter and Artin groups \cite[Lemma~6]{AppelSchupp83}:

\begin{theoremalph}[``virtual Appel--Schupp Theorem'', see Theorem~\ref{virtualAppelSchupp}]
Let $\Gamma_m$ be a dihedral Coxeter graph with vertex set $S=\{s,t\}$ and $m\geq 2$. Let $\VA_m$ be the associated virtual Artin group, and let $\omega$ be a non-trivial element of $\VA_s\frpp \VA_t$ written in normal form, and representing the identity element of $\VA_m$. Then the syllabic length of $\omega$ satisfies $\|\omega\|\geq 2m$.  
\end{theoremalph}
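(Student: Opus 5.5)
The plan is to reduce the statement to a word problem in $\A[\hGamma_m]$ by means of the semidirect product decomposition $\VA_m\cong \A[\hGamma_m]\rtimes \W_m$ recalled in Subsection~\ref{subs-decomp-VA}. Here $\VA_s=\langle \sigma_s,\tau_s\rangle\cong \mathbb{Z}\frpp \mathbb{Z}/2$, and similarly for $\VA_t$. So each syllable of $\omega=g_1g_2\cdots g_n$ is a nontrivial element of $\mathbb{Z}\frpp\mathbb{Z}/2$, and the syllables alternate between $\VA_s$ and $\VA_t$. I would write every $g_i$ as an alternating word in $\sigma_x^{\pm k}$ and $\tau_x$ (with $x\in\{s,t\}$) and push all the $\tau$'s to the right. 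Then $\omega$ becomes a product $\prod_j \sigma_{\beta_j}^{k_j}$ times an element $w\in\W_m$. Here $\beta_j=w_j(\alpha_{x_j})$ is the root obtained by applying to the simple root $\alpha_{x_j}$ the image $w_j\in\W_m$ of the prefix preceding that $\sigma$-letter. Since $\omega=1$ in $\VA_m$, we get $w=1$ in $\W_m$ and $\prod_j\sigma_{\beta_j}^{k_j}=1$ in $\A[\hGamma_m]$.

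I would then split into cases according to the $\sigma$-content. If no $\sigma$-letter occurs, then every syllable is $\tau_s$ or $\tau_t$, and $\omega$ is a nontrivial alternating word in $\W_m$ representing $1$. The classical dihedral Appel--Schupp bound \cite[Lemma~6]{AppelSchupp83} for Coxeter groups then gives $\|\omega\|\geq 2m$. Otherwise, the sequence of prefixes $w_0=1,w_1,\dots$ traces a closed walk in the Cayley graph of $\W_m$, which is a $2m$-cycle. Each syllable moves the walk by at most one step, namely by $\tau_x$ or not at all. Moreover, a syllable that does not move the walk must contain a nontrivial power of $\sigma_x$.

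The key claim is that the product $\prod_j\sigma_{\beta_j}^{k_j}$ cannot be trivial unless the walk winds completely around the $2m$-cycle. For this I would use the explicit description of $\hGamma_m$: its vertices are the $m$ reflections (positive roots) of $\W_m$, with labels determined by the angles between the roots. A closed walk that does not wind around the cycle visits only a proper arc of it. The roots $\beta_j$ produced along such an arc then lie in a proper "consecutive" subset of the $m$ roots, and consecutive nonzero exponents are attached to distinct adjacent roots whenever the syllables alternate. I would show that such a word is a nontrivial reduced word in a free product or an amalgam of smaller parabolic subgroups of $\A[\hGamma_m]$ (using van der Lek's results on parabolic subgroups, or the analogue of \cite{GaGaPa26}), and hence is nontrivial. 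Thus either the walk goes around the whole cycle, which already forces at least $2m$ moving syllables, or $\omega$ is nontrivial.

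The main obstacle is this last step: controlling cancellation in $\A[\hGamma_m]$ for the words coming from short walks. The difficulty comes from the back-and-forth walks, in which the same root is visited repeatedly and the exponents of $\sigma_{\beta}$ could potentially combine or cancel. My first attempt would be to check that the alternation of $s$- and $t$-syllables in normal form prevents two consecutive letters from carrying the same root without an intervening letter on a different root. I would then argue by induction on the number of syllables, peeling off an extremal root of the visited arc.
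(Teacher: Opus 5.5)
\textbf{There is a genuine gap: your key claim is false.} You claim that $\prod_j\delta_{\beta_j}^{k_j}$ can only be trivial if the walk of prefixes winds all the way around the $2m$-cycle. Take
\[
\omega=\Prod_R(\sigma_s,\sigma_t;m)\,\Prod_R(\sigma_t,\sigma_s;m)^{-1}.
\]
This is a cyclically alternating normal form of syllabic length $2m$ in $\VA_s\frpp\VA_t$, and it is trivial in $\VA_m$ by (V1). Every syllable is $\sigma_s^{\pm1}$ or $\sigma_t^{\pm1}$, so your walk is constant. In $\A[\hGammam]$ this word is the dihedral Artin relator in $\delta_{\alpha_s},\delta_{\alpha_t}$, which holds because $\alpha_s$ and $\alpha_t$ are $m$-adjacent in $\hGammam$.

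Consequently, no argument of the form ``a word along a proper arc is an alternating word in a free product or amalgam of smaller parabolics, hence nontrivial'' can work. The subgroups $\langle\delta_\beta,\delta_{-\beta},\delta_\gamma,\delta_{-\gamma}\rangle\cong\A_m\frpp\A_m$ attached to adjacent roots carry genuine relations of length $2m$, and the alternation of $s$- and $t$-syllables does not rule them out. The theorem is not a winding statement. Stationary syllables (those with nontrivial $\sigma$-content) must be counted together with moving ones, and the bound for them has to come from an Appel--Schupp type estimate inside $\A[\hGammam]$.

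Your description of $\hGammam$ is also wrong. It is not a graph on the $m$ positive roots: its vertices are all $2m$ roots, and $\delta_\beta\neq\delta_{-\beta}$. It is a $2m$-cycle ($m$ even) or two disjoint $m$-cycles ($m$ odd), with all labels $m$ and all other pairs unlinked (Proposition~\ref{propdihedralgammahat}).

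\textbf{How the paper supplies the missing ingredient.}
\begin{itemize}
\item It places $\delta_\beta,\delta_{-\beta}$ together in edge groups $\mathbb{F}(\delta_\beta,\delta_{-\beta})$ of a polygon of groups over an $m$-gon, with vertex groups $\A_m\frpp\A_m$.
\item It proves that a square subdivision of the development is CAT(0). The links at the original vertices are controlled by the classical Appel--Schupp lemma, via Lemma~\ref{lem-Extended-AppelSchupp}.
\item It applies combinatorial Gauss--Bonnet to a disc diagram. This shows that every nonempty cyclically reduced word in the edge groups representing $\idK$ has at least $2m$ syllables.
\end{itemize}

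Your ``back-and-forth'' obstacle is handled by a separate count. Suppose two nontrivial syllables $k_p^{u_p},k_q^{u_q}$ fall into the same edge group with only pure $\tau$-syllables between them. Proposition~\ref{prop_roots} then forces the intermediate Coxeter word to do one of two things:
\begin{itemize}
\item be trivial in $\W_m$, in which case Appel--Schupp in $\W_m$ gives a gap of at least $2m$;
\item equal $\Prod_R(\cdot,\cdot;m-1)$ or $\Prod_R(\cdot,\cdot;m)$, in which case the gap is exactly $m$ (Lemma~\ref{lemma_z_j}).
\end{itemize}
If the reduction cancels everything, there are at least two disjoint such gaps (Lemma~\ref{lem-justonepair}). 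Returning to the same edge group needs only about half a turn of the walk ($\beta\mapsto-\beta$), which is another sign that full winding is the wrong invariant. Finally, $m=2$ is treated separately, using $\VA_s\cap\VA_t=\{1\}$ from \cite{GaGaPa26}.
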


\noindent This result is the key technical result from this article, whose proof covers Sections \ref{sectionDihedral}, \ref{sec-poly-of-groups-ArtinGammaHat}, and \ref{sec-virtualAppelSchupp}. In Subsection \ref{subs-from2tolocallyred}, we adapt Charney's proof from \cite{Charney2000} to the virtual setting and deduce that the virtual Deligne complex is CAT(0) in the locally reducible case. \\
\\
\noindent In Section \ref{sectionDihedral}, we consider a dihedral Coxeter graph $\Gamma_m$ with vertices $s$ and $t$, together with a non-trivial element in the free product $\VA_s\frpp \VA_t$ that represents the identity of $\VA_m$. In Subsection \ref{subs-fromVAtoAgammaHat}, we exploit the decomposition $\VA_m=\A[\hGammam]\rtimes \W_m$ to deduce that certain equation must hold in $\A[\hGammam]$ (Equation \ref{equationInKVA}). In order to analyse this equation, we first describe the Coxeter graph $\hGammam$ in Subsection \ref{subs-description-gammaHat} (Proposition \ref{propdihedralgammahat}). \\
\\
\noindent In order to study the aforementioned equation in $\A[\hGammam]$, in Section~\ref{sec-poly-of-groups-ArtinGammaHat} we realise
the Artin group $\A[\hGammam]$ as the fundamental group of a new developable simple polygon of groups $G(\mathcal{Q})$ (different from the standard Deligne complex associated with $\hGammam$). The motivation behind this construction is that certain cyclically reduced words representing the identity of $\A[\hGammam]$ correspond to ``loops of polygons'' in the universal cover of this complex of groups (Lemma~\ref{lem-cycred-nonbackloops}). We show that the universal cover of this polygon of groups admits a CAT(0) metric (Proposition~\ref{prop-X'-CAT(0)}), which allows us to use disc diagram arguments to bound the length of such loops. \\
\\
\noindent In Section \ref{sec-virtualAppelSchupp}, we combine the results of Sections \ref{sectionDihedral} and \ref{sec-poly-of-groups-ArtinGammaHat} to prove the virtual Appel--Schupp Theorem \ref{virtualAppelSchupp}. Starting from an element written in normal form in $\VA_s \frpp \VA_t$  representing the identity of $\VA[\Gamma_m]$, we associate to it a word representing the identity of $\A[\hGammam]$. When this word is cyclically reduced, the arguments from Section~\ref{sec-poly-of-groups-ArtinGammaHat} apply and yield the desired lower bound. However, the word may not necessarily be cyclically reduced, and in Subsections \ref{subs-equation-in-A},\ref{nu''-empty} we prove additional results to obtain the required lower bound in this additional case.\\
\\
\noindent In Section \ref{sec-fin-subg}, we study finite subgroups of $\VA[\Gamma]$ when $\Gamma$ is locally reducible, using the action of $\VA[\Gamma]$ on its CAT(0) virtual Deligne complex. By the Bruhat--Tits fixed-point theorem, the study of finite subgroups of $\VA[\Gamma]$ reduces to the study of the finite subgroups of the spherical standard parabolic subgroups of $\VA[\Gamma]$ of dihedral type (Lemma \ref{reduction-to-dihedral-lem}). 
 To study the finite subgroups of a virtual Artin group of dihedral type $\VA_m$, we extend the action of $\A[\hGammam]$ on its CAT(0) Deligne complex $\D_{\hGammam}$ to an action of $\VA_m$. (Proposition \ref{prop-actionofVAm-on-Deligne}). Here again, the study of finite subgroups of $\VA_m$ then reduces to the study of the vertex stabilisers, which are classified in Lemma~\ref{lem-stabilisers}.\\
\\
\noindent In Section \ref{sec-cohom+classifying}, we construct a cocompact model classifying space for proper actions for $\VA[\Gamma]$ when $\Gamma$ is locally reducible. To this end, we first show a ``reduction to spherical parabolic subgroups'' result that holds in full generality: if $\vD_\Gamma$ is CAT(0), then $\VA[\Gamma]$ satisfies Conjecture \ref{conj-class-properactions} if all its spherical standard parabolic subgroups satisfy that conjecture (see Subsection \ref{subs-reduction-parabolic}). In the locally reducible case, the study of the spherical parabolic subgroups essentially boils down to the dihedral case. When $\Gamma$ is of dihedral type, we use the action of $\VA[\Gamma]$ on the Deligne complex $\D_{\hGamma}$ to construct a cocompact model of classifying space for proper actions of dimension~$2$.

\begin{acknowledgements}\noindent
This work was partially supported by the  EPSRC Standard Research Grant UKRI1018. We also thank Jingyin Huang and Luis Paris for useful discussions about this work. 
\end{acknowledgements}

\section{Preliminaries on virtual Artin groups}\label{sec-prelimVA} 
In this section we collect the results on virtual Artin groups and related structures that will be used throughout the paper. As we previously mentioned in the Introduction, the algebraic and geometric study of virtual Artin groups is often eased by their nice structure as semidirect products of better known groups. This decomposition encompasses at the same time classical Artin groups, Coxeter groups, and it mirrors the action of the latter on a crucial structure in the study of these objects: the root system.

\subsection{Root systems} \label{subs-rootsystems}
\noindent 
\noindent Let $\Gamma$ be a Coxeter graph with vertices $V(\Gamma)=S$. Consider the real vector space $V=\bigoplus_{s\in S}\mathbb{R}\cdot\alpha_s$ with basis $\Pi=\{\alpha_s\mid s\in S\}$. The elements $\alpha_s$ are sometimes called the \textit{simple roots}. Define a symmetric bilinear form $\langle\,\cdot,\cdot\rangle$ on $V$ by
\[
\langle\alpha_s,\alpha_t\rangle=\begin{cases}
    -\cos{\left(\frac{\pi}{m_{s,t}}\right)}\qquad& \text{if $\{s,t\}$ is an edge of $\Gamma$,}\\ -1\qquad &  \text{if $\{s,t\}$ is not an edge of $\Gamma$,}\end{cases}\]
and extended bilinearly. Define $\rho:\W[\Gamma]\longrightarrow GL(V)$ as $s\longmapsto \rho_s$ where, for all  $v\in V$, $\rho_s(v):=v-2\langle\alpha_s,v \rangle\, \alpha_s$. This yields a faithful linear representation of $\W[\Gamma]$, called the \textit{canonical linear representation} \cite[V.4]{Bourbaki}. For $w\in \W[\Gamma]$ and $v\in V$, we write $w (v)$ for $\rho_w(v)$. \medskip
\\
\noindent Given $\Gamma$ a Coxeter graph with vertices $S$, the \textit{root system} of $\W[\Gamma]$ is the set of unit vectors
\[
\Phi[\Gamma]=\{w(\alpha_s)\mid s\in S,\,w\in \W[\Gamma]\}\subset V.\]
\noindent A root is \textit{positive} (resp. \textit{negative}) if it is a linear combination of simple roots with non-negative (resp. non-positive) coefficients. Denoting the sets of positive and negative roots respectively by $\Phi^+[\Gamma]$ and $\Phi^-[\Gamma]$, we have $\Phi[\Gamma]=\Phi^+[\Gamma]\sqcup \Phi^-[\Gamma]$ and $\Phi^+[\Gamma]=-\Phi^-[\Gamma]$ (see \cite{deodh82}). \medskip 
\\
\noindent If $\beta=w(\alpha_s)\in\Phi[\Gamma]$, the element $r_{\beta}:=wsw^{-1}$ is called a \textit{reflection} of $\W[\Gamma]$. Indeed, observe that $r_{\beta}$ sends $\beta$ to its opposite vector $-\beta$ and that it fixes pointwise the hyperplane of vectors of $V$ that are orthogonal to $\beta$ with respect to the bilinear symmetric form $\langle\cdot,\cdot \rangle$. The conjugates of the generators of $\W[\Gamma]$ are in bijection with the positive (equivalently, negative) roots.\medskip\\
\noindent The Coxeter group $\W[\Gamma]$ naturally acts on the root system $\Phi[\Gamma]$ through the canonical representation. Moreover, such an action preserves the bilinear form; i.e., $\langle w(\beta),w(\gamma)\rangle=\langle\beta,\gamma\rangle$ for all $w\in \W[\Gamma]$ and $\beta,\gamma\in \Phi[\Gamma]$. Another classical result is that $\Phi[\Gamma]$ is finite if and only if $\W[\Gamma]$ is finite \cite{deodh82}.

\subsection{\texorpdfstring{Semidirect decomposition of $\VA[\Gamma]$}{Semidirect decomposition of VA}}\label{subs-decomp-VA}

\noindent The standard reference on virtual Artin groups is \cite{BellParThiel}, of which we summarize the essential information here. Virtual Artin groups were introduced in Definition~\ref{def:VA}. We recall that, given $\Gamma$ a Coxeter graph, the natural map $\iota_{\W}:\W[\Gamma]\longrightarrow\VA[\Gamma]$ sending the standard generator $s$ to the corresponding virtual generator $\tau_s$ is an injective homomorphism. We often identify $\W[\Gamma]$ with its image $\iota_{\W}(\W[\Gamma])$ in $\VA[\Gamma]$. Now, let $\pi_K:\VA[\Gamma]\longrightarrow \W[\Gamma]$ be the map defined by $\pi_K(\sigma_s)=\idW$ and $\pi_K(\tau_s)=s$ for all $s\in S$. It is possible to check that $\pi_K$ is a surjective group homomorphism projecting the virtual Artin group onto its associated Coxeter group. We call the kernel $\ker(\pi_K)=:\KVA[\Gamma]$. The map $\iota_{\W}$ is a right section of $\pi_K$, which yields a split short exact sequence
\[
\begin{tikzpicture}[node distance=2cm, auto]
  \node (uno) {$\{\id\}$};
  \node (kva) [right of=uno] {$\KVA[\Gamma]$};
  \node (va) [right of=kva] {$\VA[\Gamma]$};
	\node (w) [right of=va] {$\W[\Gamma]$};
	\node (unodue) [right of=w] {$\{\id\}.$};
 \draw[->] (uno) to node {}  (kva);
 \draw[->] (kva) to node {$i$} (va);
\draw[transform canvas={yshift=0.5ex},->] (va) - -(w) node[below,midway] {$\pi_K$};
\draw[->] (w) to[bend right] node[midway,above,inner sep=2pt] {$\iota_{\W}$} (va); 
\draw[->] (w) to node {} (unodue);
\end{tikzpicture} \]
\noindent Therefore, the Coxeter group $\W[\Gamma]$ acts on the kernel $\KVA[\Gamma]$ by conjugacy, and $\VA[\Gamma]$ has the semidirect product structure $\VA[\Gamma]=\KVA[\Gamma]\rtimes \W[\Gamma]$.\\
\begin{nt}\label{notationactionWonKVA}
    For $k\in \KVA[\Gamma]$ and $w\in \W[\Gamma]$, we denote by $k^{w}$ the element $\iota_{\W}(w)\,k\,\iota_{\W}(w)^{-1}$.
\end{nt}
\begin{nt}\label{nt-g=(k,w)}
Let $g$ be in $\VA[\Gamma]=\KVA[\Gamma]\rtimes\W[\Gamma]$. We write $g=(k,w)$ where $k\in\KVA[\Gamma]$ and $w\in \W[\Gamma]$, and we call $k$ and $w$ the \textit{Artin component} and the \textit{Coxeter component} of $g$, respectively. Observe that given $g_1=(k_1,w_1)$, $g_{2}=(k_2,w_2)$, their product is $g_1\,g_2=(k_1,w_1)(k_2,w_2)=(k_1\,k_2^{w_1},w_1w_2)$. With this notation, 
\begin{equation}\label{eq-nt-kw}
  wk=(\idK,w)(k,\idW)=(k^w,w) \in \VA[\Gamma].  
\end{equation}
\noindent The identity $\id\in \VA[\Gamma]$ is denoted by $(\idK,\idW)$.
\end{nt}\bigskip
\noindent The reason why we call $k\in \KVA[\Gamma]$ an ``Artin component" will be clearer after Theorem \ref{thm-KVA-iso-Agammahat}. The normal subgroup $\KVA[\Gamma]$ is crucial for the comprehension of $\VA[\Gamma]$. In the remainder of this section, we recollect the results on $\KVA[\Gamma]$ that will be used throughout the work.\medskip  
\begin{defn}\label{defGammaHat} Let $\Gamma$ be a Coxeter graph on a vertex set $S$. Define a new Coxeter graph $\hGamma$ as follows:
\begin{enumerate}
    \item[(1.)] $V(\hGamma)=\Phi[\Gamma]$;
    \item[(2.)] for $\beta,\gamma\in \Phi[\Gamma]$, there is an edge $\{\beta,\gamma\}$ in $\hGamma$ if and only if there exist $a,b\in S$ and $w\in \W[\Gamma]$ such that $\beta=w(\alpha_a)$ and $\gamma=w(\alpha_b)$, in which case the label $\hmbg$ is $m_{a,b}$. 
\end{enumerate}
\end{defn}
\noindent  By \cite{BellParThiel}, the label $\widehat{m}_{\beta,\gamma}$ does not depend on the choice of $w$, $a$ and $b$ in (2.). When we consider the Artin group associated with $\hGamma$, we denote the generating set of $\A[\hGamma]$ simply by $\{\beta\mid\beta\in\Phi[\Gamma]\}$. Observe that $\A[\hGamma]$ is finitely generated if and only if $\Phi[\Gamma]$ is finite, equivalently, if and only if $\W[\Gamma]$ is a finite group.\\
\\
\noindent Consider now the decomposition $\VA[\Gamma]=\KVA[\Gamma]\rtimes \W[\Gamma]$. For any root $\beta=w(\alpha_s)$, set $\delta_{\beta}:= \sigma_s^w\in \KVA[\Gamma]$. By \cite[Lemma 2.2]{BellParThiel}, the definition of $\delta_{\beta}$ does not depend on the choice of $w$ and $s$ such that $\beta=w(\alpha_s)$. The main result concerning $\KVA[\Gamma]$ is the following.

\begin{thm}\cite[Theorem 2.3]{BellParThiel} \label{thm-KVA-iso-Agammahat}
  The kernel $\KVA[\Gamma]$ is generated by $\{\delta_{\beta}\mid \beta \in \Phi[\Gamma]\}$, and the map $\beta \longmapsto \delta_{\beta}$ induces an isomorphism between $\A[\hGamma]$ and $\KVA[\Gamma]$.  \medskip
\end{thm}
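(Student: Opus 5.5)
The plan is to prove this with the Reidemeister--Schreier method, organised as a comparison of two presentations. Let $\Phi=\Phi[\Gamma]$.

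\textbf{Step 1: well-definedness.} I take as given (from \cite[Lemma 2.2]{BellParThiel}) that $\delta_\beta$ does not depend on the choice of $w$ and $s$. Then I check that $\beta\mapsto\delta_\beta$ respects the relations of $\A[\hGamma]$, so that it defines a homomorphism $\Psi:\A[\hGamma]\to\KVA[\Gamma]$. Suppose $\{\beta,\gamma\}$ is an edge of $\hGamma$, with $\beta=w(\alpha_a)$, $\gamma=w(\alpha_b)$ and label $m_{a,b}$. Then $\delta_\beta=\sigma_a^w$ and $\delta_\gamma=\sigma_b^w$. The braid relation between $\delta_\beta$ and $\delta_\gamma$ is the conjugate by $\iota_{\W}(w)$ of relation (V1) for $\sigma_a,\sigma_b$, so it holds.

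\textbf{Step 2: generation.} Let $K=\KVA[\Gamma]$. Using $\iota_{\W}(\W[\Gamma])$ as a transversal for $K$ in $\VA[\Gamma]$, the Schreier generators are:
\begin{itemize}
\item[(a)] $w\sigma_s w^{-1}=\sigma_s^w=\delta_{w(\alpha_s)}$, since $\pi_K(\sigma_s)$ is trivial;
\item[(b)] $w\tau_s (ws)^{-1}$, which is trivial.
\end{itemize}
Hence $K$ is generated by the $\delta_\beta$, and $\Psi$ is surjective.

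\textbf{Step 3: injectivity.} Rather than run the Reidemeister--Schreier rewriting directly, I would construct an inverse. Let $\W[\Gamma]$ act on $\A[\hGamma]$ by permuting generators, $w\cdot\beta=w(\beta)$. This respects the relations of $\A[\hGamma]$, because the edge set and labels of $\hGamma$ are $\W[\Gamma]$-invariant by definition. Form the semidirect product $G=\A[\hGamma]\rtimes\W[\Gamma]$ and define $\Theta:\VA[\Gamma]\to G$ by $\sigma_s\mapsto(\alpha_s,1)$ and $\tau_s\mapsto(1,s)$. I then verify the relations:
\begin{itemize}
\item (V1) maps to a relation of $\A[\hGamma]$ between simple roots, since $\{\alpha_s,\alpha_t\}$ is an edge with label $m_{s,t}$.
\item (V2) maps to the Coxeter relations, which hold in $\W[\Gamma]$.
\item (V3) becomes the identity $u(\alpha_s)=\alpha_r$ with $u=\Prod_R(s,t;m_{s,t}-1)$. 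This is the standard dihedral root computation: the alternating word of length $m-1$ sends $\alpha_s$ to $\alpha_s$ or $\alpha_t$ according to the parity of $m$.
\end{itemize}

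Next I check that $\Theta\circ\Psi$ is the identity. Indeed $\Theta(\delta_\beta)=\Theta(\sigma_s^w)=(w(\alpha_s),1)=(\beta,1)$. So $\Psi$ is injective, and by Step 2 it is an isomorphism.

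\textbf{Main obstacle.} The delicate part is the (V3) computation in Step 3, specifically pinning down the parity convention: which of $s,t$ is $r$, and whether the word acts on the left or the right. I would first verify it for $m=2,3$ and then induct using the rotation action of the dihedral group on $\Phi[\Gamma_m]$. A secondary point is that the well-definedness lemma is itself essentially the same computation, since independence of the choice of $w$ follows from (V3) and the stabiliser structure of roots.
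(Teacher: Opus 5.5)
The paper does not prove this statement. It is quoted from \cite[Theorem 2.3]{BellParThiel}, and the independence of $\delta_\beta$ from the choice of $(w,s)$ is likewise imported as \cite[Lemma 2.2]{BellParThiel}. Relative to that same input, your argument is correct and complete.

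The relation checks for $\Theta$ are right. The (V3) check is not actually delicate: it is the identity $\Prod_R(s,t;m_{s,t}-1)(\alpha_s)=\alpha_r$, which is exactly Lemma~\ref{lemma_roots} with the paper's conventions ($r=s$ for $m_{s,t}$ even, $r=t$ for $m_{s,t}$ odd). Since $\Theta(\iota_{\W}(w))=(1,w)$, you get $\Theta(\delta_\beta)=(\beta,1)$. The $\W[\Gamma]$-coordinate of $\Theta$ agrees with $\pi_K$, so $\Theta$ restricts to a left inverse $\KVA[\Gamma]\to\A[\hGamma]$ of $\Psi$. Together with the Schreier generators, this gives bijectivity.

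You could package this slightly more economically and drop Step 2. The map $(k,w)\mapsto\Psi(k)\,\iota_{\W}(w)$ is a homomorphism $\A[\hGamma]\rtimes\W[\Gamma]\to\VA[\Gamma]$, because $\Psi(w\cdot k)=\Psi(k)^w$ by \cite[Lemma 2.2]{BellParThiel}. It is inverse to $\Theta$ on generators. This yields at once the decomposition $\VA[\Gamma]\cong\A[\hGamma]\rtimes\W[\Gamma]$ and Equation~\eqref{actionWonKVA}, which is the form in which the paper actually uses the result.

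Your closing remark needs one correction: the real content sits in \cite[Lemma 2.2]{BellParThiel}, and it is not ``essentially the same computation''. To get $\iota_{\W}(u)\,\sigma_{s'}\,\iota_{\W}(u)^{-1}=\sigma_s$ whenever $u(\alpha_{s'})=\alpha_s$, you need a fact about an arbitrary $\W[\Gamma]$. Every such $u$ factors as a product of dihedral pieces $\Prod_R(s_{i-1},t_i;m_{s_{i-1},t_i}-1)$, each carrying one simple root to the next. This can be proved by induction on the length of $u$, splitting off the minimal coset representative modulo $\W_{\{s',t\}}$ for a right descent $t$ of $u$. Only after this factorization does (V3) apply piece by piece. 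Since you, like the paper, take that lemma as given, this is not a gap in your proof.
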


\noindent Henceforth, we often identify $\A[\hGamma]$  and $\KVA[\Gamma]$, and the generators of $\A[\hGamma]$ with $\{\delta_{\beta}\mid \beta \in \Phi[\Gamma]\}$.
With this notation, the action of $\W[\Gamma]$ on these generators coincides with its natural action on the root system. More precisely, for all $w\in \W[\Gamma]$ and all $\beta\in \Phi[\Gamma]$, we have
\begin{equation}\label{actionWonKVA}
\delta_{\beta}^w=\iota_{\W}(w)\,\delta_{\beta}\,\iota_{\W}
(w)^{-1}=\delta_{w(\beta)}.
\end{equation}

\noindent If $k\in \A[\hGamma]$, then write $k=\delta_{\beta_1}^{\varepsilon_1}\cdots \delta_{\beta_p}^{\varepsilon_p}$ with $\beta_i\in \Phi[\Gamma]$ and $\varepsilon_i\in\mathbb Z$ for all $i=1,\ldots,p$. The element $k^{w}$ is $k^w=\delta_{w(\beta_1)}^{\varepsilon_1}\cdots \delta_{w(\beta_p)}^{\varepsilon_p}$.\\
\\
\noindent The virtual Artin group $\VA[\Gamma]$ can therefore be written as $\A[\hGamma]\rtimes\W[\Gamma]$. Observe that, when $\Gamma$ is of spherical type, $\W[\Gamma]$ is finite and the virtual Artin group $\VA[\Gamma]$ \textit{is virtually an Artin group}, meaning that there is a finite index subgroup $\A[\hGamma]$ of $\VA[\Gamma]$ that is an Artin group. For any other $\Gamma$, the kernel $\A[\hGamma]$ has not finite index in $\VA[\Gamma]$.\medskip\\
\noindent A deeper understanding of the Artin group $\A[\hGamma]$ will allow us to derive structural properties of the whole virtual Artin group $\VA[\Gamma]$. Specifically, in Subsection \ref{subs-description-gammaHat} we study the graph $\hGamma$ when $\Gamma$ is the dihedral Coxeter graph with label $m$. This description allows us to see the Artin group $\A[\hGammam]$ as the fundamental group of a certain polygon of groups (see Subsection \ref{sec-poly-of-groups-ArtinGammaHat}).

\section {Preliminaries on complexes of groups}\label{sec-prelim-complexes}
\noindent Coxeter and Artin groups are often studied through their actions on certain remarkable cell complexes. In this section, we describe the \textit{Davis complex} and the \textit{Deligne complex}, which are respectively associated with $\W[\Gamma]$ and $\A[\Gamma]$. Motivated by these constructions, we introduce in Section \ref{sec-virtualDeligne} the notion of a \textit{virtual Deligne complex} for a virtual Artin group $\VA[\Gamma]$. These spaces can be defined in terms of simple complexes of groups, a structure that we recall in the following subsection.

\subsection{Simple complexes of groups}\label{subs-simplecomplexesofgr}
 
\noindent In this subsection, we recall standard results about simple complexes of groups, see \cite[II.12]{BriHaefl} for a more general treatment. We will be considering simple complexes of groups over a partially ordered set (or poset).

\begin{defn}
A \textit{simple complex of groups} $G(\mathcal{Q})=(G_q,\psi_{pq})$ over a poset $(\mathcal{Q},<)$ is the data of:
\begin{itemize}
    \item for each $q\in \mathcal{Q}$, a group $G_{q}$, called the \textit{local group} at $q$;
    \item for each $q < p$, an injective homomorphism $\psi_{pq}:G_{q}\xhookrightarrow{\quad} G_{p}$ such that if $r <q<p$, then $\psi_{pr}=\psi_{pq}\circ \psi_{qr}$.
\end{itemize}
\end{defn}
\noindent Note that in the literature, a complex of groups is sometimes defined using a contravariant convention, i.e. for each $q < p$, an injective homomorphism $\psi_{pq}:G_{q}\xhookrightarrow{\quad} G_{p}$. This is purely a matter of convention, up to considering the opposite poset. The convention we are adopting here is more natural for the complexes of groups we will be considering, and coincides with the convention used in the literature on Artin groups, see for instance \cite{CharDav95}.\medskip\\
\noindent Simple complexes of groups generalise to higher-dimension the notion of amalgamated product of groups, and can be used to encode actions on simplicial complexes with a strict fundamental domain. We recall that given a group $G$ acting on a simplicial complex $X$, a subcomplex $Y\subseteq X$ is called a \textit{strict fundamental domain} if it contains exactly one point from each $G$-orbit. The complexes we will be considering in this article are geometric realisations of posets. We recall that given a poset $\mathcal{Q}$, its \textit{derived complex} $\mathcal{Q}'$ is the abstract simplicial complex whose $k$-simplices correspond to the chains $q_0<\cdots < q_k$ with $q_0, \ldots, q_k \in \mathcal{Q}$. We denote by $|\mathcal{Q}'|$ the \textit{geometric realisation} of this abstract simplicial complex, which we will simply refer to as the geometric realisation of $\mathcal{Q}$. 

\begin{defn}
Let $G(\mathcal{Q})$ be a simple complex of groups over a poset $\mathcal{Q}$ with a simply connected geometric realisation. The \textit{fundamental group} of $G(\mathcal{Q})$ is defined as: \[
\pi_1(G(\mathcal {Q})):=\underset{q \in \mathcal{Q}}{\varinjlim}\,G_{q},
\]
\noindent which is the direct limit of the system of groups and monomorphisms $(G_{q},\psi_{pq})$, that is, the quotient of the free product of all the $G_q$ by adding the relations $a=\psi_{pq}(a)$ for all $q<p$ and $a\in G_q$. For each $q\in \mathcal{Q}$, we denote by $i_q: G_q \rightarrow \pi_1(G(\mathcal {Q}))$ the natural map sending $G_q$ to the image of the corresponding free factor. Note that $i_q$ may not be injective in general. 
\end{defn}

\noindent The following is a powerful theorem  that allows us to construct group actions out of local data.

\begin{thm} \cite[Theorem 12.18 and 12.20]{BriHaefl}\label{thm-strictly-developable-scog}  Let $G(\mathcal{Q})$ be a simple complex of groups over a poset $\mathcal{Q}$ with a simply connected geometric realisation. Suppose that all the morphisms $i_q: G_q\longrightarrow \pi_1(G(\mathcal{Q}))$ are injective. Let $G\coloneqq \pi_1(G(\mathcal{Q}))$. We now identify each $G_q$ with its image in $G$ under $i_q$.\medskip\\
\noindent Let $D(\mathcal{Q}, i)$ be the poset whose elements are of the form $(gG_q, q)$, for $q\in \mathcal{Q}$, $g \in G$  (i.e. $gG_q$ is a left coset of $G_q$ in $G$), and with partial order $(gG_q, q)< (g'G_{q'}, q')$ when $q<q'$ and $g^{-1}g'\in G_{q'}$. The group $G$ acts on $D(\mathcal{Q}, i)$ by left multiplication via $h\cdot (gG_q, q) = (hgG_q, q)$. Let $X$ denote the simplicial realisation of $D(\mathcal{Q}, i)$. Note that $\mathcal{Q}$ embeds in $D(\mathcal{Q}, i)$ via the map $q \mapsto  (G_q, q)$. We therefore identify $\mathcal{Q}$ with a sub-poset of $D(\mathcal{Q}, i)$, and $Y \coloneqq |\mathcal{Q}|$ with a sub-complex of $X$. \medskip\\
\noindent Then $X$ is simply connected, and $G$ acts on $X$ with strict fundamental domain $Y$.  Moreover, for each $q\in \mathcal{Q}\subset D(\mathcal{Q}, i)$, the stabiliser $\mathrm{Stab}_G(q)$ is equal to $G_q$, and for each $q' < q$, the inclusion $\mathrm{Stab}_G(q') \hookrightarrow \mathrm{Stab}_G(q)$ is the local map $\psi_{q,q'}: G_{q'}\rightarrow G_q$.
\end{thm}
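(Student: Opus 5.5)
The statement is the Bridson--Haefliger theorem on strictly developable simple complexes of groups. I would prove it by building the development explicitly, in four steps.

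\emph{Partial order and action.} First I check that the relation on $D(\mathcal{Q},i)$ is a well-defined partial order, independent of coset representatives. If $g^{-1}g'\in G_{q'}$ and $h\in G_q\subseteq G_{q'}$ (using $q<q'$ and the inclusion $\psi_{q'q}$), then $(gh)^{-1}g'\in G_{q'}$. Transitivity follows from the same inclusions. Left multiplication preserves the order, so $G$ acts by poset automorphisms and hence simplicially on $X$.

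\emph{Strict fundamental domain and stabilisers.} Every element $(gG_q,q)$ is the translate $g\cdot(G_q,q)$, so $Y$ meets every orbit. Two elements with the same second coordinate $q$ lie in the same orbit only if they are translates of $(G_q,q)$ itself, and two vertices of $Y$ with different labels are never in the same orbit. Hence $Y$ contains exactly one point of each orbit of vertices. For simplices, a chain $(g_0G_{q_0},q_0)<\dots<(g_kG_{q_k},q_k)$ can be translated so that its top element is $(G_{q_k},q_k)$. Then every $g_i$ may be taken in $G_{q_k}$, and descending down the chain each $g_i$ may successively be chosen so the chain lies in $Y$. This shows that $Y$ is a strict fundamental domain for simplices as well. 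The stabiliser of $(G_q,q)$ is $\{h : hG_q=G_q\}=G_q$, by injectivity of $i_q$. The inclusions of stabilisers are then visibly the maps $\psi_{qq'}$.

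\emph{Simple connectivity.} This is the main step, and I would argue via covering theory. Write $X=(G\times Y)/\!\sim$, where $(g,y)\sim(g',y)$ if $g^{-1}g'\in G_{q(y)}$ and $q(y)$ is the maximal element of the support of $y$. Given a connected covering $p:\tilde X\to X$, I would lift $Y$, which is simply connected, to a section $\tilde Y$. Then I would define the subgroup $H\le\pi_1$ generated by the deck-like data as follows. For each $q$, lift the $G_q$-action locally near the vertex $q$; since the star of $q$ is simply connected (it is a cone), $G_q$ acts on its preimage sheet. These local lifts are compatible under the $\psi_{pq}$, so by the universal property of the direct limit they assemble into an action of $G=\varinjlim G_q$ on $\tilde X$ lifting the action on $X$. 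Since $\tilde X$ is generated by the $G$-translates of $\tilde Y$, and the action lifts, the map $\tilde X\to X$ admits a $G$-equivariant section. This forces the covering to be trivial, so $X$ is simply connected.

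Alternatively, I would apply van Kampen to the explicit quotient description, computing $\pi_1(X)$ as the kernel of $\varinjlim G_q\to G$, which is trivial. I expect the main obstacle to be verifying carefully that the local lifts glue, which uses the compatibility $\psi_{pr}=\psi_{pq}\psi_{qr}$.
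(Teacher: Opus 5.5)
The paper does not prove this statement; it quotes Bridson--Haefliger. Your Steps 1--2 are essentially fine, and the covering strategy in Step 3 is the standard route. However, the central step of Step 3 fails as written.

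Lifting $G_q$ to the sheet of $p^{-1}(\mathrm{St}(q))$ through $\tilde q$ only gives actions of the various $G_q$ on \emph{different} subsets of $\tilde X$. These are not homomorphisms from the $G_q$ into one common group, so the universal property of $\varinjlim G_q$ does not apply, and nothing you wrote produces homeomorphisms of all of $\tilde X$. For an arbitrary connected covering this cannot be repaired: a lift of $h\in G_q$ fixing $\tilde q$ exists only if $h_*$ maps $p_*\pi_1(\tilde X,\tilde q)$ into itself, and you have no way to check that.

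The missing idea is to take $p$ to be the \emph{universal} covering. This suffices once you also check that $X$ is connected, which holds because $G$ is generated by the $G_q$ and $q\in hY\cap Y$ for $h\in G_q$. Then let $\tilde G$ be the group of all lifts of elements of $G$, an extension $1\to\pi_1(X)\to\tilde G\to G\to 1$.
\begin{itemize}
\item Every $h\in G_q$ has a unique lift fixing $\tilde q$, so $\mathrm{Stab}_{\tilde G}(\tilde q)\to G_q$ is an isomorphism.
\item For $q'<q$, the lift of $h\in G_{q'}$ fixing $\tilde q'$ also fixes $\tilde q$. This follows by unique path lifting along the edge $[\tilde q',\tilde q]$ of $\tilde Y$, since $h$ fixes $[q',q]$ pointwise.
\end{itemize}
This compatibility is what gives a splitting $s:G\to\tilde G$ via the universal property. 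It is not the identity $\psi_{pr}=\psi_{pq}\psi_{qr}$ that does this work; that identity is automatic here, since the $G_q$ are already subgroups of $G$.

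You then assert without proof that $\tilde X=s(G)\tilde Y$. This needs $\mathrm{St}(q)=G_q\cdot\mathrm{St}_Y(q)$: a chain through $(G_q,q)$ is $g_0$ times a chain of $Y$ with $g_0\in G_q$. It follows that $s(G)\tilde Y$ contains the star of each of its vertices, hence equals the connected complex $\tilde X$. To conclude, a deck transformation $\gamma$ sends a vertex $\tilde q\in\tilde Y$ to some $s(g)\tilde y$ with $\tilde y\in\tilde Y$. Projecting and using the strict fundamental domain gives $\tilde y=\tilde q$ and $g\in G_q$. Hence $\gamma\tilde q=s(g)\tilde q=\tilde q$, so $\gamma=1$.

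There are two smaller errors.
\begin{itemize}
\item In your description $(G\times Y)/\!\sim$, the group attached to a point in the open simplex of a chain $q_0<\dots<q_k$ must be $G_{q_0}$, the \emph{minimal} element. With the paper's convention the local groups increase up the poset, so the chain's stabiliser is $\bigcap_i G_{q_i}=G_{q_0}$. Using the maximal element identifies far too much.
\item Your van Kampen alternative is circular. Since $G$ is defined to be $\varinjlim G_q$, the kernel of $\varinjlim G_q\to G$ is trivial by definition. The entire content is identifying $\pi_1(X)$ with that kernel, and you do not justify this.
\end{itemize}
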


\begin{defn} A complex of groups $G(\mathcal{Q})$ satisfying the hypotheses of Theorem~\ref{thm-strictly-developable-scog} is called strictly developable, and $D(\mathcal{Q}, i)$ is called its \textit{universal cover}. Since we are interested in actions on simplicial complexes, with a slight abuse of notation we will often also refer to $X$ as the universal cover of $G(\mathcal{Q})$.
\end{defn}

\noindent The complexes on which we make Coxeter, Artin and virtual Artin groups act, arise as universal covers of strictly developable simple complexes of groups.

\subsection{The Davis complex of a Coxeter group and the Deligne complex of an Artin group}\label{subs-davis-and-deligne-complex}
\noindent In this subsection, we recall the construction of complexes of groups associated to Coxeter and Artin groups. These constructions being very similar, we will adopt a uniform perspective and will use the notation $G[\Gamma]$ with $G = \W$ or $\A$ to refer to either a Coxeter or an Artin group. Given a Coxeter graph $\Gamma$ with set of vertices $S$, if $X\subset S$, the subgroup $G_X[\Gamma]$ of $G[\Gamma]$ generated by $X$ is called a \textit{standard parabolic subgroup} of the Coxeter or Artin group $G[\Gamma]$. A conjugate $wG_X[\Gamma]\,w^{-1}$ of a standard parabolic subgroup is called simply a \textit{parabolic subgroup}. Denote by $\Gamma_X$ the full subgraph of $\Gamma$ spanned by the vertices in $X$. The graph $\Gamma_X$ is itself a Coxeter graph, and we denote its associated Coxeter and Artin groups by $\W[\Gamma_X]$ and $\A[\Gamma_X]$, respectively. \medskip\\
\noindent By the classical theory of Coxeter groups, $\W_X[\Gamma]$ is isomorphic to $ \W[\Gamma_X]$. More specifically, the natural map $\W[\Gamma_X]\longrightarrow \W[\Gamma]$ is an injective homomorphism and its image is $\W_X[\Gamma]$ \cite[Ch IV, 1.8, Thm 2]{Bourbaki}. Like for Coxeter groups, the natural map $\A[\Gamma_X]\longrightarrow \A[\Gamma]$ is injective, i.e. $\A_X[\Gamma]\cong \A[\Gamma_X]$ \cite[Theorem 4.13]{van1983homotopy}. Thus we write $G[\Gamma_X]\xhookrightarrow{}  G[\Gamma]$ for $G\in \{\W,\A\}$. \medskip
\\
\noindent  For the construction of the complexes of our interest, we consider the following family of subsets of $S$:
\[
\Sf:=\{X\subset S\mid \W_X[\Gamma]\;\, \text{is finite}\},
\]
\noindent ordered by inclusion. Set now $K_{\Gamma}:=|(\Sf)'|$. Observe that $\Sf$ has an initial object $\varnothing$, thus $K_{\Gamma}$ is simply connected. We now define a complex of groups over $\Sf$ as follows: 
\begin{itemize}
    \item For each $X\in \Sf$, the associated local group is the standard parabolic subgroup $G_X[\Gamma]$.
    \item The local maps are the inclusions of standard parabolic subgroups (see \cite[Ch IV, 1.8, Thm 2]{Bourbaki} for $G=\W$, see \cite[Theorem 4.13]{van1983homotopy} for $G=\A$).
\end{itemize}
\noindent It follows from the injectivity of the inclusions of parabolic subgroups $G_X[\Gamma]\xhookrightarrow{} G[\Gamma]$ and from Theorem \ref{thm-strictly-developable-scog} that this defines a strictly developable simple complex of groups $G(\Sf)$. Furthermore, $\pi_1(G(\Sf))=G[\Gamma]$. 
Its universal cover is the geometric realisation of the  poset 
\[
G\Sf:=\{gG_X[\Gamma]\mid X\in \Sf,\;g\in G[\Gamma]\},
\]
\noindent ordered by inclusion.\medskip
\\
\noindent For $G=\W$, the complex $\Sigma_{\Gamma}:=|(\W\Sf)'|$ is called the \textit{Davis complex} of the Coxeter group $\W[\Gamma]$. The face preserving action of $\W[\Gamma]$ on $\Sigma_{\Gamma}$ is exactly the order preserving action of $\W[\Gamma]$ on $\W\Sf$ by left multiplication. This action is cocompact since the fundamental domain $K_{\Gamma}$ is compact, and it is proper since the stabilizers of the vertices are the finite parabolic subgroups  $w\W_X[\Gamma]w^{-1}$, with $X\in \Sf$.\medskip
\\
\noindent When $G=\A$, the complex $\D_{\Gamma}:=|(\A\Sf)'|$, called the \textit{(modified) Deligne complex} associated with $\A[\Gamma]$. Thus, the Artin group $\A[\Gamma]$ acts cocompactly and simplicially on $\D_{\Gamma}$, with an action given by the left multiplication on the partially ordered set of cosets. However, the stabilizer of the vertex $a\,\A_X[\Gamma]$ of $\D_{\Gamma}$ is the infinite parabolic subgroup $a\,\A_X[\Gamma]\,a^{-1}$, which implies that the action is not proper. \medskip\\
\noindent Observe that the dimension of $\D_{\Gamma}$ is the maximal cardinality of a subset of $S$ that gives rise to a spherical type standard parabolic subgroup of $\A[\Gamma]$. We call \textit{dimension} of an Artin group $\A[\Gamma]$ (or, with a slight abuse of notation, of the Coxeter graph $\Gamma$) the dimension of the associated Deligne complex. Remarkable Artin groups are those \textit{of dimension 2}, where the maximal-dimensional cells in $\D_{\Gamma}$ are 2-simplices. \medskip
\\
\noindent  The $K(\pi,1)$-conjecture for Artin groups (see \cite{Par14} for a survey) is equivalent to the contractibility of the Deligne complex $\D_\Gamma$ for all Coxeter graphs $\Gamma$, by work of Charney--Davis \cite{CharDav95}. A very common approach to this problem is showing that the Deligne complex $\D_{\Gamma}$ always supports a CAT(0) metric. \medskip

\subsection{\texorpdfstring{The Moussong metric on $\Sigma_\Gamma$ and $\D_\Gamma$.}{The Moussong metric on Sigma and D.}}\label{subs-moussongmetric}

\noindent
Moussong~\cite{Moussong} defined a CAT(0) a piecewise Euclidean metric on the Davis complex $\Sigma_{\Gamma}$. This is a piecewise Euclidean metric obtained by first defining the metric on the strict fundamental domain $K_\Gamma$, and then extending it equivariantly to the whole space. Note that since $K_\Gamma$ is also a fundamental domain for the Deligne complex $\D_\Gamma$, a choice of piecewise Euclidean metric on $K_\Gamma$ similarly induces a metric on the entire $\D_\Gamma$. We follow a similar approach in the next section to define a Moussong metric on the virtual Deligne complex of a virtual Artin group.
\medskip
\\
\noindent For the reader's convenience, we recall here the construction of the Moussong metric. However, we emphasise that the only place in the article where this construction is necessary is the proof of Corollary \ref{cor-CAT0-locred}.\medskip
\\
\noindent The Moussong metric is preserved by the action of the Coxeter group, thus it suffices to describe the piece-wise Euclidean structure on $K_{\Gamma}$. View $K_{\Gamma}$ as a cubical (instead of simplicial) complex, where for $X\in \Sf$, the vertices $Y\subset X$ of $K_{\Gamma}$ span a cube of dimension $|X|$, denoted $cube(X)$. Let $K_{\Gamma_{\geq X}}$ denote the subcomplex of $K_{\Gamma}$ spanned by the vertices $Z\in \Sf$ with $X\subseteq  Z$. The Euclidean metric on $cube(X)$ is described below. \medskip\\
\noindent The finite group $\W_X[\Gamma]$ naturally realizes as a group of orthogonal transformations of $\mathbb{R}^{|X|}$ through the canonical representation described in Subsection \ref{subs-rootsystems}. Each generator $s\in X$ acts as an orthogonal reflection with respect to the hyperplane $\langle\alpha_s\rangle^{\perp}$, and the intersection of all the positive half-spaces with respect to these hyperplanes is a simplicial convex cone denoted by $T_X$. There is a unique point $x_0$ in $T_X$ that is at distance 1 from each such hyperplane. The \textit{Coxeter polytope} $C[X]$ is the convex hull of the $\W_X[\Gamma]$-orbit of $x_0$. For any $Y\subset X$, we can consider the convex hull of all the $\W_Y[\Gamma]$-orbits of $x_0$ together with all its $\W_X[\Gamma]$-translates. The $u$-translate of the $\W_Y[\Gamma]$-orbit of $x_0$ is denoted by $uF(Y)$, for $u\in \W_X[\Gamma]$. By \cite[Lemma 7.3.3]{Davis2008}, the set $\{uF(Y)\mid Y\subset X;\,u\in \W_X[\Gamma]\}$ coincides with the set of faces of $C[X]$. Another fact is that, calling $F^*(Y)$ the face of $T_X$ fixed by $\W_{Y}[\Gamma]$, then $F(Y)$ and $F^*(Y)$ are orthogonal and intersect at a single point $x_Y$.\medskip\\
\noindent The intersection between $C[X]$ and the cone $T_X$ is combinatorially a cube with vertices $\{x_Y\}_{Y\subseteq X}$. The Euclidean structure on $cube(X)$ is defined by identifying it with $C[X]\cap T_X$, so that the vertex corresponding to $Y\subseteq X\in \Sf$ in $K_{\Gamma}$ is identified to the vertex $x_Y\in C[X]\cap T_X$ (see Figure \ref{fig:MoussongMetric}).\medskip\\
\begin{figure}[h!]
\centering
\begin{minipage}{4cm}
  \centering
\includegraphics[width=4cm]{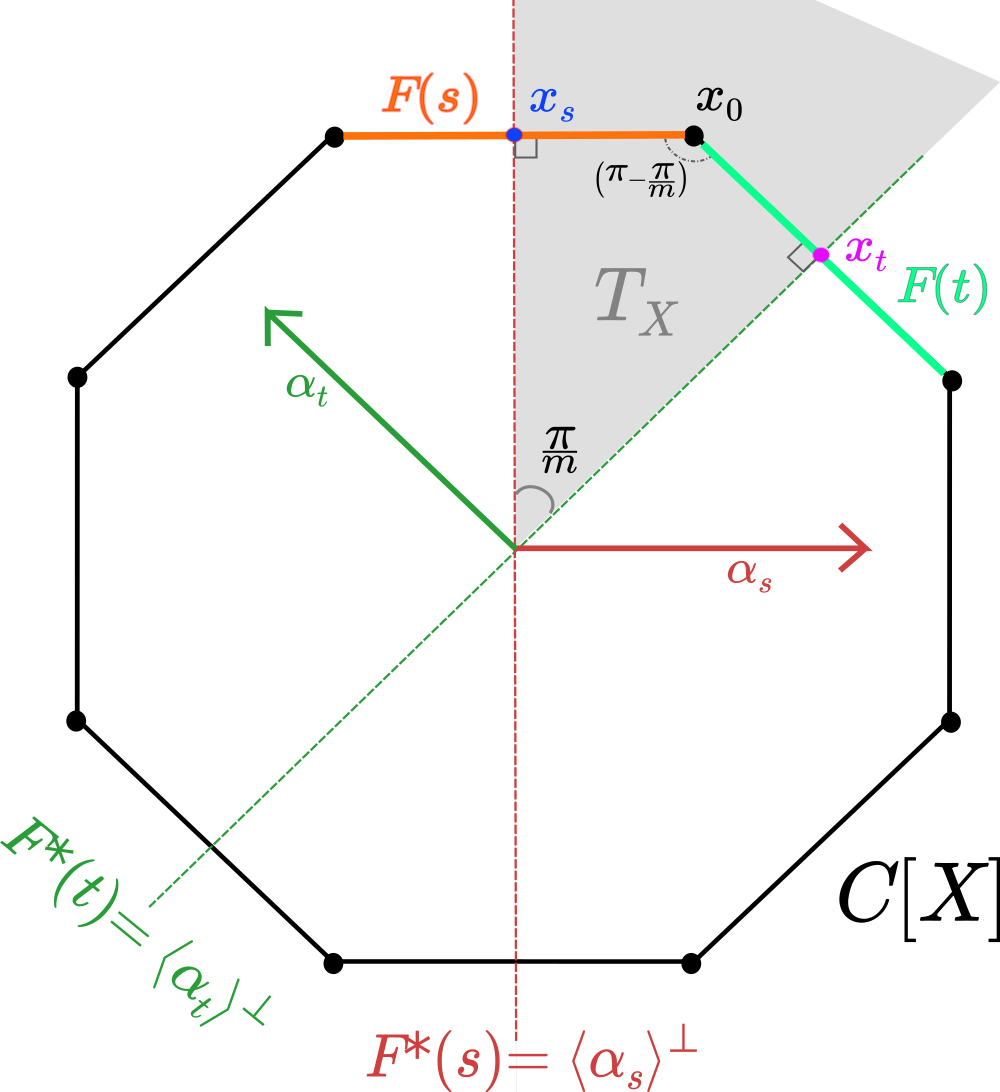}
\end{minipage}
\qquad\qquad\qquad
\begin{minipage}{5cm}
  \centering
\includegraphics[width=4cm]{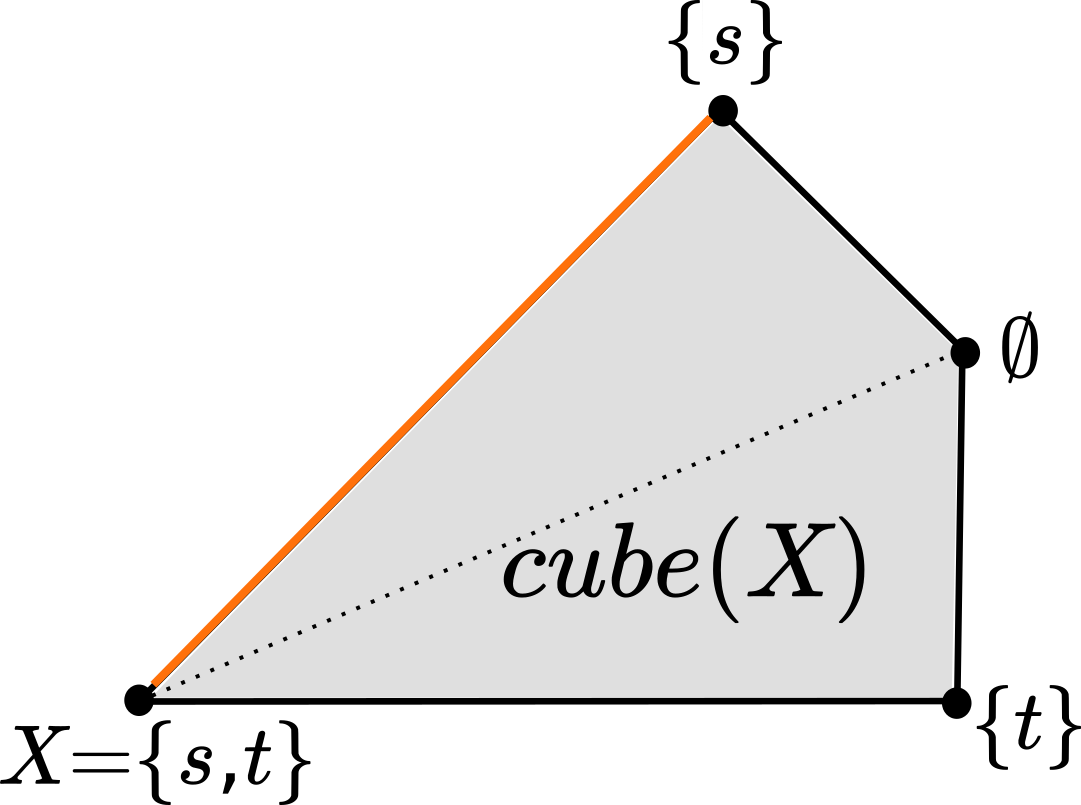}
\end{minipage}
\caption{The Moussong metric on a cube spanned by $\varnothing$, $\{s\}$, $\{t\}$ and $\{s,t\}=X\in \Sf$.}
\label{fig:MoussongMetric}
\end{figure}
\noindent If $Y\subseteq X\in \Sf$, then $F(Y)$ is isometric to the Coxeter cell $C[Y]$, so that the face of $cube(X)$ spanned by $\varnothing$ and $Y$ in $K_{\Gamma}$ is isometric to $cube(Y)$. Then the metrics on the cubes are compatible and they endow $K_{\Gamma}$ of a piece-wise Euclidean structure, which is inherited by the entire Davis complex $\Sigma_{\Gamma}$ under isometric action of the Coxeter group $\W[\Gamma]$. This metric on $K_{\Gamma}$ is called the \textit{Moussong metric} and it will be denoted by $d_M$.\medskip
\\
\noindent The complexes that we introduced in Subsection \ref{subs-davis-and-deligne-complex} are defined as complexes of groups on $\Sf$. The metric that we consider on them (unless differently specified) is the piece-wise Euclidean metric inherited by the Moussong metric $d_M$ on the fundamental domain $K_{\Gamma}$. 
\section{The virtual Deligne complex}\label{sec-virtualDeligne}

In this section we introduce a virtual version of the Deligne complex on which $\VA[\Gamma]$ acts simplicially, cocompactly and without inversions. The metric with which we endow the virtual Deligne complex $\vD_{\Gamma}$ is the Moussong metric described in Subsection  
\ref{subs-moussongmetric}. Inspired by analogous results for classical Artin groups, we show that $\vD_{\Gamma}$ is CAT(0) for a certain class of Coxeter graphs. More specifically, in Subsection \ref{subs-CAT0-2dim} we show that the virtual Deligne complex is CAT(0) when the Coxeter graph has dimension 2 (assuming the Key Theorem~\ref{virtualAppelSchupp}, whose proof is postponed to the following sections), while in Subsection \ref{subs-from2tolocallyred} we extend this result to a wider class of graphs called \textit{locally reducible}, by adapting a proof of Charney \cite{Charney2000}. The consequences of these results are analysed in Sections \ref{sec-fin-subg} and \ref{sec-cohom+classifying}.
\subsection{\texorpdfstring{Definition of $\vD_{\Gamma}$}{Definition of vD}} \label{subs-defnofVD}

\noindent As for the Davis and the Deligne complexes, our complex is defined as the universal cover of a complex of groups over the poset $\Sf$ of spherical subsets
of $S$. For this purpose, we first introduce the notion of standard parabolic subgroups for virtual Artin groups.
\begin{defn}
    Let $X\subset S$, and set $\mathcal{S}_X=\{\sigma_s\;|\; s\in X\}$ and $\mathcal{T}_X=\{\tau_s\;|\; s\in X\}$. The subgroup of $\VA[\Gamma]$ generated by $\mathcal{S}_X\sqcup \mathcal{T}_X$ is called a \textit{standard parabolic subgroup of $\VA[\Gamma]$}, and it is denoted by $\VA_X[\Gamma]$. Given $g\in \VA[\Gamma]$ and $X\subset S$, the conjugate $g\,\VA_X[\Gamma]\, g^{-1}$ is called a \textit{parabolic subgroup} of $\VA[\Gamma]$.
\end{defn}
\noindent Similarly to Coxeter groups and Artin groups, the following result holds.
\begin{thm}\cite[Thm 1.1]{GaGaPa26}
    Let $X \subseteq S$. The natural homomorphism $\VA[\Gamma_X] \longrightarrow \VA_X [\Gamma]$ that maps $\sigma_x$ to $\sigma_x$ and $\tau_x$ to $\tau_x$ for all $x\in X$ is an isomorphism.
\end{thm}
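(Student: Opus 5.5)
Since the natural map is surjective onto $\VA_X[\Gamma]$ by definition (its image is generated by $\mathcal{S}_X\sqcup\mathcal{T}_X$), the whole task is injectivity. I will prove it using the semidirect product decompositions $\VA[\Gamma_X]=\A[\widehat{\Gamma_X}]\rtimes\W[\Gamma_X]$ and $\VA[\Gamma]=\A[\hGamma]\rtimes\W[\Gamma]$ from Theorem~\ref{thm-KVA-iso-Agammahat}. The natural map $\varphi:\VA[\Gamma_X]\to\VA[\Gamma]$ commutes with the projections $\pi_K$ and the sections $\iota_{\W}$. It therefore restricts to a map on kernels $\A[\widehat{\Gamma_X}]\to\A[\hGamma]$ and induces on quotients the natural map $\W[\Gamma_X]\to\W[\Gamma]$. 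The latter is injective by \cite[Ch IV, 1.8, Thm 2]{Bourbaki}. By a standard five-lemma argument for split extensions, it then suffices to show that the restriction to kernels is injective.

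The next step is to identify this kernel map. Under the canonical representation, $\Phi[\Gamma_X]$ embeds in $\Phi[\Gamma]$ as $\{w(\alpha_s)\mid w\in\W_X[\Gamma],\,s\in X\}$, since $V_X=\bigoplus_{s\in X}\mathbb{R}\alpha_s$ is $\W_X$-invariant and the form restricts correctly. Using $\delta_{w(\alpha_s)}=\sigma_s^w$, the kernel map sends the generator $\delta_\beta$ to $\delta_\beta$ for $\beta\in\Phi[\Gamma_X]\subset\Phi[\Gamma]$. So it is the natural map $\A[\widehat{\Gamma_X}]\to\A[\hGamma]$ induced by an inclusion of vertex sets.

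I then check that $\widehat{\Gamma_X}$ is a \emph{full} labelled subgraph of $\hGamma$. Edges of $\widehat{\Gamma_X}$ are clearly edges of $\hGamma$ with the same labels, because the labels are well defined \cite{BellParThiel}. The converse is the main obstacle. Suppose $\beta=w(\alpha_a)$ and $\gamma=w(\alpha_b)$ with $w\in\W[\Gamma]$, $a,b\in S$, and $\beta,\gamma\in\Phi[\Gamma_X]$. I must produce $w'\in\W_X$ and $a',b'\in X$ with the same property. I would argue as follows. The reflections $r_\beta$ and $r_\gamma$ lie in $\W_X$ and generate a dihedral reflection subgroup $w\W_{\{a,b\}}w^{-1}$. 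By Dyer/Deodhar theory, reflection subgroups of a parabolic subgroup are compatible with the root subsystem $\Phi\cap V_X$. The pair $\{\beta,\gamma\}$ forms the canonical simple system of this dihedral reflection subgroup, and that system has $\langle\beta,\gamma\rangle=-\cos(\pi/m_{a,b})$. Within $\W_X$, a rank-two reflection subgroup whose canonical simple roots have this finite-order pairing is conjugate in $\W_X$ to a standard one; here I would cite or reprove the fact that finite reflection subgroups are conjugate into standard parabolics, applied inside $\W_X$. This yields $w'\in\W_X$ and $a',b'\in X$ with $m_{a',b'}=m_{a,b}$. Should this fail in general, the fallback is to restrict to $X\in\Sf$, which is all the Deligne-complex construction needs.

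Finally, once fullness holds, the van der Lek theorem \cite[Theorem 4.13]{van1983homotopy} gives injectivity of $\A[\widehat{\Gamma_X}]\to\A[\hGamma]$. That theorem extends to infinite vertex sets by a direct limit argument, since any relation involves finitely many generators. Combining this with injectivity on the Coxeter quotients proves that the natural map is an isomorphism onto $\VA_X[\Gamma]$.
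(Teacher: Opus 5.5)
The paper gives no proof of this statement; it imports it from \cite{GaGaPa26}, so your argument has to stand on its own. Most of it does. Surjectivity holds by definition. The morphism of split extensions reduces injectivity to the kernel map. $\Phi[\Gamma_X]=\W_X\cdot\{\alpha_x\mid x\in X\}$ sits inside $\Phi[\Gamma]$, and the kernel map is $\delta_\beta\mapsto\delta_\beta$. Once $\widehat{\Gamma_X}$ is known to be a full labelled subgraph of $\hGamma$, van der Lek's theorem together with your direct-limit remark finishes the proof. The gap is exactly in the step you flagged as the main obstacle.

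The lemma you propose to use is false: it claims that a rank-two reflection subgroup of $\W_X$ whose canonical simple roots have pairing $-\cos(\pi/m)$, $m<\infty$, is $\W_X$-conjugate to a standard parabolic subgroup. Take $\W$ of type $I_2(4)$. The subgroup $\langle s,tst\rangle$ has canonical simple roots $\alpha_s$ and $t(\alpha_s)=\alpha_s+\sqrt{2}\,\alpha_t$, which are orthogonal. Yet it has order $4$, so it is conjugate to no standard parabolic subgroup.

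Tits' theorem only places a finite reflection subgroup \emph{inside} a conjugate of some finite $\W_Y$ with $Y\subseteq X$, and the same phenomenon recurs inside $\W_Y$. Moreover, $w\{\alpha_a,\alpha_b\}$ need not be the canonical simple system at all (take $w=a$). Your argument discards the one hypothesis that matters: $\{\beta,\gamma\}$ is a $\W[\Gamma]$-translate of a simple pair.

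Use that hypothesis directly, as follows.
\begin{itemize}
\item Write $w=xu$ with $x\in\W_X$ and $u$ of minimal length in $\W_Xw$. Then $u^{-1}(\alpha_z)>0$ for all $z\in X$, so $u^{-1}$ sends positive roots of $\Phi[\Gamma_X]$ to positive roots.
\item Now $u(\alpha_a)=x^{-1}(\beta)\in\Phi[\Gamma_X]$. It cannot be negative, since otherwise $u^{-1}$ would send the positive root $-u(\alpha_a)$ of $\Phi[\Gamma_X]$ to $-\alpha_a$.
\item Write $u(\alpha_a)=\sum_{z\in X}c_z\alpha_z$ with $c_z\geq 0$ and compare coefficients in $\alpha_a=\sum_z c_z\,u^{-1}(\alpha_z)$. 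This gives $u(\alpha_a)=\alpha_{a'}$ for some $a'\in X$.
\item Likewise $u(\alpha_b)=\alpha_{b'}$ with $b'\in X$. Hence $\{\beta,\gamma\}=x\{\alpha_{a'},\alpha_{b'}\}$ with $x\in\W_X$, and $m_{a',b'}=m_{a,b}$ because $u\langle a,b\rangle u^{-1}=\langle a',b'\rangle$.
\end{itemize}
Equivalently, by Kilmoyer's theorem, a parabolic subgroup of $\W[\Gamma]$ contained in $\W_X$ is parabolic in $\W_X$.

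With this replacement your proof works for every $X\subseteq S$. Your proposed fallback to $X\in\Sf$ would not prove the statement as posed. It would not avoid the issue either, since the counterexample above already lives in a finite group.
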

\noindent
Thus, we can identify $\VA[\Gamma_X]$ with $\VA_X[\Gamma]$, where $\Gamma_X$ is the full subgraph of $\Gamma$ spanned by the vertices in $X$.

\begin{nt}
    When it is clear from the context, we usually omit the Coxeter graph $\Gamma$, writing for instance $\VA$ and $\VA_X$ instead of $\VA[\Gamma]$ and $\VA_X[\Gamma]$. When $X\subset S$ is a singleton, $X=\{s\}$, we write $G_s$ instead of $G_{\{s\}}$, for $G\in \{\W,\A,\VA\}$.
\end{nt}

\noindent     Recall that $\Sf:=\{X\subset S\mid \W_X[\Gamma]\,\text{ is finite}\}$ is a poset for the inclusion, and that $K_{\Gamma}=|(\Sf)'|$.

\begin{defn}
We define the following simple complex of groups over $\Sf$ as follows: 
\begin{itemize}
    \item For each $X\in \Sf$, the associated local group is the parabolic subgroup $\VA_X[\Gamma]$.
    \item The local maps are the inclusions of standard parabolic subgroups \cite[Thm 1.1]{GaGaPa26}.
\end{itemize}
\end{defn}
\noindent By construction, we have that $\pi_1(\VA(\Sf))=\VA[\Gamma]$. As before, the injectivity of the maps $\VA_X\xhookrightarrow{}\VA$ guarantees that this complex of groups is strictly developable. 

\begin{defn}
The \textit{virtual Deligne complex}, denoted by $\vD_{\Gamma}$, is the universal cover of the simple complex of groups $\VA(\Sf)$.
\end{defn}
\noindent The virtual Deligne complex is a simply connected simplicial complex on which $\VA[\Gamma]$ acts cellularly and cocompactly. The poset from which it arises is 
    \[
   \VA\Sf:= \{g\,\VA_X\mid g \in \VA, \;X\in \Sf\},
    \]
    ordered by inclusion. Thus, $\vD_{\Gamma}=|(\VA\Sf)'|$, and the action of $\VA[\Gamma]$ on $\vD_{\Gamma}$ is the action on $\VA\Sf$ by left multiplication. A strict fundamental domain for the action is $K_\Gamma$, identified with the sub-poset consisting of the $\VA_X$ for $X\in \Sf$. The stabilizer of the vertex $g\VA_X$ of $\vD_{\Gamma}$ is the parabolic subgroup $g\VA_X \,g^{-1}$, which is infinite. \medskip \\
\noindent As in the Coxeter and Artin case, we use the Moussong metric on $K_\Gamma$ to define a  Moussong metric on $\vD_{\Gamma}$. The following question now naturally arises.
\begin{quest}
    When is the virtual Deligne complex $\vD_{\Gamma}$ CAT(0) for the Moussong metric?
\end{quest}
\noindent  As in the Artin group case, we expect the non-positive curvature to hold in general (with respect to the Moussong metric) for all virtual Artin groups (see Conjecture \ref{conj-vD-CAT0} in the introduction).\medskip
\\
\noindent We now give a general result on the virtual Deligne complex which will be needed in the following sections. Recall that if a group $G$ acts on a polyhedral or simplicial complex, the action is said \textit{without inversion} if for a cell $\sigma$, the setwise stabilizer in $G$ equals  pointwise stabilizer. In other words, no directed cell is mapped to itself with a reversed orientation.
\begin{lem}\label{prop-vD-complete-action-no-inversion}
    For any Coxeter graph $\Gamma$, the virtual Deligne complex $\vD_{\Gamma}$ is a complete metric space and $\VA[\Gamma]$ acts on it without inversion.
\end{lem}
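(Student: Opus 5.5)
The argument splits into two independent parts: completeness, via Bridson's theorem on finitely many shapes, and absence of inversions, via the rank function on the poset $\VA\Sf$.

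\textbf{Completeness.} The Moussong metric on $\vD_{\Gamma}$ is the piecewise Euclidean metric obtained by transporting $d_M$ on $K_\Gamma$ to every translate $gK_\Gamma$. Since $S$ is finite, $\Sf$ is finite. Hence $K_\Gamma$ consists of finitely many cells (the cubes $cube(X)$ for $X\in\Sf$, or equivalently the simplices of $(\Sf)'$ with their induced Euclidean metrics). Every cell of $\vD_{\Gamma}$ is a $\VA[\Gamma]$-translate of a cell of $K_\Gamma$, because $K_\Gamma$ is a strict fundamental domain by Theorem~\ref{thm-strictly-developable-scog}. So $\vD_{\Gamma}$ is an $M_0$-polyhedral complex with $\mathrm{Shapes}(\vD_{\Gamma})$ finite. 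By Bridson's theorem \cite[I.7.19]{BriHaefl}, such a complex with its intrinsic length metric is a complete geodesic metric space. One detail needs care: the piecewise Euclidean structure on $\vD_{\Gamma}$ must be well defined, meaning the metrics on overlapping cells agree. This holds because the metrics on the cubes of $K_\Gamma$ are compatible, as recalled in Subsection~\ref{subs-moussongmetric}, and the gluing is equivariant.

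\textbf{No inversions.} Give each vertex $g\VA_X$ of $\vD_{\Gamma}$ the rank $|X|$. I first check that this is well defined, i.e.\ that $g\VA_X=g'\VA_Y$ forces $X=Y$. Since $g^{-1}g'\in\VA_X$, we get $\VA_X=\VA_Y$ as subgroups. Apply $\pi_K$ and then abelianise $\W$: the subgroup $\pi_K(\VA_X)=\W_X$ maps onto the span of the images of $X$ in $(\mathbb{Z}/2)^S$. These spans determine $X$, so $X=Y$.

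The action preserves rank, since $h\cdot g\VA_X=hg\VA_X$. A simplex of $\vD_{\Gamma}$ is a chain $g_0\VA_{X_0}\subsetneq\cdots\subsetneq g_k\VA_{X_k}$. Inclusion of cosets with $X_i\subseteq X_{i+1}$ (from the poset structure) and distinct cosets forces $X_i\subsetneq X_{i+1}$, so the ranks along a chain are strictly increasing. Any element stabilising the simplex setwise permutes its vertices while preserving rank, so it fixes each vertex. Hence it fixes the simplex pointwise, since it acts affinely on the geometric realisation.

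\textbf{Main obstacle.} The only non-formal step is the well-definedness of the rank, which rests on distinct $X\in\Sf$ giving distinct subgroups $\VA_X$. The abelianisation argument through $\pi_K$ settles this. Everything else is standard.
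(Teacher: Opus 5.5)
Your overall strategy is the paper's. Completeness follows from Bridson's theorem on complexes with finitely many shapes (the paper cites \cite[Theorem~7.13]{BriHaefl}). Absence of inversions follows because the action preserves the type of a vertex, while the vertices of a simplex have pairwise distinct types.

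The problem is the one step you single out as non-formal, the well-definedness of the rank. There is no homomorphism $\W[\Gamma]\to(\mathbb{Z}/2)^S$ with $s\mapsto e_s$ as soon as some label $m_{s,t}$ is odd: the relation $(st)^{m_{s,t}}=1$ would map to $m_{s,t}(e_s+e_t)=e_s+e_t\neq 0$. Equivalently, $s$ and $t$ are conjugate in $\W[\Gamma]$, so they have the same image in every abelian quotient. For $\Gamma=\Gamma_3$, the abelianisation of $\W_3$ is $\mathbb{Z}/2$, and $\W_s$, $\W_t$ and $\W_{\{s,t\}}$ all have the same image. So your argument distinguishes neither $\VA_s$ from $\VA_t$, nor even rank $1$ from rank $2$.

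This step is easy to avoid or to repair. The development in Theorem~\ref{thm-strictly-developable-scog} has as vertices pairs $(g\VA_X,X)$, and its order requires $X\subsetneq Y$. So the type is part of the data, strictly increases along chains, and is visibly preserved by left multiplication. This is what the paper's short argument implicitly uses.

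If you prefer to work with bare cosets, replace the abelianisation by the canonical representation. For $w\in\W_X$ one has $w(v)-v\in\mathrm{span}\{\alpha_x\mid x\in X\}$ for all $v\in V$, by induction on word length. For $s\notin X$, however, $s(\alpha_s)-\alpha_s=-2\alpha_s$ lies outside this span. Hence $\W_X\cap S=X$. Now $\VA_X=\VA_Y$ gives $\W_X=\W_Y$ after applying $\pi_K$, and therefore $X=Y$. With this replacement the rest of your proof goes through.
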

\begin{proof}
The virtual Deligne complex $\vD_{\Gamma}$ equipped with the Moussong metric is a piece-wise Euclidean metric space. Moreover, by definition of the metric, $\vD_{\Gamma}$ only contains a finite number of isometry classes of cells. The fact that $\vD_{\Gamma}$ is complete follows now from \cite[Theorem~7.13]{BriHaefl}. \medskip\\
The action of $\VA[\Gamma]$ on $\vD_{\Gamma}$ is the action by left multiplication on the cosets in $\VA\Sf$. Such an action preserves the vertex types, i.e. the cardinality of the corresponding standard parabolic subgroup. Since vertices in simplex of $\vD_\Gamma$ are cosets of parabolic subgroups of pairwise different cardinality, it follows that if a simplex of $\vD_{\Gamma}$ is fixed by an element of $\VA[\Gamma]$, then all its vertices are fixed pointwise. Thus, the action is without inversion.
\end{proof}

\subsection{\texorpdfstring{Non-positive curvature of $\vD_{\Gamma}$ for the 2-dimensional case}{Non-positive curvature of vD for the 2-dimensional case}}\label{subs-CAT0-2dim}

 \noindent Recall that we say that $\Gamma$ has dimension 2 if the maximal cardinality of a spherical type subset of $S$ is 2. The main result of this subsection is the following.

\begin{thm}\label{thm-2dim-vD-CAT(0)}
   Le $\Gamma$ be a Coxeter graph of dimension 2. Then the virtual Deligne complex $\vD_{\Gamma}$ associated with $\VA[\Gamma]$ is CAT(0). 
\end{thm}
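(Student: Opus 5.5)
The plan follows the classical Charney--Davis argument for two-dimensional Artin groups. Since $\vD_{\Gamma}$ is simply connected (Theorem~\ref{thm-strictly-developable-scog}) and complete with finitely many isometry types of cells (Lemma~\ref{prop-vD-complete-action-no-inversion}), the Cartan--Hadamard theorem \cite[II.4.1]{BriHaefl} reduces CAT(0) to local CAT(0). For a two-dimensional piecewise Euclidean complex this is Gromov's link condition: every vertex link must be a CAT(1) metric graph. Equivalently, every embedded cycle in a vertex link must have length at least $2\pi$. We check this for the three vertex types $g\VA_X$ with $|X|=0,1,2$. By equivariance it suffices to treat $X=\varnothing,\{s\},\{s,t\}$.

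\textbf{Vertices of type $\{s,t\}$ and $\{s\}$.} For $X=\{s,t\}$ the vertex is maximal, so its link lies in the subcomplex of cells below it. The cells below $\VA_X$ are cosets of $\VA_\varnothing$, $\VA_s$ and $\VA_t$ inside $\VA_X$. The link is therefore a bipartite-type graph whose cycles meet smaller cells only, and it is a tree in the relevant sense. Alternatively, one checks that $\VA_X$ is a cone point for the downward star, so there is no loop to worry about. For $X=\{s\}$, the link consists of the "downward" points (cosets $g\VA_\varnothing\subset \VA_s$) joined to "upward" points (cosets $h\VA_{\{s,t\}}\supset \VA_s$). The edges are Euclidean angles in the Moussong square, namely $\pi/2$ at $x_{\{s\}}$. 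This link is a complete bipartite graph with edges of length $\pi/2$, so every embedded cycle has at least $4$ edges and length at least $2\pi$.

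\textbf{Vertices of type $\varnothing$ (the main step).} The link of $\VA_\varnothing=\{\id\}$ has a vertex for each coset $g\VA_s$ containing $\id$, which is simply the subgroup $\VA_s$ itself, and for each $\VA_{\{s,t\}}$. Because of the $\VA_\varnothing$ stabiliser, the link of the vertex $\varnothing$ in $K_\Gamma$ actually gets "unfolded": the relevant link is that of the vertex in the universal cover. The standard way to organise this is to pass to the link of the rank-$1$/rank-$2$ vertices described above. The genuinely nontrivial condition is the link of $\VA_{\{s,t\}}$ computed in the full complex. Its vertices are the cosets $g\VA_s$ and $g\VA_t$, for $g\in\VA_{\{s,t\}}=\VA_m$, with $m=m_{s,t}$. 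Its edges correspond to cosets $g\VA_\varnothing$, each such edge joining $g\VA_s$ to $g\VA_t$ and having length equal to the Moussong angle at $x_{\{s,t\}}$, which is $\pi-\pi/m$ divided appropriately; in fact each edge has length $\pi/m$ in this graph after the standard subdivision. An embedded cycle of combinatorial length $2k$ in this link corresponds exactly to a cyclically alternating product $a_1b_1\cdots a_kb_k=\id$ in $\VA_m$ with $a_i\in\VA_s\setminus\{1\}$ and $b_i\in\VA_t\setminus\{1\}$. Such a product is a nontrivial normal form element of $\VA_s\frpp\VA_t$ of syllable length $2k$ representing the identity. The virtual Appel--Schupp Theorem~\ref{virtualAppelSchupp} gives $2k\geq 2m$, so the cycle has length at least $2m\cdot\pi/m=2\pi$. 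The remaining vertex links are trees or complete bipartite graphs as above, so this is the only place where geometry of $\VA_m$ enters.

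The main obstacle is entirely Theorem~\ref{virtualAppelSchupp}, which we assume here. What remains is careful bookkeeping: identifying the Moussong edge lengths in each link (the angles $\pi/2$ at the corners of the square and the angle $\pi/m$ at $x_{\{s,t\}}$), and checking that embedded cycles translate exactly into cyclically reduced alternating words. The latter holds because consecutive distinct cosets differ by a nontrivial element of the corresponding rank-one factor.
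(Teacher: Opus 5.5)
The argument in your paragraph headed ``Vertices of type $\varnothing$'' is in fact the correct treatment of the vertex $\VA_{\{s,t\}}$, and it coincides with the paper's. An embedded cycle in $\mathrm{Link}(\VA_{\{s,t\}},\vD_\Gamma)$ passing through $\ell$ vertices of type $g\VA_\varnothing$ gives a cyclically reduced word of syllabic length $\ell$ in $\VA_s\ast\VA_t$ that is trivial in $\VA_m$. Each such vertex contributes the Moussong angle $\pi/m$ at $x_{\{s,t\}}$, namely two simplicial edges of length $\pi/(2m)$; the angle $\pi-\pi/m$ sits at $x_\varnothing$, not at $x_{\{s,t\}}$. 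Theorem~\ref{virtualAppelSchupp} then gives length $\ell\pi/m\geq 2\pi$. Your $\{s\}$-case also matches the paper.

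The case analysis around this, however, is wrong. Your first paragraph claims the link of $\VA_{\{s,t\}}$ is a tree with ``no loop to worry about''. This is false: the relation $\Prod_R(\sigma_s,\sigma_t;m)=\Prod_R(\sigma_t,\sigma_s;m)$ already gives a closed non-backtracking path of length $2\pi$ in it. It also contradicts your own later computation.

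The genuine gap is that the link of $\VA_\varnothing$ is never checked. Your closing assertion that the remaining links are trees or complete bipartite graphs is false for it. Nothing is ``unfolded'' at this vertex. The stabiliser of $\VA_\varnothing=\{\id\}$ is trivial, and every simplex containing it is a chain $\{\id\}\subset\VA_{X_1}\subset\VA_{X_2}$ of standard parabolic subgroups. So its star is exactly $K_\Gamma$, and its link is the finite graph $\mathrm{Link}(\varnothing,K_\Gamma)$. This graph has vertices the $\{s\}$ and the $\{s,t\}\in\Sf$, with edges of length $\frac12(\pi-\pi/m_{s,t})$.

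Embedded cycles in this graph correspond to cycles $s_1,\dots,s_k$ ($k\geq 3$) of finite-label edges of $\Gamma$, and have length $\sum_i(\pi-\pi/m_{s_i,s_{i+1}})$. For $k\geq 4$ this is automatically at least $2\pi$. For a triangle with labels $p,q,r$, however, it is at least $2\pi$ if and only if $\frac1p+\frac1q+\frac1r\leq 1$, i.e.\ if and only if the triangle is not spherical. This is exactly where the dimension-$2$ hypothesis on $\Gamma$ enters the link condition. The paper handles it by identifying this link with that of $\A_\varnothing$ in $\D_\Gamma$ and citing Charney--Davis. You need to add this short verification instead of trying to reduce the $\varnothing$-case to the other vertex types.
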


\noindent We prove this result using Gromov's link condition, namely by showing that every loop in the link of a vertex has length greater or equal to $2\pi$. The same result holds for the classical Deligne complex $\D_{\Gamma}$ \cite{CharDav95}. A loop around a vertex in the Deligne (or, analogously, in the virtual Deligne) complex, corresponds to a specific element in the group. 
\begin{nt}
    We denote the free group on $n$ generators by $\mathbb{F}_n$. If $x_1,\ldots,x_n$ are the explicit generators of $\mathbb{F}_n$, we write $\mathbb{F}_n=\mathbb{F}(x_1,\ldots,x_n)$. 
\end{nt}
\begin{nt}\label{notation-freeproducts}
    Let $G$ be a group, and let $G_1,G_2$ be subgroups of $G$. For any element $\mu$ in the free product $G_1\frpp G_2$, we denote by $\ev_G(\mu)$ the evaluation of $\mu$ in the group $G$.  We say that an element $\mu\in G_1\frpp G_2$ \textit{represents the identity in $G$} if $\ev_G(\mu)=\id_G$. If $\mu=a_1b_1\ldots a_{\ell}b_{\ell}$, we say that $\mu$ is \textit{written in normal form} with respect to the decomposition $G_1\frpp G_2$ if $a_i\in G_1, \;b_{i}\in G_2$ for $i=1,\ldots,\ell$, and $a_i\neq \id\neq b_i$ for all $i=1,\ldots,\ell$, except for $a_1$ and $b_{\ell}$, which may be trivial. The factors $a_i, b_i$ are called \textit{syllables} of $\mu$. The \textit{syllabic length} of a word $\mu\in G_1\frpp G_2$ written in normal form $\mu=a_1b
    _1\cdots a_{\ell} b_{\ell}$ is $||\cdot||_{G_1\ast G_2}$, i.e. the number of non-trivial syllables in $\mu$. This naturally generalises to $n$ factors for $G_1,\ldots,G_n<G$ and an element $\mu\in G_1\frpp \cdots \frpp G_n$.
\end{nt}
\begin{rmk}\label{rmk_cyclically_reduced}
    If $\mu=a_1b_1\ldots a_{\ell} b_{\ell}$ is an element written in normal form in $G_1\frpp G_2$, then any cyclic permutation $\mu'$ of $\mu$ represents a conjugate of $\ev_G(\mu)$ in $G$. In this work we will often consider elements in free products of groups such that $\ev_G(\mu)=\id_G$, and this equality will always be understood up to cyclic permutation of the syllables of $\mu$.
\end{rmk}

\noindent When $\Gamma$ is the dihedral Coxeter graph with vertices $s$ and $t$, and a single edge labelled by the integer $m:=m_{s,t}\in \mathbb{N}_{\geq 2}$, we write $\Gamma=\Gamma_m$, and we also denote the associated dihedral Coxeter group, Artin group and virtual Artin group respectively by $\W_m, \A_m$ and $\VA_m$.\medskip\\
\noindent The key ingredient in \cite{CharDav95} to show that $\D_{\Gamma}$ is CAT(0) in the 2-dimensional case is the following theorem, proved by Appel and Schupp.

\begin{thm}\cite[Lemma 6]{AppelSchupp83}\label{AppelSchuppLemma}
Let $\Gamma_m$ be a dihedral Coxeter graph with vertex set $S=\{s,t\}$ and $m\geq 2$. Let $G_m$ be either $\W_m$ or $\A_m$, and let $G_{r}$ be the associated standard parabolic subgroup on $\{r\}\subset \{s,t\}$. Let $\mu$ be a non-trivial element in $G_s\frpp G_t$ written in normal form and representing the identity in $G_m$. Then the syllabic length of $\mu$ satisfies $\|\mu\|_{G_s\ast G_t}\geq 2m$. 
\end{thm}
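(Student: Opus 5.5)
We prove the Appel--Schupp Lemma (Theorem~\ref{AppelSchuppLemma}) geometrically, via the complex of groups over the poset $\mathcal{Q}=\{\varnothing,\{s\},\{t\},\{s,t\}\}$. Since $\Gamma_m$ is spherical, $\Sf=\mathcal{Q}$ and $K_{\Gamma_m}$ is a single square, with corners $\varnothing$, $\{s\}$, $\{t\}$, $\{s,t\}$. The universal cover ($\Sigma_{\Gamma_m}$ for $G=\W$, $\D_{\Gamma_m}$ for $G=\A$) is obtained by gluing copies of this square.

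The key local object is the link $L$ of the vertex $v_0=G_{\{s,t\}}$ (the coset $G_m$ itself). Its vertices are the cosets $gG_s$ and $gG_t$, and the vertices $gG_\varnothing=\{g\}$, for $g\in G_m$. Each coset $\{g\}$ is joined to $gG_s$ and to $gG_t$. Forgetting the middle vertices, $L$ is the bipartite coset graph of $G_s,G_t$ in $G_m$. In the Moussong metric, the angle of the square at the corner $\{s,t\}$ is $\pi/m$, since $cube(X)=C[X]\cap T_X$ sits in a Coxeter $2m$-gon. Hence each edge path $gG_s - \{g\} - gG_t$ in $L$ has length $\pi/m$.

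Now take $\mu=a_1b_1\cdots a_\ell b_\ell$ in normal form and, using Remark~\ref{rmk_cyclically_reduced}, assume it is cyclically reduced, so it has $2\ell$ nontrivial syllables. Write $g_0=1$ and $g_j$ for the successive prefix products. Consecutive prefixes lie in a common coset of $G_s$ or of $G_t$, so $\mu$ traces a closed edge path in the coset graph. This path has $\|\mu\|$ edges and returns to its start because $\ev_{G_m}(\mu)=1$. Normal form means consecutive edges pass through distinct group elements, so the path has no backtracking. A nontrivial element yields a nontrivial such loop, so $L$ contains a locally injective closed path of combinatorial length $\|\mu\|$.

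The claim therefore becomes: every such loop in $L$ has at least $2m$ edges, that is, metric length at least $2\pi$. This is exactly Gromov's link condition at the vertex $v_0$. It suffices that the universal cover be CAT(0), because in a CAT(0) polyhedral complex the link of a vertex is CAT(1). This gives the lower bound on systoles with no extra input. A reduction is needed to deal with paths that repeat vertices: any closed path without backtracking contains a simple cycle of no greater length, so it suffices to bound embedded cycles.

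For $G=\W_m$, the complex $\Sigma_{\Gamma_m}$ is the Euclidean plane tiled by the Coxeter $2m$-gon, which is CAT(0) by Moussong. Here one can also argue directly: $L$ is the $2m$-cycle, and $\W_m$ is the dihedral group of order $2m$.

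The main obstacle is $G=\A_m$, where we must establish that $\D_{\Gamma_m}$ is CAT(0) independently. The link of a vertex $\{s\}$ is not the issue: it is a cone-like graph (a star on the cosets) and contains no cycles. Curvature at $\varnothing$-vertices is likewise harmless, since the square has a right angle there and the link has only two points. So the whole difficulty is the girth $2m$ of the coset graph for $\A_s,\A_t$ in $\A_m$.

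For this girth bound I will reduce to a statement about $\A_m$ via its Garside structure. Taking $\Delta$ to be the fundamental element, $\A_m/\langle\Delta^2\rangle$ acts on a tree-like structure, or more concretely one shows that any relation $s^{p_1}t^{q_1}\cdots$ of alternating length below $2m$ fails. The quickest route is small cancellation: the one-relator presentation $\Prod_R(s,t;m)=\Prod_R(t,s;m)$, viewed as a relator over the free product $\langle s\rangle\frpp\langle t\rangle$, has syllable length $2m$. Its pieces have syllable length at most $m-1$, so it satisfies $C'(1/2)$-type conditions over the free product, roughly $C(4)$–$T(4)$ for $m\ge 2$ in the Lyndon–Schupp sense. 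The free-product Greendlinger lemma then implies that any reduced diagram for a cyclically reduced relation over $\langle s\rangle\frpp\langle t\rangle$ must contain an entire relator's worth of boundary. Hence $\|\mu\|\ge 2m$.
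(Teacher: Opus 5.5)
The paper does not prove this statement: it quotes it from Appel--Schupp and uses it as a black box in the extended and virtual Appel--Schupp arguments.

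\textbf{The $\W_m$ case is fine.} Your direct argument works: the coset graph of $\W_s,\W_t$ in $\W_m$ is a $2m$-cycle, and a closed non-backtracking path in a cycle has length a positive multiple of $2m$. Two side remarks are wrong but harmless. Since $\W_m$ is finite, $\Sigma_{\Gamma_m}$ is a single Coxeter $2m$-gon, not a tiling of the plane. The angle of the square at the $\varnothing$-corner is $\pi-\pi/m$, not $\pi/2$.

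\textbf{For $\A_m$ there is a genuine gap.} The geometric reformulation is only a restatement. $\D_{\Gamma_m}$ is the cone on $L$, so its being CAT(0) is \emph{equivalent} to the girth bound you want. Moreover, the Charney--Davis proof of that CAT(0) property itself uses Appel--Schupp. Everything therefore rests on your last paragraph, and that paragraph does not work.

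In the Lyndon--Schupp theory over $\langle s\rangle\frpp\langle t\rangle$, the symmetrized set also contains weakly cyclically reduced conjugates, of syllable length $2m+1$. Pieces are common \emph{semi-reduced} prefixes, so they may end in a partial syllable. Write $R=PN$ with $P=x_1\cdots x_m$ the positive alternating half.
\begin{itemize}
\item Some cyclic permutation of $R^{-1}$ begins with $x_1\cdots x_{m-1}x_m^{-1}$. Hence $x_1\cdots x_{m-1}x_m^{2}$ is a piece of syllable length $m=\tfrac12\|R\|$.
\item $x_m^{-1}N$, of length $m+1$, is also a piece. It is a common semi-reduced prefix of a conjugate of $R$ and of a cyclic permutation of $R^{-1}$.
\end{itemize}
So $R$ is a semi-reduced product of two pieces; for $m=2$, $sts^{-1}t^{-1}=(st^2)\cdot(t^{-1}s^{-1}t^{-1})$. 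Thus $C'(1/2)$, and even $C(3)$, fail.

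Even if some such condition held, it would not give $\|\mu\|\geq 2m$. $C'(1/2)$ is far too weak for any Greendlinger lemma. The Greendlinger-type results under $C'(1/6)$ or $C(4)$--$T(4)$ only place a large part of some relator on the boundary (all but a few pieces), not an entire relator.

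Your passing suggestion of working in $\A_m/\langle\Delta^2\rangle$ also cannot work on its own. For $m=3$, $ts^2ts^2=s^{-1}\Delta^2 s$ has syllable length $4<6$ and is trivial in the quotient, so the girth drops there and you would have to keep track of the centre. What is missing is an actual analysis of short relations in $\A_m$ itself, which is exactly the content of Appel--Schupp's lemma.
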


\noindent The proof of Theorem \ref{thm-2dim-vD-CAT(0)} deeply relies on an analogue of Appel--Schupp's result for virtual Artin groups. Indeed, controlling the length of a closed curve in the link of a vertex in $\vD_{\Gamma}$, equals controlling the syllabic length of a certain element written in normal form in a free product of subgroups.  \medskip\\
\noindent
Let $\Gamma$ be the dihedral Coxeter graph with vertices $S=\{s,t\}$ and label $m_{s,t}=:m$. Let $\omega$ be an element in the free product
$ \VA_s \frpp \VA_t$, written in normal form. 
\begin{align*}
    \VA_{s}\frpp \VA_{t} &\xrightarrow[]{\qquad} \VA_{\{s,t\}}=\VA_m\\
   \omega= g_1\cdots g_{\ell} &\longmapsto \ev_{\VA_m}({g_1\cdots g_{\ell}})=\ev_{\VA_m}({\omega}).
\end{align*}

\noindent In this context, for $\omega$ to be written in normal form it means that if $g_i\in \VA_{r_i}$, then $g_{i+1}\in \VA_{r_{i+1}}$ with $r_i\in \{s,t\}$, $r_i\neq r_{i+1}$ for all $i\in \mathbb{Z}_{/\ell}$. The following theorem, which we will refer to as ``virtual Appel--Schupp", is the key step for proving Theorem \ref{thm-2dim-vD-CAT(0)}.

\begin{thm}[``Virtual Appel--Schupp Theorem'']\label{virtualAppelSchupp}
Let $\Gamma_m$ be a dihedral Coxeter graph with vertex set $S=\{s,t\}$ and $m\geq 2$. Let $\VA_m$ be the associated virtual Artin group, and let $\omega$ be a non-trivial element in $\VA_s\frpp \VA_t$ written in normal form, representing the identity in $\VA_m$. Then the syllabic length of $\omega$ satisfies $\|\omega\|\geq 2m$. 
\end{thm}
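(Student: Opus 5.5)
Since $\VA_m\cong \A[\hGammam]\rtimes\W_m$ and each factor satisfies $\VA_r = \A[\widehat{\Gamma}_{\{r\}}]\rtimes \W_r$, I would transfer $\omega$ to an equation in $\A[\hGammam]$ and reduce to a length bound there. Write each syllable as $g_i=(k_i,w_i)$ with $w_i\in\{\idW, r_i\}$ and $k_i$ a power of $\delta_{\alpha_{r_i}}$. Each $\VA_r$ is $\mathbb{Z}\rtimes\mathbb{Z}/2$, generated by $\sigma_r$ and $\tau_r$, with $\tau_r\sigma_r\tau_r=\delta_{-\alpha_r}$; so I would rather write $k_i$ as a power of $\delta_{\pm\alpha_{r_i}}$. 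Using the product rule $(k_1,w_1)(k_2,w_2)=(k_1k_2^{w_1},w_1w_2)$, the relation $\ev_{\VA_m}(\omega)=\id$ becomes two conditions. The first is $w_1\cdots w_\ell=\idW$ in $\W_m$. The second is
\[
k_1^{u_0}\,k_2^{u_1}\cdots k_\ell^{u_{\ell-1}}=\idK, \qquad u_j=w_1\cdots w_j ,
\]
in $\A[\hGammam]$, where each factor $k_i^{u_{i-1}}$ is a power of a single generator $\delta_{\beta_i}$ with $\beta_i=u_{i-1}(\pm\alpha_{r_i})\in\Phi[\Gamma_m]$.

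Next I would split into cases. If all $k_i$ are trivial, then $\omega$ lies in $\W_s\frpp\W_t$ and represents the identity in $\W_m$. The normal form condition forces $w_i\neq \idW$, so Theorem~\ref{AppelSchuppLemma} for $\W_m$ gives $\|\omega\|\ge 2m$. Otherwise I would look at the word $\nu=\delta_{\beta_1}^{e_1}\cdots\delta_{\beta_\ell}^{e_\ell}$, deleting trivial letters. Consecutive $\beta_i$ come from different simple roots $r_i\ne r_{i+1}$ transported by almost the same $u$, so they are adjacent in $\hGammam$ (edge label $m$) or related in a controlled way. The plan is to describe $\hGammam$ explicitly: its vertices are the $2m$ roots (or $\pm$ pairs), and edges join $u(\alpha_s)$ and $u(\alpha_t)$. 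This should look like a cycle-like graph with labels $m$, possibly with $\infty$ edges elsewhere.

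The core step is a length bound for $\nu$ in $\A[\hGammam]$. I would realise $\A[\hGammam]$ as the fundamental group of a polygon of groups whose vertex groups are the dihedral Artin subgroups $\A_{\{u(\alpha_s),u(\alpha_t)\}}$, glued along cyclic subgroups. I would show that its development is CAT(0) by checking the link condition with the classical Appel--Schupp bound. A cyclically reduced $\nu$ then traces a closed non-backtracking loop of polygons. A disc diagram (Gauss--Bonnet) argument shows such a loop must wind through enough polygons. Translating back, this forces enough syllables of $\omega$, namely each change of the Coxeter component contributes, so $\ell\ge 2m$.

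The main obstacle is that $\nu$ need not be cyclically reduced. Cancellations like $\delta_\beta^{a}\delta_\beta^{-a}$ occur when the Coxeter parts cancel back, for example $\tau_s\sigma_s^a\tau_s\,\cdot\,\sigma_s^{-a}$ patterns across different syllables. There I would argue inductively: a cancellation in $\nu$ corresponds to a subword of $\omega$ whose Coxeter part returns, which lets me shorten $\omega$ to a normal form word still representing the identity. I would check that this shortening never drops below the bound, or else that the fully reduced word falls into the pure-Coxeter case. I expect this bookkeeping, controlling the empty-reduction case, to be the hardest part.
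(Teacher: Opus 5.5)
Your overall architecture matches the paper's: the semidirect-product reduction to a system of two equations, the pure-Coxeter case via classical Appel--Schupp, a CAT(0) polygon of groups with a Gauss--Bonnet count, and then the non-reduced case. There is, however, a genuine gap, and it starts with your description of the factors.

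The group $\VA_r=\langle\sigma_r,\tau_r\mid\tau_r^2=1\rangle$ is $\mathbb{Z}\frpp\mathbb{Z}_{/2}$, not $\mathbb{Z}\rtimes\mathbb{Z}_{/2}$. Its Artin part is $\A[\widehat{\Gamma_{r}}]=\mathbb{F}(\delta_{\alpha_r},\delta_{-\alpha_r})$, which is free of rank $2$. Hence $a_i=k_i^{u_{i-1}}$ is an arbitrary element of $\mathbb{F}(\delta_{\beta_i},\delta_{-\beta_i})$, not a power of one generator; for instance $g_i=\sigma_r\tau_r\sigma_r\tau_r=(\delta_{\alpha_r}\delta_{-\alpha_r},\idW)$.

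In a polygon of groups with dihedral Artin vertex groups and cyclic edge groups, such a syllable lies in no single edge group. It lies in no vertex group either, since $\beta$ and $-\beta$ are never adjacent in $\hGammam$. So it must be split into arbitrarily many edge-crossings, and a lower bound on the length of the resulting loop gives no bound on $\ell$.

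The missing idea is a complex in which $\mathbb{F}(\delta_\beta,\delta_{-\beta})$ is itself an edge group. The paper pairs $\beta$ with $-\beta$ and uses an $m$-gon with:
edges indexed by $\Phi_m^+$;
edge groups $\mathbb{F}(\delta_\beta,\delta_{-\beta})$;
vertex groups $\langle\delta_{\pm\beta},\delta_{\pm\gamma}\rangle\cong\A_m\frpp\A_m$;
trivial face group.
This also handles odd $m$, where $\hGammam$ is disconnected. For this complex, the link condition at the corners is \emph{not} the classical Appel--Schupp bound. One needs its extension to $\mathbb{F}(\beta_1,\beta_2)\frpp\mathbb{F}(\gamma_1,\gamma_2)\to\A_m\frpp\A_m$ (Lemma~\ref{lem-Extended-AppelSchupp}), obtained by regrouping letters according to $\mathbb{F}(\beta_1,\gamma_1)\frpp\mathbb{F}(\beta_2,\gamma_2)$.

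Your treatment of the non-reduced case is also not yet an argument. Suppose $a_p$ and $a_q$ fuse, with all $k_i$ trivial in between. Then $v=w_p\cdots w_{q-1}$ satisfies $v(\alpha_{r_q})=\pm\alpha_{r_p}$, but $v$ does not lie in a single factor. So $g_p\cdots g_q$ does not collapse to a shorter normal form, and ``shortening $\omega$'' is not available as stated.

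What is needed is the quantitative fact that every fusion gap $q-p$ is either at least $2m$ or exactly $m$. It is at least $2m$ when $v$ is trivial in $\W_m$, by Appel--Schupp. Otherwise it is exactly $m$, by classifying $w\in\W_m$ with $w(\alpha_s)\in\{\pm\alpha_s,\pm\alpha_t\}$ (Proposition~\ref{prop_roots}) plus a parity check. You also need the observation that if $\nu'$ is non-empty but reduces to the empty word, then $\nu$ has at least two fusible pairs with disjoint fusion intervals. Together these give $\ell\geq 2m$.

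Finally, the polygon construction requires $m\geq 3$, so $m=2$ needs a separate argument. The paper rules out $\ell=2$ using $\VA_s\cap\VA_t=\{1\}$.
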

  \noindent Theorem \ref{virtualAppelSchupp} is the key technical result from this work, whose proof is postponed to Sections~\ref{sectionDihedral}-\ref{sec-virtualAppelSchupp}.

\begin{proof}[Proof of Theorem \ref{thm-2dim-vD-CAT(0)}]  Observe that $\vD_{\Gamma}$ is simply connected by construction. To show that $\vD_{\Gamma}$ is CAT(0) when $\Gamma$ has dimension 2, we use Gromov's link criterion. We therefore study the length of an embedded loop in the link of a vertex of $\vD_{\Gamma}$.\medskip\\
\noindent The vertices of $\vD_{\Gamma}$ are of the form $v=g\VA_X$, where $g\in \VA[\Gamma]$ and $X\in \Sf$ can have cardinality 0, 1 or 2.  Up to the action of the group, it is enough to consider the case in which $g=\id$. See Figure~\ref{fig:ex-triangle} for a picture of a small portion of $\vD_\Gamma$ with the angles coming from the Moussong metric. 
\begin{figure}[h!]
\centering\includegraphics[width=5cm]{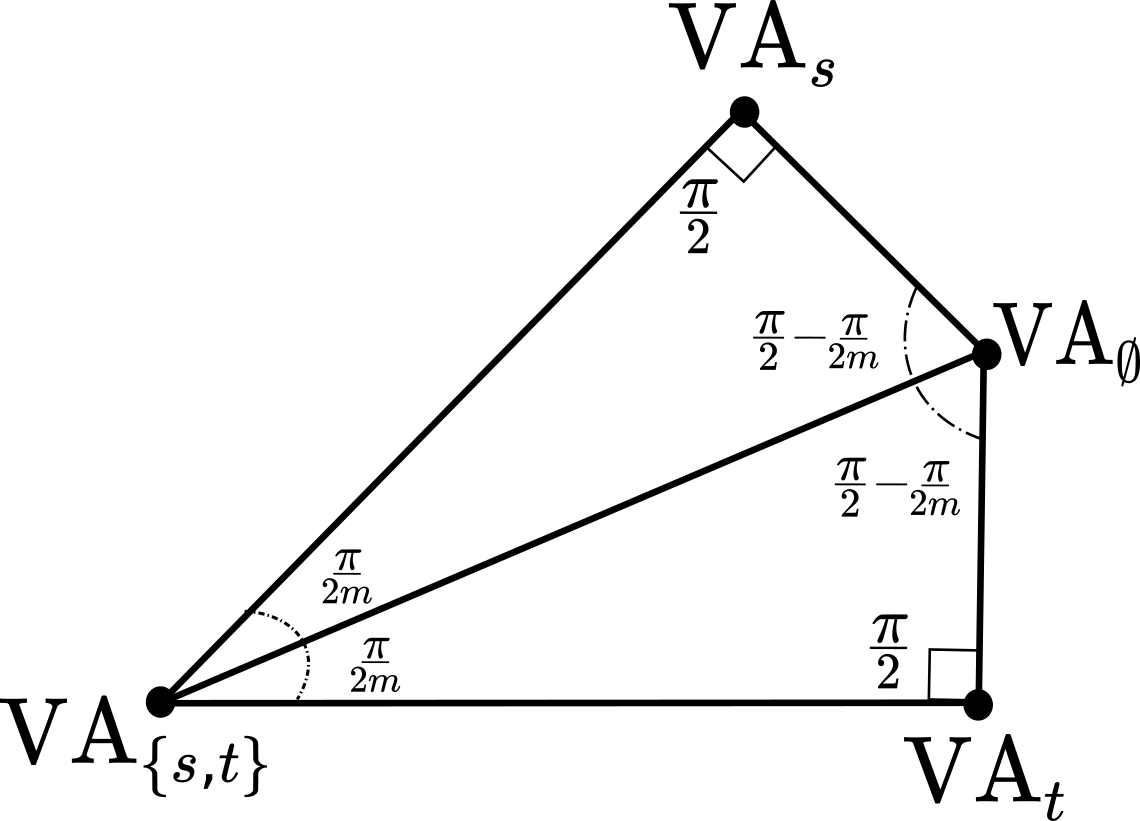}
    \caption{The simplices spanned by $\VA_{\varnothing},\VA_s,\VA_t$ and $\VA_{\{s,t\}}$ in $\vD_{\Gamma}$.}\label{fig:ex-triangle}
\end{figure}
\noindent If $X=\varnothing$, the vertex $v=\VA_X$ is $v=\VA_{\varnothing}=\{\id\}$. By construction of the complex of groups and its metric, the link of $\VA_\varnothing$ in $\vD_\Gamma$ is isometric to the link of $\A_\varnothing$ in $\D_\Gamma$, which is CAT(1) by \cite{CharDav95}.
\medskip
\\
\noindent  If $|X|=1$, then $X=\{s\}$ for some $s\in S$ and $v=\VA_s$. There are two types of vertices in the link of $\VA_s$: those of the form $s^i\VA_\varnothing$ for $i\in \mathbb Z$, and those of the form $\VA_{\{s,t\}}$ for $t\in S$ such that $\{s,t\}\in \Sf$. One sees that the link is a complete bipartite graph on these two sets. Since all edges have length $\pi/2$ by definition of the Moussong metric, and closed embedded loops in a bipartite simplicial graph contain at least four edges, we get that the minimal length of a loop in the link of $v$ is $2\pi$.\medskip
\\
\noindent Suppose now that $|X|=2$, so that $X=\{s,t\}$ and that $m_{s,t}\neq \infty$. Consider the vertex $v=\VA_{\{s,t\}}$ in $\vD_{\Gamma}$. A simple closed curve $\gamma$ in the link of $v$ alternates betweens vertices of type $0$ of the form $g\VA_\varnothing$, and vertices of type $1$ of the form $g\VA_r$ for $g\in \VA_{\{s,t\}}$ and $r\in \{s, t\}$. Let us denote cyclically $v_0, w_0, v_1, w_1, \ldots, v_\ell, w_\ell$ the vertices of $\gamma$, with the $v_i$ being the vertices of type $0$ in $\gamma$,  the $w_i$ the vertices of type $1$, and with $v_0=v_\ell$ and $w_0 = w_\ell$. For each $i$, let $r_i \in \{s, t\}$ such that $w_i$ corresponds to a coset $h_i\VA_{r_i}$ with $h_i\in \VA_{\{s,t\}}$. Note that, since $\gamma$ is embedded, we always have $r_{i+1}\neq r_i$.\medskip\\
\begin{figure}[h!]
    \centering
\includegraphics[width=7.5cm]{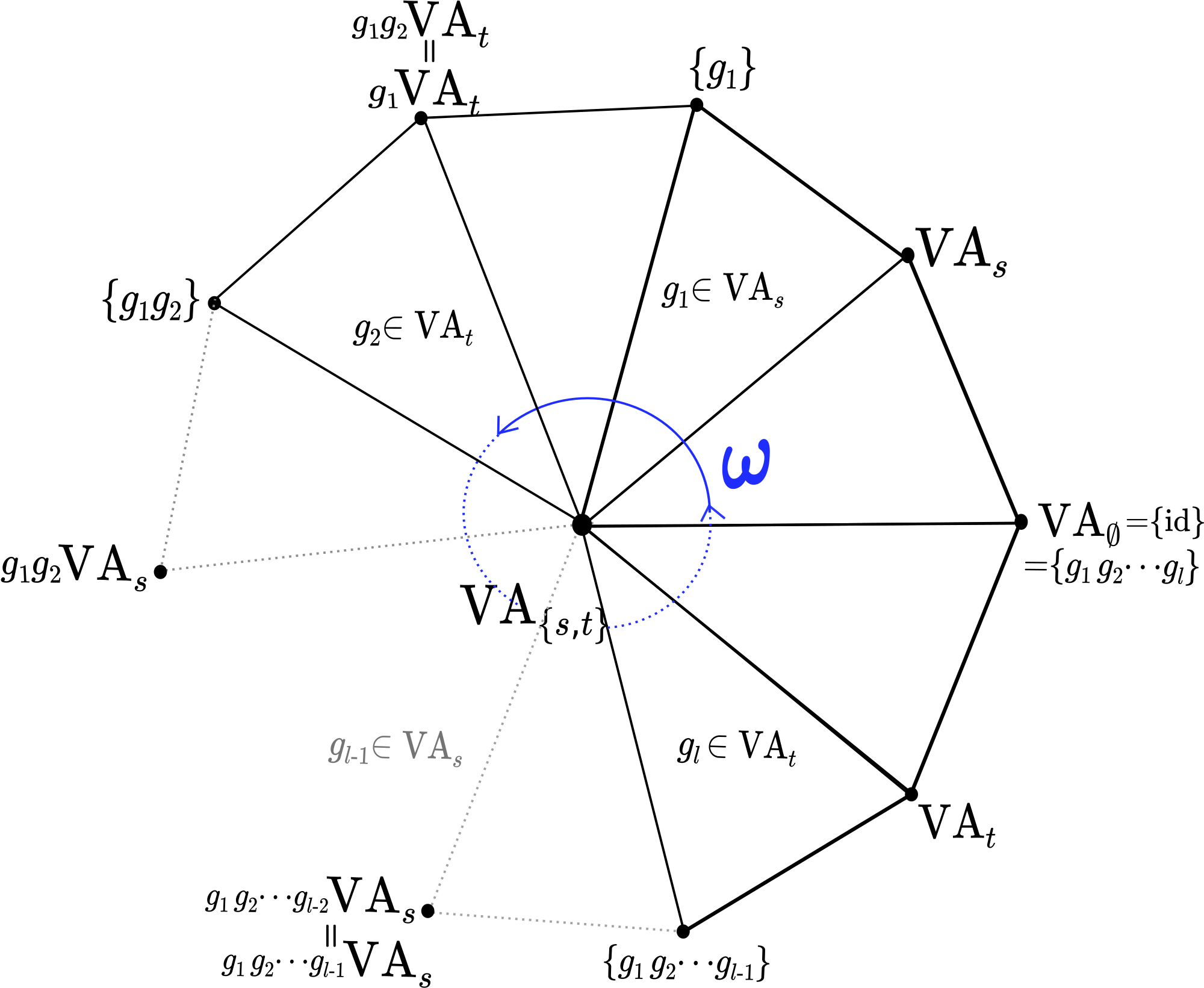}
    \caption{The loop of triangles around a vertex represented by the element $\omega=g_1\cdots g_{\ell}$ written in normal form.}
    \label{fig:vD-dihedral-loop}
\end{figure}

\noindent By construction of the complex of groups, for every $i$ we can write $h_{i+1} = h_i g_i$, with $g_i\in \VA_{r_i}$. Thus, the equality $v_0 = v_\ell$ between vertices of type $0$ implies that the element $g_1\cdots g_\ell$ is trivial in $\VA_{\{s,t\}}$. Thus, thought as an element of $\VA_s \frpp\VA_t$, the element $g_1\cdots g_\ell$ is in normal form, and in the kernel of the evaluation map 
 $$\ev_{\VA_{\{s,t\}}}: \VA_{s}\frpp \VA_{t} \longrightarrow\VA_{\{s,t\}}.$$

\noindent By Theorem \ref{virtualAppelSchupp}, $\ell\geq 2m$. Hence, when $|X|=2$, any non-trivial closed curve in $\mathrm{Link}(v,\vD_{\Gamma})$ has length $\ell\cdot \frac{\pi}{m}\geq 2m\cdot \frac{\pi}{m}=2\pi$. \\
Since any non-trivial closed curve in $\mathrm{Link}(v,\vD_{\Gamma})$ has length at least $2\pi$, by Gromov's link criterion we can conclude that $\vD_{\Gamma}$ is CAT(0).
\end{proof}

\subsection{From 2-dimensional to locally reducible virtual Artin groups}\label{subs-from2tolocallyred}
 
\noindent In \cite{Charney2000}, Charney showed that the Moussong metric is CAT(0) for the class of
\textit{locally reducible} Artin groups, defined as follows: 
\begin{defn}\label{def-locred}
    Given a Coxeter graph $\Gamma$ with vertices $V(\Gamma)=S$, the Artin group $\A[\Gamma]$ is \textit{locally reducible} if its spherical-type standard parabolic subgroups decompose as direct products of rank 1 and rank 2 standard parabolic subgroups.
\end{defn}
 \noindent   Charney shows that, in the locally reducible case, the link of any vertex in $v$ in $\D_{\Gamma}$ is CAT(1) by decomposing it as a certain \textit{orthogonal join} of CAT(1) complexes, some of which are links of vertices in 2-dimensional Artin groups. The proof in the virtual Artin case extends in a completely analogous way: one can show that the links of vertices decomposes in a similar way and use the fact that links of vertices are CAT(1) in the 2-dimensional case (see Theorem~\ref{thm-2dim-vD-CAT(0)}) to conclude:

\begin{cor}\label{cor-CAT0-locred}\label{thm-from2-tolocred}
Let $\Gamma$ be a locally reducible Coxeter graph. Then the Moussong metric on the virtual Deligne complex $\vD_{\Gamma}$ is CAT(0).
\end{cor}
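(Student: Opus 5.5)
We follow Charney's strategy from \cite{Charney2000}. The complex $\vD_{\Gamma}$ is simply connected by construction and complete by Lemma~\ref{prop-vD-complete-action-no-inversion}. It has finitely many shapes of cells, since $K_\Gamma$ is compact and the metric is induced from $d_M$. So by Gromov's link condition (in the form of \cite[II.5]{BriHaefl}) it suffices to show that the link of every vertex is CAT(1). Up to the action of $\VA[\Gamma]$, it is enough to treat the vertices $v=\VA_X$ with $X\in\Sf$.

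The link $\mathrm{Lk}(v)$ splits as the spherical join of a \emph{downward} part $\mathrm{Lk}^{\downarrow}(v)$ and an \emph{upward} part $\mathrm{Lk}^{\uparrow}(v)$. The downward part is spanned by the cosets $g\VA_Y$ with $g\in\VA_X$ and $Y\subsetneq X$. The upward part is spanned by the $\VA_Z$ with $Z\supsetneq X$, $Z\in\Sf$. The join is orthogonal because of the product structure of the Moussong cubes: the face of $cube(Z)$ through $x_X$ splits orthogonally as the directions towards $\varnothing$ inside $cube(X)$ times the directions in $K_{\Gamma_{\geq X}}$.

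The upward link contains no group data. It coincides isometrically with the upward link of $\A_X$ in $\D_\Gamma$, and also with that of $\W_X$ in $\Sigma_\Gamma$. It is therefore CAT(1) by Moussong's theorem, exactly as in the case $X=\varnothing$ of Theorem~\ref{thm-2dim-vD-CAT(0)}. A spherical join is CAT(1) if and only if both factors are, so it remains to treat the downward link.

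For the downward link we use local reducibility. Write $X=X_1\sqcup\cdots\sqcup X_k$ with each $|X_i|\in\{1,2\}$, and with $m_{a,b}=2$ whenever $a,b$ lie in different blocks. By (V1)--(V3), the $\sigma$- and $\tau$-generators of different blocks then commute. Hence $\VA_X\cong\VA_{X_1}\times\cdots\times\VA_{X_k}$. Here we use \cite[Thm 1.1]{GaGaPa26} for the injectivity of the factors, and we identify $\VA[\Gamma_X]$ with the direct product of the $\VA[\Gamma_{X_i}]$ because $\Gamma_X$ is a join of the $\Gamma_{X_i}$ with labels $2$. The Coxeter polytope also splits: $C[X]=\prod_i C[X_i]$, and therefore $cube(X)=\prod_i cube(X_i)$ metrically.

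We then check, at the level of posets, that the poset of cosets $g\VA_Y$ with $g\in\VA_X$ and $Y\subsetneq X$ is the join (minus the top element) of the corresponding posets for the $X_i$. Since $Y=\bigsqcup_i(Y\cap X_i)$, a coset $g\VA_Y$ corresponds to the tuple $(g_i\VA_{Y\cap X_i})_i$. This yields an isometry
\[
\mathrm{Lk}^{\downarrow}(\VA_X,\vD_\Gamma)\cong \mathrm{Lk}^{\downarrow}(\VA_{X_1},\vD_{\Gamma_{X_1}})\ast\cdots\ast\mathrm{Lk}^{\downarrow}(\VA_{X_k},\vD_{\Gamma_{X_k}}).
\]
This is the step most likely to need care. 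One must verify that the Moussong metric on the product cube induces precisely the orthogonal spherical join on links. This follows from the product decomposition of $C[X]$, as in Charney's argument.

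Each factor in this join is CAT(1). If $|X_i|=1$, the factor is the discrete set $\{s^n\VA_\varnothing\}$, with pairwise distance $\pi$, so it is CAT(1). If $|X_i|=2$, the factor is the graph studied in the case $|X|=2$ of the proof of Theorem~\ref{thm-2dim-vD-CAT(0)}, with edges of length $\pi/2m$ in the Moussong metric. By Theorem~\ref{virtualAppelSchupp} every embedded loop in it has length at least $2\pi$. A metric graph with this girth bound is CAT(1). Joins of CAT(1) spaces are CAT(1), so every vertex link is CAT(1), and $\vD_\Gamma$ is CAT(0).
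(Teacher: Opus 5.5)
Your proposal is correct and follows essentially the same route as the paper. Like the paper, you split $\mathrm{Link}(\VA_X,\vD_\Gamma)$ orthogonally as $\mathrm{Link}(\VA_X,K_{\Gamma_{\geq X}})\ast\mathcal{B}_X$, get the first factor from Moussong/Charney--Davis, and use local reducibility to write $\mathcal{B}_X=\mathcal{B}_{X_1}\ast\cdots\ast\mathcal{B}_{X_k}$, handling the rank-two factors by the girth bound of Theorem~\ref{virtualAppelSchupp} (which the paper packages as Theorem~\ref{thm-2dim-vD-CAT(0)}).

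Two cosmetic slips do not affect the argument. First, the rank-one factor is the discrete set $\{g\VA_\varnothing : g\in\VA_s\}$ with $\VA_s\cong\mathbb{Z}\ast\mathbb{Z}_{/2}$, rather than $\{s^n\VA_\varnothing\}$. Second, the coset poset below $\VA_X$ is the product of the factor posets minus its top element, not their join; its realisation is metrically the orthogonal join of the factor links.
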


\noindent For the reader's convenience, we recall Charney's proof and adapt it to the virtual Artin case. We emphasise however that the rest of this subsection is a direct transfer of her proof to the virtual Artin case. We use here the notations introduced in Section~\ref{subs-moussongmetric}.\medskip\\
\noindent When studying links of vertices in $\vD_{\Gamma}$, we may assume without loss of generality that the vertex lies in the fundamental domain $K_{\Gamma}=K_{\Gamma}^{\VA}$. Now let $\Gamma$ be a locally reducible Coxeter graph, and let $\vD_{\Gamma}$ be the associated virtual Deligne complex. 
\noindent Recall that the link of a piece-wise Euclidean complex is a piece-wise spherical complex. Given two spherical simplices $\sigma_1,\sigma_2$ of dimensions $\kappa_1$, $\kappa_2$, the \textit{orthogonal join} $\sigma_1 \frpp \sigma_2$ is defined as follows. Embed $\mathbb{S}^{\kappa_1}$ and $\mathbb{S}^{\kappa_2}$ in $\mathbb{S}^{\kappa}$ with $\kappa=\kappa_1+\kappa_2+1$ so that every point of in $\mathbb{S}^{\kappa_1}$ is at distance $\pi/2$ from every point in $\mathbb{S}^{\kappa_2}$. Then $\sigma_1\frpp \sigma_2$ is the $\kappa$-simplex spanned by $\sigma_1\subset \mathbb{S}^{\kappa_1}$ and $\sigma_2\subset \mathbb{S}^{\kappa_2}$. If $L_1$ and $L_2$ are piece-wise spherical complexes, their \textit{orthogonal join} $L_1\frpp L_2$ is the piece-wise spherical complex whose simplices are the orthogonal joins $\sigma_1\frpp \sigma_2$ of (possibly empty) simplices $\sigma_1\subset L_1$ and $\sigma_2\subset L_2$. \\
\\
\noindent Consider now $\vD_{\Gamma}$ with the Moussong metric described in Subsection \ref{subs-davis-and-deligne-complex}. If $\Gamma$ is of spherical type with $V(\Gamma)=S$, then $\VA\Sf$ has a unique maximal element $\VA_S=\VA[\Gamma]$. Hence $K_{\Gamma}^{\VA}=cube(S)$ and $\vD_{\Gamma}$ is a cone with cone point at the vertex $\VA_S=\VA$. Let $\mathcal{B}_S$ denote the link of the cone point. Then $\mathcal{B}_S$ is a simplicial complex of dimension $|S|-1$ which has a piece-wise spherical structure given by identifying each highest-dimensional simplex with the link of the origin in the cone $T_S$.  \medskip\\
\noindent Suppose now that $\Gamma$ is of infinite type. If $Y\subseteq X\in \Sf$, let  $K_{\Gamma_{\geq Y}}^{\VA}$ be the subcomplex of $K_{\Gamma}^{\VA}$ spanned by the vertices $\VA_X$ such that $Y\subseteq X\in \Sf$. If $Y\subseteq X\in \Sf$, denote by $cube(Y,X)$ the face of $cube(X)$ spanned by $\VA_X$ and $\VA_Y$, which lies in the subcomplex $K_{\Gamma_{\geq Y}}^{\VA}$ of $K_{\Gamma}^{\VA}$. \\

\noindent The proof of Corollary \ref{cor-CAT0-locred} relies on 
the following link decomposition. Its proof is essentially identical to that \cite[Lemma 2.2]{Charney99}, and we only include here for ease of reference:

\begin{lem}\label{lem-decomp-link}
    If $x$ lies in the interior of $cube(Y,X)$, then $\mathrm{Link}(x,\vD_{\Gamma})$ is isometric to the orthogonal join $\mathrm{Link}(x, K_{\Gamma_{\geq Y}}^{\VA})\ast \mathcal{B}_Y$.
\end{lem}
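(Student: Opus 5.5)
The plan is to follow Charney's argument for \cite[Lemma 2.2]{Charney99}: describe the star of $x$ combinatorially via the poset $\VA\Sf$, and then check that the Moussong metric splits it orthogonally. We may assume $x$ lies in the fundamental domain $K_\Gamma^{\VA}$, with $x$ in the interior of $cube(Y,X)$ for $Y\subseteq X\in\Sf$.

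The first step is to describe the cells of $\vD_\Gamma$ containing $x$. A cube $g\,cube(Z)$ contains $cube(Y,X)$ exactly when the chain $\VA_Y\subseteq \VA_X$ is contained in the chain defining it. So the cells through $x$ are the cubes $g\,cube(Y',X')$ with $Y'\subseteq Y$, $X\subseteq X'\in\Sf$, and $g\VA_{Y'}\subseteq \VA_Y\subseteq\VA_X\subseteq g\VA_{X'}$. The last condition forces $g\in \VA_Y$.

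This splits the data into two independent parts:
\begin{itemize}
\item \emph{Upward part}: the choice of $X'\supseteq X$. This is governed by the fundamental domain alone, since every $g$ here lies in $\VA_Y\subseteq\VA_{X'}$, so $g\VA_{X'}=\VA_{X'}$. It contributes the directions in $K^{\VA}_{\Gamma_{\ge Y}}$ transverse to the cube.
\item \emph{Downward part}: the coset $g\VA_{Y'}$ with $g\in\VA_Y$ and $Y'\subseteq Y$. This is exactly the poset of cosets of standard parabolic subgroups of the spherical group $\VA_Y\cong\VA[\Gamma_Y]$, using \cite[Thm 1.1]{GaGaPa26} to identify $\VA_{Y'}$ inside $\VA_Y$ with the standard parabolic subgroup of $\VA[\Gamma_Y]$. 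So it is the star of the cone point in $\vD_{\Gamma_Y}$.
\end{itemize}
Every pair (downward simplex, upward simplex) spans a cell, so combinatorially the link is the simplicial join of the upward link $\mathrm{Link}(x,K^{\VA}_{\Gamma_{\ge Y}})$ with the link of the cone point of $\vD_{\Gamma_Y}$, which is $\mathcal B_Y$.

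The remaining step, and the one requiring care, is the metric claim that this join is orthogonal. This is a statement purely about the Euclidean cubes $cube(X')\cong C[X']\cap T_{X'}$, which are the same as in the Coxeter case. So I would quote Charney's computation. Inside $cube(X')$, the face through $x_Y$ spanned by the vertices $x_{Y'}$ with $Y'\subseteq Y$ lies in $F(Y)$, while the face spanned by $x_{X''}$ with $Y\subseteq X''$ lies in $F^*(Y)$. These two faces are orthogonal and meet at $x_Y$, as recalled in Subsection~\ref{subs-moussongmetric}. Consequently, at an interior point of $cube(Y,X)$ the tangent directions from the two factors are perpendicular, and each simplex of the link is the spherical join of a simplex in $\mathrm{Link}(x,K_{\Gamma_{\ge Y}}^{\VA})$ and a simplex of $\mathcal B_Y$.

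On the $\mathcal B_Y$ side, a downward simplex in $F(Y)$ is a link simplex of the origin of the cone $T_Y$. This matches the piecewise spherical structure defining $\mathcal B_Y$, and these identifications are compatible across cubes because faces are isometric to the corresponding smaller cubes. The main obstacle is therefore bookkeeping rather than new mathematics. One must check that the coset conditions really decouple, namely that the $g$ is forced into $\VA_Y$; this is where the injectivity of parabolic inclusions is used. Once that holds, the metric argument transfers verbatim from \cite{Charney99}.
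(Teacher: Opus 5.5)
Your proposal is correct and follows essentially the same route as the paper, which transfers Charney's argument. Both use the local orthogonal product structure of each Moussong cube near $x$, coming from $F(Y)\perp F^*(Y)$, and then sweep out by $\VA_Y$, which fixes $K^{\VA}_{\Gamma_{\geq Y}}$ and turns $\mathrm{Link}(\VA_Y, cube(Y))$ into $\mathcal{B}_Y$. Your explicit check that every cell through $x$ has the form $g\,cube(Y',X')$ with $g\in\VA_Y$ is just a more detailed version of the paper's step of taking the $\VA_Y$-orbit of a neighbourhood of $x$ in $K_\Gamma^{\VA}$.
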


\begin{proof}
Observe first that $cube(Y)$ and $cube(Y,X)$ intersect orthogonally at the vertex $\VA_Y$ of $K_{\Gamma}^{\VA}$, by construction of the Moussong metric (see  Figure \ref{fig:MoussongMetric}). If $x$ is a point in the interior of $cube(Y,X)$, then a sufficiently small neighbourhood of $x$ in $cube(X)$ can be identified with a neighbourhood of $(\VA_Y,x)$ in the orthogonal product $cube(Y)\times cube(Y,X)$ (see Figure \ref{fig:linkdecomp}).
\begin{figure}[h!]
\centering
\begin{minipage}{4cm}
  \centering
\includegraphics[width=4cm]{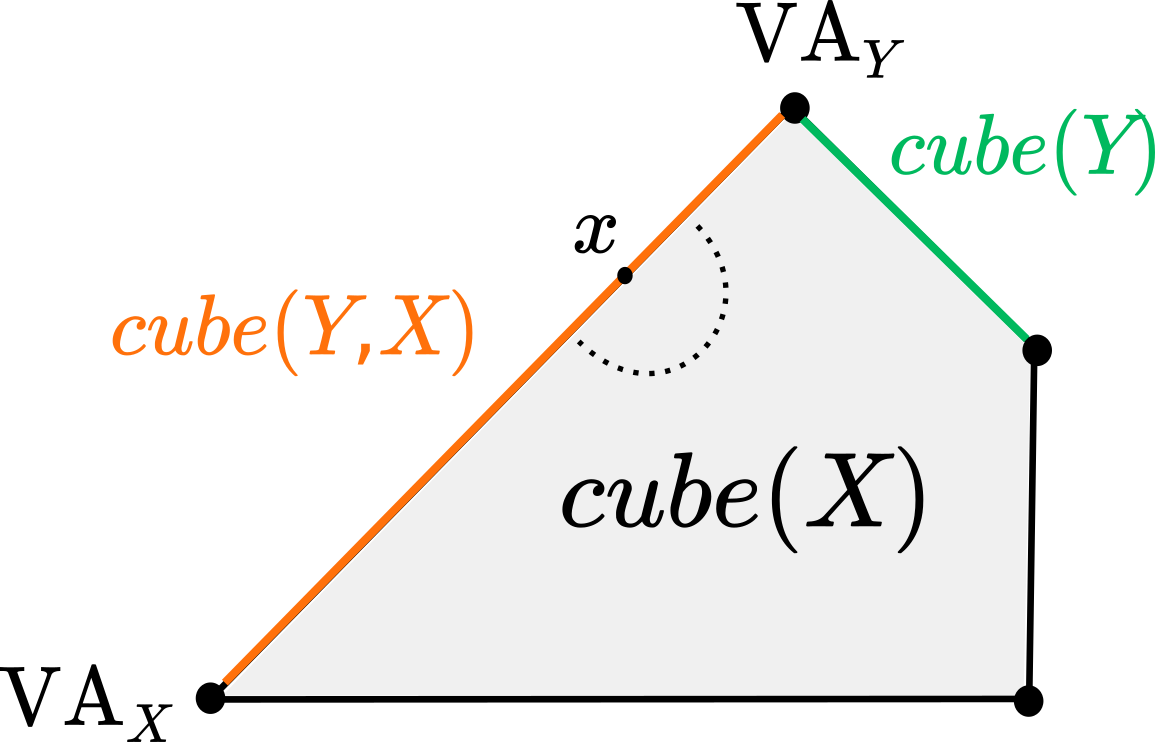}
\end{minipage}
\qquad\qquad
\begin{minipage}{4cm}
  \centering
\includegraphics[width=3.5cm]{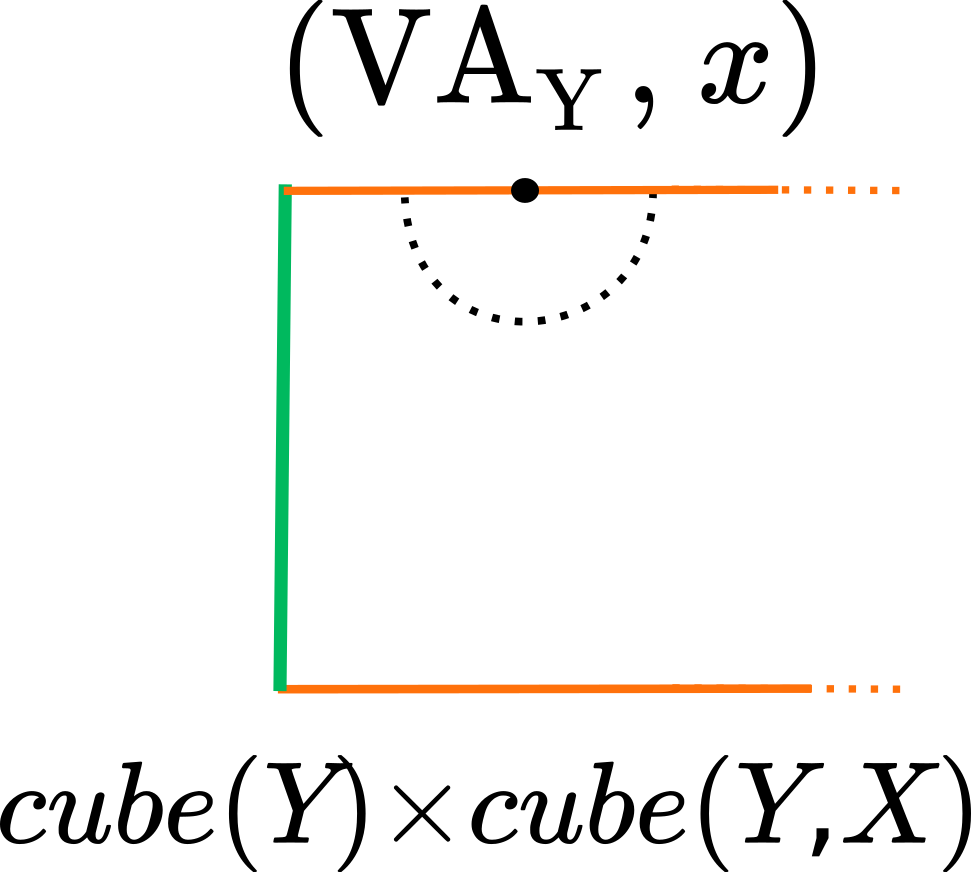}
\end{minipage}
  \label{fig:linkdecomp}
  \caption{The neighbourhood of $x\in cube(Y,X)$. }
\end{figure}Taking the union over all $X\in \Sf$ such that $Y\subseteq X$, we see that a neighbourhood of $x$ in $K_{\Gamma}^{\VA}$ can be identified with a neighbourhood of $(\VA_Y,x)$ in the orthogonal product $cube(Y)\times K_{\Gamma_{\geq Y}}^{\VA}$. Thus
\[
\mathrm{Link}(x,K_{\Gamma}^{\VA})=\mathrm{Link}(\VA_Y,cube(Y))\frpp \mathrm{Link}(x,K_{\Gamma_{\geq Y}}^{\VA}).
\]
\noindent Taking the orbit of $K_{\Gamma}^{\VA}$ under the $\VA_{Y}$-action, we obtain a neighbourhood of $x$ in $\vD_{\Gamma}$. The action of $\VA_Y$ fixes $K_{\Gamma_{\geq Y}}^{\VA}$ and hence 
\[
\mathrm{Link}(x,\vD_{\Gamma})=\mathrm{Link}(x,K_{\Gamma_{\geq Y}}^{\VA})\frpp \mathcal{B}_Y.
\]
\end{proof}

\noindent To show Corollary \ref{cor-CAT0-locred}, we rephrase Charney's proof \cite[Theorem 3.2]{Charney2000} for the classical Deligne complex and we use the decomposition of the link of a vertex carried out in Lemma \ref{lem-decomp-link}.
\begin{proof}[Proof of Corollary \ref{cor-CAT0-locred}]
Since $\vD_{\Gamma}$ is simply connected by construction, we only need to check that the link of any vertex is CAT(1). Pick a vertex $v=g\VA_X$ of $\vD_{\Gamma}$: without loss of generality, we can assume that $g=\id$ and thus that $v=\VA_X\in K_{\Gamma}^{\VA}$ with $X\in \Sf$. By Lemma \ref{lem-decomp-link}, 
\[
\mathrm{Link}(v,\vD_{\Gamma})=\mathrm{Link}(v,K_{\Gamma_{\geq X}}^{\VA})\frpp \mathcal{B}_X.
\]
\noindent The fact that $\mathrm{Link}(G_X,K_{\Gamma_{\geq X}}^{G})\subset K_{\Gamma}^{G}$ is CAT(1) was shown for Coxeter groups by Moussong in \cite{Moussong} and for Artin groups by Charney and Davis in \cite[Lemma 4.4.1]{CharDav95}. The proof does not depend on $G$ but only on the intrinsic metric $d_M$ on $K_{\Gamma}$, so it follows  that $\mathrm{Link}(\VA_X,K_{\Gamma_{\geq X}}^{\VA})$ is CAT(1).\medskip\\
\noindent It remains to show that $\mathcal{B}_X$ is CAT(1). Since $\Gamma$ is locally reducible, for each $X\in \Sf$ we can write 
\[
\VA[\Gamma_X]=\VA[\Gamma_{X_1}]\times \cdots \times \VA[\Gamma_{X_k}]
\]
\noindent with $|X_i|=1$ or $2$ for all $i=1,\ldots,k$. It is easily seen that $\mathcal{B}_X$ decomposes as the orthogonal join 
\[
\mathcal{B}_{X}=\mathcal{B}_{X_1}\frpp \cdots \frpp \mathcal{B}_{X_k}.
\]
\noindent Recall that $\mathcal{B}_{X_i}$ is the link of the vertex $\VA_{X_i}$ in $\vD_{X_i}$. If $|X_i|=1$, then $\mathcal{B}_{X_i}$ is discrete and clearly CAT(1). If $|X_i|=2$, then by Theorem \ref{thm-2dim-vD-CAT(0)}, $\mathcal{B}_{X_i}$ is CAT(1). Since a orthogonal join of CAT(1) piece-wise spherical complexes is CAT(1) by  \cite[Theorem A.10]{CharneyDavis93}, it follows that $\mathrm{Link}(v,\vD_{\Gamma})$ is CAT(1), hence $\vD_{\Gamma}$ is CAT(0).
\end{proof}

\section{Dihedral virtual Artin groups}\label{sectionDihedral}

\noindent The goal of this and the following two sections is to develop the tools to show Theorem \ref{virtualAppelSchupp}, which we recall below:
\begin{reptheorem}{virtualAppelSchupp}[Virtual Appel--Schupp Theorem]
    Let $\Gamma_m$ be a dihedral Coxeter graph with vertex set $S=\{s,t\}$ and $m\geq 2$. Let $\VA_m$ be the associated virtual Artin group, and let $\omega$ be a non-trivial element in $\VA_s\frpp \VA_t$ written in normal form, such that $\ev_{\VA_m}(\omega)=\id_{\VA_m}$. Then the syllabic length of $\omega$ satisfies $\|\omega\|\geq 2m$. 
\end{reptheorem}

\noindent An outline of what is done in Sections \ref{sectionDihedral}, \ref{sec-poly-of-groups-ArtinGammaHat} and 
\ref{sec-virtualAppelSchupp} is summarized here below.\begin{itemize}
\item In this section, we study the equation $\ev_{\VA_m}({\omega})=\id_{\VA_m}$ in the dihedral virtual Artin group $\VA_m$. In Subsection \ref{subs-fromVAtoAgammaHat}, we decompose $\VA_m$ into $\A[\hGammam]\rtimes \W_m$, and we obtain that the desired equality in $\VA_m$ holds only if a certain equality  holds in the Artin group $\A[\hGammam]$. In Subsection \ref{subs-description-gammaHat} we describe the Coxeter graph $\hGammam$ starting from a study of the dihedral root system $\Phi_m$ of $\W_m$, and we investigate the action of the Coxeter group $\W_m$ on $\Phi_m$, and thus on $\A[\hGammam]$.
\item In Section \ref{sec-poly-of-groups-ArtinGammaHat} we assume $m\geq 3$ and we realize $\A[\hGammam]$ as the fundamental group of a new strictly developable polygon of groups. This gives an action of $\A[\hGammam]$ on a CAT(0) polygonal complex $X$, whose key feature is that the aforementioned equality  in the Artin group $\A[\hGammam]$ corresponds to a loop of polygons in $X$. If such a loop contains no back-tracking, we use disc diagram arguments to bound below the length of such a loop (see Proposition \ref{prop_eq_inKVA}).
\item Finally, in Section \ref{sec-virtualAppelSchupp} we finish the proof of Theorem \ref{virtualAppelSchupp}, by studying the case where the aforementioned loop of polygons contains back-tracking. This requires a finer study of the action of $\W[\hGammam]$ on $\hGammam$. 
\end{itemize}

\subsection{\texorpdfstring{From an equation in $\VA_m$ to an equation in $\A[\hGammam]$}{From an equation in VA}}\label{subs-fromVAtoAgammaHat}

\noindent Take a non-trivial element $\omega=g_1\cdots g_{\ell}$ in the free product $\VA_s\frpp \VA_t$, written in normal form and such that $\ev_{\VA_m}(\omega)=\id_{}$ in the dihedral virtual Artin group $\VA_m$.
\begin{align*}
    \VA_{s}\frpp \VA_{t} &\xrightarrow[]{\qquad} \VA_m\\
    \omega=g_1\,g_2\cdots g_{\ell} &\longmapsto \ev_{\VA_m}({\omega})=\id_{\VA_m}.
\end{align*}
\noindent We will assume that $\omega$ is cyclically reduced. In particular, $g_i\neq \id$ for all $i=1,\ldots,{\ell}$, $g_i$ and $g_{i+1}$ never belong to the same free factor for all $i=1,\ldots,{\ell}-1$, and $g_{\ell}$, $g_{1}$ do not belong to the same free factor. 
\\
\noindent Consider now a syllable $g_i$ in $\omega$. We know that $g_i\neq \id$ and that $g_i\in \VA_{r_i}$ for ${r_i}\in \{s,t\}$. Write now $g_i=(k_i,w_i)$ with $k_i\in \KVA[\Gamma_{r_i}]=\A[\widehat{\Gamma_{r_i}}]$ and $w_i\in \W[\Gamma_{r_i}]=\{\idW,r_i\}$. Therefore, by $\ev_{\VA_m}(\omega)=\id$, we obtain
\[
g_1\,g_2\cdots g_{\ell}=(k_1,w_1)\,(k_2,w_2)\cdots (k_{\ell},w_{\ell})=\id,
\]
\noindent where $g_1\cdots g_{\ell}$ is meant to be a product of elements in the dihedral virtual Artin group $\VA_m$. By the action described in Equation \ref{actionWonKVA} and Notations \ref{notationactionWonKVA}-\ref{nt-g=(k,w)}, we write:
\[
(k_1\,k_2^{w_1},w_1w_2)\cdots (k_{\ell},w_{\ell})= (k_1\,k_2^{w_1}\,k_3^{w_1 w_2}\cdots k_{\ell}^{w_1 w_2\cdots w_{{\ell}-1}},\,w_1\cdots w_{\ell})=\id=(\idK,\idW).\]
\noindent Set now $u_1:=\idW$ and $u_i:=w_{1}\cdots w_{i-1}$ for $2\leq i \leq {\ell}$. We obtain:
\begin{equation}\label{eq1}
(\underbrace{k_1^{u_1}\,k_2^{u_2}\cdots k_{\ell}^{u_{\ell}}}_{\in\, \KVA[\Gamma_m]=\A[\hGammam]}\,,\,\underbrace{w_1\cdots w_{\ell}}_{\in \, \W_m})=(\idK,\idW).
\end{equation}
\noindent Equation \ref{eq1} holds in $\VA_m$ if and only if 
\begin{equation}\label{eq2}
 \begin{cases}
k_1^{u_1}\,k_2^{u_2}\cdots k_{\ell}^{u_{\ell}}=\idK \quad&\text{in $\A[\hGammam]$};\\
    w_1\cdots w_{\ell}=\idW \quad &\text{in $\W_m$.} 
\end{cases}   
\end{equation}
\begin{defn}
    Given a non-trivial element $\omega=g_1\cdots g_{\ell}$ written in normal form in $\VA_s\frpp \VA_t$, we call the system in (\ref{eq2}) the \textit{system of equations associated with $\omega$.}
\end{defn}
\noindent To show Theorem \ref{virtualAppelSchupp}, we consider the equation 
\begin{equation}\label{equationInKVA}
k_1^{u_1}\,k_2^{u_2}\cdots k_{\ell}^{u_{\ell}}=a_1\cdots a_{\ell}=\idK \quad\text{in $\A[\hGammam]$},
\end{equation}
\noindent with $a_i:=k_i^{u_i}$ for all $i\in \{1,\ldots, \ell\}$. To study this equation in $\A[\hGammam]$, we need first to describe the kernel $\KVA[\Gamma]=\A[\hGamma]$ when $\Gamma$ is the dihedral graph $\Gamma=\Gamma_m$.

\subsection{\texorpdfstring{Description of $\hGamma$ for $\Gamma$ dihedral}{Description of hGamma for Gamma dihedral.}}\label{subs-description-gammaHat}\noindent Recall that the group $\KVA[\Gamma]$ is isomorphic to the Artin group $\A[\hGamma]$, where the vertices of $\hGamma$ are the roots in $\Phi[\Gamma]$ of $\W[\Gamma]$. According to Definition \ref{defGammaHat}, we place an edge in $\hGamma$ between the roots $\beta$ and $\gamma$ if there exist $w\in \W[\Gamma]$ and $a,b\in V(\Gamma)=S$ such that $\beta=w(\alpha_a)$ and $\gamma=w(\alpha_b)$; such an edge is labelled by $\hmbg:= m_{a,b}$.\\
\\
\noindent Since we consider the dihedral case, the group $\W[\Gamma]$ is the dihedral Coxeter group $\W_m$ of order $2m$. For the reader's convenience, we give a brief overview of root systems in this case (see also \cite{Bourbaki, Hump}). We denote by $\Phi_m$ the dihedral root system of $\W_m$, which has only two simple roots, $\alpha_s$ and $\alpha_t$. In Figure \ref{fig:oddrootsystem}, we illustrate the root system $\Phi_m$ of the dihedral Coxeter group of order $2m$ for $m=5$.\\
\\
\noindent Recall that $\W_m$ is the group of isometries of a regular $2m$-gon centred at the origin of the Euclidean plane, consisting of $m$ rotations by angles that are multiples of $2\pi/m$, and $m$ reflections with respect to the symmetry axes. If $m$ is even, a symmetry axis of the $2m$-polygon is either the line joining two opposite vertices or the line joining the midpoints of opposite edges. If $m$ is odd, a symmetry axis is a line joining a vertex of the polygon to the midpoint of its opposite edge. The dihedral group is generated by two reflections whose associated orthogonal hyperplanes form an angle of $\pi/m$. Indeed, a rotation of $2\pi/m$ is the composition of two such reflections. \\
\begin{figure}[ht]
\centering
\begin{minipage}{7cm}
  \centering
\includegraphics[width=4.5cm]{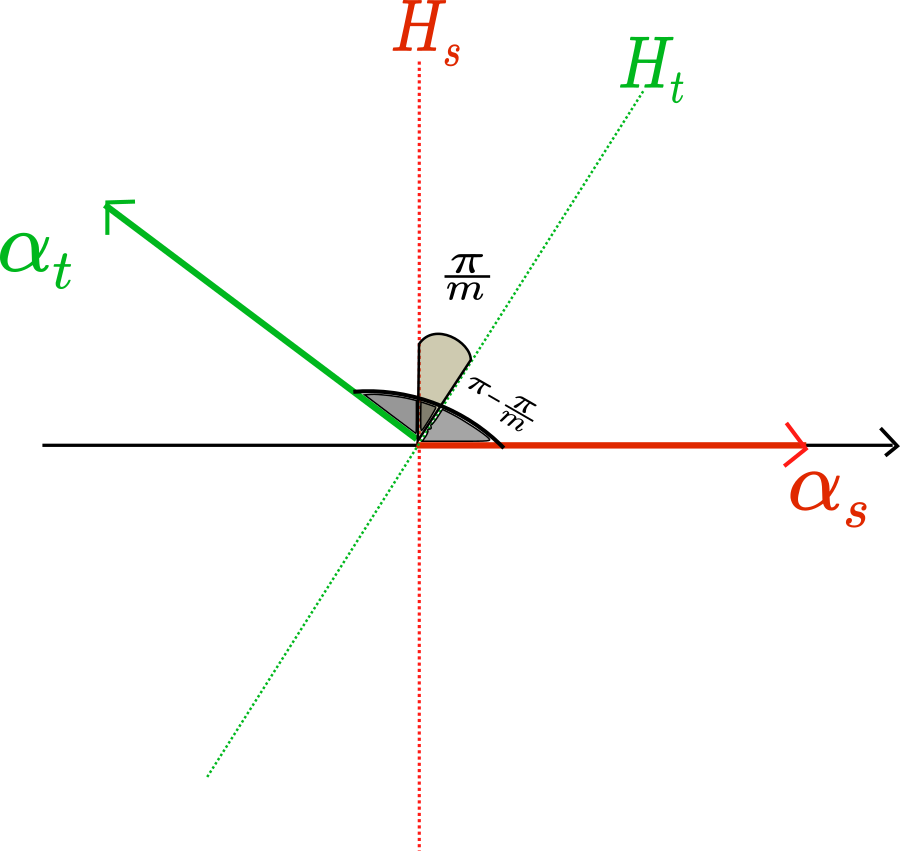}
  \caption{The simple roots $\alpha_s$ and $\alpha_t$ with their respective orthogonal hyperplanes.}
  \label{fig:rootsandhyp}
\end{minipage}
\qquad
\begin{minipage}{7cm}
  \centering
\includegraphics[width=5cm]{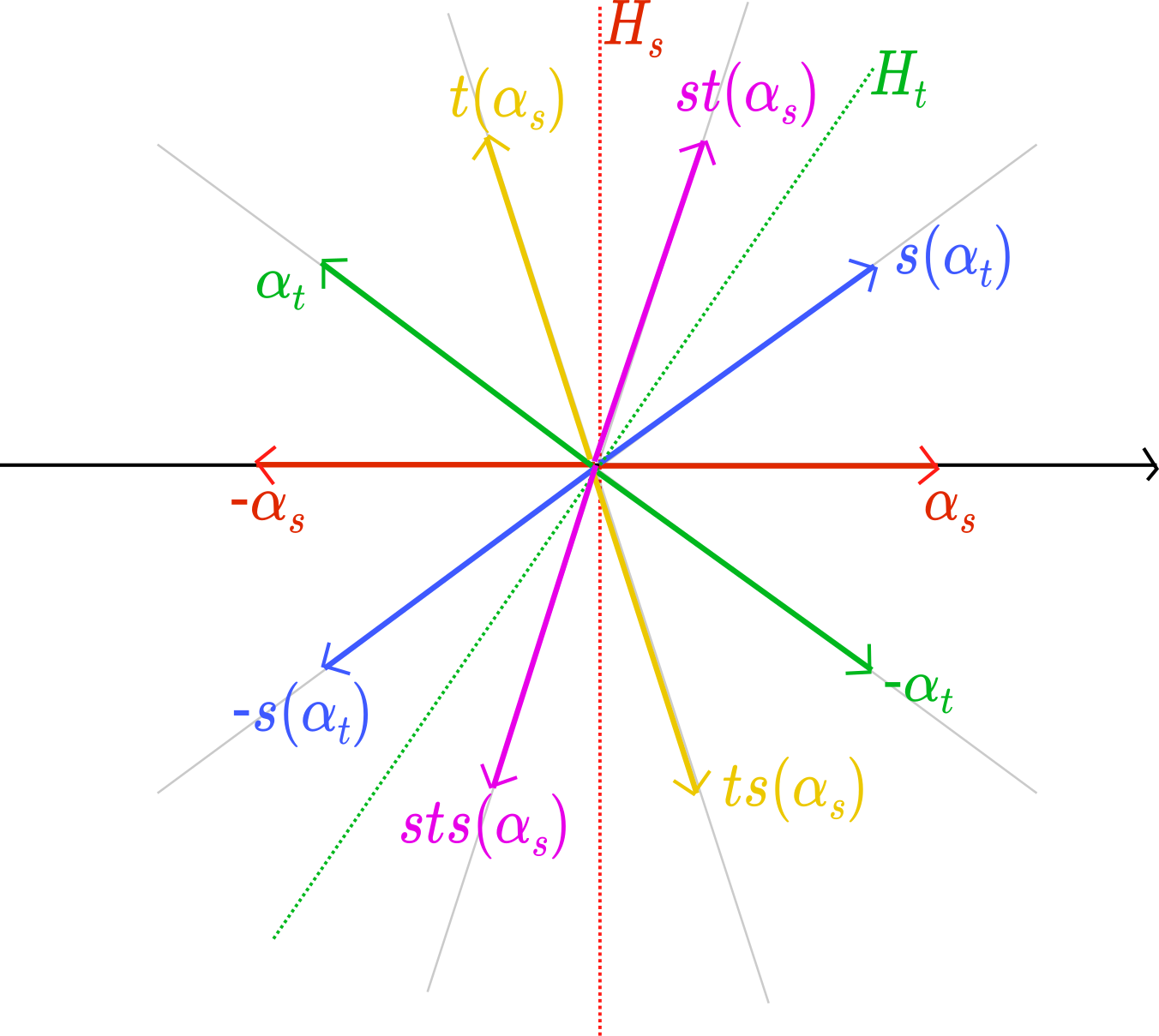}
  \caption{The root system $\Phi[\Gamma]$ of the dihedral group for $m=5$.}
  \label{fig:oddrootsystem}
\end{minipage}
\end{figure}
\\
\noindent The dihedral root system $\Phi_m$ is constructed as follows. Take $V=\mathbb R^2$ and fix the unit root vector $\alpha_s$ to be the vector $(1,0)$. Its orthogonal hyperplane $H_s$ is the axis $x=0$. Now draw the line $H_t$, which will be the hyperplane orthogonal to the simple root $\alpha_t$, such that the angle between the lines $H_t$ and $H_s$ is $\pi/m$ (counter-clockwise). Draw now the unit root vector $\alpha_t$ orthogonal to $H_t$ in the upper half plane. Then, the angle between $\alpha_s$ and $\alpha_t$ is $\pi-\pi/m$. All the other non-simple roots are obtained acting by reflections with respect to $H_s$ and $H_t$ on $\alpha_s$ and $\alpha_t$ (see Figures \ref{fig:rootsandhyp} and \ref{fig:oddrootsystem}). In other words, all the $2m$ root vectors are obtained as rotations of the vector $(1,0)=\alpha_s$ by integer multiples of $\pi/m$.\\
\begin{rmk} The standard scalar product $\langle \alpha_s,\alpha_t \rangle$ of $\mathbb R^2$ is $\cos{(\pi-\pi/m)}=-\cos(\frac{\pi}{m})$, therefore we can identify it with the symmetric bilinear form defined in Section \ref{subs-rootsystems}.
\end{rmk}\medskip
\begin{nt}
    When two vertices $a,b$ of a Coxeter graph are joined by an edge labelled by $m\in \mathbb N_{\geq 2}$, we say that $a$ and $b$ are $m$-\textit{adjacent}. If there is a path between $a$ and $b$ (not necessarily of length 1) in which all the edges are labelled by $m$, we say that $a$ and $b$ are $m$-\textit{connected}. \medskip
\end{nt}
\noindent We now start our study of the graph $\hGamma$ when $\Gamma$ is the dihedral graph $\Gamma_m$ with vertices $S=\{s,t\}$ and $m_{s,t}=m\neq \infty$. We will denote such a graph by $\hGammam$. The vertices of $\hGammam$ are $\{{\beta}\mid \beta\in \Phi_m\}$. By the general construction of $\hGamma$, if $\beta,\gamma\in \Phi_m$ are such that $\hmbg=m$, then there exists $w\in \W
_m$ such that $\{\beta,\gamma\}=w\{\alpha_s,\alpha_t\}$. Since $\W_m$ acts by isometries on $V=\mathbb R^2$, we have that if $\hmbg=m$, then 
\[\langle \beta,\gamma \rangle=\langle w(\alpha_s),w(\alpha_t)\rangle=\langle \alpha_s,\alpha_t\rangle=\pi-\frac{\pi}{m}. \]
\noindent Hence, the two roots $\beta, \gamma$ are $m$-adjacent only if they form an obtuse angle measuring $\pi-\frac{\pi}{m}$. We will now see that this necessary condition is also sufficient.\\
\\
\noindent Indeed, suppose that $\beta,\gamma\in \Phi_m$ form an angle of $\pi-\pi/m$. Assume without loss of generality that $\beta$ is the first root that we encounter moving counter-clockwise from $\alpha_s$. Let $\theta=\frac{k\pi}{m}$ be the angle between $\alpha_s$ and $\beta$, with $0<k\leq m$. We have the following two possibilities.
\begin{enumerate}
    \item[(1.)] If $k$ is even, then $\theta=l\frac{2\pi}{m}$ for some $l\in \mathbb N$. Since $st\in \W_m$ is the counter-clockwise rotation of $\frac{2\pi}{m}$, then $w=(st)^l$ is such that $w(\alpha_s)=\beta$ and $w(\alpha_t)=\gamma$.
    \item[(2.)] If $k$ is odd, write $k=2l+1$ for $l\in \mathbb N$. If $m$ is odd, let $w$ be the reflection\\ $w=\Prod_R(s,t;l)\;s\;\Prod_L(t,s;l)$. Then $w(\alpha_s)=\beta$ and $w(\alpha_t)=\gamma$. If $m$ is even, let $w$ be the reflection $\Prod_R(t,s;l)\;t\;\Prod_L(s,t;l)$. Then $w(\alpha_t)=\beta$ and $w(\alpha_s)=\gamma$.
\end{enumerate}

\noindent Thanks to this discussion, we can show the following lemma.

\begin{lem}\label{adjacencyinGammaHat}
Any $\beta\in \Phi_m$ is $m$-adjacent in $\hGammam$ to exactly two other roots $\gamma_1,\gamma_2$.
\end{lem}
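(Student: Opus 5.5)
The plan is to reduce $m$-adjacency in $\hGammam$ to a purely geometric condition on angles in the dihedral root system, and then count. The discussion preceding the lemma already gives this characterisation: two roots $\beta,\gamma\in\Phi_m$ are $m$-adjacent in $\hGammam$ if and only if the angle between them (as vectors of $\mathbb{R}^2$) is exactly $\pi-\frac{\pi}{m}$. Necessity follows because $\W_m$ acts by isometries, so $\langle w(\alpha_s),w(\alpha_t)\rangle=\langle\alpha_s,\alpha_t\rangle=-\cos(\pi/m)$. Sufficiency comes from the explicit case analysis (1.)–(2.), which produces $w\in\W_m$ with $\{\beta,\gamma\}=w\{\alpha_s,\alpha_t\}$. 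So I will first record this equivalence cleanly, noting that the case analysis assumed $\beta$ is the first of the two roots met counter-clockwise from $\alpha_s$, which is harmless since the condition is symmetric in $\beta,\gamma$.

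Next I count the roots at angle $\pi-\pi/m$ from a fixed $\beta$. The $2m$ roots of $\Phi_m$ are exactly the rotations of $\alpha_s=(1,0)$ by $k\pi/m$, $k=0,\dots,2m-1$, so they are equally spaced on the unit circle. Writing $\beta$ as the rotation by $k_0\pi/m$, the vectors at unoriented angle $\pi-\pi/m$ from $\beta$ are precisely the rotations of $\beta$ by $\pm(\pi-\pi/m)$. These correspond to angles $(k_0\pm(m-1))\pi/m$, and both are multiples of $\pi/m$, so both vectors lie in $\Phi_m$.

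It remains to check that these two roots $\gamma_1,\gamma_2$ are distinct and different from $\beta$. They coincide only if $2(m-1)\pi/m\equiv 0 \pmod{2\pi}$, that is $m\mid m-1$, which is impossible for $m\geq 2$. Neither equals $\beta$, since $(m-1)\pi/m\not\equiv 0\pmod{2\pi}$. Hence $\beta$ is $m$-adjacent to exactly two roots.

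The only step needing real care is the sufficiency direction of the angle criterion, namely that the explicit $w$ in cases (1.)–(2.) indeed sends $\{\alpha_s,\alpha_t\}$ onto $\{\beta,\gamma\}$, including the parity bookkeeping for $m$ even versus odd. I would verify it by tracking rotation angles: $st$ is rotation by $2\pi/m$, and the reflections in (2.) are reflections across the line at angle $k\pi/(2m)$-type axes bisecting appropriately. Since the paper presents this as already established, I will simply invoke it and keep the counting argument above as the substance of the proof.
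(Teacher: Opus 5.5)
Your proposal is correct and follows essentially the same route as the paper. Both invoke the preceding discussion, which shows that $m$-adjacency in $\hGammam$ is equivalent to the two roots forming an angle of $\pi-\pi/m$, and then count, among the $2m$ equally spaced roots, the two obtained by rotating $\beta$ by $\pm(\pi-\pi/m)$; your explicit check that these two roots are distinct and different from $\beta$ makes precise a step the paper leaves implicit.
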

\begin{proof}
By what is written above, $\beta$ is $m$-adjacent  to another root $\gamma$ in $\hGammam$ if and only if they form an angle of $\pi-\pi/m$. The roots in $\Phi_m$ are separated in pairs by angles of multiples of $\pi/m$. Therefore, there are exactly two roots $\gamma_1,\gamma_2\in \Phi_m$ such that the angle between $\beta$ and $\gamma_i$ is $\pi-\pi/m$ for $i=1,2$, and the result follows.
\end{proof}

\noindent The following is a classical result in the theory of Coxeter groups, and it can be found in \cite{deodh82}.
\begin{lem}\label{lemma_roots}
    Let $\Gamma$ be a Coxeter graph with vertex set $V(\Gamma)=S$, and let $s,t\in S$ be two elements such that $m_{s,t}\neq \infty$. Then 
    \[
    \Prod_R(s,t;m_{s,t}-1)(\alpha_s)=\begin{cases}
        \alpha_t \quad \text{if $m_{s,t}$ is odd,}\\
        \alpha_s \quad \text{if $m_{s,t}$ even.}
    \end{cases}
    \]
\end{lem}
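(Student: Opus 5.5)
The plan is to reduce everything to the dihedral group $\W_{\{s,t\}}$ of order $2m$, acting on its two-dimensional root system. Fix the notation $m=m_{s,t}$. The element $\Prod_R(s,t;m-1)$ is the alternating word of length $m-1$ ending in $t$. First I would record how $s$ and $t$ act on the simple roots in the plane spanned by $\alpha_s,\alpha_t$, using the reflection formula $\rho_r(v)=v-2\langle\alpha_r,v\rangle\alpha_r$ from Subsection~\ref{subs-rootsystems}:
\[
s(\alpha_s)=-\alpha_s,\qquad t(\alpha_s)=\alpha_s+2\cos(\pi/m)\,\alpha_t .
\]
Since this plane is invariant under $\W_{\{s,t\}}$ and the restricted form is the positive definite form of Subsection~\ref{subs-description-gammaHat}, I would compute geometrically, as in the dihedral picture. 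Roots are unit vectors obtained by rotating $\alpha_s$ by multiples of $\pi/m$, $st$ is a rotation by $2\pi/m$, and each of $s,t$ is a reflection.

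Next I would write the word explicitly. If $m-1=2l$ is even, then $\Prod_R(s,t;m-1)=(st)^l$. This is a rotation by $2l\pi/m=(m-1)\pi/m=\pi-\pi/m$, so $\alpha_s$ is sent to the unit vector at angle $\pi-\pi/m$ from $\alpha_s$ on the counter-clockwise side. By the construction in Subsection~\ref{subs-description-gammaHat}, that vector is exactly $\alpha_t$. This settles the case $m$ odd.

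If $m-1=2l+1$ is odd, i.e. $m$ is even, then $\Prod_R(s,t;m-1)=t(st)^l$. I would apply $(st)^l$ first, obtaining the root at angle $2l\pi/m=\pi-2\pi/m$. Then I would reflect in $H_t$, the line at angle $\pi/2+\pi/m$. A reflection in a line at angle $\phi$ sends angle $\theta$ to $2\phi-\theta$, which here gives
\[
\pi+2\pi/m-(\pi-2\pi/m)=4\pi/m .
\]
That is wrong, so I would carefully recheck the orientation convention for "rotation by $st$", which depends on whether $s$ or $t$ is applied first. Equivalently, I would use the cleaner argument $t(st)^l=(ts)^l t$. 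Since $t$ and $s$ are reflections in lines meeting at angle $\pi/m$, the conjugation/parity bookkeeping gives $\alpha_s$ when $m$ is even. With $m=2$ this is immediate: $t(\alpha_s)=\alpha_s$ because $\langle\alpha_s,\alpha_t\rangle=0$.

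To avoid orientation errors entirely, I would prefer a uniform algebraic argument. Write $w_0=\Prod_R(s,t;m)=\Prod_R(t,s;m)$ for the longest element, which sends every positive root to a negative one and satisfies $w_0\alpha_s=-\alpha_{s'}$, where $s'=w_0sw_0$. This $s'$ equals $t$ if $m$ is odd and $s$ if $m$ is even; the latter follows from the braid relation, since $w_0$ is central exactly when $m$ is even. Then
\[
\Prod_R(s,t;m-1)=w_0\,r
\]
for $r$ the appropriate generator: the alternating word of length $m$ ending in $s$, multiplied on the right by $s$. Using the form $\Prod_R(t,s;m)$, whose last letter is $s$, we get $\Prod_R(s,t;m-1)=\Prod_R(t,s;m)\,s=w_0 s$. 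Hence
\[
\Prod_R(s,t;m-1)(\alpha_s)=w_0(-\alpha_s)=\alpha_{s'} ,
\]
which is $\alpha_t$ for $m$ odd and $\alpha_s$ for $m$ even.

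The main obstacle is justifying $w_0\alpha_s=-\alpha_{s'}$ with $s'=w_0sw_0$. I would do it as follows. The element $w_0$ has maximal length, so $w_0$ makes every positive root negative and $w_0s=s'w_0$. Then $w_0(\alpha_s)$ is a negative root $-\beta$ whose reflection is $w_0sw_0=s'$. By the bijection between reflections and positive roots, this forces $\beta=\alpha_{s'}$. Finally, $s'$ is $t$ or $s$ according to the parity of $m$, by comparing $\Prod_R(s,t;m)\,s$ with $t\,\Prod_R(s,t;m)$ via the braid relation.
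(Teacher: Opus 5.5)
The paper does not prove this lemma. It quotes it as a classical fact with a reference to Deodhar, and only afterwards uses the dihedral picture of Subsection~\ref{subs-description-gammaHat}. Your final, algebraic argument is a correct self-contained proof, and it is the part you should keep.

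The identity $\Prod_R(s,t;m-1)=\Prod_R(t,s;m)\,s=w_0s$ holds: drop the last letter of the alternating word ending in $s$. Everything then reduces to $w_0(\alpha_s)=-\alpha_{s'}$ with $s'=w_0sw_0$, and your justification works:
\begin{itemize}
\item $w_0(\alpha_s)$ is negative, most directly since $\ell(w_0s)=m-1<\ell(w_0)$.
\item Its reflection is $w_0sw_0^{-1}=w_0sw_0$, because $w_0$ is an involution.
\item A reflection determines its positive root. If $r_\beta=r_\gamma$, then $-\beta=r_\gamma(\beta)=\beta-2\langle\gamma,\beta\rangle\gamma$, so $\beta=\pm\gamma$ since both are unit vectors.
\end{itemize}
For $m$ odd, $s'=t$ because $t\,\Prod_R(s,t;m)$ and $\Prod_R(t,s;m)\,s$ are both the alternating word of length $m-1$ ending in $t$. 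For $m$ even, $s'=s$ by centrality of $w_0=(st)^{m/2}$. In that case your closing sentence should compare with $s\,\Prod_R(s,t;m)$, not with $t\,\Prod_R(s,t;m)$.

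Compared with a computation in the plane, your route is uniform in the parity of $m$ and insensitive to orientation conventions. The price is that it invokes the standard facts on the longest element and the root--reflection correspondence. The classical elementary alternative is to check by induction that the coefficients of $\Prod_R(s,t;k)(\alpha_s)$ are $\sin(k\pi/m)/\sin(\pi/m)$ and $\sin((k+1)\pi/m)/\sin(\pi/m)$, then evaluate at $k=m-1$.

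You should delete the geometric paragraph for $m$ even rather than leave it as a false start. The error there is the position of $H_t$. Since $\alpha_t$ sits at angle $\pi-\pi/m$, the line $H_t=\alpha_t^{\perp}$ is at angle $\pi/2-\pi/m$, not $\pi/2+\pi/m$; the verbal description in Subsection~\ref{subs-description-gammaHat} is easy to misread on this point. With the correct line, the reflection sends angle $\pi-2\pi/m$ to $2(\pi/2-\pi/m)-(\pi-2\pi/m)=0$, i.e.\ to $\alpha_s$, as required. The remark about ``conjugation/parity bookkeeping'' is not an argument and is superseded by the algebraic proof.
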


\begin{rmk}\label{orbitsofroots}
    The previous lemma says in particular that, if $m$ is odd, then all the roots in in the dihedral root system $\Phi_m$ belong to the same orbit under the action of $\W_m$. If $m$ is even, then the two simple roots $\alpha_s$ and $\alpha_t$ generate the two disjoint orbits of roots under the action of the dihedral Coxeter group.
\end{rmk}

\noindent A straightforward consequence of Lemma \ref{lemma_roots} is the following result. 

\begin{cor}\label{cor_roots}
    Let $\Gamma_m$ be a dihedral Coxeter graph with $V(\Gamma)=\{s,t\}$, and let $\alpha_s$ and $\alpha_t$ be the associated simple roots in $\Phi_m$. Then, for $1\leq k \leq m$:
    \begin{enumerate}
        \item[(1.)] If $m$ is even, $\Prod_R(s,t;m-k)(\alpha_s)=\Prod_R(s,t;k-1)(\alpha_s)$ and $\Prod_R(t,s;m-k)(\alpha_s)=\Prod_R(t,s;k+1)(\alpha_s)$.
        \item[(2.)] If $m$ is odd, $\Prod_R(s,t;m-k)(\alpha_s)=\Prod_R(t,s;k-1)(\alpha_t)$ and $\Prod_R(t,s;m-k)(\alpha_s)=\Prod_R(s,t;k+ 1)(\alpha_t)$.
    \end{enumerate}
\end{cor}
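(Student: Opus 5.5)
The plan is to derive all four identities from Lemma~\ref{lemma_roots} by splitting alternating words, using only two facts. First, $s(\alpha_s)=-\alpha_s$ and $t(\alpha_t)=-\alpha_t$. Second, an alternating word of length $n$ ending in a given letter splits as a prefix times a suffix, where the suffix is again an alternating word ending in that letter. Recall that $\Prod_R(s,t;n)$ is the alternating word of length $n$ whose rightmost letter is $t$.

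\textbf{First identity in each case.} For $1\le k\le m$, write
\[
\Prod_R(s,t;m-1)=Q\cdot \Prod_R(s,t;m-k),
\]
where $Q$ consists of the first $k-1$ letters. Then $\Prod_R(s,t;m-k)(\alpha_s)=Q^{-1}\bigl(\Prod_R(s,t;m-1)(\alpha_s)\bigr)$. Since every letter is an involution, $Q^{-1}$ is the reversal of $Q$. It is therefore an alternating word of length $k-1$ whose rightmost letter is the leftmost letter of $\Prod_R(s,t;m-1)$. That letter is $t$ if $m-1$ is odd and $s$ if $m-1$ is even.
\begin{itemize}
\item If $m$ is even, $Q^{-1}=\Prod_R(s,t;k-1)$, and Lemma~\ref{lemma_roots} gives $\Prod_R(s,t;m-1)(\alpha_s)=\alpha_s$. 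This yields $\Prod_R(s,t;m-k)(\alpha_s)=\Prod_R(s,t;k-1)(\alpha_s)$.
\item If $m$ is odd, $Q^{-1}=\Prod_R(t,s;k-1)$, and the lemma gives $\alpha_t$. This yields $\Prod_R(s,t;m-k)(\alpha_s)=\Prod_R(t,s;k-1)(\alpha_t)$.
\end{itemize}

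\textbf{Second identity in each case.} I would reduce it to the first by peeling off the last letter. For $n\ge1$ we have $\Prod_R(t,s;n)=\Prod_R(s,t;n-1)\,s$, hence $\Prod_R(t,s;n)(\alpha_s)=-\Prod_R(s,t;n-1)(\alpha_s)$. Applying this with $n=m-k$ when $k\le m-1$ gives
\[
\Prod_R(t,s;m-k)(\alpha_s)=-\Prod_R(s,t;m-(k+1))(\alpha_s).
\]
Now apply the first identity with $k+1\le m$.
\begin{itemize}
\item If $m$ is even, the right-hand side becomes $-\Prod_R(s,t;k)(\alpha_s)$. By the peeling formula again this equals $\Prod_R(t,s;k+1)(\alpha_s)$, as claimed.
\item If $m$ is odd, the right-hand side becomes $-\Prod_R(t,s;k)(\alpha_t)$. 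Since $\Prod_R(s,t;k+1)=\Prod_R(t,s;k)\,t$, this equals $\Prod_R(s,t;k+1)(\alpha_t)$, as claimed.
\end{itemize}

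\textbf{Boundary case $k=m$.} Here the left-hand side is $\alpha_s$. For the right-hand side, use the relation $\Prod_R(s,t;m)=\Prod_R(t,s;m)$ in $\W_m$ to rewrite $\Prod_R(t,s;m+1)$, respectively $\Prod_R(s,t;m+1)$, as an alternating word of length $m-1$ times a square of a letter, which cancels. The value then reduces to Lemma~\ref{lemma_roots} applied with the roles of $s$ and $t$ exchanged. The only real obstacle in the whole argument is the parity bookkeeping: checking which letter ends each reversed word, and these edge values.
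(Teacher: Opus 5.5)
Your proof is correct and takes the route the paper indicates: the paper gives no written argument, only the remark that the corollary is a straightforward consequence of Lemma~\ref{lemma_roots}, and your splitting of $\Prod_R(s,t;m-1)$, peeling of the last letter via $s(\alpha_s)=-\alpha_s$ and $t(\alpha_t)=-\alpha_t$, and parity bookkeeping are all accurate. One small precision on the boundary case $k=m$: for $m$ even you get $\Prod_R(t,s;m+1)=\Prod_R(s,t;m-1)$, so Lemma~\ref{lemma_roots} applies directly there, and exchanging $s$ and $t$ is only needed for $m$ odd, where $\Prod_R(s,t;m+1)=\Prod_R(t,s;m-1)$.
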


\noindent Another consequence of Lemma \ref{lemma_roots} which will be used in the next sections is the following.

\begin{prop}\label{prop_roots}
    Let $\Gamma_m$ be a dihedral Coxeter graph with $V(\Gamma)=\{s,t\}$, and let $\alpha_s$ and $\alpha_t$ be the associated simple roots in $\Phi_m$. Let $w$ be an element in $\W_m$. Then:
    \begin{enumerate}
        \item [(1.)] If $w(\alpha_s)=\alpha_s$, then either $m$ is even and $w=\Prod_R(s,t;m-1)$, or $w=\idW$.
        \item [(2.)] If $w(\alpha_s)=-\alpha_s$, then either $m$ is even and $w=\Prod_R(s,t;m)$, or $w=s$.
        \item [(3.)] If $w(\alpha_s)=\alpha_t$, then $m$ is odd and $w=\Prod_R(s,t;m-1)$.
        \item [(4.)] If $w(\alpha_s)=-\alpha_t$, then $m$ is odd and $w=\Prod_R(s,t;m)$.
    \end{enumerate}
\end{prop}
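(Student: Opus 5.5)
We will reduce the statement to counting rotations and reflections in the dihedral group $\W_m$. We use the geometric model of $\Phi_m$ described above. The roots are the $2m$ unit vectors obtained by rotating $\alpha_s=(1,0)$ through integer multiples of $\pi/m$. The elements of $\W_m$ are the $m$ rotations $(st)^j$ by angles $2\pi j/m$, together with the $m$ reflections. Each reflection is the orthogonal reflection across a line through the origin making an angle that is a multiple of $\pi/(2m)$.

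First, suppose $w$ is a rotation by angle $\theta$. Then $w(\alpha_s)$ is $\alpha_s$ rotated by $\theta$, and this equals $\pm\alpha_s$ or $\pm\alpha_t$ only for specific $\theta$. The vector $\alpha_t$ lies at angle $\pi-\pi/m$ from $\alpha_s$, and $-\alpha_t$ at angle $-\pi/m$, equivalently $2\pi-\pi/m$. Rotations in $\W_m$ have angles $2\pi j/m$, which are even multiples of $\pi/m$.
\begin{itemize}
\item $w(\alpha_s)=\alpha_s$ forces $\theta=0$, so $w=\idW$.
\item $w(\alpha_s)=-\alpha_s$ forces $\theta=\pi=m\cdot\pi/m$, which requires $m$ even. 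In that case $w$ is the rotation by $\pi$, i.e. $(st)^{m/2}=\Prod_R(s,t;m)$.
\item $w(\alpha_s)=\alpha_t$ requires $\pi-\pi/m=(m-1)\pi/m$ to be an even multiple of $\pi/m$, i.e. $m$ odd. Then $w=(st)^{(m-1)/2}$, which we must match with $\Prod_R(s,t;m-1)$.
\item $w(\alpha_s)=-\alpha_t$ requires $(2m-1)\pi/m$ to be an even multiple of $\pi/m$, which is impossible. So no rotation occurs in case (4).
\end{itemize}

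Second, suppose $w$ is a reflection. A reflection sends $\alpha_s$ to $\alpha_s$ exactly when its mirror line is $H_s$, so $w=s$ gives $-\alpha_s$. It fixes $\alpha_s$ exactly when its mirror is the line spanned by $\alpha_s$. This is a reflecting line of $\W_m$ iff $\alpha_s$ is perpendicular to some root, i.e. iff $\pi/2$ is a multiple of $\pi/m$, i.e. $m$ even. More generally, the reflection sending $\alpha_s$ to a root $\beta$ has its mirror bisecting $\alpha_s$ and $\beta$, and it lies in $\W_m$ iff that bisector is a mirror line. Mirror lines are the lines perpendicular to roots, i.e. the lines at angles $\pi/2+k\pi/m$. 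For $\beta=-\alpha_s$ the bisector is at angle $\pi/2$, which is $H_s$, so $w=s$. For $\beta=\alpha_t$ the bisector is at angle $(m-1)\pi/(2m)$. For $\beta=-\alpha_t$ it is at angle $-\pi/(2m)$, i.e. $\pi/2+(m-1)\pi/(2m)$ modulo $\pi$. Either of these is a mirror iff $(m-1)/2$ is an integer, i.e. $m$ odd. Reflecting across the line spanned by $\alpha_s$, for $m$ even, is the reflection $\Prod_R(s,t;m-1)$: this word has odd length, so it is a reflection, and Lemma~\ref{lemma_roots} shows it fixes $\alpha_s$ for $m$ even.

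Third, we identify the words in the statement. Lemma~\ref{lemma_roots} gives $\Prod_R(s,t;m-1)(\alpha_s)=\alpha_t$ for $m$ odd and $=\alpha_s$ for $m$ even. Then $\Prod_R(s,t;m)=\Prod_R(s,t;m-1)\cdot s'$, with the last letter chosen appropriately, or is computed directly via $\Prod_R(s,t;m)=\Prod_R(s,t;m-1)\,s$ or similar. This gives $-\alpha_t$ for $m$ odd and $-\alpha_s$ for $m$ even. Uniqueness then follows from a cleaner argument than the case analysis. The stabiliser of $\alpha_s$ in $\W_m$ has order at most $2$: it is trivial for $m$ odd, and $\{\idW,\Prod_R(s,t;m-1)\}$ for $m$ even. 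Hence the set of $w$ with $w(\alpha_s)=\beta$ is a coset of this stabiliser. Once one element of the coset is exhibited, the claimed list is complete. For case (2) the coset is $\{s,\ s\Prod_R(s,t;m-1)\}$, and we must check that the second element equals $\Prod_R(s,t;m)$ as an element of $\W_m$ (using the braid relation). When $m$ is odd, the parity obstructions in (1)–(4) come from Remark~\ref{orbitsofroots}.

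The main obstacle is bookkeeping: matching alternating words such as $\Prod_R(s,t;m)$ with the correct geometric transformation, given the paper's right-to-left convention for $\Prod_R$. We will handle this by using the braid relation $\Prod_R(s,t;m)=\Prod_R(t,s;m)$ together with Lemma~\ref{lemma_roots}, rather than relying on coordinates.
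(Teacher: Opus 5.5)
Your final argument is correct, and it is organised differently from the paper's. The paper argues case by case from the rotation/reflection dichotomy, where odd length gives a reflection and even length gives a rotation with no nonzero fixed vector. In (1), $w$ is $\idW$ or the reflection whose root is orthogonal to $\alpha_s$. In (2), it is the rotation by $\pi$ or $s$. In (3), Remark~\ref{orbitsofroots} forces $m$ odd, and the paper then simply asserts that $w$ is the rotation by $\pi-\pi/m$. Finally, (4) is reduced to (3) by writing $-\alpha_t=t(\alpha_t)$. Your approach instead computes $\Stab(\alpha_s)$ once, which is exactly the paper's (1). You then observe that each fibre $\{w : w(\alpha_s)=\beta\}$ is a left coset of this stabiliser, so a single witness from Lemma~\ref{lemma_roots} settles each case. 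This is the systematic version of the paper's reduction of (4) to (3), and it makes explicit the uniqueness that the paper leaves tacit in (3). Once you adopt it, your rotation/reflection case analysis is only needed to compute the stabiliser.

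However, that case analysis contains a false statement, which you should delete rather than keep. The bisector of $\alpha_s$ and $\alpha_t$, at angle $(m-1)\pi/(2m)$, is never a mirror of $\W_m$. The mirrors sit at angles $(m+2k)\pi/(2m)$ modulo $\pi$, and $m-1\equiv m+2k \pmod{2m}$ is impossible by parity. For instance, when $m=3$ the bisector is at $\pi/3$ while the mirrors are at $\pi/6,\pi/2,5\pi/6$. So no reflection sends $\alpha_s$ to $\alpha_t$, and your claim ``iff $m$ odd'' is right only for $-\alpha_t$. Had the claim been true, for $m$ odd the rotation $(st)^{(m-1)/2}$ and some reflection would both send $\alpha_s$ to $\alpha_t$. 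That would give a nontrivial element of $\Stab(\alpha_s)$, contradicting both your own stabiliser computation and the uniqueness asserted in (3).

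There are also some smaller slips:
\begin{itemize}
\item ``sends $\alpha_s$ to $\alpha_s$ exactly when its mirror is $H_s$'' should read $-\alpha_s$.
\item The parity obstruction in (3)--(4) concerns $m$ even, not $m$ odd.
\item The identities $s\,\Prod_R(s,t;m-1)=\Prod_R(s,t;m)$ for $m$ even and $t\,\Prod_R(s,t;m-1)=\Prod_R(s,t;m)$ for $m$ odd hold letter by letter. Prepending on the left therefore gives both the second coset element in (2) and the witness in (4) without invoking the braid relation. Your version $\Prod_R(s,t;m-1)\,s=\Prod_R(t,s;m)$ is also fine, but it does need the braid relation.
\end{itemize}
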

\begin{proof}
    Recall that, in the dihedral Coxeter group $\W_m$, the elements of odd length are reflections with respect to some root $\beta \in \Phi_m$, while elements of even length $l$ are counter-clockwise rotations of angle $2l\pi/m$. Reflections fix the hyperplane orthogonal to the associated root $\beta$, while rotations have no fixed points. \medskip \\
    \noindent (1.) If $w(\alpha_s)=\alpha_s$, then either $w=\idW$, or $w=r_{\beta}$ is a reflection and $\alpha_s$ is orthogonal to $\beta$. Since the angle between $\beta$ and $\alpha_s$ must be a multiple of $\pi/m$, it is equal to $\pi/2$ only if $m$ is even. By Lemma \ref{lemma_roots}, the reflection $r_{\beta}$ is $\Prod_R(t,s;m-1)$, which can be written as $\Prod_R(t,s;\frac{m-2}{2})\,t\,\Prod_L(s,t;\frac{m-2}{2})$. In this case the (positive) root $\beta$ is $\Prod_R(t,s;\frac{m-2}{2})(\alpha_t)$. \medskip \\
    \noindent (2.) If $w(\alpha_s)=-\alpha_s$, then $w$ is either a counter-clockwise rotation of $\pi$, or a reflection fixing the hyperplane orthogonal to $\alpha_s$. In the latter case, we clearly have $w=s$. In the former case, we have that $m$ must be even and $w$ is the longest element $w=\Prod_R(s,t;m-1)$.\medskip\\
    \noindent (3.) If $w(\alpha_s)=\alpha_t$, then $\alpha_s$ and $\alpha_t$ are in the same orbit. By Remark \ref{orbitsofroots} we have that $m$ is odd and that $w$ is a counter-clockwise rotation of $\pi-\pi/m$, namely, $w=\Prod_R(s,t;m-1)$. \medskip\\
    \noindent (4.) If $w(\alpha_s)=-\alpha_t=t(\alpha_t)$, then $\alpha_s$ and $\alpha_t$ are in the same orbit. Again, this implies that $m$ is odd and that $tw(\alpha_s)=\alpha_t$. By part (3.) we get that $tw=\Prod_R(s,t;m-1)$, hence $w=\Prod_R(s,t;m)$.
\end{proof}

\noindent We are now able to completely characterise $\hGammam$.

\begin{prop}\label{propdihedralgammahat}
    Let $\Gamma_m$ the dihedral Coxeter graph with vertices $S=\{s,t\}$.
    \begin{enumerate}
        \item[(1.)] If $m$ is even, then $\hGammam$ is a $2m$-gon with edges labelled by $m$.
        \item[(2.)] If $m$ is odd, then $\hGammam$ is the disjoint union of two $m$-gons with edges labelled by $m$.
    \end{enumerate}
\end{prop}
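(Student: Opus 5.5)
The plan is to identify $\hGammam$ with a circulant graph on $\mathbb{Z}/2m$ and then read off its cycle structure from a single gcd computation.

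First, all edges of $\hGammam$ are labelled $m$. Indeed, by Definition \ref{defGammaHat} every edge $\{\beta,\gamma\}$ arises from some $w\in\W_m$ and $a,b\in S$ with $\beta=w(\alpha_a)$, $\gamma=w(\alpha_b)$, and its label is $m_{a,b}$. The only edge of $\Gamma_m$ is $\{s,t\}$, with label $m$.

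Next, by Lemma \ref{adjacencyinGammaHat}, every vertex of $\hGammam$ has degree exactly $2$. Hence $\hGammam$ is a finite disjoint union of cycles, and it remains to determine the number and lengths of these cycles. For this we use the explicit model of $\Phi_m$ described before Lemma \ref{adjacencyinGammaHat}. The $2m$ roots are the unit vectors $\beta_j$ at angle $j\pi/m$ from $\alpha_s=(1,0)$, for $j\in\mathbb{Z}/2m$. The discussion preceding that lemma shows that two roots are $m$-adjacent if and only if they form an angle of $\pi-\pi/m$. This means $\beta_j$ is adjacent exactly to $\beta_{j+(m-1)}$ and $\beta_{j-(m-1)}$. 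These two neighbours are distinct, since $2(m-1)\not\equiv 0 \pmod{2m}$ for $m\geq 2$. Thus $\hGammam$ is the Cayley graph of $\mathbb{Z}/2m$ with respect to the generator $\pm(m-1)$.

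The connected components are therefore the cosets of the subgroup $\langle m-1\rangle\leq \mathbb{Z}/2m$. This subgroup has index $d=\gcd(m-1,2m)$. Since $\gcd(m-1,m)=1$, we get $d=\gcd(m-1,2)$.
\begin{itemize}
\item If $m$ is even, then $d=1$, so the graph is connected. It is a single cycle through all $2m$ roots, that is, a $2m$-gon with all labels $m$. For $m=2$ this is a $4$-cycle, which is still a simple graph.
\item If $m$ is odd, then $d=2$. The graph splits into two cycles of length $m$, namely the even-indexed and the odd-indexed roots. These are $m$-gons, non-degenerate since $m\geq 3$.
\end{itemize}

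As a consistency check, for $m$ odd each component should be $\W_m$-invariant in a way compatible with Remark \ref{orbitsofroots}. For $m$ odd, the reflection $s$ sends $\beta_j\mapsto\beta_{m-j}$, which swaps parities, while rotations preserve them. This is consistent with $\W_m$ acting transitively on roots while permuting the two $m$-gons.

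The only step needing care is justifying the adjacency rule $j\mapsto j\pm(m-1)$, i.e. that "angle $\pi-\pi/m$" corresponds precisely to an index difference of $\pm(m-1)$ modulo $2m$. This holds because angles between unit vectors are taken in $[0,\pi]$, and the index differences $\pm(m-1)$ are the only ones (mod $2m$) realising that angle. After that, the rest is the elementary gcd computation above.
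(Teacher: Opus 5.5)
Your proof is correct, but it argues differently from the paper. The paper constructs the cycles by hand. Starting at $\alpha_s$, it uses Lemma~\ref{lemma_roots} and Corollary~\ref{cor_roots} to rewrite $\Prod_R(s,t;m-k)(\alpha_s)$ as a shorter alternating word applied to a simple root. In this way it extends an explicit $m$-labelled path inductively until it closes up; for $m$ odd this path is $\alpha_s,\alpha_t,ts(\alpha_s),ts(\alpha_t),\dots$. For $m$ odd it then argues separately that $\beta$ and $-\beta$ are never $m$-connected.

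You instead use only the angle characterisation of $m$-adjacency stated before Lemma~\ref{adjacencyinGammaHat}. You convert it into the circulant graph on $\mathbb{Z}/2m$ with connection set $\{\pm(m-1)\}$, and read off both the number and the length of the cycles from $\gcd(m-1,2m)=\gcd(m-1,2)$. This is more uniform, and it is tighter at the one point where the paper is informal. The paper's claim that $\pi$ ``would have to be an integer multiple of $\pi\cdot\frac{m-1}{m}$'' really has to be read modulo $2\pi$ and with signs; once made precise, it is exactly your parity/coset argument. Your computation also gives connectivity for $m$ even and the splitting for $m$ odd at once, without tracking alternating words inductively.

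What the paper's route gives in exchange is that it stays inside the Coxeter-word formalism. It also names the vertices along each cycle as explicit $\W_m$-translates of the simple roots.

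If you want to avoid relying on the sufficiency half of the angle criterion, you can compute the edges directly. With $\alpha_t=\beta_{m-1}$, $st\colon\beta_j\mapsto\beta_{j+2}$ and $s\colon\beta_j\mapsto\beta_{m-j}$, the edges $\{w(\alpha_s),w(\alpha_t)\}$ for $w=(st)^l$ and $w=(st)^l s$ are precisely the pairs $\{\beta_j,\beta_{j+m-1}\}$, $j\in\mathbb{Z}/2m$.
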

\begin{proof}
First observe that, since the only label in $\Gamma_m$ is $m$, the only label (other than $\hmbg=\infty$) that can appear in $\hGammam$ is $m$. \\\\
  \noindent(1.) If $m$ is even, we show in $\hGammam$ all the roots of $\Phi_m$ appear in a single cycle of length $2m$, whose edges are all labelled by $m$. By Lemma \ref{adjacencyinGammaHat}, each root of $\Phi_m$ is $m$-adjacent to exactly two other roots. We begin with $\alpha_s$. Clearly, there is an $m$-labelled edge between $\alpha_s$ and $\alpha_t$, corresponding to the image of the base of the root system under the action of $w=\idW$. The root $\alpha_s$ is $m$-adjacent to exactly one further root. By Lemma \ref{lemma_roots}, we have $\Prod_R(s,t;m-1)(\alpha_s)=\alpha_s$, and therefore $\alpha_s$ is $m$-adjacent to $\Prod_R(s,t;m-1)(\alpha_t)$. By part (1.) of Corollary \ref{cor_roots}, this equals $st(\alpha_t)$. Hence, in $\hGammam$ there is $m$-labelled edge between $\alpha_s$ and $st(\alpha_t)$.\medskip\\
  \noindent On the other hand, $st(\alpha_t)$ is $m$-adjacent to $st(\alpha_s)$. Again by Corollary \ref{cor_roots}, we have $st(\alpha_s)=\Prod_R(s,t;m-3)$. If $m=4$, his already yields an $m$-labelled path from $\alpha_s$ to $\Prod_R(s,t;m-3)(\alpha_s)=-\alpha_s$. By the same reasoning, we obtain an $m$-labelled path between $\alpha_t$ (which is $m$-adjacent to $\alpha_s$), to $-\alpha_t$ (which is $m$-adjacent to $-\alpha_s$). Hence, all roots appear in a single $2m$-cycle whose edges are labelled by $m$.

\begin{figure}[h]
\centering
\begin{minipage}{6cm}
  \centering
  \includegraphics[width=5cm]{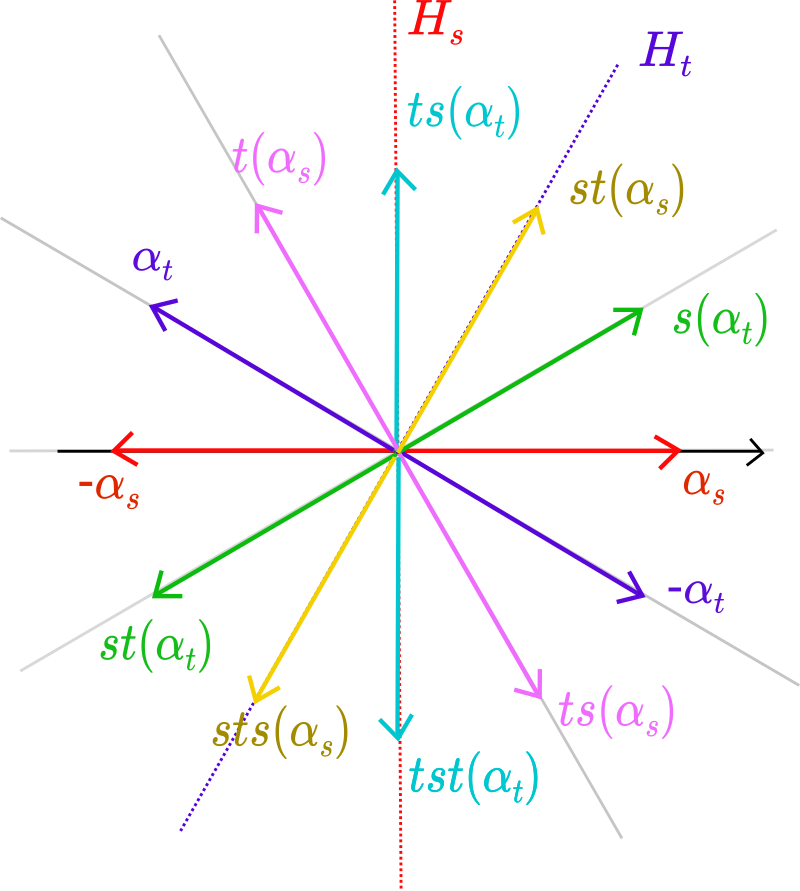}
  \caption{The root system $\Phi_m$ of the even dihedral group for $m=6$.}
  \label{fig:evenRootSystem}
\end{minipage}
\qquad \qquad
\begin{minipage}{6cm}
  \centering
\includegraphics[width=6cm]{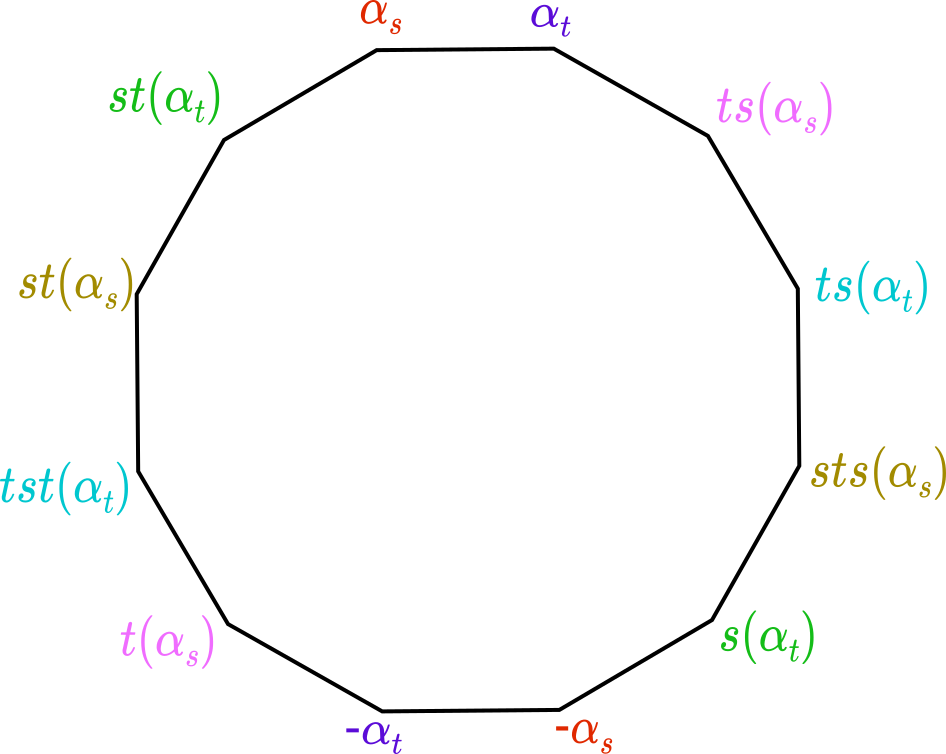}
  \caption{The graph $\hGammam$ for $m=6$. All edges are labelled by 6.}
  \label{fig:evenGammaHat}
\end{minipage}
\end{figure}
  
  \noindent If $m\geq 6$, we continue inductively using Corollary \ref{cor_roots}. At each step, we extend the $m$-labelled path from $\alpha_s$ to $(st)^k(\alpha_s)$, until we reach $-\alpha_s = (st)^{m/2}(\alpha_s)$, and similarly from $\alpha_t$ to $-\alpha_t$. In this way, we obtain a $2m$-cycle containing all roots, with all edges labelled by $m$. Figure \ref{fig:evenGammaHat} illustrates the graph $\hGammam$ in the case $m=6$.\\\\
  \noindent (2.) Let $m$ be odd. By Remark \ref{orbitsofroots}, all the roots of $\Phi_m$ belong to the same orbit under the action of $\W_m$. We show that there exists an $m$-cycle in $\hGammam$ containing the roots $\alpha_s$ and $\alpha_t$. By Lemma \ref{lemma_roots}, we have $
  \Prod_R(s,t;m-1)(\alpha_s)=\alpha_t$. Hence, $\alpha_s$ is $m$-adjacent to $\alpha_t=\Prod_R(s,t;m-1)(\alpha_s)$, which in turn is $m$-adjacent to $\Prod_R(s,t;m-1)(\alpha_t)$. By part (1.) of Corollary \ref{cor_roots}, the latter equals $\Prod_R(t,s;2)(\alpha_s)=ts(\alpha_s)$, which is $m$-adjacent to $ts(\alpha_t)$.
So far we found an $m$-labelled path  with vertices $\alpha_s,\alpha_t,ts(\alpha_s),ts(\alpha_t)$. If $m=3$, then $ts(\alpha_t)=\alpha_s$ and we obtain a cycle of length 3. If $m\geq 5$, we continue by writing $ts(\alpha_t)=\Prod_R(s,t;m-3)(\alpha_s)$ and proceeding inductively. After $m$ steps, we reach $\Prod_R(t,s;m-1)(\alpha_t)=\alpha_s$, thus closing an $m$ cycle.
\begin{figure}[h]
\centering
\includegraphics[width=8cm]{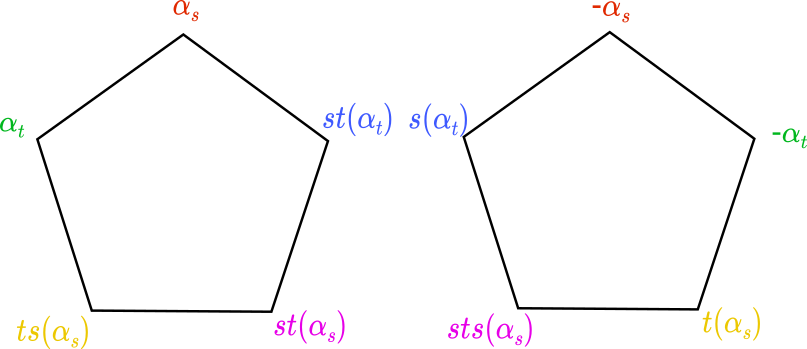}
  \caption{The Coxeter graph $\hGammam$ for $m=5$.}
  \label{fig:odd_Gammahat}
\end{figure}
\noindent By the same argument, we obtain another cycle of length $m$ containing the vertices $-\alpha_s$ and $-\alpha_t$. Observe for any $\beta\in \Phi_m$ with $m$ odd, the two roots $\beta,-\beta$ can never be $m$-connected. Indeed, if there were an $m$-labelled path in $\hGammam$ joining $\beta$ and $-\beta$, then the angle $\pi$ between $\beta$ and $-\beta$ would have to be an integer multiple of $\pi\cdot\frac{m-1}{m}$, which is impossible. Figure \ref{fig:odd_Gammahat} illustrates the two pentagons appearing in $\hGammam$ for $m=5$, whose associated root system is the one pictured in Figure \ref{fig:oddrootsystem}.
\end{proof}

\section{\texorpdfstring{The polygon of groups associated with $\A[\hGammam]$}{The polygon of groups associated with A[]}} \label{sec-poly-of-groups-ArtinGammaHat}

\noindent  Recall that, in order to show the virtual Appel--Schupp Theorem \ref{virtualAppelSchupp}, in Subsection \ref{subs-fromVAtoAgammaHat} we have translated an equation $\ev_{\VA_m}(\omega)=\ev_{\VA_m}(g_1\cdots g_{\ell})=\id$ in $\VA_m$ into an equation in the Artin kernel $\A[\hGammam]$, namely 
\[
\qquad a_1\cdots a_{\ell}=\idK \quad \text{in \,\, $\A[\hGammam]$, \qquad\qquad\qquad\qquad\qquad(\ref{equationInKVA})}
\]
\noindent where $a_i=k_i^{u_i}$ for all $i=1,\ldots,\ell$.\medskip\\
\noindent  To study Equation \ref{equationInKVA}, and more generally equations in $\A[\hGammam]$, in this section we construct a CAT(0) polygonal complex $X$ on which $\A[\hGammam]$ acts. A key feature of this complex is that such equations can naturally be interpreted as certain  loops of polygons in $X$.\medskip\\
\noindent To this aim, we proceed as follows. We assume $m\geq 3$ and view the group $\A[\hGammam]$ as the fundamental group of a certain strictly developable simple polygon of groups $G(\mathcal{Q})$. Then we consider the (subdivision of the) universal cover of such a complex, and we show that, for a specific choice of the metric, it is CAT(0). Thus, we consider elements $\xi=x_1\cdots x_r$ which are \textit{cyclically reduced} a suitable sense (see Definition~\ref{def:cyclically_reduced_GQ}), and if they  they represent the identity in $\A[\hGammam]$, we identify them with non-trivial and non-backtracking loops in such a CAT(0) complex. Using disc diagram arguments, we show that any cyclically reduced element $\xi$ in $G(\mathcal{Q})$  representing the identity in $\A[\hGammam]$ has length at least $2m$.

\subsection{Construction of the polygon of groups} \label{subs-construction-polygon-of-groups}
\noindent
In this subsection, we construct a simple complex of groups $G(\mathcal{Q})$ such that $\A[\hGammam]\cong \pi_1(G(\mathcal{Q}))$. The underlying complex is a regular $m$-gon $P$ (with $m\geq 3$), and $\mathcal{Q}$ denotes the poset of faces of $P$, naturally ordered by inclusion.\medskip \\
\noindent After proving that $G(\mathcal{Q})$ is strictly developable, we consider its development $X$, which is a polyhedral complex with strict fundamental domain equal to the polygon $P$. This yields a face-preserving action of $\A[\hGammam]$ on $X$. We then observe that $P$ (and hence the entire complex $X$) admits a square subdivision $X'$ with a choice of the metric that endows it with the structure of a CAT(0) square complex.
\begin{defn}\label{defn-complexofgroups-GQ} Let $m\geq 3$ be an integer, and let $P$ be the regular $m$-gon whose edges are labelled by the positive roots in $\Phi^+_m$. Two edges $f_{\beta},f_{\gamma}$ are adjacent in $P$ if and only if $\beta$ is $m$-adjacent to $\gamma$ or to $-\gamma$ in $\hGammam$. Let $\mathcal{Q}$ be the opposite of the poset of faces of $P$, that is, the poset of faces of $P$ ordered by reverse-inclusion. We define the simple complex of groups $G(\mathcal{Q})$ over $\mathcal{Q}$ with the following data:
\begin{itemize}
    \item For each vertex ${v}_{\beta,\gamma}$ between two adjacent edges $f_{\beta}$ and $f_{\gamma}$, let the local group at $v_{\beta,\gamma}$ be $G_{v_{\beta,\gamma}}=\langle  \delta_{\beta},\delta_{\gamma},\delta_{-\beta},\delta_{-\gamma}\rangle \cong \A_m \frpp \A_m$.
    \item For each $\beta\in \Phi^+_m$, let the local group at the edge $f_{\beta}$ be $G_{f_{\beta}}=\langle\delta_{\beta},\delta_{-\beta}\rangle=\mathbb{F}(\delta_{\beta},\delta_{-\beta})$.
    \item Let the local group $G_P$ at the entire polygon $P$ be the trivial group.
    \item The local morphisms are the inclusions of standard parabolic subgroups of $\A[\hGammam]$.
\end{itemize}
\end{defn}
\begin{lem}\label{A[hGammam]andcomplexesofGroups}
Let $\Gamma_m$ be the dihedral Coxeter graph with vertices $S=\{s,t\}$ and integer label $m\geq 3$. Then, $\A[\hGammam]$ is isomorphic to $\pi_1(G(\mathcal{Q}))$, where $\mathcal{Q}$ is the simple complex of groups introduced in Definition \ref{defn-complexofgroups-GQ}. Moreover, $G(\mathcal{Q})$ satisfies the hypotheses of Theorem~\ref{thm-strictly-developable-scog}, and in particular is strictly developable.  
\end{lem}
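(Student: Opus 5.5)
The plan is to read off a presentation of $\pi_1(G(\mathcal{Q}))$ from the local groups and compare it with the standard Artin presentation of $\A[\hGammam]$. Strict developability will then follow from van der Lek's injectivity of standard parabolic subgroups \cite[Theorem 4.13]{van1983homotopy}.

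\textbf{Step 1: $P$ and its local groups are well defined.} First I check that the adjacency rule in Definition~\ref{defn-complexofgroups-GQ} really produces an $m$-gon. By Proposition~\ref{propdihedralgammahat}, the $m$-labelled edges of $\hGammam$ form a $2m$-cycle for $m$ even, or two $m$-cycles for $m$ odd. The map $\beta\mapsto-\beta$ preserves angles between roots, and $m$-adjacency is characterised by the angle $\pi-\pi/m$, so negation is a graph automorphism of $\hGammam$. For $m$ even it acts on the $2m$-cycle as the antipodal rotation; for $m$ odd it swaps the two $m$-gons. In both cases the quotient by $\pm$ is an $m$-cycle on $\Phi^+_m$, which is exactly the boundary of $P$. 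Since $m\geq 3$, this is a genuine polygon.

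Next I identify the full subgraph of $\hGammam$ spanned by $\{\pm\beta,\pm\gamma\}$ when $f_\beta,f_\gamma$ are adjacent. Say $\beta$ is $m$-adjacent to $\gamma$ (the case of $-\gamma$ is identical). Then $-\beta$ is $m$-adjacent to $-\gamma$, by the negation automorphism. The angle between $\beta$ and $-\gamma$ is $\pi/m\neq\pi-\pi/m$ because $m\geq 3$, and the angle between $\beta$ and $-\beta$ is $\pi$. So every other pair is an $\infty$-edge, and the subgraph is two disjoint edges labelled $m$. Hence the standard parabolic subgroup it generates is $\A_m\frpp\A_m$, as claimed. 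Similarly $\{\beta,-\beta\}$ spans a non-edge, so the edge group is $\mathbb{F}(\delta_\beta,\delta_{-\beta})$. All local maps are inclusions of standard parabolic subgroups, so they are injective and compatible, giving a simple complex of groups.

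\textbf{Step 2: the fundamental group.} The geometric realisation of $\mathcal{Q}$ is a cone (on the barycentre of $P$), hence simply connected. The direct limit is the free product of the local groups modulo identifications along inclusions. The polygon group is trivial, and every edge group is contained in the vertex groups at its endpoints. So the limit is generated by the symbols $\delta_\beta$, for $\beta$ ranging over $\pm\Phi^+_m=\Phi_m$, with one generator per root: the copies of $\delta_\beta$ in the two vertex groups at the ends of $f_{|\beta|}$ are identified through $G_{f_{|\beta|}}$.

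The relations are the union of the Artin relations of the vertex groups. By Step 1, every $m$-edge $\{\beta,\gamma\}$ of $\hGammam$ has $f_{|\beta|}$ adjacent to $f_{|\gamma|}$, so its braid relation appears in the vertex group $G_{v_{|\beta|,|\gamma|}}$. No other relations occur, since non-edges of $\hGammam$ impose none. Hence the limit has exactly the standard presentation of $\A[\hGammam]$. Moreover, each $i_q$ is the inclusion of the corresponding standard parabolic subgroup of $\A[\hGammam]$, which is injective by van der Lek. Theorem~\ref{thm-strictly-developable-scog} therefore applies.

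The main care needed is the bookkeeping in Step 1: showing that every $m$-edge of $\hGammam$ is captured by some vertex of $P$, and that no spurious relation appears, in particular that $\beta$ and $-\gamma$ are not $m$-adjacent. This is precisely where the hypothesis $m\geq3$ enters.
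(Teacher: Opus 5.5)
Your proof is correct and follows the paper's route: identify $\pi_1(G(\mathcal{Q}))$ with $\A[\hGammam]$ by comparing presentations, observe that $|\mathcal{Q}'|$ is a cone and hence simply connected, and use van der Lek for the injectivity of the local groups. The paper only asserts that the isomorphism ``easily follows from the definition''; your Step 1 supplies exactly the omitted checks, namely that negation is an automorphism of $\hGammam$ so the quotient is an $m$-cycle, that each vertex subgraph is two disjoint $m$-edges since $\pi/m\neq\pi-\pi/m$ for $m\geq 3$, and that every $m$-edge of $\hGammam$ appears at some vertex of $P$.
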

\begin{proof}
   The structure of the Coxeter graph $\hGammam$ is described in Proposition \ref{propdihedralgammahat}. As usual, denote by $\{\delta_{\beta}\,\mid \beta\in \Phi_m\}$ the generating set of $\A[\hGammam]$. The fact that
    \[
    \A[\hGammam]\cong \pi_1(G(\mathcal{Q}))
    \]
    \noindent easily follows from the definition of $\pi_1(G(\mathcal{Q}))$. It remains to show that $G(\mathcal{Q})$ is a strictly developable complex of groups, namely, that all the morphisms $i_q: G_q\longrightarrow \pi_1(G(\mathcal{Q}))=\A[\hGammam]$ are injective. Note that $|\mathcal{Q}'|$ is simply connected since the face $P$ is a maximal element of $\mathcal{Q}$, hence $|\mathcal{Q}'|$ is a cone. Since the local groups of $G(\mathcal{Q})$ are standard parabolic subgroups of $\A[\hGammam]$, they inject into $\A[\hGammam]$ by Van der Lek's result \cite{van1983homotopy}, and hence the result is proven. 
\end{proof}
\begin{ex}
    Let $m=6$, so that $\Phi_6$ is the root system illustrated in Figure \ref{fig:evenRootSystem}. 
    In Figure \ref{fig:even_complexofgroups} we illustrate the hexagon $P$ and the strictly developable simple complex of groups $G(\mathcal{Q})$. The edge-colours of $P$ in Figure \ref{fig:even_complexofgroups} correspond to the lines containing $\beta$ and $-\beta$ in Figure \ref{fig:evenRootSystem}.

\begin{figure}[h]
\centering
\begin{minipage}{6cm}
  \centering
\includegraphics[width=6cm]{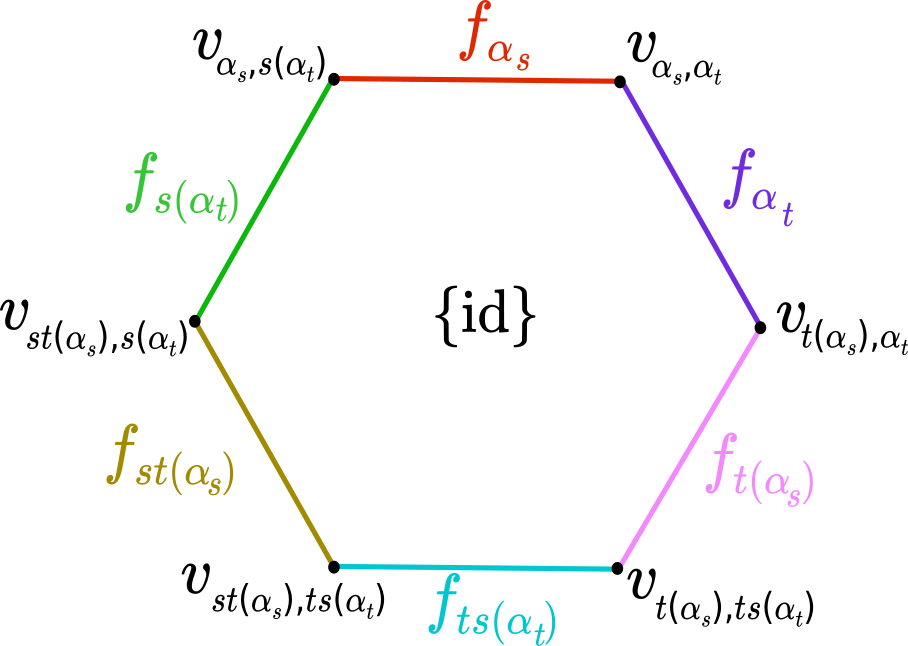}
\end{minipage}
\qquad\qquad\quad
\begin{minipage}{6.5cm}
  \centering
\includegraphics[width=6.5cm]{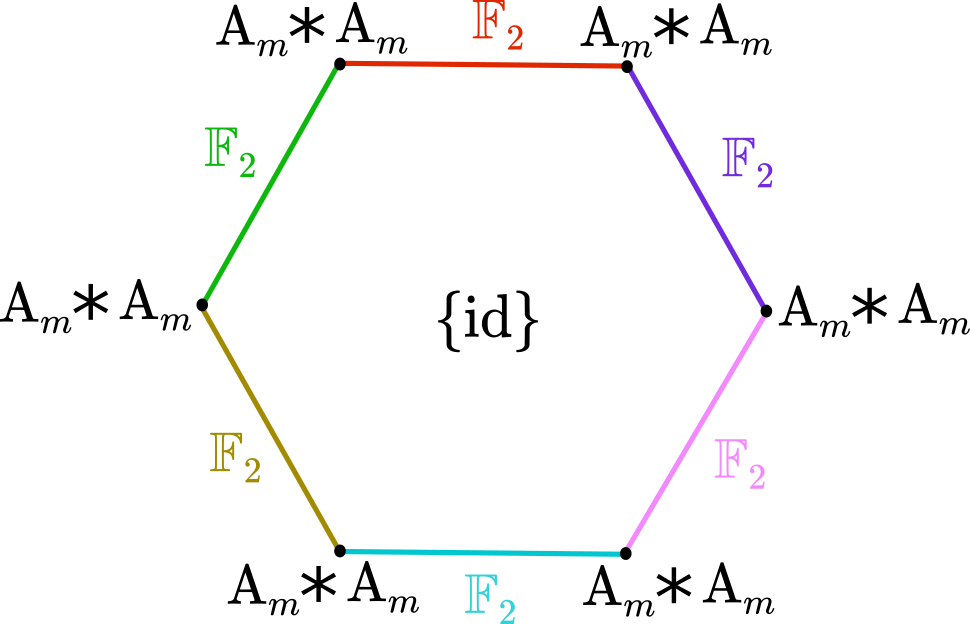}
\end{minipage}
\caption{The polygon $P$ (left) and the complex of groups $G(\mathcal{Q})$ for $m=6$ (right). For better readability, the local morphisms are omitted.}
\label{fig:even_complexofgroups}
\end{figure}
    
\end{ex}

\noindent Proposition \ref{A[hGammam]andcomplexesofGroups} implies that $\A[\hGammam]$ acts face-preserving on a polygonal complex $X$ built out of $m$-gons, and with strict fundamental domain $P$.
\noindent We now consider 
a square subdivision $X'$ of $X$ obtained by adding a vertex at the centre of each polygon and each edge of $X$, and by adding edges between the centre of a polygon and the midpoint of each of its each edges.\medskip\\
\noindent The study of the complex $X$ (and, in particular, of its square subdivision $X'$) is motivated by the fact that an equation like the one we have in \ref{equationInKVA} can naturally be interpreted as a certain non back-tracking loop of polygons in $X$, which we now explain. 

\begin{defn}\label{def:cyclically_reduced_GQ} A \textit{word in $G(\mathcal{Q})$} is a word of the form $\xi=x_1\cdots x_r$ with each syllable $x_i$ belonging to an edge-group of $G(\mathcal{Q})$.  We say that the word $\xi$ is \textit{cyclically reduced} in  $G(\mathcal{Q})$ if it is cyclically reduced as a word in the free product of the edge-groups of $G(\mathcal{Q})$, that is, for every $i\in \mathbb{Z}_{/r}$, the syllables $x_i$ and $x_{i+1}$ never belong to the same edge-group associated to the fundamental domain $P$, and $x_i\neq \id$ for all $i=1,\ldots,r$.
\end{defn}
\noindent Similarly to what is done for free products, we denote by $\ev_{\A[\hGammam]}(\xi)$ the evaluation of the element $\xi$ in the fundamental group of $G(\mathcal{Q})$, which is isomorphic to $\A[\hGammam]$ by Lemma \ref{A[hGammam]andcomplexesofGroups}. The following lemma motivates the study of this polygonal complex.
\begin{lem}\label{lem-cycred-nonbackloops}  Let  $\xi=x_1\cdots x_r$ be a cyclically reduced word in $G(\mathcal{Q})$ such that $\ev_{\A[{\hGammam}]}(\xi)=\idK$. For each $0 \leq i \leq r$, let $P_i\coloneqq x_1\cdots x_iP$. The sequence $$P_0=P, P_1,\ldots, P_r = P$$defines a loop of polygons in $X$, i.e. a sequence of polygons such that any two consecutive polygons intersect along an edge. For each $i$, let $u_i$ be the vertex of $X'$ that is the centre of $P_i$ and let $u_i'$ be the vertex of $X'$ that is the centre of the edge $P_i \cap P_{i+1}$. Then the path 
$$u_0, u_0', u_1, u_1', \ldots, u_r=u_0$$ defines a non-backtracking loop in $X'$. 
\end{lem}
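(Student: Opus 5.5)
We need to check three things: consecutive polygons share an edge, the sequence closes up, and the path in $X'$ has no backtracking. We use the explicit description of $X$ from Theorem~\ref{thm-strictly-developable-scog}: with the reverse-inclusion convention, polygons of $X$ are cosets $g\{\idK\}$ of the trivial group $G_P$, i.e. elements of $\A[\hGammam]$. Edges of $X$ are cosets $gG_{f_\beta}$, and the polygon $g$ contains the edge $hG_{f_\beta}$ exactly when $g\in hG_{f_\beta}$.

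\textbf{Consecutive polygons share an edge.} Let $f^{(i)}$ be the edge of $P$ whose edge-group contains $x_i$. Then $P_{i-1}=x_1\cdots x_{i-1}P$ and $P_i=x_1\cdots x_{i-1}x_iP$ both contain the edge $x_1\cdots x_{i-1}G_{f^{(i)}}$. The reason is that $x_1\cdots x_{i-1}$ and $x_1\cdots x_i$ both lie in the coset $x_1\cdots x_{i-1}G_{f^{(i)}}$, since $x_i\in G_{f^{(i)}}$. Since $x_i\neq\idK$ and $G_P$ is trivial, $P_{i-1}\neq P_i$. Two distinct polygons meet in at most one edge of $X$: an edge carries only one edge-type, and a polygon has a unique edge of each type. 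Hence $P_{i-1}\cap P_i$ is exactly this edge, and $u'_{i-1}$ is well defined as its midpoint.

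\textbf{The sequence closes up.} We have $P_r=\ev(\xi)P=P$ because $\ev_{\A[\hGammam]}(\xi)=\idK$. So the sequence is a loop, and in $X'$ the vertices $u_{i-1},u'_{i-1},u_i$ are joined by edges from centres of polygons to midpoints of their edges.

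\textbf{No backtracking.} A backtrack can only occur in two ways.

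First, $u_{i-1}=u_i$. This would mean $P_{i-1}=P_i$, which was excluded above.

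Second, $u'_{i-1}=u'_i$, i.e. the edge $P_{i-1}\cap P_i$ equals the edge $P_i\cap P_{i+1}$. The edge of $P_i$ of type $f$ is $x_1\cdots x_iG_f$, so as edges of $P_i$ these two edges have types $f^{(i)}$ and $f^{(i+1)}$ respectively. Distinct edge types give distinct edges of $P_i$, so equality forces $f^{(i)}=f^{(i+1)}$. This contradicts cyclic reducedness.

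The same argument applies at the wrap-around index $r\to 1$, using the cyclic condition on $x_r,x_1$ together with $P_r=P_0$.

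The main (though mild) obstacle is the bookkeeping in the second backtracking case: identifying the shared edge $P_i\cap P_{i+1}$ and recognising that its type, read inside $P_i$, is determined by the edge-group containing $x_{i+1}$. The strict fundamental domain property makes this type well defined.
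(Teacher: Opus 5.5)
Your proof is correct and follows the same route as the paper: non-triviality of each $x_i$ gives $P_{i-1}\neq P_i$, and cyclic reducedness forces the two edges of $P_i$ shared with $P_{i-1}$ and $P_{i+1}$ to have different types, hence to be distinct; your coset bookkeeping makes explicit what the paper leaves implicit. One small point: your justification that two distinct polygons share at most one edge only rules out two shared edges of the same type, while excluding two shared edges of different types $f\neq f'$ needs $G_f\cap G_{f'}=\{\idK\}$ (van der Lek). This is not needed for non-backtracking if you simply take $u'_{i-1}$ to be the midpoint of the explicit edge $x_1\cdots x_{i-1}G_{f^{(i)}}$.
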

 \begin{proof}
 Since $\xi$ is reduced, each syllable $x_i$ is non-trivial. In particular, $P_i \neq P_{i+1}$, hence $u_i \neq u_{i+1}$. Moreover, since $\xi$ is reduced,  
$x_i$ and $x_{i+1}$ never belong to the same edge-group of $P$. Thus, the edges $P_{i-1}\cap P_i$ and $P_i \cap P_{i+1}$ are distinct, hence $u_i \neq u_{i+1}'$. Thus, the path $u_0, u_0', u_1, u_1', \ldots, u_r$ has no backtracking.
\end{proof}

\noindent In order to study the loops coming from the above lemma, we now introduce a piecewise-Euclidean metric on $X'$ (and thus on $X$), which we prove to be CAT(0).

\begin{defn}[Metric on $X'$]\label{def-verts-metric-X'}
    A vertex $v$ of the square subdivision $X'$ of $X$ is said to be 
    \begin{itemize}
        \item of type $(a)$ if it is the centre of a polygon of $X$;
        \item  of type $(b)$ if it is the midpoint of an edge of $X$;
        \item of type $(c)$ if is it a vertex of $X$ prior to subdivision.
    \end{itemize}
    \noindent A square of $X'$ has two opposite vertices of type $(b)$ and two vertices of type $(a)$ and $(c)$, respectively.    When $m\geq4$  let $\theta\coloneqq \pi/2$ (see Figure \ref{fig:subdivisionofX}),  and when $m=3$ let $\theta\coloneqq \pi/3$ (see Figure~\ref{fig:subdivisionofX-triangle}).The metric that we choose on $X'$ is the piecewise-Euclidean metric where each square is identified with the parallelogram of the Euclidean plane satisfying the following (see Figure~\ref{fig:metric-X'}):
    \begin{itemize}
        \item The angle at each vertex of type $(b)$ is $\pi/2$.
        \item The angle at the vertex of type $(c)$ is $\theta$.
        \item The angle at the vertex of type $(a)$ is $\pi-\theta$.
        \item Edges between vertices of type (b) and (c) have length $1$. 
    \end{itemize}
\end{defn}

\begin{figure}[h!]
\centering
\includegraphics[width=8.5cm]{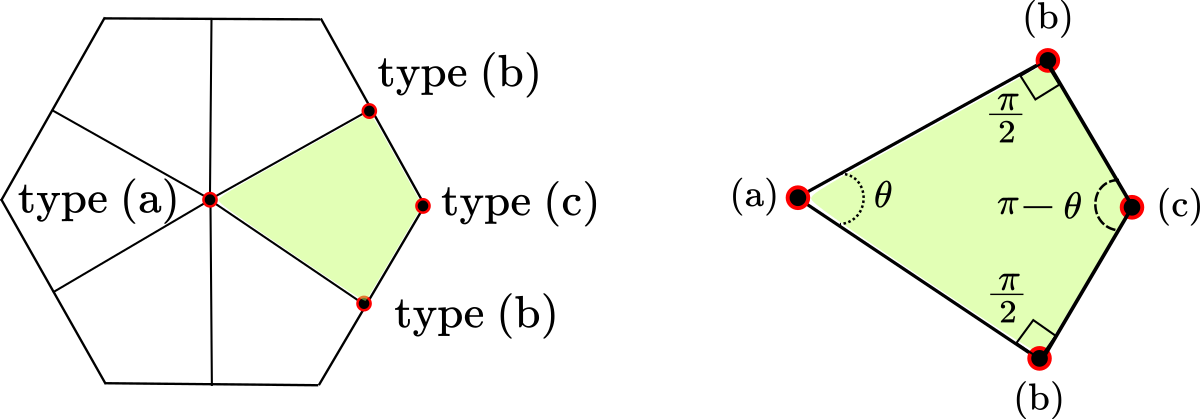}
    \caption{The metric on the squares of $X'$.}
\label{fig:metric-X'}
\end{figure}

\begin{figure}[h!]
\centering
\begin{minipage}{7cm}
    \centering \includegraphics[width=7cm]{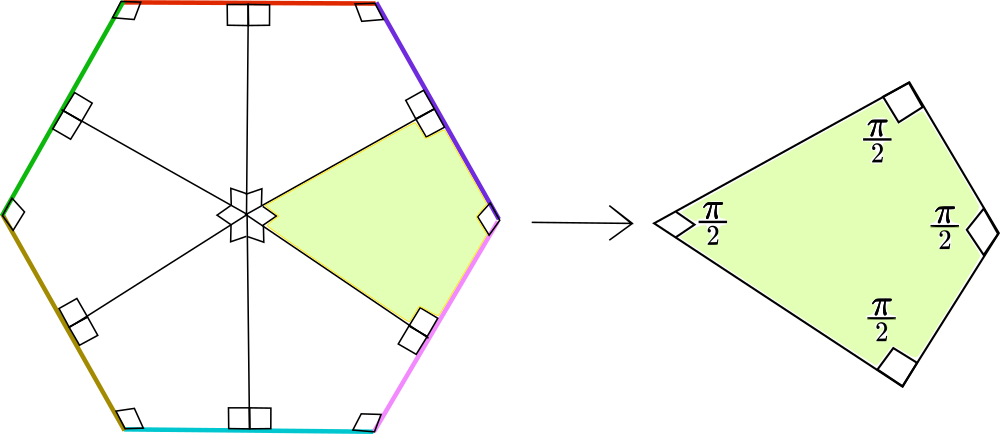}
    \caption{Square subdivision of the polygon $P$ for $m\geq 4$.}
\label{fig:subdivisionofX}
\end{minipage}
\qquad
\begin{minipage}{7cm}
        \centering \includegraphics[width=7cm]{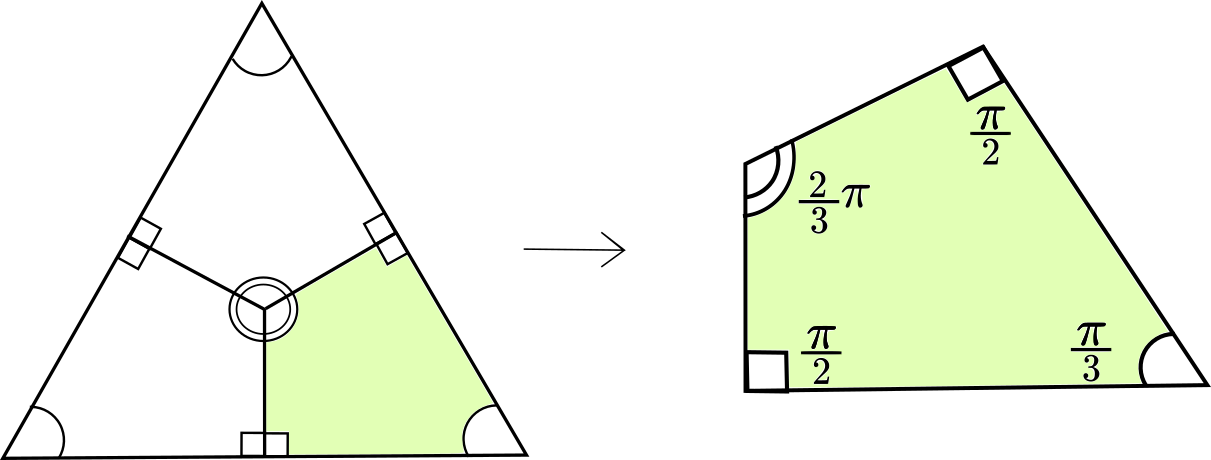}
    \caption{Square subdivision of the polygon $P$ for $m=3$.}
\label{fig:subdivisionofX-triangle}
\end{minipage}
\end{figure}

\begin{prop}\label{prop-X'-CAT(0)}
For the choice of metric done in Definition \ref{def-verts-metric-X'}, the polygonal complex $X'$ is CAT(0).
\end{prop}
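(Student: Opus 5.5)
The plan is to apply Gromov's link condition to the piecewise-Euclidean complex $X'$. By Lemma~\ref{A[hGammam]andcomplexesofGroups} and Theorem~\ref{thm-strictly-developable-scog}, the development $X$ is simply connected, and hence so is its subdivision $X'$. Since there are finitely many isometry types of cells, $X'$ is a complete geodesic space by \cite[Theorem~7.13]{BriHaefl}. It therefore suffices to check that the link of every vertex is CAT(1). Each link is a metric graph, so this amounts to showing that every embedded loop in it has length at least $2\pi$. We treat the three vertex types of Definition~\ref{def-verts-metric-X'} separately.

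\emph{Types (a) and (b).} For a vertex of type (a), the centre of a polygon, the link is a circle made of $m$ edges, one for each square, i.e.\ for each corner of the $m$-gon. Each edge has length $\pi-\theta$. For $m\geq 4$ this gives total length $m\pi/2\geq 2\pi$, and for $m=3$ it gives $3\cdot 2\pi/3=2\pi$. For a vertex of type (b), the midpoint of an edge $e$ of $X$, the vertices of the link are the two endpoints of $e$ together with the centres of the polygons containing $e$. Each square contributes exactly one edge, of length $\pi/2$, joining an endpoint to a polygon centre. A square is determined by the pair (polygon, corner), so the link is a simplicial complete bipartite graph. Any embedded loop then has at least $4$ edges and length at least $2\pi$.

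\emph{Type (c).} This is the main obstacle. Up to the action, the vertex is $v=v_{\beta,\gamma}$, with stabiliser
\[
G_v=\langle\delta_\beta,\delta_\gamma\rangle\frpp\langle\delta_{-\beta},\delta_{-\gamma}\rangle=:A\frpp A',\qquad A\cong A'\cong\A_m .
\]
The two edge groups at $v$ are $F_1=\mathbb F(\delta_\beta,\delta_{-\beta})$ and $F_2=\mathbb F(\delta_\gamma,\delta_{-\gamma})$. Exactly as in the proof of Theorem~\ref{thm-2dim-vD-CAT(0)}, the link of $v$ is bipartite: it has cosets of $F_1$ and $F_2$ on one side and elements of $G_v$ (polygon centres) on the other, and each edge has length $\theta$. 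An embedded loop of $\ell$ edges thus corresponds to a non-trivial normal-form word $y_1\cdots y_\ell$ in $F_1\frpp F_2$ that evaluates to the identity in $G_v$, with $\ell$ even.
\begin{itemize}
\item The case $\ell=2$ is ruled out because $F_1\cap F_2=\{\id\}$, which is visible from the normal form in $A\frpp A'$. This already settles $m\geq 4$, where $\theta=\pi/2$ and we need $\ell\geq 4$.
\item For $m=3$ we have $\theta=\pi/3$, so we must also exclude $\ell=4$, i.e.\ relations $y_1y_2y_3y_4=\id$ with the $y_i$ alternately in $F_1$ and $F_2$.
\end{itemize}

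To handle $\ell=4$, I would expand each $y_i$ in its free normal form over $\langle\delta_{\pm\beta}\rangle$ or $\langle\delta_{\pm\gamma}\rangle$. This rewrites the word in the free product $A\frpp A'$. Since it represents the identity, some maximal subword lying in a single factor, say $A$, must evaluate to the identity in $A\cong\A_3$. Such a subword is an alternating product of powers of $\delta_\beta$ and $\delta_\gamma$, and since it comes from at most $4$ syllables $y_i$ it has syllabic length less than $6$ in $\langle\delta_\beta\rangle\frpp\langle\delta_\gamma\rangle$. By Theorem~\ref{AppelSchuppLemma} it must then already be trivial in that free product. That forces a free cancellation either inside some $y_i$ or between a $\delta_\beta$-power and a $\delta_\gamma$-power. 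The first is impossible because each $y_i$ is written in free normal form; the second is impossible because $\delta_\beta$ and $\delta_\gamma$ generate distinct infinite cyclic factors. An induction on the number of such cancellations should then force some $y_i=\id$, contradicting the assumption that the word is in normal form.

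I expect this bookkeeping of cancellations, when contracting a trivial $A$- or $A'$-block creates new adjacencies, to be the delicate point. If it becomes unwieldy, the fallback is to apply the retractions $G_v\to A$ and $G_v\to A'$, which kill the other factor, and use Theorem~\ref{AppelSchuppLemma} on each image separately.
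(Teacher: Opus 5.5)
Your proposal is correct and is essentially the paper's proof: Gromov's link condition at the three vertex types, with the type (c) case handled by regrouping into $A\ast A'$ and applying Theorem~\ref{AppelSchuppLemma} to a block that evaluates trivially. This is exactly the argument of Lemma~\ref{lem-Extended-AppelSchupp}; the paper proves girth $\geq 2m$ for every $m$ (a bound reused later in Proposition~\ref{prop_eq_inKVA}), whereas you only extract $\ell\geq 4$ from $F_1\cap F_2=\{\id\}$ and, for $m=3$, $\ell\geq 6$.

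Two small corrections. First, no induction on cancellations is needed: the first block that evaluates trivially is already a non-empty freely reduced word of syllabic length at most $4<6$ in $\langle\delta_\beta\rangle\ast\langle\delta_\gamma\rangle$, which contradicts Appel--Schupp outright. Second, the link of a type (c) vertex has only the cosets of $F_1$ and $F_2$ as vertices, with one edge of length $\theta$ per element of $G_v$; polygon centres are not adjacent to $v$ in $X'$. That is the count you actually use.
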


\noindent A key result to show that $X'$ is non-positively curved is the following lemma, which is a sort of generalization of Appel--Schupp Lemma \ref{AppelSchuppLemma}. Recall that we denote the free group on $n$ generators by $\mathbb{F}_n$. If $x_1,\ldots,x_n$ are the explicit generators of $\mathbb{F}_n$, we write $\mathbb{F}_n=\mathbb{F}(x_1,\ldots,x_n)$. 

\begin{lem}\label{lem-Extended-AppelSchupp}
\noindent Let $m \geq 2$, and let $\Gamma=\Gamma^1_m\sqcup\Gamma^2_m$ be the Coxeter graph given by two disjoint edges with label $m$ and vertex set $V(\Gamma_m^{(i)})=\{\beta_i,\gamma_i,\}$ for  $i=1,2$. Consider the free subgroups $H_{\beta} \coloneqq \A_{\{\beta_1, \beta_2\}}=\mathbb{F}(\beta_1,\beta_2)$ and $H_{\gamma}\coloneqq \A_{\{\gamma_1, \gamma_2\}}=\mathbb{F}(\gamma_1,\gamma_2)$ of $\A[\Gamma]$. Let $$g=b_1c_1\cdots b_kc_k$$ be an element of the free product $H_{\beta}\frpp H_{\gamma}$, written in normal form. If $\ev_{\A[\Gamma]}(g) = \id_{\A[\Gamma]}$, then the syllabic length of $g$ with respect to the free product $H_{\beta}\frpp H_{\gamma}$  satisfies $\|g\|_{H_{\beta}\ast H_{\gamma}}\geq 2m$. 
\end{lem}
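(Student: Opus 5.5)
The plan is to exploit the fact that $\Gamma$ has no edge between its two components. This gives a free product decomposition $\A[\Gamma]=\A_1\frpp\A_2$ with $\A_i:=\A_{\{\beta_i,\gamma_i\}}\cong\A_m$. I would then locate, inside $g$, a piece that lies entirely in one factor $\A_i$ and is trivial there, and apply the classical Appel--Schupp Theorem~\ref{AppelSchuppLemma} to that piece.

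\textbf{Reductions.} By Remark~\ref{rmk_cyclically_reduced}, up to cyclic permutation (and conjugation within a factor) I may assume $g$ is cyclically reduced in $H_\beta\frpp H_\gamma$. So every syllable is non-trivial, consecutive syllables (cyclically) alternate between $H_\beta$ and $H_\gamma$, and $\|g\|$ is even. I would dispose of $\|g\|\le 1$ directly: then $g$ is a single non-trivial element of a free group $H_\beta$ or $H_\gamma$, and these inject into $\A[\Gamma]$ by van der Lek, so $g\neq\id$.

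\textbf{Pieces and blocks.} Write each syllable $b_j\in\mathbb{F}(\beta_1,\beta_2)$ in reduced form as an alternating product of non-zero powers of $\beta_1$ and $\beta_2$, and similarly each $c_j$ in terms of $\gamma_1,\gamma_2$; call these powers \emph{pieces}. Reading $g$ cyclically gives a cyclic sequence of pieces, each lying in $\A_1$ (powers of $\beta_1,\gamma_1$) or in $\A_2$. Group cyclically consecutive pieces lying in the same $\A_i$ into maximal \emph{blocks}.
\begin{itemize}
\item Consecutive pieces inside one syllable lie in different $\A_i$. Hence two consecutive pieces of a block come from consecutive syllables of $g$, and so alternate between $\beta_i$-powers and $\gamma_i$-powers.
\item Consequently a block is a word in normal form in $\A_{\beta_i}\frpp\A_{\gamma_i}$ all of whose syllables are non-zero powers. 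In particular it is non-trivial in that free product.
\item Its syllabic length is at most the number of syllables of $g$ it meets. Unless the block is the whole cyclic word, it meets each syllable of $g$ at most once, so its length is at most $\|g\|$.
\end{itemize}

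\textbf{Using the normal form in $\A_1\frpp\A_2$.} Suppose there are at least two blocks. Then the blocks form a cyclically alternating sequence of elements of $\A_1$ and $\A_2$ whose product is $\ev_{\A[\Gamma]}(g)=\id$. By the normal form theorem for free products (cyclic version), some block $B$ must evaluate to the identity in its factor $\A_i\cong\A_m$. By the previous step, $B$ is non-trivial in $\A_{\beta_i}\frpp\A_{\gamma_i}$ and in normal form. Theorem~\ref{AppelSchuppLemma} then gives $\|B\|\ge 2m$, and since $\|B\|\le\|g\|$ we conclude $\|g\|\ge 2m$.

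In the remaining case there is a single block covering everything. Then every syllable of $g$ is a single power, $b_j=\beta_i^{p_j}$ and $c_j=\gamma_i^{q_j}$, all in the same $\A_i$. So $g$ itself is a normal form word in $\A_{\beta_i}\frpp\A_{\gamma_i}$ trivial in $\A_m$, and Theorem~\ref{AppelSchuppLemma} applies directly to give $\|g\|\ge 2m$.

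\textbf{Main obstacle.} The delicate point is the bookkeeping that turns "trivial block" into "long subword of $g$". One must check that maximality of blocks prevents two pieces from the same syllable from ever lying in one block, except in the wrap-around single-block case. One must also check that cyclic reduction of $g$ is what makes the cyclic free-product normal form argument in $\A_1\frpp\A_2$ legitimate. I would handle both by writing the block decomposition explicitly and treating the one-block case separately, as above.
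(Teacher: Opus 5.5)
Your strategy is the paper's: split syllables into powers of single generators, regroup maximal runs into an alternating word in $\A_{\{\beta_1,\gamma_1\}}\frpp\A_{\{\beta_2,\gamma_2\}}$, use the normal form there to find a run trivial in $\A_m$, and apply Theorem~\ref{AppelSchuppLemma} to it. However, one step fails as written, and it is precisely the bookkeeping point you flagged.

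\textbf{The failing step.} After you pass to the cyclically reduced word, a block that is not the whole cyclic word \emph{can} meet the same syllable twice. Take $g=(\beta_1^{a}\beta_2^{b}\beta_1^{c})(\gamma_1^{d})$, which is cyclically reduced with $\|g\|=2$. Its cyclic pieces are $\beta_1^{a},\beta_2^{b},\beta_1^{c},\gamma_1^{d}$. The blocks are $\beta_2^{b}$ and $\beta_1^{c}\gamma_1^{d}\beta_1^{a}$, and the second has syllabic length $3>\|g\|$.

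In general, with at least two blocks, this wrap-around happens exactly when two conditions hold. First, one syllable has an odd number $\geq 3$ of pieces, with its first and last pieces in $\A_i$. Second, every other syllable is a single power in $\A_i$. Such a block has exactly $\|g\|+1$ syllables, so your inequality $\|B\|\le\|g\|$ is false in this case.

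\textbf{Repairs.} Any of the following works.
\begin{itemize}
\item In the wrap-around case, Theorem~\ref{AppelSchuppLemma} gives $\|g\|+1\geq 2m$. Since $\|g\|$ is even after cyclic reduction (you noted this but never used it), you get $\|g\|\geq 2m$.
\item Alternatively, conjugate $B$ to merge its two end pieces, which are powers of the same generator. Then apply Theorem~\ref{AppelSchuppLemma} to the cyclic reduction, which has length at most $\|g\|$.
\item Simplest, and what the paper does: do not cyclically reduce at all. For the linear word $b_1c_1\cdots b_kc_k$, blocks are linear runs. The pieces of a block come from syllables with strictly increasing indices, so $\|B\|\le\|g\|$ is immediate.
\end{itemize}
In the linear version, the ordinary normal form theorem in $\A_{\{\beta_1,\gamma_1\}}\frpp\A_{\{\beta_2,\gamma_2\}}$ already forces some non-empty block to be trivial. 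This also makes your separate treatment of $\|g\|\le 1$ and of the single-block case unnecessary.
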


\begin{proof}
    Let us write each $b_i$ in normal form in the free group $\mathbb{F}( \beta_1, \beta_2)$ and each $c_i$ in normal form in $\mathbb{F}( \gamma_1, \gamma_2)$. Then the concatenation $b_1c_1\cdots b_kc_k$ yields an element in normal form in $\mathbb{F}(\beta_1,\beta_2,\gamma_1,\gamma_2)$. By grouping together on the one hand all the syllables of the form $\beta_1^\ast$ and $\gamma_1^\ast$, and on the other hand all the syllables of the form $\beta_2^\ast$ and $\gamma_2^\ast$, we can rewrite this element $g$ as a concatenation of the form 
    $g = \eta_1^{(1)}\eta_1^{(2)}\cdots \eta_\ell^{(1)}\eta_\ell^{(2)}$, which is in normal form for the decomposition of $ \mathbb{F}(\beta_1,\beta_2,\gamma_1,\gamma_2)$ as the free product $\mathbb{F}( \beta_1, \gamma_1)\frpp \mathbb{F}(\beta_2,\gamma_2)$ (i.e. each $\eta_i^{(1)} \in \mathbb{F}( \beta_1,\gamma_1)$ non-trivial except maybe $\eta_1^{(1)}$, and each $\eta_i^{(2)} \in \mathbb{F}( \beta_1,\gamma_1)$ non-trivial except maybe $\eta_\ell^{(2)}$). Under the evaluation map $\ev_{\A[\Gamma]}: H_{\beta} \frpp H_{\gamma} \rightarrow \A[\Gamma]$, we have that
    $$\id_{\A[\Gamma]} = \ev_{\A[\Gamma]}(g) = \mathrm{ev}_{\A[\Gamma]}(\eta_1^{(1)})\;\ev_{\A[\Gamma]}(\eta_1^{(2)})\cdots \;\ev_{\A[\Gamma]}(\eta_\ell^{(1)})\;\mathrm{ev}_{\A[\Gamma]}(\eta_\ell^{(2)}).$$
    If none of the syllables $\mathrm{ev}_{\A[\Gamma]}(\eta_j^{(i)})$ is trivial, then this a normal form in the free product $\A[\Gamma] = \A_{\{\beta_1,\gamma_1\}}\frpp \A_{\{\beta_2, \gamma_2\}}$, hence $\mathrm{ev}_{\A[\Gamma]}(g)$ is non-trivial, which is a contradiction. Thus, one of the syllables $\mathrm{ev}_{\A[\Gamma]}(\eta_j^{(i)})$ is the trivial element of $\A_{\{\beta_1,\gamma_1\}}$ or $\A_{\{\beta_2, \gamma_2\}}$, for some $j=1,\ldots,\ell$. This means that the word $\eta_j^{(i)}$, which is in normal form in the free product $\mathbb{F}(\beta_i, \gamma_i) = \langle \beta_i \rangle \frpp \langle \gamma_i\rangle$, satisfies $$\mathrm{ev}_{\A_{\{\beta_i, \gamma_i\}}}(\eta_j^{(i)}) = \id_{\A_{\{\beta_i, \gamma_i\}}}$$
    By the standard Appel--Schupp Lemma \ref{AppelSchuppLemma} in the dihedral Artin group $\A_{\{\beta_i, \gamma_i\}}$, we get that $$||\eta_j^{(i)}||_{\langle \beta_i\rangle\ast \langle \gamma_i\rangle} \geq 2m.$$
    Since syllables of the form $\beta_i^\ast$ and $\gamma_i^\ast$ are in different free factors on the free product $H_{\beta}\frpp H_{\gamma}=\A_{\{\beta_1, \beta_2\}}\ast \A_{\{\gamma_1, \gamma_2\}}$, it follows that $$||g||_{H_{\beta}\ast H_{\gamma}} \geq ||\eta_j^{(i)}||_{\langle \beta_i\rangle\ast \langle \gamma_i\rangle} \geq 2m,$$
    as we wanted to show.
\end{proof}

\begin{proof}[Proof of Proposition \ref{prop-X'-CAT(0)}]
    The complex $X'$ is simply connected as (a subdivision of) the universal cover of the developable complex of groups $G(\mathcal{Q})$. As usual, we use Gromov's link criterion to show that $X'$ is locally CAT(0). Let $v$ be a vertex of $X'$. If $v$ is a vertex of type $(a)$, then it exactly belongs to $m$ squares, and a non trivial closed curve in $\mathrm{Link}(v)$ measures at least $m\cdot(\pi-\theta)\geq 2\pi$ for all $m\geq 3$. If $v$ is of type $(b)$, then it is incident to at least $4$ squares, thus a non trivial closed curve in $\mathrm{Link}(v)$ measures at least $4\cdot\pi/2=2\pi$. If $v$ is of type $(c)$, then it is a vertex of $X$ which corresponds to a standard parabolic subgroup of $\A[\hGammam]$ of the form $\langle\delta_{\beta},\delta_{-\beta},\delta_{\gamma},\delta_{-\gamma}\rangle$, and the angle at its corner is $\theta\in \{\pi/2,\pi/3\}$. As in the proof of Theorem \ref{thm-2dim-vD-CAT(0)}, an embedded loop in the link of $v$ corresponds to a cyclically reduced element of the form $b_1c_1\cdots b_kc_k$, where the syllables alternatively belong to two edge-groups $\langle\delta_{\beta},\delta_{-\beta}\rangle$ and $\langle \delta_{\gamma},\delta_{-\gamma}\rangle$.    It thus follow from Lemma \ref{lem-Extended-AppelSchupp} that the girth of $\mathrm{Link}(v)$ is at least $2m$. Therefore the length of a non trivial loop in $\mathrm{Link}(v)$ is at least $2m\cdot\theta\geq 6\;\pi/3=2\pi$ for all $m\geq 3$. Hence $X'$ is locally CAT(0), which concludes the proof.
\end{proof}

\medskip
\noindent From now on, when we refer to the complex $X'$, we will always mean this square complex equipped with the above choice of angles. With this notion of subdivision, note that $\A[\hGammam]=\pi_1(G(\mathcal{Q}))$ acts face preserving on the square complex $X'$. 

\subsection{\texorpdfstring{Disc diagrams and cyclically reduced elements}{Disc diagrams and cyclically reduced elements }}\label{subs-discdiagrams_inAgammahat}

\noindent In this subsection, we 
use disc diagram arguments to establish a lower bound on the number of syllables in a cyclically reduced element $\xi=x_1\cdots x_r$ in $G(\mathcal{Q})$ that represents the identity element of $\A[\hGammam]$.
\noindent We recall below terminology and results about disc diagrams and refer the reader to 
\cite[Chapter V]{LynSchu77} and \cite[Section 4]{McCammondWise02} for further details. \medskip
\\
\noindent A \textit{disc diagram} $D$ over a polygonal complex $X$ is a finite contractible planar CW-complex of dimension 2, together with a cellular map $D\longrightarrow X$ which restricts to a homeomorphisms on every closed 2-cell. We say that a disc diagram $D$ is \textit{reduced} if no two distinct 2-cells of $D$ that share an edge are mapped to the same polygon of $X$. We say that a disc diagram $D$ is \textit{non-degenerate} if its boundary is isomorphic to a circle, and \textit{degenerate} otherwise. \medskip\\
\noindent Given a locally finite 2-dimensional CW-complex and $x$ a point in its 1-skeleton, the \textit{link of $x$ in $X$}, denoted by $\mathrm{Link}(x)$, is the set of points that are at distance $\epsilon$ from $x$. This set has a natural graph structure where the vertices are the the points of $\mathrm{Link}(x)$ in the 1-skeleton of $X$, and the arcs are the points of $\mathrm{Link}(x)$ in the interior of the 2-cells of $X$. If $v\in X$ is a 0-cell and we regard the 2-cells of $X$ as polygons, then the edges of $\mathrm{Link}(v)$
correspond to the corners of these polygons attached to $v$. We will refer to a
particular edge in $\mathrm{Link}(v)$ as a \textit{corner of $R$ at $v$} if this edge comes from the
polygon $R\longrightarrow X$.\medskip\\
\noindent A vertex $v$ of a disc diagram $D\longrightarrow X$ over a polygonal complex $X$ of dimension 2 is said \textit{internal}, and we write $v\in \overset{\circ}{D}$, if its link is homeomorphic to a circle, while it is said \textit{boundary vertex}, and we write $v\in \partial D$, otherwise.\\
\\
\noindent Thanks to the Lyndon-van Kampen theorem, we know that to every null-homotopic and non-backtracking closed curve $\xi: \mathbb{S}^1\longrightarrow X$ in a polygonal complex $X$, one can associate a disc diagram $D\longrightarrow X$ whose restriction to the boundary is the given loop $\xi$. We say that \textit{$D$ fills $\xi$}.

\begin{defn}\label{defncurvature}
     Let $D$ be a planar, contractible 2-complex and suppose that every corner $c$ of a 2-cell of $D$ has been assigned a real positive number $\theta_c$, which will be the \textit{angle} at such a corner. The \textit{combinatorial curvature} of a vertex $v\in D$, denoted by $\kappa_{D}(v)$, is \[\kappa_D(v)=2\pi-\pi\,\cdot \chi(\mathrm{Link}(v))\;\;-\sum_{c\,\text{corner at }v}\theta_c\,; \]
    \noindent where $\chi(\mathrm{Link}(v))$ is the Euler characteristic of $\mathrm{Link}(v)$, i.e. number of vertices of $\mathrm{Link}(v)$ minus number of edges in $\mathrm{Link}(v)$. The \textit{curvature} of a closed 2-cell $f$ of $D$, denoted by $\kappa_D(f)$, is defined to be 
    \[
    \kappa_D(f)=2\pi-|\partial f|\cdot\pi\;\,-\sum_{c\;\text{corner of } f}\theta_c\;;
    \]
\noindent where $|\partial f|$ is the number of edges of $D$ in the boundary of $f$.
\end{defn}

\noindent The following result, known as the combinatorial Gauss-Bonnet theorem, will be our key tool to study loops in $X'$:

\begin{thm}\cite[Theorem 4.6]{McCammondWise02}\label{comb_GAusBonnet}
    Let $D$ be a planar contractible polygonal complex with angles. Then:
    \[
    \sum_{v\in D}\kappa_D(v)+\sum_{f\in D}\kappa_D(f)=2\pi.
    \]
\end{thm}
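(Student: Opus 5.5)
The plan is a direct Euler characteristic count; the whole proof reduces to a double-counting identity together with $\chi(D)=1$ for a contractible complex. One point needs settling first. For the identity to hold, the corner angles must enter the vertex curvatures and the face curvatures with \emph{opposite} signs, so that they cancel in the total. The convention of \cite{McCammondWise02} is
\[
\kappa_D(f)=2\pi-\sum_{c\text{ corner of }f}(\pi-\theta_c)=2\pi-|\partial f|\cdot\pi+\sum_{c}\theta_c ,
\]
which differs from the displayed formula in Definition~\ref{defncurvature} by the sign of the angle sum. As displayed, the identity fails: for a single square with all corner angles $\pi/2$, one gets $4\cdot\pi/2+(-4\pi)=-2\pi$. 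I will therefore prove the statement with the sign $+\sum_c\theta_c$ in $\kappa_D(f)$, reading the minus sign in Definition~\ref{defncurvature} as a typo.

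Let $V$, $E$, $F$ be the numbers of vertices, edges and $2$-cells of $D$, and let $C=\sum_f |\partial f|$. Here $C$ is also the total number of corners, since each $2$-cell has exactly as many corners as boundary edges (counted with multiplicity when an edge appears twice on $\partial f$). The key step is to compute $\sum_{v}\chi(\mathrm{Link}(v))$ by counting locally.
\begin{itemize}
\item The vertices of $\mathrm{Link}(v)$ are the ends of edges of $D$ at $v$. Every edge of $D$ has exactly two ends, a loop contributing two ends at the same vertex. Summing over all $v$ therefore gives $2E$.
\item The edges of $\mathrm{Link}(v)$ are the corners at $v$. Summing over all $v$ gives $C$.
\end{itemize}
Hence $\sum_v \chi(\mathrm{Link}(v)) = 2E - C$. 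This holds equally at interior vertices, boundary vertices and cut vertices of degenerate diagrams, because only the incidence counts enter.

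Now add the curvatures. Every corner angle $\theta_c$ is attached both to a vertex and to a face: it appears with a minus sign in the vertex sum and with a plus sign in the face sum, so these contributions cancel. What remains is
\[
\sum_v\kappa_D(v)+\sum_f\kappa_D(f)=2\pi V-\pi(2E-C)+2\pi F-\pi C=2\pi(V-E+F)=2\pi\,\chi(D).
\]
Since $D$ is a finite contractible $2$-complex, $\chi(D)=1$, and the total equals $2\pi$ as claimed.

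The main obstacle is purely bookkeeping: the link-vertex and corner counts must stay valid in degenerate situations, such as edges appearing twice on the boundary of one face, loops, and non-circular boundary. I would handle this by defining everything via ends of edges and corners of attaching maps rather than via embedded geometric pictures, so that each of the counts above is an exact identity in all cases.
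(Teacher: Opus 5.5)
Your argument is correct and is the standard Euler-characteristic double count behind \cite[Theorem 4.6]{McCammondWise02}, which the paper cites without giving a proof: summing $\chi(\mathrm{Link}(v))$ over vertices gives $2E-C$, the corner angles cancel between vertex and face terms, and $\chi(D)=1$. You are also right that the face curvature in Definition~\ref{defncurvature} should read $2\pi-|\partial f|\cdot\pi+\sum_c\theta_c$; this is the convention the paper actually relies on later, when it asserts that every square of $X'$ has curvature zero.
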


\begin{prop}\label{prop_eq_inKVA}
Let $\Gamma_m$ be the dihedral Coxeter graph with integer label $m\geq 3$, and let $\xi=x_1\cdots x_r$ be a non-empty, cyclically reduced element in the complex of groups $G(\mathcal{Q})$. If $\ev_{\A[\hGammam]}(\xi)=\idK$ in $\A[\hGammam]$, then $r\geq 2m$.   
\end{prop}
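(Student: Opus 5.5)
By Lemma~\ref{lem-cycred-nonbackloops}, the element $\xi$ yields a closed non-backtracking loop $c=(u_0,u_0',u_1,u_1',\ldots,u_r=u_0)$ in $X'$. This loop alternates between vertices of type $(a)$, the centres of the polygons $P_i$, and vertices of type $(b)$, the midpoints of the shared edges. It has $2r$ edges. Since $X'$ is CAT(0) by Proposition~\ref{prop-X'-CAT(0)}, it is simply connected, so $c$ is null-homotopic. The plan is to take a reduced disc diagram $D\to X'$ of minimal area filling $c$, or more conveniently a diagram over $X$ whose 2-cells are the squares. Assign each corner the angle prescribed by Definition~\ref{def-verts-metric-X'}: $\pi/2$ at type $(b)$, $\theta$ at type $(c)$, $\pi-\theta$ at type $(a)$. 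Then apply the combinatorial Gauss--Bonnet Theorem~\ref{comb_GAusBonnet}. Since the squares are Euclidean parallelograms, every face has curvature $0$.

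\textbf{Internal vertices.} Each has $\kappa_D(v)\le 0$, by the link computations in the proof of Proposition~\ref{prop-X'-CAT(0)}. Reducedness of $D$ ensures that the cycle in the link is a genuine (immersed) loop in $\mathrm{Link}(v,X')$, so its length is at least $2\pi$. Consequently all positive curvature, totalling at least $2\pi$, is concentrated on the boundary.

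\textbf{Boundary vertices.} Each boundary vertex of $D$ is of type $(a)$ or $(b)$. If $D$ is non-degenerate, a boundary vertex with $k\ge1$ corners has curvature $\pi-\sum\theta_c$.
\begin{itemize}
\item At a type $(a)$ vertex $u_i$, non-backtracking ($u_{i-1}'\neq u_i'$) forces at least one corner, so $\kappa\le \pi-(\pi-\theta)=\theta$.
\item At a type $(b)$ vertex $u_i'$, the two boundary edges go to the distinct polygon centres $u_i\ne u_{i+1}$. Both squares at an edge midpoint contain the two adjacent half-edges, and the corner angle there is $\pi/2$. The two polygon centres are not adjacent through a single square, so at least two corners appear, and $\kappa\le \pi-2\cdot\pi/2=0$.
\end{itemize}
Hence $2\pi\le \sum\kappa\le r\theta$, which gives $r\ge 2\pi/\theta$. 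This yields $r\ge4$ when $m\ge4$ and $r\ge 6=2m$ when $m=3$. So for $m\ge4$ this crude count is insufficient, and the argument must be refined.

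\textbf{Refinement for $m\ge 4$ (main obstacle).} Here one must show that positive curvature at type $(a)$ boundary vertices is compensated. The plan is to look at a type $(a)$ boundary vertex $u_i$ with exactly one corner: the whole polygon $P_i$'s contribution is one square. In that case the corner's square has its type $(c)$ vertex $x$, an original vertex of $X$, on the path between the edges $P_{i-1}\cap P_i$ and $P_i\cap P_{i+1}$. Since $\theta=\pi/2$, $\kappa(u_i)=\pi/2$.

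I would group boundary vertices into maximal runs of consecutive polygons sharing a common type $(c)$ vertex $x$ of $X$. Such a run of polygons around $x$ corresponds to consecutive syllables alternating in the two edge-groups at $x$. If $x$ is internal to $D$, Lemma~\ref{lem-Extended-AppelSchupp} gives at least $2m$ corners at $x$, so $\kappa(x)\le 2\pi-2m\cdot\pi/2<0$ for $m\ge3$. This negative curvature must be charged against the boundary gains. I would set up a discharging argument: each internal type $(c)$ vertex with $k\ge 2m$ corners has curvature $2\pi-k\pi/2$, and this is distributed to the adjacent boundary $(a)$-vertices. The goal is to show that each unit of net positive curvature $\pi/2$ costs roughly $m/2$ syllables, giving $2\pi$ only when $r\ge 2m$.

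Concretely, I would first treat the case where $D$ is a single ``fan'' of squares around one type $(c)$ vertex. There the loop winds around $x$, and Lemma~\ref{lem-Extended-AppelSchupp} directly gives $r\ge 2m$. I would then argue that any diagram with more structure has strictly more boundary, for instance by induction on area, cutting $D$ along a type $(a)$/type $(c)$ path into smaller diagrams. Degenerate diagrams are handled by splitting at cut vertices, each piece being a shorter cyclically reduced loop. Making this bookkeeping precise is the step I expect to be hardest.
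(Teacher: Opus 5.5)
Your setup is the same as the paper's: a reduced filling diagram $D$, the corner angles of Definition~\ref{def-verts-metric-X'} (so every square is flat), and the local estimates. These are $\kappa_D\le\theta$ at boundary vertices of type $(a)$, and $\kappa_D\le 0$ at boundary vertices of type $(b)$ and at all interior vertices. The gap is the case $m\ge 4$. Your crude count gives only $r\ge 4$ there, and the proposed refinement (discharging from interior type $(c)$ vertices, induction on area, cutting along type $(a)$/$(c)$ paths) is never carried out. Claims such as ``each unit of net positive curvature costs roughly $m/2$ syllables'' or ``any diagram with more structure has strictly more boundary'' are not justified. As written, the proposal proves the statement only for $m=3$.

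The missing observation is that no discharging is needed, and you had already written down the key estimate.
\begin{itemize}
\item Every square of $X'$ has exactly one vertex of type $(c)$.
\item Type $(c)$ vertices never lie on $\partial D$, because the boundary loop only passes through vertices of types $(a)$ and $(b)$.
\item $D$ contains at least one square, since a diagram without $2$-cells is a tree and a non-backtracking closed path cannot live in a tree.
\end{itemize}
Hence $D$ has at least one interior vertex $x$ of type $(c)$. By Lemma~\ref{lem-Extended-AppelSchupp} and the fact that $D\to X'$ is an immersion on links, $\kappa_D(x)\le 2\pi-2m\theta$. This bound is $\le 0$ for both choices of $\theta$: it equals $0$ when $m=3$ and $\theta=\pi/3$, and equals $2\pi-m\pi<0$ when $m\ge 4$ and $\theta=\pi/2$. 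So all type $(c)$ vertices together contribute at most $2\pi-2m\theta$. Feeding this single term into Theorem~\ref{comb_GAusBonnet} gives
\[
2\pi\;\le\; r\theta+0+(2\pi-2m\theta),
\]
that is, $r\ge 2m$, uniformly for all $m\ge 3$. This is exactly how the paper concludes. One interior type $(c)$ vertex already absorbs the whole deficit, so you never need to relate the number or arrangement of such vertices to the boundary length.
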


\begin{proof}[Proof of Proposition \ref{prop_eq_inKVA}]
\begin{figure}[h!]
    \centering \includegraphics[width=5cm]{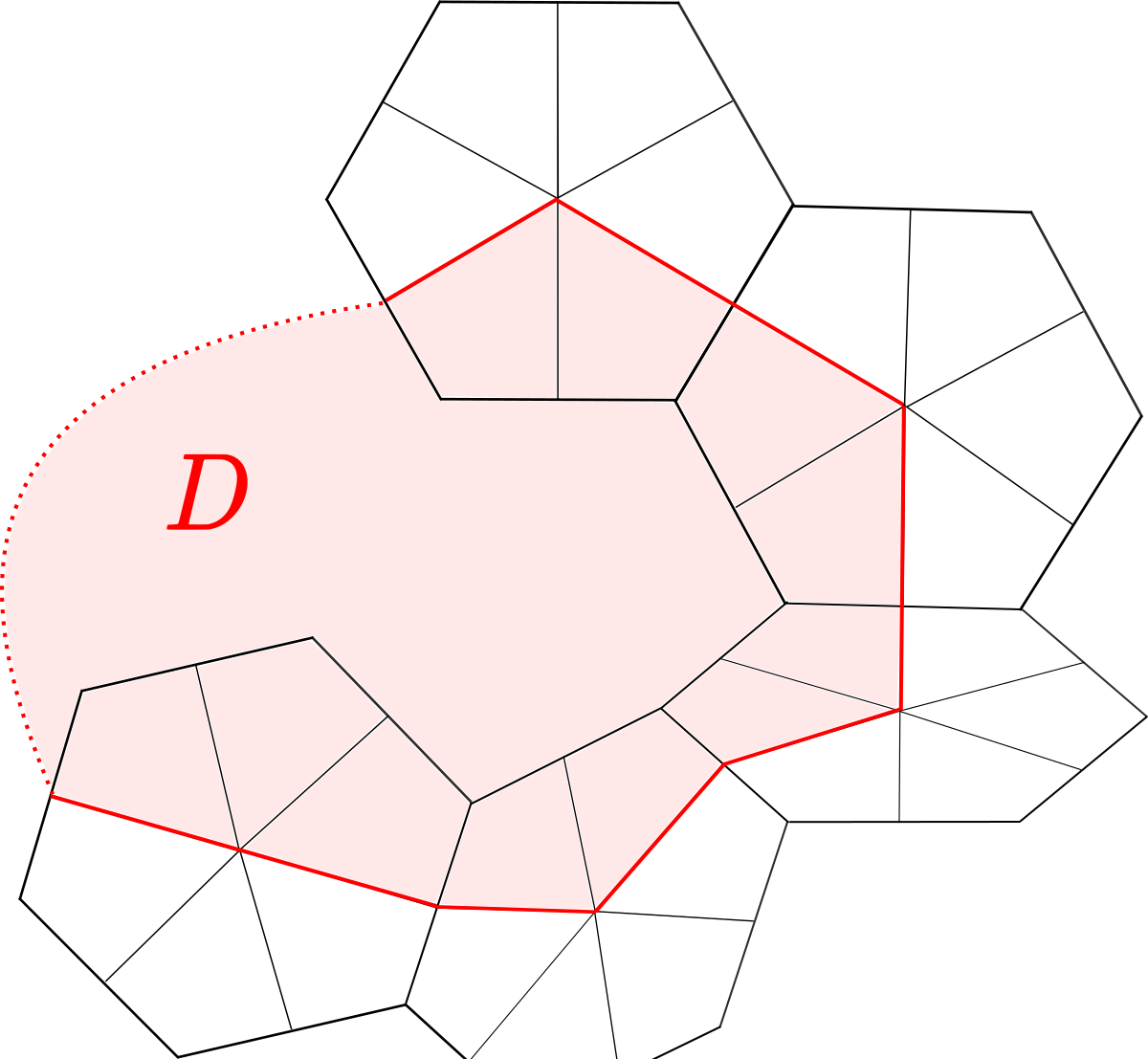}
    \caption{The disc diagram $D$ whose boundary is the cyclically reduced element $\xi=x_1\cdots x_r$ representing the identity in $\A[\hGammam]$.}
\label{fig:discdiagram}
\end{figure} 
    Let $X'$ be the square complex associated with $G(\mathcal{Q})$. Since $\xi=x_1 \cdots x_r$ represents the identity in $\A[\hGammam]$, it defines a null-homotopic loop in $X'$ without back-tracking by Lemma~\ref{lem-cycred-nonbackloops}, which we fill with a minimal diagram $D$. Note that it is enough to consider the case where $r$ is minimal, and in particular we can assume that the associated loop of $X'$ is embedded. We assign to each corner the angle coming from the CAT(0) metric on $X'$ (see Figure~\ref{fig:metric-X'}). Note in particular that for this choice of angles, each square has curvature zero, and the combinatorial Gauss-Bonnet theorem (Theorem \ref{comb_GAusBonnet}) boils to the following equality:
\[
2\pi=\sum_{v\in D}\kappa_D(v).
\] 
\noindent We now study the curvature of the vertices in $D$ (see Figure \ref{fig:discdiagram}), considering separately the case of boundary vertices and interior vertices. Observe first that vertices of type $(c)$, by construction, cannot belong to the boundary of $D$.

\medskip

\noindent \textbf{Claim 1.} Let $v$ be a boundary vertex of $D$. If $v$ is a of type $(a)$, then  $\kappa_D(v) \leq \theta$. If $v$ is of type $(b)$, then $\kappa_D(v)\leq 0$.

\medskip

\noindent Suppose that $v$ is of type $(a)$. Since the angle at every corner containing $v$ is $\pi-\theta$ by construction, it follows that $\kappa_D(v) \leq \theta$, as we wanted.\\
\noindent Suppose now that $v$ is of type $(b)$. Since the angle at every corner containing $v$ is $\pi/2$ by construction, in order to show that $\kappa_D(v) \leq 0$ it is enough to show that $v$ is not contained in a single square of $D$. This follows from the fact that $\partial D$ only contains vertices of type $(a)$ or (b), while two consecutive edges of a square of $X'$ necessarily contain a vertex of type $(c)$. This proves the claim.

\medskip

\noindent \textbf{Claim 2.} Let $v$ be a vertex in the interior of $D$. Then $\kappa_D(v) \leq 0$. Moreover, if $v$ is a vertex of type $(c)$, then $\kappa_D(v)\leq 2\pi - 2m\theta$. 

\medskip

\noindent Since $D$ is reduced, the map $D \rightarrow X'$ induces an immersion at the level of links of vertices. In particular, since $X'$ is CAT(0) by Proposition~\ref{prop-X'-CAT(0)}, it follows that $D$ itself is CAT(0). Thus, the girth of every interior vertex of $D$ is at least $2\pi$, hence $\kappa_D(v) \leq 0$. 

\noindent Suppose now that $v$ is a vertex of type $(c)$. Again, the map $f:D \rightarrow X'$ induces an immersion at the level of links of vertices. In particular, the girth of $\mathrm{Link}_D(v)$ is bounded below by the girth of $\mathrm{Link}_{X'}(f(v))$. Now since every corner of a vertex of type $(c)$ is $\theta$ by construction, it follows from Lemma~\ref{lem-Extended-AppelSchupp} that the girth of $\mathrm{Link}_D(v)$ is at least $2m\theta$, hence $\kappa_D(v) \leq 2\pi - 2m\theta$. This proves the claim.

\medskip

\begin{figure}[h!]
\centering
\begin{minipage}{7cm}
    \centering \includegraphics[width=4cm]{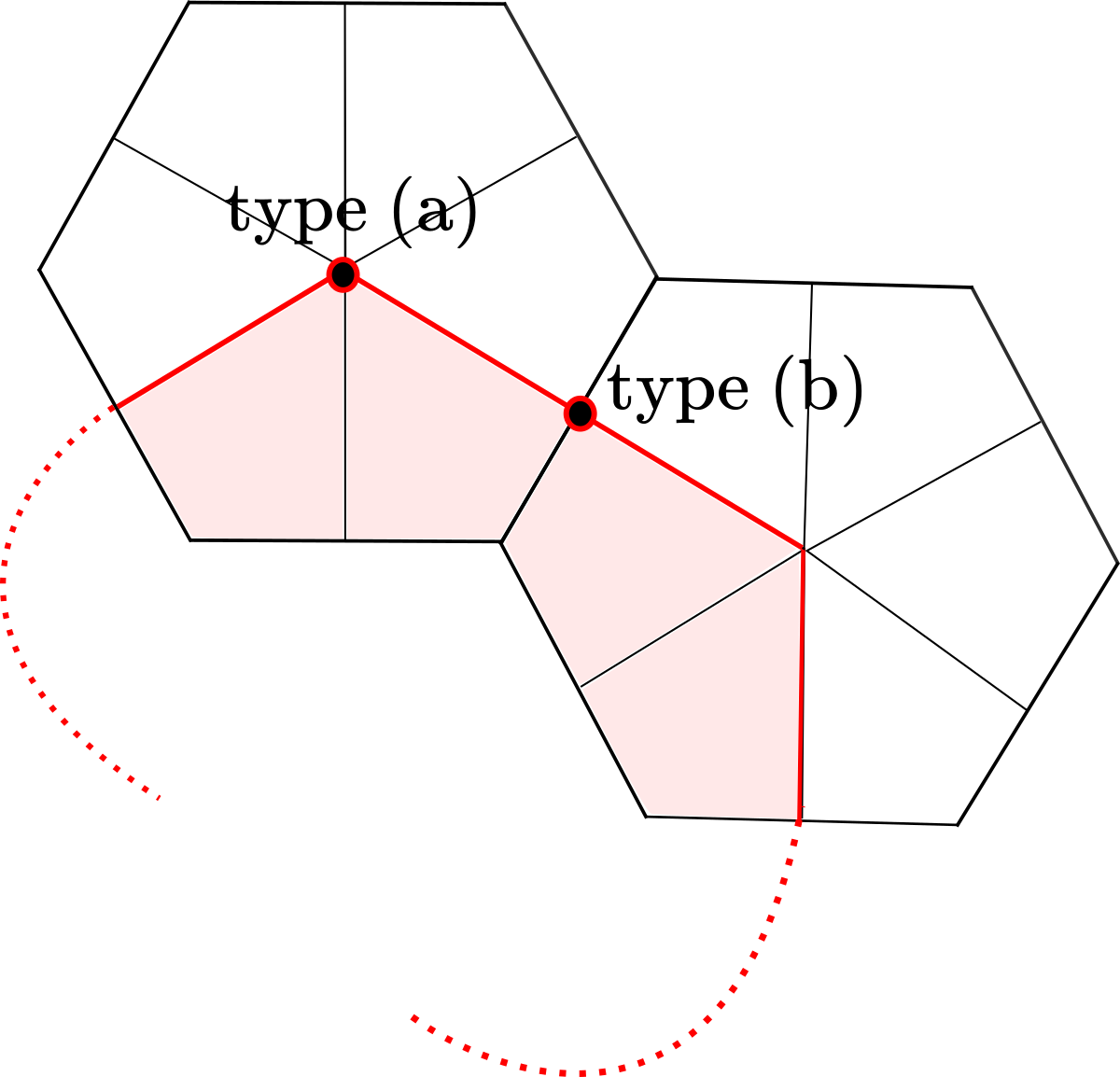}
    \caption{Boundary vertices in $\partial D$ of type $(a)$ and $(b)$.}
\label{fig:boudary-vertices}
\end{minipage}
\qquad
\begin{minipage}{7cm}
    \centering \includegraphics[width=4cm]{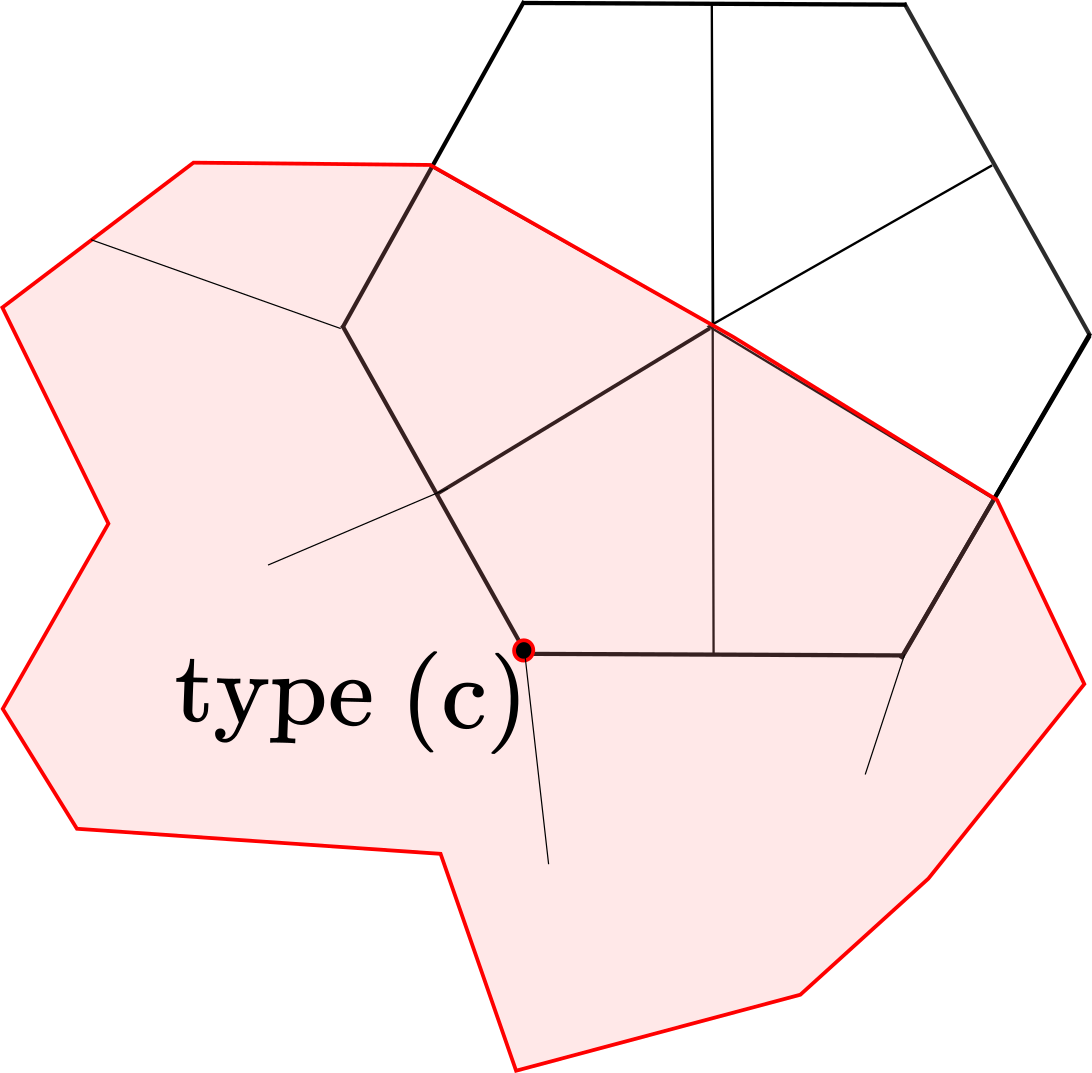}
    \caption{Interior vertices in $\overset{\circ}{D}$ of type $(c)$.}
\label{fig:interior_vertices}
\end{minipage}
\end{figure}

\noindent We now apply the combinatorial Gauss-Bonnet theorem (Theorem \ref{comb_GAusBonnet}):
\[
2\pi=\sum_{v\in D}\kappa_D(v).
\]
\noindent Since the loop of $X'$ associated to $\xi$ is embedded by assumption, it follows that $\partial D$ is homeomorphic to a circle. In particular, $D$ contains at least one square, hence vertices of type $(a)$, $(b)$,$(c)$. It thus follows from the above claims that

\begin{align}
2\pi=\underbrace{\sum_{v\,\text{of type $(a)$}}\kappa_D(v)}_{\leq \,r\,\theta}\;+\underbrace{\sum_{v\,\text{of type $(b)$}}\kappa_D(v)}_{\leq 0}\;+\underbrace{\sum_{v\,\text{of type $(c)$}}\kappa_D(v)}_{\leq 2\pi-2m \theta}\
\end{align}
Therefore $2\pi \leq 2\pi\,+(r-2m)\theta$, hence $ r\geq 2m$. 
\end{proof}

\section{\texorpdfstring{Proof of the virtual Appel--Schupp Theorem (Theorem \ref{virtualAppelSchupp})}{Proof of virtual Appel--Schupp (Theorem \ref{virtualAppelSchupp})}}\label{sec-virtualAppelSchupp}

In this section, we prove the following result.
\begin{reptheorem}{virtualAppelSchupp} Let $\Gamma_m$ be a dihedral Coxeter graph with vertex set $S=\{s,t\}$ and $m\geq 2$. Let $\VA_m$ be the associated virtual Artin group, and let $\omega$ be an element in $\VA_s\frpp \VA_t$ written in normal form such that $\ev_{\VA_m}(\omega)=\id_{\VA_m}$. Then the syllabic length of $\omega$ satisfies $\|\omega\|\geq 2m$. 
\end{reptheorem}
\noindent The fact that the minimal length of a  non-empty element in normal form in $\VA_s \frpp \VA_t$ that represents the identity in the quotient $\VA_m$ is at least $2m$ follows from the results established in Sections \ref{sectionDihedral} and \ref{sec-poly-of-groups-ArtinGammaHat}. Indeed, we have seen that $g_1\cdots g_{\ell}=\id\in \VA_m$ if and only if the following system of equations is satisfied:
\begin{equation*}
 \begin{cases}
k_1^{u_1}\,k_2^{u_2}\cdots k_{\ell}^{u_{\ell}}=\idK \quad&\text{in $\A[\hGammam]$};\\
    w_1\cdots w_{\ell}=\idW \quad &\text{in $\W_m$;} 
\end{cases}   \qquad\qquad\qquad (\text{\ref{eq2}})
\end{equation*}
\noindent where $g_i=(k_i,w_i)$, $u_1=\idW$ and $u_i=w_1\cdots w_{i-1}$ for all $i=1,\ldots,\ell-1$. \\

\noindent Unless in the proof of Theorem \ref{virtualAppelSchupp}, we will assume in this section that $m\geq 3$. We now focus on the first equation of (\ref{eq2}), aiming to interpret it as a certain null-homotopic, non-backtracking loop of polygons in $X'$, and then use Proposition \ref{prop_eq_inKVA} to bound its minimal length.

\subsection{\texorpdfstring{The equation in $\A[\hGammam]$}{The equation in the Artin group}} \label{subs-equation-in-A}

\noindent We start with the following observation:

\begin{lem}\label{reducedwordcasek_inontrivial} Let $\omega$ be an element in $\VA_s\frpp \VA_t$ satisfying equation (\ref{eq2}). Then the product $k_1^{u_1}\,k_2^{u_2}\cdots k_{\ell}^{u_{\ell}}$ defines a word in $G(\mathcal{Q})$, where $G(\mathcal{Q})$ is the complex of groups constructed in Definition~\ref{defn-complexofgroups-GQ}.
\end{lem}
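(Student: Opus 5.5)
We have to show that each factor $a_i=k_i^{u_i}$ lies in one of the edge-groups $G_{f_\beta}=\langle\delta_\beta,\delta_{-\beta}\rangle$ of $G(\mathcal{Q})$, for some $\beta\in\Phi_m^+$. The plan is to first describe $k_i$ and then track how conjugation by $u_i$ moves it around.

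The first step is to identify $k_i$. Each syllable $g_i$ lies in $\VA_{r_i}$ with $r_i\in\{s,t\}$, so $g_i=(k_i,w_i)$ with $k_i\in\KVA[\Gamma_{r_i}]$. For the rank-one graph $\Gamma_{r_i}$, the root system is $\{\pm\alpha_{r_i}\}$ and $\widehat{\Gamma_{r_i}}$ has two vertices with no edge. By Theorem~\ref{thm-KVA-iso-Agammahat}, $\KVA[\Gamma_{r_i}]$ is therefore the free group $\mathbb{F}(\delta_{\alpha_{r_i}},\delta_{-\alpha_{r_i}})$. Concretely, $\delta_{\alpha_r}=\sigma_r$ and $\delta_{-\alpha_r}=\tau_r\sigma_r\tau_r$. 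This uses the isomorphism $\VA[\Gamma_{r_i}]\cong\VA_{r_i}$ from \cite[Thm 1.1]{GaGaPa26}, which guarantees that the kernel inside $\VA_{r_i}$ coincides with $\KVA[\Gamma_{r_i}]$ viewed inside $\KVA[\Gamma_m]$. Under the identification of Theorem~\ref{thm-KVA-iso-Agammahat}, $k_i$ is thus a word in $\delta_{\pm\alpha_{r_i}}$.

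The second step applies Equation~(\ref{actionWonKVA}). If $k_i=\delta_{\epsilon_1\alpha_{r_i}}^{n_1}\cdots\delta_{\epsilon_p\alpha_{r_i}}^{n_p}$, then $k_i^{u_i}=\delta_{\epsilon_1u_i(\alpha_{r_i})}^{n_1}\cdots\delta_{\epsilon_p u_i(\alpha_{r_i})}^{n_p}$. Let $\beta_i:=\pm u_i(\alpha_{r_i})$, with the sign chosen so that $\beta_i\in\Phi_m^+$. Since $\W_m$ acts linearly, $u_i(-\alpha_{r_i})=-u_i(\alpha_{r_i})$, so $a_i\in\langle\delta_{\beta_i},\delta_{-\beta_i}\rangle=G_{f_{\beta_i}}$. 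Every positive root labels exactly one edge of $P$ by Definition~\ref{defn-complexofgroups-GQ}, so this group is indeed an edge-group of $G(\mathcal{Q})$. Hence $a_1\cdots a_\ell$ is a word in $G(\mathcal{Q})$, which proves the lemma.

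The only delicate point is the compatibility of the identifications in the first step, namely that $\KVA$ of the rank-one parabolic is generated by $\delta_{\pm\alpha_r}$ in the ambient $\A[\hGammam]$. This follows directly from the definition $\delta_{w(\alpha_r)}=\sigma_r^w$ with $w\in\{\idW,r\}$, together with $r(\alpha_r)=-\alpha_r$. I would state this explicitly.

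It is also worth recording for later that the $a_i$ are non-trivial whenever the $k_i$ are. Non-triviality is not needed here, since the lemma only asserts that the product is a word in $G(\mathcal{Q})$.
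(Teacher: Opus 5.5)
Your proposal is correct and follows essentially the same route as the paper. Both arguments identify $k_i$ as an element of the free group $\mathbb{F}(\delta_{\alpha_{r_i}},\delta_{-\alpha_{r_i}})$, then use $\delta_\beta^w=\delta_{w(\beta)}$ to place $a_i=k_i^{u_i}$ in $\langle\delta_{\beta_i},\delta_{-\beta_i}\rangle$, and recognise this group as an edge-group of $P$. Your explicit choice of sign making $\beta_i$ positive, together with the computation $\delta_{-\alpha_r}=\tau_r\sigma_r\tau_r$, simply spells out details the paper leaves implicit.
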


\begin{proof}
    For all $i=1,\ldots,\ell$, let $a_i \coloneqq k_i^{u_i}$. The element $k_i$ belongs to $\A[\widehat{\Gamma_{r_i}}]$, which is the free group generated by $\delta_{\alpha_{r_i}}$ and $\delta_{-\alpha_{r_i}}$. 
    Denote by $\beta_i$ the root $\beta_i=u_i(\alpha_{r_i})$. Then $a_i=k_i^{u_i}=(u_i\cdot k_i)\;u_i^{-1}$ belongs to the free group $\mathbb{F}( \delta_{\beta_i},\delta_{-\beta_i})$. By construction of the polygon of groups $G(\mathcal{Q})$, we have that for every (positive) root $\beta\in \Phi_m^+$, the standard parabolic subgroup $\A[\hGamma_{\{\delta_{\beta},\delta_{-\beta}\}}]=\mathbb{F}(\delta_{\beta}, \delta_{-\beta})$ corresponds to an edge-group of $P$.
    \end{proof}

\noindent
However, the product $k_1^{u_1}\cdots k_{\ell}^{u_\ell}$ might not represent a cyclically reduced element in $G(\mathcal{Q})$, in which case we can put it in cyclically reduced form in the usual way, by removing trivial syllables and merging adjacent syllables in the same factor. In Definition~\ref{def:reduction_algorithm} below, we give a slightly different, but equivalent, reduction algorithm which will be more convenient in subsequent proofs.

\begin{nt}
    In this section, we will be working with words $\xi=x_1\cdots x_{r}$ in $G(\mathcal{Q})$ and will be constructing new words representing conjugated elements in $\A[\hGammam]$. Thus, we always think of such words up to conjugation, and in particular we will always think of the syllables $x_i$ as being indexed by $\mathbb{Z}_{/r}$. We will often represent such words in a loop, as in Figure~\ref{fig:intervals}.
\end{nt}

\begin{defn}[Fusible pairs]  Given a word $\xi=x_1\cdots x_{r}$ in $G(\mathcal{Q})$, we say that a pair $(x_{i_1}, x_{i_2})$ of syllables of $\xi$ is a \textit{fusible pair} if $x_{i_1},x_{i_2}$ are non-trivial and belong to the same edge group of $G(\mathcal{Q})$, and if $x_i = \idK$ for all $i \in \{i_1+1, \ldots, i_2-1\}$ (with $i \in \mathbb{Z}_
{/r}$ considered cyclically). We call  $I_{i_1,i_2}=\{i_1+1,\ldots,i_{2}\}$ the \textit{fusion interval} of the fusible pair $(x_{i_1}, x_{i_2})$, and the integer $z_{i_1,i_2}:=|I_{i_1,i_2}|$ the \textit{fusion gap}. 
\end{defn}

\begin{rmk}\label{rmk:disjoint_fusion_intervals}
Note that if $(x_{i_1}, x_{i_2})$ and $(x_{i_3}, x_{i_4})$ are distinct fusible pairs of $\xi$, then the fusion intervals  $I_{i_1,i_2}$ and $I_{i_3,i_4}$ are disjoint, see Figure~\ref{fig:intervals}.   
\end{rmk}

\begin{figure}[h!]
    \centering
\includegraphics[width=13cm]{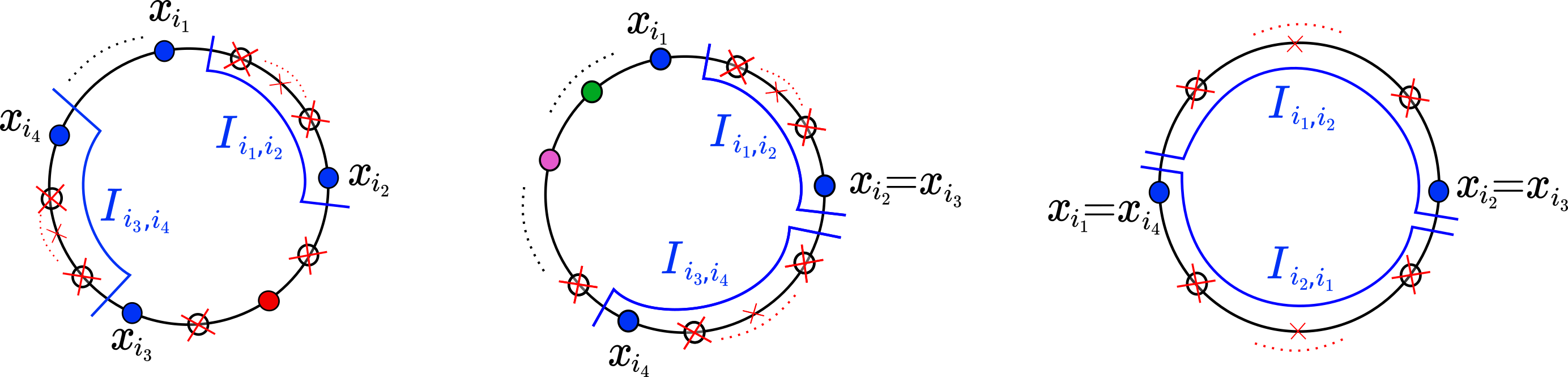}
    \caption{We represent a word $\xi = x_1\cdots x_r$ cyclically. In each image, two distinct fusible pairs and the corresponding disjoint fusion intervals.}
    \label{fig:intervals}
\end{figure}

\begin{defn}\label{def:reduction_algorithm}
  Given a word $\xi=x_1\cdots x_{r}$ in $G(\mathcal{Q})$, we construct a finite sequence of words $\xi^{(0)} = \xi, \xi^{(1)}, \ldots$ all representing elements in the same conjugacy class of $\A[\hGammam]$ by applying the following algorithm:
\begin{itemize}
\item \textbf{Fusion:} If $\xi^{(j)}$ contains a fusible pair, pick such a pair $(x_{i_1}, x_{i_2})$ and define
$$\xi^{(j+1)} = y_1\cdots y_r$$
with $y_i = x_i$ if $i \notin \{i_1, i_2\}$, $y_{i_1}= x_{i_1}x_{i_2}$, and $y_{i_2} = \idK$.
\item \textbf{Cancellation:} if $\xi^{(j)}$ no longer contains any fusible pair, define $\xi^{\CR}$ to be the word obtained from $\xi^{(j)}$ by removing all trivial syllables, and stop the algorithm. 
\end{itemize}
\end{defn}
\noindent  Observe that words $\xi^{(j)}$ have the same number of syllables by construction, but the number of non-trivial syllables in $\xi^{(j)}$ decreases with $j$. We have the following:
\begin{lem}
   Given a word $\xi=x_1\cdots x_{r}$ in $G(\mathcal{Q})$, the algorithm above terminates in a cyclically reduced product $\xi^{\CR}$ in $G(\mathcal{Q})$ such that $\xi$ and $\xi^{\CR}$ represent conjugated elements of $\pi_1(G(\mathcal{Q}))=\A[\hGammam]$.
\end{lem}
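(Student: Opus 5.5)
The plan has three parts: termination, then invariance of the conjugacy class, then cyclic reducedness of the output.

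\textbf{Termination.} Each fusion step replaces two non-trivial syllables by at most one non-trivial syllable. Indeed, $y_{i_2}=\idK$, while $y_{i_1}=x_{i_1}x_{i_2}$ may or may not be trivial. So the number of non-trivial syllables of $\xi^{(j)}$ strictly decreases with $j$, while the total number of syllables stays equal to $r$. Since this count is a non-negative integer bounded by $r$, after at most $r$ fusion steps no fusible pair remains, and the algorithm performs the cancellation step and stops.

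\textbf{The conjugacy class is preserved.} This is the step requiring a little care, because fusible pairs are considered cyclically in $\mathbb{Z}_{/r}$.
\begin{itemize}
\item If $i_1<i_2$ in the linear order on $\{1,\ldots,r\}$, then all syllables strictly between them are trivial. Hence, in $\A[\hGammam]$,
\[
x_{i_1}x_{i_1+1}\cdots x_{i_2}=x_{i_1}x_{i_2},
\]
so $\ev(\xi^{(j+1)})=\ev(\xi^{(j)})$ on the nose.
\item If the fusion interval wraps around the index $r$, i.e.\ $i_2<i_1$, I first apply a cyclic permutation of the syllables so that $i_1$ precedes $i_2$. By Remark~\ref{rmk_cyclically_reduced}, this replaces the element by a conjugate. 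I then argue as in the linear case and permute back, which again produces a conjugate.
\end{itemize}
Removing trivial syllables in the cancellation step clearly does not change the evaluation. Composing finitely many conjugations, $\xi$ and $\xi^{\CR}$ represent conjugate elements of $\pi_1(G(\mathcal{Q}))=\A[\hGammam]$, using Lemma~\ref{A[hGammam]andcomplexesofGroups}. Each new syllable $x_{i_1}x_{i_2}$ still lies in a single edge group, so every $\xi^{(j)}$, and hence $\xi^{\CR}$, is still a word in $G(\mathcal{Q})$.

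\textbf{Cyclic reducedness.} Write $\xi^{\CR}=z_1\cdots z_{r'}$. By construction every $z_p$ is non-trivial. Suppose $z_p$ and $z_{p+1}$ (indices modulo $r'$) lay in the same edge group. In the final word $\xi^{(j)}$ they are non-trivial syllables separated cyclically only by trivial syllables, so they would form a fusible pair. This contradicts the stopping condition.

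One degenerate case must be treated separately, namely $r'=1$, where a single syllable is its own cyclic neighbour. Here the stopping condition forbids pairing a syllable with itself; I will interpret fusible pairs as requiring $i_1\neq i_2$, so a one-syllable word is cyclically reduced in the sense of Definition~\ref{def:cyclically_reduced_GQ}. The other degenerate case, $r'=0$, is the empty word, which is vacuously cyclically reduced. I expect these edge cases and the cyclic-index bookkeeping to be the only mild obstacle; the argument is otherwise the standard reduction in a free product of the edge groups, pushed through the evaluation map to $\A[\hGammam]$.
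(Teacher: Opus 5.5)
Your proof is correct and follows the same route as the paper. The paper gives no written proof beyond observing that each fusion strictly decreases the number of non-trivial syllables, which yields termination, and it leaves implicit both the invariance of the conjugacy class (including wrap-around fusions, which you handle by cyclic permutation) and the cyclic reducedness of the output (which you obtain from the stopping condition), as well as the one-syllable edge case, which your reading $i_1\neq i_2$ resolves sensibly.
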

\noindent We now apply this reduction to the first equation of (\ref{eq2}).
\begin{defn}[The words $\nu, \nu'$ and $\nu^{\CR}$]\label{def:nu-dash-doubledash}
   Let $\omega$ be an element in $\VA_s\frpp \VA_t$ satisfying equation (\ref{eq2}). For each $i$, let $a_i \coloneqq k_i^{u_i}$. Let $\nu$ be the product $$\nu=a_1\cdots a_\ell$$ 
\noindent in $G(\mathcal{Q})$. Let $\nu'$ be the word obtained from $\nu$ by removing all trivial syllables $a_i=k_i^{u_i}=\idK^{u_i}=\idK$ with $k_i=\idK$, 
\begin{equation}
\nu'=b_1\cdots \,b_{\ell'},
\end{equation}
\noindent in $G(\mathcal{Q})$ where, for each $j=1,\ldots,\ell'$, the syllable $b_j$ is of the form $b_j:=a_{i_j}=k_{i_j}^{u_{i_j}}$ for some indices $i_1<\cdots<i_{\ell'}$ (up to cyclic permutation) and non-trivial elements $k_{i_j}$. Then let $\nu^{\CR}=c_1\cdots c_{\ell''}$ be the cyclically reduced word in $G(\mathcal{Q})$ obtained from $\nu$ (or, equivalently, from $\nu'$) by applying the algorithm from Definition~\ref{def:reduction_algorithm}.\medskip
\end{defn} 

\noindent By construction, $\nu^{\CR}$ is cyclically reduced, the words $\nu, \nu', \nu^{\CR}$ represent conjugated elements of $\A[\hGammam]$, and in particular $\ev_{\A[\hGammam]}(\nu)=\ev_{\A[\hGammam]}(\nu')=\ev_{\A[\hGammam]}(\nu^{\CR})=\idK$. Moreover, $\ell\geq \ell'\geq \ell''$. Observe that $\nu'$ or $\nu^{\CR}$ might be empty. If $\nu$ is already cyclically reduced, namely, if all the syllables $a_i$ are non-trivial and adjacent syllables never belong to the same edge-group of $G(\mathcal{Q})$, then $\nu=\nu'=\nu^{\CR}$.
In what follows we distinguish the cases $\nu^{\CR}$ non-empty and $\nu^{\CR}$ empty to find that, in both cases, the original length $\ell$ must be greater or equal to $2m$. In Subsection \ref{nu''-empty}, we combine all these results to complete the proof of Theorem \ref{virtualAppelSchupp}. The former case is straightforward and is considered in the next result.

\begin{prop}\label{prop-subs-equation-in-A}
    Let $\nu$, $\nu'$ and $\nu^{\CR}$ be as in Definition \ref{def:nu-dash-doubledash}. If $\nu^{\CR}$ is non-empty, then $\ell\geq2m$.
\end{prop}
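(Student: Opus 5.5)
The argument should be a direct chaining of earlier results; the content lies in confirming that the hypotheses of Proposition~\ref{prop_eq_inKVA} hold for $\nu^{\CR}$.

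First, by Lemma~\ref{reducedwordcasek_inontrivial}, $\nu=a_1\cdots a_\ell$ is a word in $G(\mathcal{Q})$. Each syllable $a_i=k_i^{u_i}$ lies in the edge group $\mathbb{F}(\delta_{\beta_i},\delta_{-\beta_i})$, with $\beta_i=u_i(\alpha_{r_i})$; this edge group is indexed by the positive root among $\pm\beta_i$. Next, the reduction algorithm of Definition~\ref{def:reduction_algorithm} is applied to $\nu$ and yields $\nu^{\CR}=c_1\cdots c_{\ell''}$. By the lemma following that definition, $\nu^{\CR}$ is cyclically reduced in $G(\mathcal{Q})$ and represents a conjugate of $\ev_{\A[\hGammam]}(\nu)$. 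By the first equation of (\ref{eq2}), $\ev_{\A[\hGammam]}(\nu)=\idK$, so $\ev_{\A[\hGammam]}(\nu^{\CR})=\idK$ as well.

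Since $\nu^{\CR}$ is non-empty by hypothesis, Proposition~\ref{prop_eq_inKVA} applies (recall $m\geq 3$ throughout this section) and gives $\ell''\geq 2m$. The algorithm never increases the number of non-trivial syllables: a fusion step merges two non-trivial syllables and sets one of them to $\idK$, and the cancellation step only deletes trivial syllables. Hence $\ell''\leq \ell'\leq \ell$, and so $\ell\geq 2m$.

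The only step needing care is the cyclic bookkeeping in the algorithm. One must check that fusing a pair $(x_{i_1},x_{i_2})$ separated only by trivial syllables, including a pair that wraps around cyclically, preserves the conjugacy class. This holds because cyclic permutation of the word produces a conjugate (Remark~\ref{rmk_cyclically_reduced}), and because the intermediate syllables are trivial. It must also be ensured that the resulting word satisfies the cyclic condition of Definition~\ref{def:cyclically_reduced_GQ}, namely that no cyclically adjacent non-trivial syllables lie in the same edge group. This is exactly the termination condition of the algorithm, since it stops only when no fusible pair remains.
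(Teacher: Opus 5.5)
Your proposal is correct and follows the paper's argument. The paper likewise observes that $\nu^{\CR}$ is a non-empty cyclically reduced word in $G(\mathcal{Q})$ representing the identity of $\A[\hGammam]$, applies Proposition~\ref{prop_eq_inKVA} (through the non-backtracking loop of Lemma~\ref{lem-cycred-nonbackloops}) to get $\ell''\geq 2m$, and concludes from $\ell\geq\ell'\geq\ell''$; your extra bookkeeping on the reduction algorithm just spells out what the paper takes for granted.
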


\begin{proof}
We know that $\nu^{\CR}$ is a cyclically reduced, non-empty product in $G(\mathcal{Q})$ such that $\ev_{\A{\hGammam}}(\nu^{\CR})=\idK$. Then by Lemma \ref{lem-cycred-nonbackloops}, $\nu^{\CR}$ defines a non-backtracking loop in the polygon of groups $X'$ constructed in Section \ref{sec-poly-of-groups-ArtinGammaHat}. By Proposition \ref{prop_eq_inKVA}, we obtain that $\ell''\geq 2m$, and so that $\ell\geq 2m$.
\end{proof}

\subsection{\texorpdfstring{Case $\nu^{\CR}$ empty}{Case nu'' empty}} \label{nu''-empty}

\noindent The aim of this subsection is showing the following.

\begin{prop}\label{prop-nu''-empty}
Let $\nu$, $\nu'$ and $\nu^{\CR}$ be as in Definition \ref{def:nu-dash-doubledash}. If $\nu^{\CR}$ is empty, then $\ell\geq2m$.
\end{prop}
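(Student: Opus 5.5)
The plan is to analyse the combinatorics of the reduction algorithm of Definition~\ref{def:reduction_algorithm} when it ends in the empty word, and to convert each fusion into a statement about the Coxeter components $w_i$. Proposition~\ref{prop_roots} will then force long gaps between fused syllables. Throughout, $\omega=g_1\cdots g_\ell$ is cyclically reduced with $g_i=(k_i,w_i)\in\VA_{r_i}$, and $a_i=k_i^{u_i}\in\mathbb{F}(\delta_{\beta_i},\delta_{-\beta_i})$ with $\beta_i=u_i(\alpha_{r_i})$, as in Lemma~\ref{reducedwordcasek_inontrivial}.

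\textbf{Step 1 (the case $\nu'$ empty).} Suppose first that $\nu'$ is empty, so that $k_i=\idK$ for all $i$. Then every $g_i=\tau_{r_i}$, and $\omega$ is a nontrivial normal form in $\W_s\frpp\W_t$ representing the identity of $\iota_\W(\W_m)\cong\W_m$. The classical Appel--Schupp Lemma~\ref{AppelSchuppLemma} for $\W_m$ then gives $\ell\geq 2m$.

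\textbf{Step 2 (one fusion forces a long gap).} Now assume $\nu'$ is nonempty. Consider a fusible pair $(x_{i_1},x_{i_2})$ at some stage of the algorithm. The two syllables lie in the same edge group, so $u_{i_2}(\alpha_{r_{i_2}})=\pm u_{i_1}(\alpha_{r_{i_1}})$. Equivalently, $v\coloneqq u_{i_1}^{-1}u_{i_2}=w_{i_1}w_{i_1+1}\cdots w_{i_2-1}$ satisfies $v(\alpha_{r_{i_2}})=\pm\alpha_{r_{i_1}}$.

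In the simplest situation every syllable strictly between $i_1$ and $i_2$ has $k_i=\idK$. Those syllables are then $\tau_{r_i}$ with alternating $r_i$, so $v$ is an alternating word in $s,t$ of length $z_{i_1,i_2}-1$ or $z_{i_1,i_2}$, depending on whether $w_{i_1}$ is trivial. Proposition~\ref{prop_roots} (parts (1)--(4)) lists exactly which alternating words send a simple root to $\pm$ a simple root: $\idW$, $r$, or words of length $m-1$ or $m$. The short cases are excluded for $m\geq3$:
\begin{itemize}
\item $v=\idW$ would force $i_2=i_1+1$, contradicting that $\omega$ is in normal form;
\item $v=r_{i_1}$ applied to the other simple root is not $\pm$ a simple root.
\end{itemize}
Hence $z_{i_1,i_2}\geq m$ for such an innermost pair.

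\textbf{Step 3 (fusions across already-cancelled blocks).} This is the step I expect to be the main obstacle. In general, the syllables in a fusion interval may be trivial only because earlier fusions cancelled them, so their $k_i$ are not trivial. The idea is to treat each such block as a single unit. If a contiguous block $g_p\cdots g_q$ is fully cancelled by the algorithm, then its associated Artin product is trivial, so $g_p\cdots g_q=(\idK,w_p\cdots w_q)$ lies in $\iota_\W(\W_m)$ and equals a reduced alternating word in the $\tau$'s. I would then argue by induction on the nesting depth of fusions, using Remark~\ref{rmk:disjoint_fusion_intervals} to keep the intervals disjoint. The inductive claim is: for every fusible pair, the syllables in its fusion interval, read in $\VA_s\frpp\VA_t$, evaluate to an element of $\iota_\W(\W_m)$ that maps $\alpha_{r_{i_2}}$ to $\pm\alpha_{r_{i_1}}$. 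Combined with the normal-form and length bookkeeping (the number of syllables is at least the $\W_m$-length of this element), this should give $z_{i_1,i_2}\geq m$ as in Step~2. The delicate point is checking that the Coxeter length of a cancelled block is bounded by its number of syllables, and that it is never $0$ or $1$ in a way that would spoil the bound. For that check I would apply Appel--Schupp in $\W_m$ again to the closed subwords.

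\textbf{Step 4 (counting).} Since $\nu^{\CR}$ is empty and no single nontrivial syllable $a_i$ is trivial, the last fusion merges two remaining nontrivial syllables whose product is $\idK$. This cuts the circle $\mathbb{Z}_{/\ell}$ into two complementary arcs, each of which is a fusion interval in the sense of Step~3, for the pair read in either cyclic direction. By Step~3 each arc contains at least $m$ indices. The arcs are disjoint, so $\ell\geq 2m$.
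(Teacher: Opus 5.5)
Your Step~1 and the combinatorial part of Step~4 are fine: before the last fusion exactly two non-trivial syllables remain, and the two complementary arcs are fusion intervals. Step~2 is essentially Lemma~\ref{lemma_z_j}. The gap is Step~3, on which Step~4 rests entirely, and the inductive claim you state there is false.

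A later-stage fusion interval need not be a fully cancelled block. Suppose an earlier fusion $(x_{i_1},x_q)$, with $q$ strictly between $i_1$ and $i_2$ and $x_{i_1}x_q\neq\idK$, absorbs $a_q$ into position $i_1$. Afterwards $(x_{i_1},x_{i_2})$ can be fusible while $g_{i_1+1}\cdots g_{i_2-1}$ has Artin component conjugate to $a_q\neq\idK$. That block therefore does not lie in $\iota_{\W}(\W_m)$.

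This genuinely happens. Take $m$ even and put syllables $(h_1,\idW),(h_2,\idW),(h_3,\idW),((h_1h_2h_3)^{-1},\idW)\in\VA_s$ at positions $1,m+1,2m+1,3m+1$. Here $h_1,h_2,h_1h_2\neq\idK$ in $\mathbb{F}(\delta_{\alpha_s},\delta_{-\alpha_s})$. Separate them by blocks $\tau_t\tau_s\cdots\tau_t$ of length $m-1$. By Lemma~\ref{lemma_roots}, all four $a_i$ lie in the edge group of $\alpha_s$, and $\omega$ represents the identity. Fusing $(a_1,a_{m+1})$ first yields exactly the configuration above.

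Independently, the length bookkeeping you leave open cannot be settled by Appel--Schupp in $\W_m$. The Coxeter projection $w_p\cdots w_q$ of a block is in general not a normal form in $\W_s\frpp\W_t$: some $w_i$ are trivial, so equal letters can become adjacent and cancel.

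The missing observation needs no evaluation at all: every fusion interval arising at any stage of the algorithm contains the fusion interval of a fusible pair of the original word $\nu$. The argument goes as follows.
\begin{itemize}
\item A position only changes by right multiplication within its edge group, or by becoming trivial for good. Hence if $(x_p,x_q)$ is fusible at some stage, then $a_p,a_q$ are non-trivial and lie in the same edge group.
\item If every position strictly between $p$ and $q$ was already trivial in $\nu$, then $(a_p,a_q)$ is itself fusible in $\nu$.
\item Otherwise, some such position was killed by an earlier fusion. Since $x_p,x_q$ were still non-trivial at that time, its partner lies between $p$ and $q$ (inclusive). So the earlier fusion interval is contained in $I_{p,q}$, and you conclude by induction on the stage.
\end{itemize}
Apply this to the two complementary arcs of your Step~4, together with Lemma~\ref{lemma_z_j}, and you get $\ell\geq 2m$.

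The paper takes a shortcut. Lemma~\ref{lem-justonepair} shows directly that $\nu$ itself has two distinct fusible pairs. Their fusion intervals are disjoint by Remark~\ref{rmk:disjoint_fusion_intervals} and each has at least $m$ elements by Lemma~\ref{lemma_z_j}, so no statement about later stages of the algorithm is needed.
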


\noindent To show this result, we first treat separately some sub-cases. The case in which $\nu'=\nu^{\CR}$ is empty is treated below.

\begin{lem}\label{lem-nu'-empty}
    If $\nu'$ is empty, then $\ell \geq 2m$.
\end{lem}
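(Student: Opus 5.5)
If $\nu'$ is empty, every $k_i$ is trivial. Each syllable $g_i=(k_i,w_i)$ is then of the form $(\idK,w_i)$ with $w_i\in\{\idW,r_i\}$. Since $\omega$ is in normal form, the syllables are non-trivial, so $w_i=r_i$ and $g_i=\tau_{r_i}$ for every $i$. The plan is to reduce the statement to the classical Appel--Schupp Lemma in the Coxeter group $\W_m$ (Theorem~\ref{AppelSchuppLemma} with $G_m=\W_m$).

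Concretely, $\omega=\tau_{r_1}\tau_{r_2}\cdots\tau_{r_\ell}$ with $r_i\neq r_{i+1}$, each syllable lying in $\iota_{\W}(\W_s)\subset\VA_s$ or $\iota_{\W}(\W_t)\subset\VA_t$. The hypothesis $\ev_{\VA_m}(\omega)=\id$ is equivalent, through the second equation of the system (\ref{eq2}), to $w_1\cdots w_\ell=r_1\cdots r_\ell=\idW$ in $\W_m$. Since $\iota_{\W}$ is injective, this is the same as saying that the word $\mu=r_1\cdots r_\ell$ represents the identity in $\W_m$.

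Next I will check that $\mu$ is a non-trivial element of $\W_s\frpp\W_t$ written in normal form. Its syllables are the non-trivial elements $r_i\in\W_{r_i}\cong\mathbb Z/2$, and consecutive syllables lie in different factors. Because $\omega$ is non-trivial, $\ell\geq 1$, so $\mu$ is non-trivial; in fact $\ell$ is even, since $\omega$ is cyclically reduced and alternates between the two factors. Applying Theorem~\ref{AppelSchuppLemma} to the dihedral Coxeter group $\W_m$ then gives $\ell=\|\mu\|_{\W_s\ast\W_t}\geq 2m$.

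There is no real obstacle in this argument. The only points needing care are the bookkeeping that identifies the syllables of $\omega$ with their Coxeter components, and the observation that cyclic reducedness of $\omega$ transfers verbatim to $\mu$. If the cyclic version is needed, it is handled by Remark~\ref{rmk_cyclically_reduced}.
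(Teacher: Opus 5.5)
Your proposal is correct and follows the paper's argument. Triviality of all $k_i$ forces every Coxeter component $w_i$ to be non-trivial, so $w_1\cdots w_\ell$ is an alternating word in normal form representing $\idW$ in $\W_m$. The classical Appel--Schupp lemma for $\W_m$ then gives $\ell\geq 2m$.
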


\begin{proof}
    If $\nu'$ is empty, this means in particular that in $\nu$ all the syllables are trivial, and so $k_i=\idK$ for all $i$. Since the element $(k_i, w_i)$ is non-trivial by assumption, it follows that $w_i \neq \idW$ for all $i$. In particular, it follows that the element $w_1\cdots w_\ell$ is cyclically reduced and represents the identity element of $\W_m$. It now follows from Theorem~\ref{AppelSchuppLemma} that $\ell \geq 2m$.
\end{proof}

\noindent In the rest of this subsection, we suppose that $\nu'$ is non-empty and $\nu^{\CR}$ is empty. Then $\nu$ contains a fusible pair $(a_p, a_q)$. 
In the following result, we analyse this situation in detail.

\begin{lem}\label{lemma_z_j}
Let $(a_p, a_q)$ be a fusible pair of $\nu$. Then the fusion gap $z_{p,q}=z$ is either $z\geq 2m$ or $z=m$. 
\end{lem}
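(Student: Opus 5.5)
The plan is to translate the fusion condition into an equation in $\W_m$ and then read off the admissible gaps from Proposition~\ref{prop_roots}. Write $q=p+z$, with indices taken cyclically. By definition of a fusible pair, $a_i=\idK$ for $p<i<q$. Since $a_i=k_i^{u_i}$, this gives $k_i=\idK$, and since each $g_i=(k_i,w_i)$ is non-trivial we get $w_i=r_i$ for all $p<i<q$. Because $\omega$ is in normal form, the letters $r_p,r_{p+1},\dots,r_q$ alternate between $s$ and $t$. As in the proof of Lemma~\ref{reducedwordcasek_inontrivial}, $a_p$ lies in the edge group $\mathbb{F}(\delta_{\beta_p},\delta_{-\beta_p})$ with $\beta_p=u_p(\alpha_{r_p})$, and similarly for $a_q$. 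So the two syllables lie in the same edge group exactly when $\beta_q=\pm\beta_p$. Using $u_q=u_p\,w_p\,r_{p+1}\cdots r_{q-1}$ and cancelling $u_p$, this becomes
\[
v(\alpha_{r_q})=\pm\alpha_{r_p},\qquad v:=w_p\,r_{p+1}r_{p+2}\cdots r_{q-1}\in\W_m ,
\]
where $w_p\in\{\idW,r_p\}$.

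Next I split according to $w_p$. If $w_p=\idW$, then $v$ is the alternating word of length $z-1$ whose rightmost letter is $r_{q-1}\neq r_q$. If $w_p=r_p$, then $v$ is the alternating word of length $z$, again with rightmost letter $r_{q-1}\neq r_q$. Either way, $v=\Prod_R(x,y;L)$ applied to $\alpha_x$, with $\{x,y\}=\{s,t\}$, $x=r_q$ and $L\in\{z-1,z\}$. This is exactly the form covered by Proposition~\ref{prop_roots}, up to swapping the roles of $s$ and $t$.

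Now assume $z<2m$; the goal is to show $z=m$. Then $L\le 2m-1$, and the alternating word is determined by $L$. Proposition~\ref{prop_roots} lists which $w$ send $\alpha_x$ to $\pm\alpha_x$ or $\pm\alpha_y$: these are $\idW$, the single letter $x$, $\Prod_R(x,y;m-1)$ and $\Prod_R(x,y;m)$, together with parity conditions on $m$. Matching against the length-$L$ alternating word ending in $y$ leaves $L\in\{0,m-1,m\}$. The letter $x$ alone is excluded because its rightmost letter is wrong. Note also that alternating words of length $L$ and $2m-L$ can coincide, which I must track.

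For each surviving pair $(w_p,L)$ I then check which target $\pm\alpha_{r_p}$ is produced, using that $r_p$ is determined by the parity of $z$ (namely $r_p=r_q$ iff $z$ is even). The candidates are as follows. The case $L=0$ forces $w_p=\idW$ and $z=1$; then $\alpha_{r_q}=\pm\alpha_{r_p}$ with $r_p\neq r_q$, which is impossible. The remaining candidates are $z=m$ or $z=m+1$ when $w_p=\idW$, and $z=m-1$ or $z=m$ when $w_p=r_p$. In each case, Proposition~\ref{prop_roots}(1)–(4) fixes the parity of $m$ and whether the image is $\alpha_x$ or $\alpha_y$. Comparing this with the parity-determined letter $r_p$ should rule out $z=m\pm1$ and leave only $z=m$. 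For example, $w_p=\idW$ with $L=m$ gives $z=m+1$; the image is $-\alpha_x$ for $m$ even, which needs $r_p=r_q$ and hence $z$ even, contradicting $z=m+1$ odd. The image is $-\alpha_y$ for $m$ odd, which needs $r_p\neq r_q$ and hence $z$ odd, again contradicting $z=m+1$ even. Both options fail.

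I expect the main obstacle to be this final parity bookkeeping, i.e.\ making sure every combination of $w_p\in\{\idW,r_p\}$, the parity of $m$, and the sign is covered without error. I will organise it as a small table. I also need to treat the cyclic wrap-around (when the fusion interval passes through index $\ell$) by cyclically permuting $\omega$ first, which Remark~\ref{rmk_cyclically_reduced} allows.
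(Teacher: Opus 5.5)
Your argument is essentially the paper's. Both reduce the fusion condition to $v(\alpha_{r_q})=\pm\alpha_{r_p}$ for the alternating element $v=w_p\,r_{p+1}\cdots r_{q-1}\in\W_m$, and then combine Proposition~\ref{prop_roots} with a length count and the parity of $z$. The only organisational difference is this. The paper splits by the parity of $z$ first and disposes of $v=\idW$ via the Appel--Schupp lemma in $\W_m$. You instead assume $z<2m$ and use the explicit structure of alternating words in $\W_m$.

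There is one slip in your enumeration. The list $L\in\{0,m-1,m\}$ is incomplete, because $\Prod_R(x,y;2m-1)=\Prod_R(y,x;1)=x$ in $\W_m$. This is exactly the $L\leftrightarrow 2m-L$ coincidence you flagged. So your claim that ``the letter $x$ alone is excluded because its rightmost letter is wrong'' is false.

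The extra case is harmless, and it is closed by your own parity check:
\begin{itemize}
\item $L=2m-1\le z<2m$ forces $z=2m-1$ and $w_p=r_p$.
\item Hence $z$ is odd, so $r_p\neq r_q=x$.
\item But $v(\alpha_x)=x(\alpha_x)=-\alpha_x\neq\pm\alpha_{r_p}$, a contradiction.
\end{itemize}
The paper's own treatment of the subcase $v_p=s$ glosses over the same word. There it is harmless because it would force $z=2m$.

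With this case added, your remaining four combinations of $w_p\in\{\idW,r_p\}$, $L\in\{m-1,m\}$ and the parity of $m$ check out. They leave only $z=m$, as claimed.
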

\begin{proof}
  \noindent By definition, $a_p$ and $a_q$ belong to the same edge-group of $G(\mathcal{Q})$. Thus $a_p=k_{p}^{u_{p}}$ and $a_q=k_{q}^{u_{q}}$ with $k_{p}\neq \idK \neq k_{q}$. Recall that $k_{p}\in \A[\widehat{\Gamma_{r_{p}}}]$ with $r_{p}\in \{s,t\}$ and $r_{p}\neq r_{p+1}$. There are two possibilities:
\begin{itemize}
    \item [\textbf{(a)}] $k_{p}$ and $k_{q}$ both belong to $\A[\widehat{\Gamma}_{r_{p}}]$, i.e. $z=q-p$ is even;
    \item [\textbf{(b)}] $k_{p}\in \A[\widehat{\Gamma}_{r_{p}}]$ and $k_{q}\in \A[\widehat{\Gamma}_{r_{p}+1}]$, i.e. $z=q-p$ is odd.
\end{itemize}
\noindent Without loss of generality, assume $r_{p}=s$ and $r_{p+1}=t$.\\
\textbf{Case (a):} We have
\begin{align*}
  a_p=k_{p}^{u_{p}}\in \langle \delta_{u_{p}(\alpha_{s})}, \delta_{-u_{p}(\alpha_{s}))}\rangle,  \qquad \qquad  a_q=k_{{q}}^{u_{q}}\in \langle \delta_{u_{q}(\alpha_{s})}, \delta_{-u_{q}(\alpha_{s})}\rangle.
\end{align*}
\noindent Since the two syllables lie in the same edge-group, we must have 
\[
u_{p}(\alpha_s)=\pm u_{q}(\alpha_s).
\]
\noindent Write $u_{q}=u_{p+z}=w_1\cdots w_{p-1}\,w_{p}\cdots w_{p+z-1}$ as $u_{q}=u_{p}\,v_{p}$, where $v_{p}:=w_{p}w_{p+1}\cdots w_{p+z-1}$. Hence 
\begin{equation}\label{eqk_i=id}
u_{p}(\alpha_s)=\pm \,u_{p}v_{p}(\alpha_s) \qquad \Longrightarrow \quad\pm\,\alpha_s=\,v_{p} (\alpha_s). 
\end{equation}
\noindent Recall that, whenever $k_i=\idK$, the corresponding Coxeter component $w_{i}\neq \idW$, thus $v_{p}$ is written in normal form in $\W_s\frpp \W_t$, of syllabic length $z$ or $z-1$, depending on whether $w_{p}\neq \idW$ or not. If $v_{p}=\idW$ (i.e. if $\ev_{\W_m}(v_p)=\idW$), then by Appel--Schupp Lemma \ref{AppelSchuppLemma} we obtain $z\geq z-1\geq 2m$, and we are done.\medskip\\
\noindent Assume therefore that $\ev_{\W_m}(v_{p})=\ev_{\W_m}(w_{p}w_{p+1}\cdots w_{p+z-1})\neq \idW$ in $\W_m$. From Proposition \ref{prop_roots}, Equality \ref{eqk_i=id} with positive sign holds only if either $v_{p}=\idW$ (excluded), or $v_{p}=w_{p}\cdots w_{q-1}=\Prod_R(s,t;m-1)$ and $m$ is even.
A length computation gives
\[
m-1=\|v_{p}\|_{\W_m}=\begin{cases}
    z-1 &\text{ if $w_{p}=\idW$,}\\
    z &\text{ if $w_{p}\neq\idW$,}
\end{cases} \qquad\Longrightarrow\qquad z=\begin{cases}
    m &\text{ if $w_{p}=\idW$,}\\
    m-1 &\text{ if $w_{p}\neq\idW$.}
\end{cases} 
\]\noindent However, case (a) requires $z$ to be even, thus, since $m$ is even, the only admissible case is $z=m$.\medskip\\
From Proposition \ref{prop_roots}, Equality \ref{eqk_i=id} with negative sign holds only if either $v_{p}=s$, or if $m$ is even and $v_{p}=w_{p}\cdots w_{q-1}=\Prod_R(s,t;m)$. The case $v_{p}=s$ can happen either when $z=1$, $q=p+1$ and $w_{p}=s$, or when $z=2$, $q=p+2$, $w_{p}=\idW$ and $v_{p}=w_{p+1}=s$. In the first scenario, we have that $z=1$ is odd, which contradicts the assumption that $k_{p}$ and $k_{q}$ both belong to $\A[\widehat{\Gamma_s}]$. In the second scenario, it must be $w_{p+1}=s$. But for two consecutive indices, $k_i,k_{i+1}$ never belong to the same standard parabolic subgroup. Since we assumed $k_p\in \VA_s$, then $w_{p+1}\in \W_t$, and this yields a contradiction. Hence, Equation \ref{eqk_i=id} holds with negative sign only if $v_{p}=w_{p}\cdots w_{i_{1}+z-1}=\Prod_R(s,t;m)$ and $m$ is even, which means
\[
m=\|v_{p}\|_{\W_m}=\begin{cases}
    z-1 &\text{ if $w_{p}=\idW$,}\\
    z &\text{ if $w_{p}\neq\idW$,}
\end{cases} \qquad\Longrightarrow\qquad z=\begin{cases}
    m+1 &\text{ if $w_{p}=\idW$,}\\
    m &\text{ if $w_{p}\neq\idW$.}
\end{cases} 
\]\noindent Again, since both $m$ and $z$ are even,  one concludes $z=m$. \\
\\
\noindent \textbf{Case (b):} Compute now
\begin{align*}
   a_p=k_{p}^{u_{p}}\in \langle \delta_{u_{p}(\alpha_{s})}, \delta_{-u_{p}(\alpha_{s}))}\rangle,  \qquad \qquad  a_q=k_{{q}}^{u_{q}}\in \langle \delta_{u_{q}(\alpha_{t})}, \delta_{-u_{q}(\alpha_{t})}\rangle.
\end{align*}
\noindent The condition that they lie in the same edge-group becomes $u_{p}(\alpha_s)=\pm \,u_{p+z}(\alpha_t)$, that is, 
\begin{equation}\label{eq_k_i2}
u_{p}(\alpha_s)=\pm \,u_{p}v_{p}(\alpha_t) \qquad \Longrightarrow \quad\pm\,\alpha_s=v_{p} (\alpha_t).
\end{equation}
\noindent From part (3.) of Proposition \ref{prop_roots}, Equation \ref{eq_k_i2} holds with positive sign if and only if $v_{p}=w_{p}\cdots w_{p+z-1}=\Prod_R(t,s;m-1)$ and $m$ is odd. Similarly, Equation \ref{eq_k_i2} holds with negative sign only if $v_{p}=w_{p}\cdots w_{p+z-1}=\Prod_R(t,s;m)$ and $m$ is odd.
Again using the length computation, and observing that in case (b) the integer $z$ is odd, one deduces that the only possible case is $m$ odd and $z=m$.
\end{proof}

\begin{rmk}
    The previous lemma also implies, in particular, that if $k_i\neq \idK\neq k_{i+1}$ for some $i\in \mathbb{Z}_{/\ell}$, then $a_i$ and $a_{i+1}$ do not belong to the same edge-group of $G(\mathcal{Q})$.
\end{rmk}

\noindent The next lemma guaranties the existence of at least two fusible pairs in $\nu$. 
\begin{lem}\label{lem-justonepair}
     Suppose that $\nu^{\CR}$ is empty and that $\nu$ has at least one non-trivial syllable. Then $\nu$ contains at least two distinct fusible pairs.
\end{lem}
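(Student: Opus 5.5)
We argue by contradiction, assuming that $\nu$ contains at most one fusible pair. Since $\nu^{\CR}$ is empty but $\nu$ has a non-trivial syllable, the reduction algorithm of Definition~\ref{def:reduction_algorithm} must perform at least one fusion. So $\nu$ contains at least one fusible pair; call it $(a_p,a_q)$, and suppose it is the only one.

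\textbf{Step 1: only $a_p$ and $a_q$ are non-trivial.} The fusion interval $I_{p,q}$ consists of trivial syllables, except for $a_q$ at its end. Consider the non-trivial syllables of $\nu$ read cyclically. Any two cyclically consecutive non-trivial syllables that lie in the same edge-group form a fusible pair. Now suppose $\nu'$ has three or more syllables, or has two syllables other than $\{a_p,a_q\}$. Since $(a_p,a_q)$ is the only fusible pair, no other consecutive pair of non-trivial syllables shares an edge-group. After the single fusion, the syllable $a_pa_q$ sits between its two neighbours, and these lie in edge-groups different from that of $a_pa_q$. So after the fusion the word is still cyclically reduced, with at least one syllable. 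If $a_pa_q\neq\idK$, this contradicts the emptiness of $\nu^{\CR}$.

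\textbf{Step 2: the case $a_pa_q=\idK$.} Here the fused syllable becomes trivial. Its two neighbours $a_{p'}$ (before $a_p$) and $a_{q'}$ (after $a_q$) now become cyclically adjacent among the non-trivial syllables. For the algorithm to continue, they must form a new fusible pair. This is allowed, since the new pair only appears after the first fusion. So Step 1 alone does not finish the argument, and we refine the count instead.

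\textbf{Step 3: counting via the cyclic structure.} If $\nu'$ consists of exactly two syllables, they are the cyclically consecutive pairs $(a_p,a_q)$ and $(a_q,a_p)$. Both lie in the same edge-group, so both are fusible pairs, and they are distinct because their fusion intervals are complementary. This gives two pairs. Otherwise $\nu'$ has at least three syllables. Since the whole word reduces to empty, the syllables of $\nu'$ must cancel in a nested, bracket-like pattern in the free product of edge-groups. Any such nonempty pattern on a cyclic word with at least two letters has at least two \enquote{innermost} adjacent pairs in the same factor, for the following reason. A cyclic word reducing to the identity in a free product of groups, with no two cyclically adjacent letters in the same factor, is impossible; this is the normal form theorem, applied to the cyclically reduced word. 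If exactly one adjacent pair shares a factor, merge it. The resulting cyclic word either has no adjacent pair in the same factor, which is a contradiction unless it is empty, or it creates exactly the one new adjacency $(a_{p'},a_{q'})$. We then induct on the length.

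The cleanest formulation works directly on $\nu'$ viewed as a cyclic word in the free product of the edge-groups of $G(\mathcal{Q})$, which is trivial there because $\nu^{\CR}$ is empty. Such a cyclic word, of length at least $2$, has at least two cyclically adjacent pairs lying in the same factor, since a cyclic word with at most one such pair reduces to a nonempty cyclically reduced word. Each such adjacent pair of non-trivial syllables in $\nu'$ is exactly a fusible pair of $\nu$, and distinct pairs give distinct fusion intervals. This is the approach we adopt.

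\textbf{Main obstacle.} The length-one case of $\nu'$ needs a separate check: a single non-trivial syllable evaluating to the identity is impossible, because of the injectivity in Lemma~\ref{A[hGammam]andcomplexesofGroups}. The genuinely delicate step is the induction showing that a cyclic word with exactly one same-factor adjacency cannot reduce to empty. After merging, the new adjacency $(a_{p'},a_{q'})$ must be tracked carefully, including the wrap-around when only two syllables remain.
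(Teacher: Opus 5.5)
Your final argument, the ``cleanest formulation'' in Step~3, is correct, but it takes a different route from the paper.

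The paper stays inside the reduction algorithm. Assuming $(a_{i_1},a_{i_2})$ is the unique fusible pair, it notes that $(a_{i_2},a_{i_1})$ is not fusible, so some non-trivial syllable lies outside $I_{i_1,i_2}$. After the fusion, any fusible pair of $\nu^{(1)}$ must be new, so its fusion interval strictly contains $I_{i_1,i_2}$, and again some non-trivial syllable lies outside it. Iterating gives a strictly increasing chain of proper subsets of $\mathbb{Z}_{/\ell}$, which is impossible.

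You instead turn the hypothesis into a statement about the free product $F$ of the edge groups of $G(\mathcal{Q})$, using two observations.
\begin{enumerate}
\item Each fusion replaces the cyclic word by one representing a conjugate in $F$, not merely in $\A[\hGammam]$. The paper only records the latter, so you should state this explicitly. It follows that $\nu^{\CR}$ is empty exactly when $\nu'$ is trivial in $F$.
\item The fusible pairs of $\nu$ are precisely the cyclically adjacent pairs $(b_j,b_{j+1})$ of $\nu'$ lying in the same factor.
\end{enumerate}
The lemma then reduces to the normal form theorem for free products. This avoids tracking the dynamics of the algorithm, and it also shows that emptiness of $\nu^{\CR}$ does not depend on the order in which fusions are performed. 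The paper's argument, by contrast, never leaves the language of the algorithm used in the subsequent proofs.

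The step you call ``genuinely delicate'' needs no induction. Suppose $\nu'$ has $n\geq 3$ syllables and at most one same-factor cyclic adjacency. Rotate so that this adjacency, if it exists, is the pair $(b_n,b_1)$. Then $b_1\cdots b_n$ is a reduced word of length $n$ in $F$, hence non-trivial, a contradiction. The remaining cases are as follows.
\begin{itemize}
\item For $n=2$, both cyclic pairs are fusible, as you say.
\item For $n=1$, there is no fusible pair at all, so the algorithm halts with a non-empty word. This is simpler than invoking injectivity into $\A[\hGammam]$.
\end{itemize}
With these cases the proof is complete, and Steps~1 and~2 can be discarded.
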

\begin{proof}
    We reason by contradiction and assume that there is a single fusible pair $(a_{i_1},a_{i_2})$ in $\nu$. (There must be at least one such pair since $\nu$ has at least one non-trivial syllable and $\nu^{\CR}$ is empty by assumption.)\medskip

   \noindent Since $(a_{i_2},a_{i_1})$ is not a fusible pair by assumption, we have that there exists at least a third non-trivial syllable $a_j$ for some $j \notin \{i_1, i_1+1, \ldots, i_2\}$ (red dot in Figure~\ref{fig:fusion-process}). Since $(a_{i_1},a_{i_2})$ is the unique fusible pair of $\nu$ and $\nu'$ is non-trivial, to get the cyclically reduced word $\nu^{\CR}$ we must first fuse the pair $(a_{i_1},a_{i_2})$, thus obtaining a new word $\nu^{(1)}$. Since the resulting word $\nu^{(1)}$ contains a non-trivial syllable by the above, there must exist a fusible pair $(a_{i_3}, a_{i_4})$ of $\nu^{(1)}$. Note that this pair cannot be a fusible pair of $\nu$ by assumption, so in particular we must have $I_{i_1,i_2}\subsetneq I_{i_3, i_4}$. Moreover, $(a_{i_4}, a_{i_3})$ is not a fusible pair of $\nu^{(1)}$, otherwise it would be a fusible pair of $\nu$ already, contradicting our assumption. Thus, there exists another non-trivial syllable $a_{j}$ for some $j \notin \{i_3, i_3+1,\ldots, i_4\}$. 
   
\begin{figure}[h!]
        \centering
\includegraphics[width=9cm]{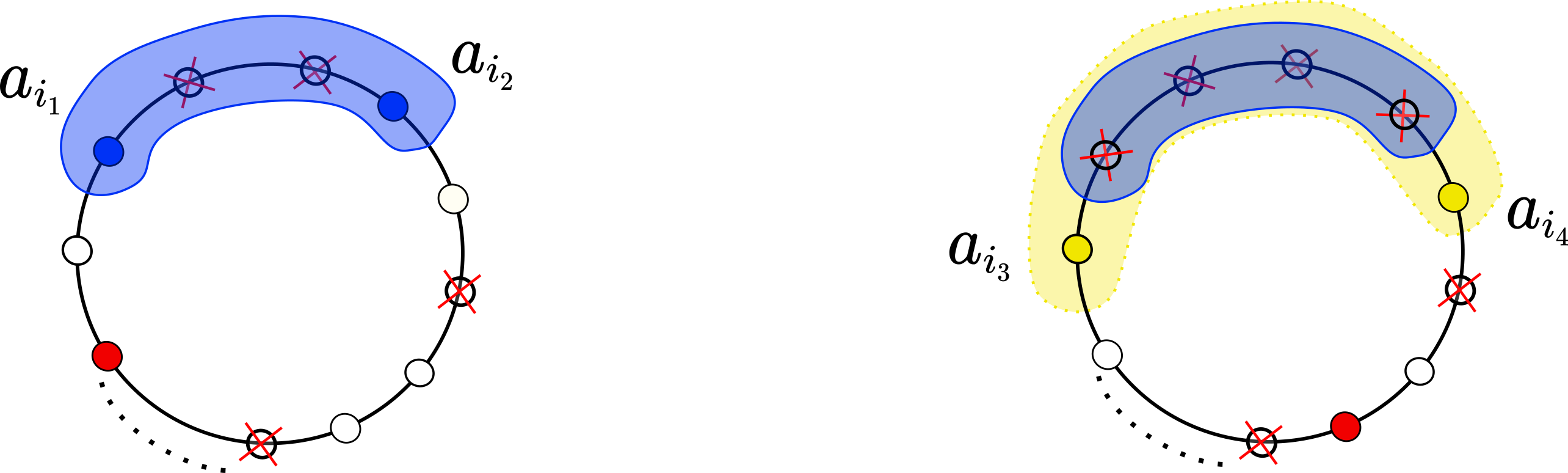}
    \caption{On the left: The word $\nu$, the fusible pair $(a_{i_1}, a_{i_2})$ and a non-trivial syllable (in red). On the right: The word $\nu^{(1)}$, the fusible pair $(a_{i_3}, a_{i_4})$ and a non-trivial syllable (in red).}
\label{fig:fusion-process}
    \end{figure}
   \noindent In this way, we can always repeat the process and thus produce a sequence $\nu^{(j)}$, for $j\geq 1$, of words all representing the identity element, where at each step we fuse a pair $(a_{i_{2j+1}}, a_{i_{2j+2}})$ of the word $\nu^{(i)}$ such that $I_{i_{2j-1}, i_{2j}}\subsetneq I_{i_{2j+1}, i_{2j+2}}$. Since we get in this way a strictly increasing sequence of subsets of $\mathbb{Z}_\ell$, we get a contradiction. 
\end{proof}

\begin{proof}[Proof of Proposition \ref{prop-nu''-empty}]
\noindent If $\nu^{\CR}$ is empty, then either $\nu'$ is empty, in which case the result follows from Lemma~\ref{lem-nu'-empty}, or by Lemma~\ref{lem-justonepair} there is at least two distinct fusible pairs $(a_{i_1}, a_{i_2})$, and $(a_{i_3},a_{i_4})$ in $\nu$. In the latter case, the fusion intervals $I_{i_{1}, i_2}$ and $I_{i_3, i_4}$ are disjoint by Remark~\ref{rmk:disjoint_fusion_intervals}. Moreover, each contains at least $m$ elements by Lemma~\ref{lemma_z_j}, hence $\ell\geq 2m$.
\end{proof}

\noindent We are finally ready to give the proof of Theorem \ref{virtualAppelSchupp}.

\begin{proof}[Proof of Theorem \ref{virtualAppelSchupp}]
Let $\omega=g_1\cdots g_{\ell}$ in $\VA_s \frpp \VA_t$ be a non-trivial element written in normal form such that $\ev_{\VA_m}(\omega)=\id$ in the quotient $\VA[\Gamma_m]$. We consider the associated system of equations (\ref{eq2}). \medskip\\
\noindent In the case $m\geq 3$, the result follows from Propositions~\ref{prop-subs-equation-in-A} and~\ref{prop-nu''-empty}. Suppose now that $m=2$, and we want to show that $\ell\geq 2m=4$. Note that it is enough obtain the desired bound when $\ell$ is even. The only case to rule out is thus $\ell =2$. But an equality of the form $h_sh_t=\id$ in $\VA_m$, with $h_s\in \VA_s$ and $h_t\in \VA_t$ non-trivial, would imply that $\VA_s$ and $\VA_t$ have a non-trivial intersection in $\VA_m$, contradicting~\cite[Theorem~1.2]{GaGaPa26}. 
\end{proof}

\section{\texorpdfstring{Finite subgroups of $\VA[\Gamma]$}{Finite subgroups of }}\label{sec-fin-subg}

\noindent In this section we show that $\VA[\Gamma]$ satisfies the torsion conjecture when $\Gamma$ is locally reducible. From now on, with abuse of notation, if a group $H<\VA[\Gamma]$ is such that $H=\iota_{\W}(H')$ for $H'<\W[\Gamma]$, we simply write $H<\W[\Gamma]$. \medskip

\begin{thm}\label{thm-finitesubgroups}
    Let $\Gamma$ be a locally reducible Coxeter graph, and let $H\leq \VA[\Gamma]$ be a finite subgroup. Then, up to conjugacy, $H<\W[\Gamma]$. Namely, $\VA[\Gamma]$ satisfies the torsion conjecture.
\end{thm}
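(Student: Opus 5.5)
The plan is to follow the strategy announced in the introduction: first reduce to spherical parabolic subgroups using the CAT(0) geometry of $\vD_\Gamma$, then reduce further to the dihedral case, and finally treat dihedral groups through an action on the Deligne complex $\D_{\hGammam}$.

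\textbf{Step 1: reduction to a vertex stabiliser.} By Corollary~\ref{cor-CAT0-locred} and Lemma~\ref{prop-vD-complete-action-no-inversion}, $\vD_\Gamma$ is a complete CAT(0) space. Moreover, $\VA[\Gamma]$ acts on it by isometries and without inversion. Let $H$ be a finite subgroup. The Bruhat--Tits fixed point theorem gives a point fixed by $H$. Since the action has no inversions, $H$ then fixes a vertex $g\VA_X$ with $X\in\Sf$. Hence $g^{-1}Hg\le \VA_X\cong\VA[\Gamma_X]$, and it suffices to prove the statement for $\VA[\Gamma_X]$ with $X$ spherical.

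\textbf{Step 2: reduction to factors.} By local reducibility, $\VA[\Gamma_X]=\prod_i\VA[\Gamma_{X_i}]$ with $|X_i|\in\{1,2\}$. The decomposition $\VA=\A[\hGamma]\rtimes\W$ is compatible with these direct products. Project $H$ to each factor to get finite groups $H_i$. If each $H_i$ is conjugate in $\VA[\Gamma_{X_i}]$ into $\W[\Gamma_{X_i}]$, then the product of the conjugators conjugates $H\le\prod H_i$ into $\prod\W[\Gamma_{X_i}]=\W[\Gamma_X]$. This leaves two cases.
\begin{itemize}
\item Rank one: $\VA_s\cong\mathbb F(\delta_{\alpha_s},\delta_{-\alpha_s})\rtimes\mathbb Z/2$, which is a free product $\mathbb Z\frpp\mathbb Z/2$. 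Its finite subgroups are conjugate into $\langle\tau_s\rangle$ by the Kurosh subgroup theorem, or by its Bass--Serre tree.
\item Dihedral: $\VA_m$.
\end{itemize}

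\textbf{Step 3: the dihedral case.} Here $\VA_m=\A[\hGammam]\rtimes\W_m$. The group $\W_m$ permutes the roots and preserves the labels of $\hGammam$, so it acts by graph automorphisms of $\hGammam$. This induces an action on the poset $\A\Sf(\hGammam)$ via $w\cdot a\A_Y=w(a)\A_{w(Y)}$. Combining this with the left action of $\A[\hGammam]$ gives an isometric action of $\VA_m$ on $\D_{\hGammam}$.

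By Proposition~\ref{propdihedralgammahat}, $\hGammam$ is a union of cycles with all labels $m\ge2$, except when $m=2$. So for $m\ge3$, and also when $\hGammam$ is a square with label $2$, $\hGammam$ is two-dimensional or locally reducible, and $\D_{\hGammam}$ is CAT(0) by \cite{CharDav95, Charney2000}. The action is without inversion because it preserves vertex types, so a finite $H\le\VA_m$ fixes a vertex $a\A_Y$ with $|Y|\le 2$ and $\hGammam_Y$ spherical.

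The stabiliser of $\A_Y$ in $\VA_m$ is $\A_Y\rtimes\Stab_{\W_m}(Y)$. Since $H$ is finite and $\A_Y$ is torsion-free (a spherical Artin group), $H$ maps injectively to $\Stab_{\W_m}(Y)$. The remaining task is to show that $H$ is conjugate, by an element of $\A_Y$, into $\Stab_{\W_m}(Y)$. Equivalently, every finite subgroup of $\A_Y\rtimes F$ is conjugate into $F$, where $F$ acts on $\A_Y$ by diagram automorphisms.

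\begin{itemize}
\item For $|Y|=1$, $\A_Y\cong\mathbb Z$ and $F$ acts by $\pm1$. In $\mathbb Z\rtimes F$, the finite subgroups are easily listed and conjugated into $F$. Note that $\delta_\beta\mapsto\delta_{-\beta}$ never occurs inside $Y$ alone.
\item For $|Y|=2$, the group $\A_Y$ is a dihedral Artin group $\A_m$, and $F$ is at most $\mathbb Z/2$, swapping or fixing the two generators. I would use the action of $\A_Y\rtimes F$ on the Bass--Serre tree of $\A_m$ modulo its centre, or on the CAT(0) Deligne complex of $\A_Y$ extended by $F$. A fixed point of an involution there lies at the base vertex, up to $\A_Y$-translation, giving the required conjugation. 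Torsion-freeness of $\A_m$ and its centre has to be handled carefully.
\end{itemize}

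The main obstacle is Step 3: constructing the extended action of $\VA_m$ on $\D_{\hGammam}$ and classifying its vertex stabilisers. In particular, one must show that finite subgroups of $\A_Y\rtimes F$ conjugate into $F$. The fact that an element $(k,w)$ fixes a vertex does not by itself force $k$ to be trivial, so a second fixed-point argument inside the stabiliser is needed. I would try first to do this by a direct cohomological computation: for cyclic $F=\mathbb Z/2$, conjugacy of complements is governed by $H^1(\mathbb Z/2,\A_Y)$, which I would compute using the explicit action.
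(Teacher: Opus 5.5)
Your reduction to a vertex stabiliser of $\vD_\Gamma$, the splitting into rank-one and dihedral factors, and the extended action of $\VA_m$ on $\D_{\hGammam}$ follow exactly the paper's route (Lemma~\ref{reduction-to-dihedral-lem}, Lemma~\ref{prop-actionofVAm-on-Deligne}). The gap is the last step, which you flag as the main obstacle and leave open. For $|Y|=2$ you allow $\Stab_{\W_m}(Y)$ to contain an involution swapping the two generators, and you propose a second fixed-point argument or an $H^1(\mathbb{Z}/2,\A_Y)$ computation, neither of which you carry out.

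The missing observation is that $\Stab_{\W_m}(Y)$ can be computed outright, and then nothing is left to prove. Replacing $u$ by $w^{-1}uw$, you may assume $Y=\{\alpha_s,\alpha_t\}$. An element $u$ preserving $Y$ either fixes both roots, or swaps them.
\begin{itemize}
\item If $u$ fixes both roots, it fixes a basis of $\mathbb{R}^2$, so it is trivial because the representation is faithful.
\item If $u$ swaps them, Proposition~\ref{prop_roots} applies. The condition $u(\alpha_s)=\alpha_t$ forces $m$ odd and $u=\Prod_R(s,t;m-1)=(st)^{(m-1)/2}$. The condition $u(\alpha_t)=\alpha_s$ forces $u=(ts)^{(m-1)/2}$. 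These are different elements, since $st$ has order $m$ and $m\nmid m-1$.
\end{itemize}
Hence the stabiliser of $a\A_Y$ is $a\A_Ya^{-1}$, which is torsion-free, so $H$ is trivial.

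For $Y=\{\beta\}$, $\Stab_{\W_m}(\beta)$ is trivial if $m$ is odd. If $m$ is even it is $\{\idW,r\}$, where $r$ is the reflection fixing $\beta$. Since $\delta_\beta^r=\delta_{r(\beta)}=\delta_\beta$, the stabiliser is $\mathbb{Z}\times\mathbb{Z}/2$, and its only non-trivial finite subgroup is $\langle r\rangle\le\W_m$. This is precisely the content of Lemma~\ref{lem-stabilisers}. With it, your observation that $H$ injects into $\Stab_{\W_m}(Y)$ finishes the dihedral case immediately. No conjugacy-of-complements argument is needed.

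Your phrase ``$F$ acts by $\pm1$'' in the case $|Y|=1$ also needs correcting. With the inversion action, $\mathbb{Z}\rtimes\mathbb{Z}/2\cong D_\infty$ has two conjugacy classes of involutions, so its finite subgroups are not all conjugate into $F$, and your claim would fail. The argument works only because $F$ acts on $\A_Y$ by permuting generators and fixes $\beta$. So the action is trivial and the group is a direct product.
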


\begin{rmk}
    By a result of Tits \cite[Ex 2d) p.137]{Bourbaki}, a finite subgroup of $\W[\Gamma]$ is contained in a finite parabolic subgroup of $\W[\Gamma]$. Thus, we get that in the locally reducible case, a finite subgroup of $\VA[\Gamma]$ is isomorphic to a direct product of dihedral Coxeter groups and cyclic groups of order~$2$.
\end{rmk}

\noindent To prove Theorem \ref{thm-finitesubgroups}, we reduce to the analogous statement in dimension 2, and more specifically to the dihedral case. As before, we denote by $\Gamma_m$ the dihedral Coxeter graph with integer label $m\geq 2$. 
\begin{lem}\label{reduction-to-dihedral-lem}
If the torsion conjecture for virtual Artin groups $\VA[\Gamma]$ holds for all $\Gamma=\Gamma_m$ dihedral, then it holds for all locally reducible virtual Artin groups.
\end{lem}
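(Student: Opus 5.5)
Let $H\leq \VA[\Gamma]$ be a finite subgroup with $\Gamma$ locally reducible. By Corollary~\ref{cor-CAT0-locred}, $\vD_\Gamma$ is a complete CAT(0) space (completeness from Lemma~\ref{prop-vD-complete-action-no-inversion}). The Bruhat--Tits fixed point theorem then gives a point $x\in\vD_\Gamma$ fixed by $H$. Because the action is without inversion (Lemma~\ref{prop-vD-complete-action-no-inversion}), $H$ fixes every vertex of the open simplex containing $x$, and in particular some vertex $g\VA_X$ with $X\in\Sf$. Hence $g^{-1}Hg\leq \VA_X\cong\VA[\Gamma_X]$, using \cite[Thm 1.1]{GaGaPa26}. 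Since conjugating inside $\VA_X$ and then by $g$ keeps us inside a conjugate of $\iota_\W(\W[\Gamma])$, and $\iota_\W(\W[\Gamma_X])\subseteq\iota_\W(\W[\Gamma])$, it suffices to show that every finite subgroup of $\VA[\Gamma_X]$ is conjugate into $\W[\Gamma_X]$.

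Because $\Gamma$ is locally reducible, $\VA[\Gamma_X]=\VA[\Gamma_{X_1}]\times\cdots\times\VA[\Gamma_{X_k}]$ with $|X_i|\in\{1,2\}$. Here the direct product decomposition comes from the presentation: vertices in different $X_i$ have $m=2$, and (V1)--(V3) then give commuting relations, as used in the proof of Corollary~\ref{cor-CAT0-locred}. Let $p_i$ be the projection to the $i$-th factor. Each $p_i(H)$ is finite. If $|X_i|=2$, the dihedral hypothesis gives $c_i$ with $c_ip_i(H)c_i^{-1}\leq\W[\Gamma_{X_i}]$. If $|X_i|=1$, the group $\VA[\Gamma_{X_i}]=\mathbb{F}(\delta_{\alpha},\delta_{-\alpha})\rtimes\mathbb{Z}/2$ is virtually free, which is also a free product $\langle\sigma_s\rangle*\langle\tau_s\rangle\cong\mathbb{Z}*\mathbb{Z}/2$. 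Its finite subgroups are therefore conjugate into $\langle\tau_s\rangle=\W[\Gamma_{X_i}]$ by the Kurosh subgroup theorem, or by a fixed point on the Bass--Serre tree. Setting $c=(c_1,\ldots,c_k)$, we get $cHc^{-1}\leq\prod_i c_ip_i(H)c_i^{-1}\leq\prod_i\W[\Gamma_{X_i}]=\W[\Gamma_X]$, since $H\leq\prod_i p_i(H)$.

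The main point to verify is that the product decomposition of $\VA[\Gamma_X]$ is compatible with the embeddings $\iota_\W$, so that $\prod_i\W[\Gamma_{X_i}]$ really equals $\iota_\W(\W[\Gamma_X])$. This holds because each $\iota_\W(\W[\Gamma_{X_i}])$ is generated by the $\tau_s$ with $s\in X_i$. I expect no other obstacles: the rank-one case is elementary, and the fixed point step is standard once the CAT(0) property is available.
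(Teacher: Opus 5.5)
Your proposal is correct and follows essentially the same route as the paper: apply Bruhat--Tits on the complete CAT(0) complex $\vD_\Gamma$, use the absence of inversions to get a fixed vertex, reduce to a conjugate of the spherical parabolic $\VA_X$, and then use its direct product decomposition, handling the rank-one and dihedral factors via the projections $p_i$. Your explicit checks of the commuting relations coming from (V1)--(V3) and of the compatibility $\prod_i \iota_\W(\W[\Gamma_{X_i}])=\iota_\W(\W[\Gamma_X])$ fill in details that the paper leaves implicit.
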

\begin{proof}
 Let $\Gamma$ be locally reducible. By Corollary \ref{cor-CAT0-locred}, the virtual Deligne complex 
$\vD_{\Gamma}$ is CAT(0), and $\VA[\Gamma]$ acts on it simplicially and cocompactly. Let 
$H<\VA[\Gamma]$ be a finite subgroup. By the Bruhat–Tits fixed point theorem (see \cite[Corollary II.2.8]{BriHaefl}), every finite group acting by isometries on a complete CAT(0) space fixes a point. Thus $H$ fixes a point $x\in \vD_{\Gamma}$. Recall that $\vD_{\Gamma}$ is complete thanks to Lemma \ref{prop-vD-complete-action-no-inversion}. Let 
$C_x$ be the unique cell whose interior contains $x$. Then $H$ stabilises $C_x$ setwise. Again by Lemma \ref{prop-vD-complete-action-no-inversion}, the action of $\VA[\Gamma]$ on $\vD_{\Gamma}$ is without inversion, thus $H$ must fix a vertex $\eta\VA_X$ of $\vD_{\Gamma}$, so that $H\leq \Stab(\eta\,\VA_X)$. The stabiliser of $\eta\,\VA_X$ is the parabolic subgroup $\eta\,\VA_X\,\eta^{-1}$, where $X\in \Sf$. Since $\Gamma$ is locally reducible, the spherical-type subgroup $\VA_{X}$ decomposes as a direct product
\[
\VA_X=\VA_{X_1}\times \cdots \times\VA_{X_k},
\]
where $X_i\subseteq S$ and 
$|X_i|\leq 2$ for all $i=1,\ldots ,k$. Up to conjugacy, we may therefore assume $H\leq \VA_{X_1}\times \cdots \times \VA_{X_k}$.\medskip\\
\noindent
Each factor $\VA_{X_i}$ satisfies the torsion conjecture. Indeed, if $|X_i|=1$, $\VA_{X_i}=\mathbb{Z}\frpp \mathbb{Z}_{/2}$, and the finite subgroups of this free product lie, up to conjugacy, in the factor $\mathbb{Z}_{/2}$. If $|X_i|=2$ the group $\VA_{X_i}$ satisfies the conjecture by hypothesis. Then, up to conjugacy in the factor $\VA_{X_i}$, we can assume that the projection of $H$ onto the factor $\VA_{X_i}$ is contained in the corresponding Coxeter group 
$\W_{X_i}$. Since a finite subgroup of a direct product embeds into the product of its projections, it follows that $H$ is contained in $\W_{X_1}\times \cdots\times \W_{X_k}=\W_X$,
up to conjugacy. Therefore, up to conjugacy in $\VA[\Gamma]$, $H\leq \W[\Gamma]$. This completes the proof.   
\end{proof}

\noindent The rest of this section is devoted to proving the following.

\begin{thm}\label{thm-finitesubgroups-dihedral}
    Let $\Gamma_m$ be the dihedral Coxeter graph with vertex set $S=\{s,t\}$ and integer label $m\geq 2$. If $H\leq \VA_m$ is a finite subgroup, then, up to conjugacy, $H\leq \W_m$. Namely, $\VA_m$ satisfies the torsion conjecture.
\end{thm}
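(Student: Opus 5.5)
The plan follows the introduction: extend the action of $\A[\hGammam]$ on its Deligne complex $\D_{\hGammam}$ to an action of all of $\VA_m=\A[\hGammam]\rtimes\W_m$, then apply a fixed point argument and classify vertex stabilisers.

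\textbf{Step 1 (action).} The group $\W_m$ acts on $\hGammam$ by graph automorphisms preserving labels, since it acts on $\Phi_m$ and preserves $m$-adjacency by Definition~\ref{defGammaHat}. Hence it acts on $\A[\hGammam]$ via $\delta_\beta\mapsto\delta_{w(\beta)}$, which is exactly the conjugation action (\ref{actionWonKVA}). It also permutes the spherical subsets of $V(\hGammam)$, so it permutes standard parabolic subgroups. I would therefore let $(k,w)\in\VA_m$ act on $\D_{\hGammam}$ by sending the coset $a\A_Y$ to $k\,w(a)\,\A_{w(Y)}$. This preserves the poset structure and the Moussong metric, because $w$ preserves labels and therefore the shapes of the cubes. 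One checks compatibility with the semidirect product law from Notation~\ref{nt-g=(k,w)}. For $m\ge 3$, $\hGammam$ is one or two cycles of length $\ge3$ with all labels $m$ (Proposition~\ref{propdihedralgammahat}), so it is $2$-dimensional, and $\D_{\hGammam}$ is CAT(0) by Charney--Davis and complete. For $m=2$, $\hGammam$ is a $4$-cycle with labels $2$, so $\A[\hGammam]\cong\mathbb F_2\times\mathbb F_2$, which is still FC type. The Deligne complex is then a product of two trees, hence CAT(0). The action is without inversions because it preserves vertex types, that is, the cardinality of $Y$.

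\textbf{Step 2 (fixed point).} Let $H\le\VA_m$ be finite. By Bruhat--Tits, $H$ fixes a point, and since there are no inversions it fixes a vertex $v=a\A_Y$. Conjugating by $a\in\A[\hGammam]\le\VA_m$, we may assume $v=\A_Y$ with $Y=\varnothing$, a single root $\{\beta\}$, or an edge $\{\beta,\gamma\}$ of $\hGammam$.

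\textbf{Step 3 (stabilisers).} The stabiliser of $\A_Y$ is $\{(k,w):k\in\A_Y,\ w(Y)=Y\}=\A_Y\rtimes\Stab_{\W_m}(Y)$.
\begin{itemize}
\item If $Y=\varnothing$, the stabiliser is $\W_m$ and we are done.
\item If $Y=\{\beta\}$, the stabiliser is $\mathbb Z\rtimes\Stab_{\W_m}(\beta)$. The group $\Stab_{\W_m}(\beta)$ is $\{1\}$ or $\{1,r\}$, with $r$ a reflection orthogonal to $\beta$ (Proposition~\ref{prop_roots}), and $r$ fixes $\delta_\beta$. So the stabiliser is $\mathbb Z$ or $\mathbb Z\times\mathbb Z/2$, and every finite subgroup already lies in $\W_m$.
\item If $Y=\{\beta,\gamma\}$, the stabiliser is $\A_m\rtimes\Stab_{\W_m}(\{\beta,\gamma\})$. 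The element sending $(\alpha_s,\alpha_t)$ to $(\beta,\gamma)$ conjugates everything back to $Y=\{\alpha_s,\alpha_t\}$. The setwise stabiliser of $\{\alpha_s,\alpha_t\}$ in $\W_m$ is either trivial, or of order $2$ generated by an element $w_0$ swapping $\alpha_s,\alpha_t$ (the longest element composed appropriately, using Proposition~\ref{prop_roots}). It acts on $\A_m=\langle\delta_{\alpha_s},\delta_{\alpha_t}\rangle$ by the diagram involution.
\end{itemize}

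\textbf{Main obstacle.} It remains to show that every finite subgroup of $\A_m\rtimes\langle w_0\rangle$ is conjugate into $\langle w_0\rangle$. Since $\A_m$ is torsion-free, a nontrivial finite subgroup has order $2$ and is generated by some $(k,w_0)$ with $k\,w_0(k)=1$. I then need $k=c\,w_0(c)^{-1}$ for some $c\in\A_m$, i.e.\ a vanishing $H^1(\mathbb Z/2;\A_m)$ statement for the diagram involution. I would attack this by letting $\A_m\rtimes\langle w_0\rangle$ act on the (Bass--Serre or Deligne-type) CAT(0) tree or complex of $\A_m/Z(\A_m)$, or more concretely on $\D_{\Gamma_m}$ with the involution extended. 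A fixed point of $(k,w_0)$ then yields either the base vertex, giving conjugacy into $\langle w_0\rangle$, or a vertex with cyclic stabiliser $\langle\sigma_s\rangle$. The latter is impossible because the swap maps $\langle\sigma_s\rangle$ to $\langle\sigma_t\rangle$, and the fixed-point analysis, using the central element and the conjugacy of the fixed vertex, should force the conjugate form. Checking this cohomology-vanishing step carefully is where I expect the real work. Once it is done, all cases put $H$ into a conjugate of $\W_m$.
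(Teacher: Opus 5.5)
Your overall strategy is the paper's: extend the action of $\A[\hGammam]$ on $\D_{\hGammam}$ to $\VA_m=\A[\hGammam]\rtimes\W_m$, apply Bruhat--Tits, and analyse vertex stabilisers. Steps 1 and 2 are correct, as are the cases $Y=\varnothing$ and $Y=\{\beta\}$ of Step 3. The gap is in the case $Y=\{\beta,\gamma\}$. There you allow $\Stab_{\W_m}(\{\alpha_s,\alpha_t\})$ to have order $2$, generated by an element swapping $\alpha_s$ and $\alpha_t$. The statement you then need is that every finite subgroup of $\A_m\rtimes\langle w_0\rangle$ is conjugate into $\langle w_0\rangle$, i.e.\ your $H^1$-vanishing. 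You only sketch it (``should force the conjugate form''), so as written the proof does not close.

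The missing observation is that this setwise stabiliser is always trivial, so the case you single out as the main obstacle never occurs. Suppose $w\in\W_m$ permutes $\{\alpha_s,\alpha_t\}$. Being linear, $w$ maps non-negative combinations of simple roots to non-negative combinations, so $w(\Phi^+_m)=\Phi^+_m$. Then $w$ has no inversions, hence length $0$, so $w=\idW$.

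Equivalently, you can use Proposition~\ref{prop_roots} and its analogue with $s$ and $t$ exchanged. An element fixing both $\alpha_s$ and $\alpha_t$ lies in $\{\idW,\Prod_R(s,t;m-1)\}\cap\{\idW,\Prod_R(t,s;m-1)\}=\{\idW\}$. A swap would have to equal both $\Prod_R(s,t;m-1)$ and $\Prod_R(t,s;m-1)$ (with $m$ odd), and these are distinct. Your parenthetical suggestion that the longest element produces the swap is also mistaken. The longest element sends $\alpha_s$ to $-\alpha_t$ or $-\alpha_s$. The genuine swap of $\alpha_s$ and $\alpha_t$ is the diagram automorphism, i.e.\ the reflection in the bisector of the fundamental chamber, which is not in $\W_m$.

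Consequently the stabiliser of $a\A_Y$ is just $a\A_Y a^{-1}\cong\A_m$, which is torsion-free, so any finite subgroup fixing such a vertex is trivial. This is exactly how the paper handles it in Lemma~\ref{lem-stabilisers}. With this fact your argument is complete, and no cohomological input is needed.
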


\noindent In 
Subsection \ref{subs-extending the action} we study the action of $\A[\hGammam]$ on its Deligne complex $\D_{\hGammam}$, and we extend this action to the entire virtual Artin group $\VA_m$. In Subsection \ref{subs-stabilisers-action} we analyse the corresponding stabilisers, and finally show Theorem \ref{thm-finitesubgroups-dihedral}.

\subsection{\texorpdfstring{Extending the action of $\A[\hGammam]$ on its Deligne complex to an action of $\VA_m$}{Extending the action of Ah on dh}}\label{subs-extending the action}
\noindent Consider the Coxeter graph $\hGammam$ such that $V(\hGammam)=\Phi_m$, together with its Deligne complex $\D_{\hGammam}$. 
    It follows from Proposition~\ref{propdihedralgammahat} that the graph $\hGammam$ is 2 dimensional. 
    This implies by \cite{CharDav95} that $\D_{\hGammam}$, equipped with the Moussong metric described in Subsection \ref{subs-davis-and-deligne-complex}, is CAT(0). We recall that a strict fundamental domain for the action of $\A[\hGammam]$ on  $\D_{\hGammam}$ is given by $K_{\hGammam}$, the geometric realisation of the poset \[\widehat{\mathcal{S}^f}=\{\mathcal{X}\subseteq \Phi_m\mid \W_{\mathcal{X}}[\hGammam]\text{   \,is finite\,}\}.\]
\noindent The maximal subsets of the root system appearing above are of the form $\mathcal{X}=\{w(\alpha_s),w(\alpha_t)\}$ for some $w\in \W_m$. 
 We recall that $\VA_m=\A[\hGammam]\rtimes \W_m$, where $\W_m$ acts on $\A[\hGammam]$ as described in Equation \ref{actionWonKVA}. We now extend the action of $\A[\hGammam]$ on $\D_{\hGammam}$ to an action of the entire group $\VA_m$.
\begin{defn}
    Let $g=(k,u)$ be an element in $\VA_m=\A[\hGammam]\rtimes \W_m$, with $k\in \A[\hGammam]$ and $u\in \W_m$. Let $v=a\,\A_{\cX}[\hGammam]$ be a vertex in $\hDm$. We define the action of $g$ on $v$ as \begin{equation}\label{actionofVAonD}
g\cdot v= (k,u)\cdot a\,\A_{\cX}[\hGammam]:=k\,a^u\, \A_{u(\cX)}[\hGammam],
\end{equation}
\noindent where, as in Notation \ref{notationactionWonKVA}, $a^u$ denotes the element of $\A[\hGammam]$ such that $u\cdot a=a^u\,u$, and, if $\cX=\{w(\alpha_s),w(\alpha_t)\}$, then $u(\cX)=\{uw(\alpha_s),uw(\alpha_t)\}$.
\end{defn} \noindent In the following proposition, we show that this definition yields a well-defined action of $\VA_m$ on $\hDm$.

\begin{lem}\label{prop-actionofVAm-on-Deligne}
 Let $\VA_m$ act on the vertices of $\hDm$ like in Equation \ref{actionofVAonD}. Then this action extends to a face-preserving action of $\VA_m$ on $\hDm$.  Moreover, the action is without inversion.
\end{lem}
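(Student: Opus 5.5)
We would prove Lemma~\ref{prop-actionofVAm-on-Deligne} by recognising the formula \eqref{actionofVAonD} as the natural combination of two actions on the poset $\A\widehat{\mathcal{S}^f}$ of cosets: the left-multiplication action of $\A[\hGammam]$, and an action of $\W_m$ by \emph{diagram automorphisms}. The key input is that $\W_m$ acts on $\Phi_m=V(\hGammam)$ by automorphisms of the labelled graph $\hGammam$. Indeed, by Definition~\ref{defGammaHat}, $\beta,\gamma$ span an edge with label $m_{a,b}$ iff $\{\beta,\gamma\}=w\{\alpha_a,\alpha_b\}$. This condition is visibly $\W_m$-invariant, and by \eqref{actionWonKVA} the automorphism $k\mapsto k^u$ of $\A[\hGammam]$ is induced by this graph automorphism. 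Consequently $u$ maps spherical subsets to spherical subsets, and $(\A_{\cX}[\hGammam])^u=\A_{u(\cX)}[\hGammam]$.

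\textbf{Well-definedness.} Suppose $a\A_{\cX}=a'\A_{\cX}$. Then $a^{-1}a'\in\A_{\cX}$, so $(a^{-1}a')^u\in\A_{u(\cX)}$, and hence $ka^u\A_{u(\cX)}=ka'^u\A_{u(\cX)}$. Thus \eqref{actionofVAonD} does not depend on the coset representative.

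\textbf{Action axioms.} The identity clearly acts trivially. For $g_1=(k_1,u_1)$ and $g_2=(k_2,u_2)$ we have $g_1g_2=(k_1k_2^{u_1},u_1u_2)$ by Notation~\ref{nt-g=(k,w)}. We then compute
\[
g_1\cdot(g_2\cdot a\A_{\cX})=k_1(k_2a^{u_2})^{u_1}\A_{u_1u_2(\cX)}=k_1k_2^{u_1}a^{u_1u_2}\A_{u_1u_2(\cX)}=(g_1g_2)\cdot a\A_{\cX}.
\]
This uses only that $k\mapsto k^u$ is a homomorphism and that $(a^{u_2})^{u_1}=a^{u_1u_2}$, which is a group action of $\W_m$ on $\A[\hGammam]$.

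\textbf{Order preservation and faces.} If $a\A_{\cX}\subseteq b\A_{\mathcal Y}$ with $\cX\subseteq\mathcal Y$, then applying the automorphism and left multiplying gives $ka^u\A_{u(\cX)}\subseteq kb^u\A_{u(\mathcal Y)}$, and $u(\cX)\subseteq u(\mathcal Y)$. So each element acts by a poset automorphism of $\A\widehat{\mathcal{S}^f}$, and therefore simplicially on the geometric realisation $\hDm$. Since $|u(\cX)|=|\cX|$, the action preserves the cube structure used for the Moussong metric. It is isometric as well, because the cube metric on $cube(\cX)$ depends only on the labels of $\Gamma_{\cX}$, and these are preserved by graph automorphisms.

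\textbf{Without inversion.} Vertex type $|\cX|$ is preserved, and the vertices of any simplex have pairwise distinct types. Hence, exactly as in Lemma~\ref{prop-vD-complete-action-no-inversion}, an element stabilising a simplex fixes it pointwise.

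The main point to check carefully is the $\W_m$-invariance of $\hGammam$ as a \emph{labelled} graph, including non-edges (label $\infty$), and that sphericity of $\cX$ is preserved. Everything else is formal. For invariance, we would invoke directly the independence-of-choice statement following Definition~\ref{defGammaHat}, together with the explicit description in Proposition~\ref{propdihedralgammahat}: $u$ acts by isometries on $\mathbb R^2$, and in the dihedral case adjacency is characterised by the angle $\pi-\pi/m$.
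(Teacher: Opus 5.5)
Your proposal is correct and follows the same route as the paper: you verify the action axioms using $g_1g_2=(k_1k_2^{u_1},u_1u_2)$ and $(a^{u_2})^{u_1}=a^{u_1u_2}$, check that inclusions of cosets (hence adjacency) are preserved, and deduce that there are no inversions from $|u(\cX)|=|\cX|$. You are slightly more careful than the paper on three points. You check independence of the coset representative, and that $u$ preserves sphericity via the $\W_m$-invariance of the labelled graph $\hGammam$; the paper leaves both implicit. You also observe that the action is isometric, which is what is actually needed later when the Bruhat--Tits fixed point theorem is applied.
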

\begin{proof}
    Note that for $u=1$, we recover the action of $A[\hGammam]$ on $\vD_{\hGammam}$ by left multiplications. We first verify that the action of $\VA_m$ on the vertices of $\hDm$ satisfies the identity and compatibility axioms.\medskip\\
    \noindent Let $v=a\,\A_{\cX}[\hGammam]$ be a vertex of $\hDm$, where $a\in \A[\hGammam]$ and $\cX\in \widehat{\mathcal{S}^f}$. If $g=\id$ in $\VA_m$, then $(\idK,\idW)\cdot v=\id\,a^{\id}\,\A_{\cX}[\hGammam]=v$. Let now $g_1=(k_1,u_1)$, $g_2=(k_2,u_2)$ be two elements in $\VA_m$ such that $k_1,k_2\in \A[\hGammam]$ and $u_1, u_2\in \W_m$. We compute:
    \[
    g_1\cdot (g_2\cdot a\,\A_{\cX}[\hGammam])=g_1\cdot (k_2\,a^{u_2}\,\A_{u_2(\cX)}[\hGammam])=k_1\,k_2^{u_1}\,a^{u_1u_2}\,\A_{u_1u_2(\cX)}[\hGammam].
    \]
    \noindent On the other hand, 
    \[
    (g_1 g_2)\cdot a\;\A_{\cX}[\hGammam]= (k_1\,k_2^{u_1},u_1u_2)\cdot a\;\A_{\cX}[\hGammam] =k_1\,k_2^{u_1}\,a^{u_1u_2}\,\A_{u_1u_2(\cX)}[\hGammam].
    \]
    \noindent Thus, the action is well defined on the vertices of $\hDm$. Let us now show that this action preserves the simplicial structure of $\hDm$. Since $\hDm$ is the geometric realisation of the poset of left cosets of standard parabolic subgroups of $\A[\hGammam]$, it is enough to check that the action preserves adjacency of vertices. Two adjacent vertices of $\hDm$ are of the form $a\A_{\cX}[\hGammam]$, $a\A_{\cX'}[\hGammam]$ for some $a\in \A[\hGammam]$ and $\cX \subsetneq \cX'$ in $\widehat{\mathcal{S}^f}$, and one checks that an element $(k, u)$ of $\VA_m$ takes these vertices to the adjacent pair of vertices $ka^u\A_{u(\cX)}[\hGammam]$ and  $ka^u\A_{u(\cX')}[\hGammam]$.\medskip\\ 
\noindent  To see that the action is without inversions, observe that for every $u\in \W_m$, the subset $\cX\in \widehat{\mathcal{S}^f}$ is sent to $u(\cX)$, which also belongs to $\widehat{\mathcal{S}^f}$ and has the same cardinality as $\cX$. In particular, adjacent vertices are in different $\VA_m$-orbits, hence the action is without inversions. 
\end{proof}
\noindent 

\subsection{\texorpdfstring{Stabilisers of the action and proof of Theorem \ref{thm-finitesubgroups-dihedral}}{Stabilisers of the action}}\label{subs-stabilisers-action}
\noindent Once we have constructed the cocompact and face-preserving action of $\VA_m$ on $\hDm$, we study here its stabilisers.
\begin{lem}\label{lem-stabilisers}
Consider the action of $\VA_m$ on $\hDm$ described in Proposition \ref{prop-actionofVAm-on-Deligne}. Then the stabiliser of the vertex $a\,\A_{\cX}[\hGammam]$ is given by
\[
\Stab(a\,\A_{\cX}[\hGammam])=\begin{cases}
   a\,\W_m\, a^{-1}&\text{if $|\cX|=0$;}\\
   a\,(\A_{\cX}[\hGammam]\times\langle r\rangle)\,a^{-1} &\text{if $|\cX|=1$ and $m$ is even};\\
    a\,\A_{\cX}[\hGammam]\, a^{-1} &\text{if $|\cX|=1$ and $m$ is odd};\\
    a\,\A_{\cX}[\hGammam]\, a^{-1} &\text{if $|\cX|=2$};
\end{cases}
\]
\noindent where $r$ is the reflection $r=w\,\Prod_R(s,t;m-1)\,w^{-1}$ if $\cX=\{w(\alpha_s)\}$, and $r=w\,\Prod_R(t,s;m-1)\,w^{-1}$ if $\cX=\{w(\alpha_t)\}$.
\end{lem}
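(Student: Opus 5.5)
First reduce to $a=\idK$. For $g=(a,\idW)$ we have $g\cdot\A_{\cX}[\hGammam]=a\,\A_{\cX}[\hGammam]$, so $\Stab(a\A_{\cX})=a\,\Stab(\A_{\cX})\,a^{-1}$. It therefore suffices to compute the stabiliser of $\A_{\cX}[\hGammam]$ itself.

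Take $g=(k,u)$. By Equation \ref{actionofVAonD}, $g\cdot \A_{\cX}=k\,\A_{u(\cX)}$. In the Deligne complex two vertices $k\A_{\cY}$ and $\A_{\cX}$ coincide exactly when $\cY=\cX$ and $k\in \A_{\cX}$. Indeed, cosets of standard parabolic subgroups over different subsets are distinct vertices, since the vertex type $\cX$ is part of the poset data. Hence
\[\Stab(\A_{\cX})=\{(k,u)\mid k\in\A_{\cX}[\hGammam],\ u(\cX)=\cX\}=\A_{\cX}[\hGammam]\rtimes \Stab_{\W_m}(\cX).\]
This is a subgroup because $\W_m$-elements preserving $\cX$ normalise $\A_{\cX}$: $\A_{\cX}^u=\A_{u(\cX)}$ by Equation \ref{actionWonKVA}. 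It remains to compute the setwise stabiliser of $\cX\subseteq\Phi_m$ in $\W_m$ in each case.

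If $\cX=\varnothing$, the stabiliser is all of $\W_m$ and $\A_\varnothing$ is trivial, giving $\W_m$.

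If $\cX=\{w(\alpha_s)\}$, we need $u\,w(\alpha_s)=w(\alpha_s)$, i.e.\ $w^{-1}uw(\alpha_s)=\alpha_s$. By Proposition \ref{prop_roots}(1), $w^{-1}uw$ is either $\idW$ or, when $m$ is even, $\Prod_R(s,t;m-1)$. So the stabiliser is trivial for $m$ odd and equals $\langle r\rangle$ for $m$ even; the case $\alpha_t$ is symmetric, swapping $s$ and $t$. For $m$ even we also need the semidirect product to be direct. Since $r$ fixes $w(\alpha_s)$, Equation \ref{actionWonKVA} shows that $r$ commutes with $\delta_{w(\alpha_s)}$, which generates $\A_{\cX}\cong\mathbb Z$. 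This gives $\A_{\cX}\times\langle r\rangle$.

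If $|\cX|=2$, then $\cX=\{w(\alpha_s),w(\alpha_t)\}$, and $u$ must either fix both roots or swap them. Fixing both forces $u=\idW$: the two roots span $\mathbb R^2$, and $\W_m$ acts faithfully on it. For a swap, $w^{-1}uw$ would send $\alpha_s\mapsto\alpha_t$ and $\alpha_t\mapsto\alpha_s$. By Proposition \ref{prop_roots}(3) this forces $m$ odd and $w^{-1}uw=\Prod_R(s,t;m-1)$, which has even length and so is a rotation. A rotation swapping two vectors must be the rotation by $\pi$. But $\alpha_t\neq-\alpha_s$, since they make an angle of $\pi-\pi/m$, so no such $u$ exists. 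Hence the stabiliser is $\A_{\cX}$.

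The main care is in this last check, excluding a swap of the two roots of an edge. The rest is routine bookkeeping with Notation \ref{nt-g=(k,w)}.
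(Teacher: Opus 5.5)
Your proposal is correct and follows the paper's proof. You reduce to $a=\idK$, identify the stabiliser as the pairs $(k,u)$ with $k\in\A_{\cX}[\hGammam]$ and $u(\cX)=\cX$, and compute the setwise stabiliser of $\cX$ in $\W_m$ case by case via Proposition~\ref{prop_roots}. The one cosmetic difference is in the case $|\cX|=2$: the paper excludes both fixing and swapping by comparing the unique candidates $\Prod_R(s,t;m-1)$ and $\Prod_R(t,s;m-1)$, whereas you use faithfulness of the linear action and the fact that a rotation swapping two roots must be the rotation by $\pi$; both arguments are valid.
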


\begin{proof} Up to conjugation, we can assume that $a=\idK$ and compute the stabiliser of a vertex of the form $\A_{\cX}[\hGammam]$. Let $g=(k,u)$ be in $\VA_m$ and let $\cX\subseteq\{w(\alpha_s),w(\alpha_t)\}$ for some $w\in \W_m$. A direct computation shows that
\[  g\cdot \,\A_{\cX}[\hGammam]=k\,\A_{u(\cX)}[\hGammam] =\,\A_{\cX}[\hGammam]\qquad\Longrightarrow\qquad
  \begin{cases}
      k\in \A_{\cX}[\hGammam]\,;\\
      u(\cX)=\cX .
  \end{cases}
    \]
    \noindent If $\cX=\varnothing$, then $\A_{\cX}[\hGammam] = \{\idK\}$ and $u$ can be any element in $\W_m$, hence the stabiliser of $\A_{\varnothing}[\hGammam]$ is $\W_m$.\medskip\\
    \noindent Suppose now $|\cX|=1$, say $\cX=\{w(\alpha_s)\}$. Then $g=(k,u)$ stabilises $\A_{\cX}[\hGammam]$ if and only if $u(w(\alpha_s))=w(\alpha_s)$ and $k\in \A_{\cX}[\hGammam]$. By Proposition~\ref{prop_roots}, this  holds either when $m$ is even and $u$ is the reflection $r=w\,\Prod_R(s,t;m-1)\,w^{-1}$, or when $u=\idW$. In the first case, since $r$ commutes with $\delta_{w(\alpha_s)}$, we have that the stabiliser of $\A_{w(\alpha_s)}[\hGammam]$ is $\A_{\cX}[\hGammam]\times \langle r\rangle$. In the second case, the stabiliser of $\A_{w(\alpha_s)}[\hGammam]$ is $\A_{\cX}[\hGammam]$.
    \medskip\\
    \noindent Finally, if $|\cX|=2$, then $\cX=\{w(\alpha_s),w(\alpha_t)\}$.  The equality $u(\cX)=\cX$ implies that $w^{-1}uw$ either fixes $\alpha_s$ and $\alpha_t$, or interchanges them. By Corollary \ref{cor_roots}, the only element sending $\alpha_s$ to $\alpha_t$ is $\Prod_R(s,t;m-1)$, while the only element sending $\alpha_t$ to $\alpha_s$ is $\Prod_{R}(t,s;m-1)$, for $m$ odd. These are distinct elements of $\W_m$, hence no element of $\W_m$ interchanges $\alpha_s$ and $\alpha_t$. By Proposition~\ref{prop_roots}, an element of $\W_m$ fixing $\alpha_s$ and $\alpha_t$ is in 
    $$\{\idW, \Prod_{R}(s,t;m-1)\} \cap \{\idW, \Prod_{R}(t,s;m-1)\} = \{\idW\}$$ hence the stabiliser of $\A_{\cX}[\hGammam]$ is $\A_{\cX}[\hGammam]$.
\end{proof}
\noindent Another step for proving Theorem \ref{thm-finitesubgroups-dihedral} is next lemma.
\begin{lem}\label{lem-finitesubs&stabilisers}
   Let $H$ be a finite subgroup of $\Stab(a\,\A_{\cX}[\hGammam])$. Then, up to conjugacy, $H\leq \W_m$.
\end{lem}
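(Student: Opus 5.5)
Up to conjugation by $a\in\A[\hGammam]\subset\VA_m$ we may assume $a=\idK$, so $H$ is a finite subgroup of $\Stab(\A_{\cX}[\hGammam])$, which Lemma~\ref{lem-stabilisers} computes explicitly. The plan is to treat its four cases in turn. The only input beyond that lemma is that $\A_{\cX}[\hGammam]$ is torsion-free for $|\cX|\in\{1,2\}$. Indeed, for $|\cX|=1$ it is infinite cyclic. For $|\cX|=2$ it is either a dihedral Artin group $\A_m$ (when the two roots are $m$-adjacent in $\hGammam$, by Proposition~\ref{propdihedralgammahat}), which is torsion-free as a spherical Artin group, or it is free of rank $2$ when $m_{\beta,\gamma}=\infty$. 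In fact only spherical $\cX$ occur, and these are the $m$-adjacent pairs.

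\textbf{Case $|\cX|=0$.} The stabiliser is $\W_m$ itself, so $H\leq \W_m$ and there is nothing to prove.

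\textbf{Case $|\cX|=2$, and case $|\cX|=1$ with $m$ odd.} Here the stabiliser is $\A_{\cX}[\hGammam]$, which is torsion-free by the remark above. Hence $H$ is trivial, and trivially $H\leq\W_m$.

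\textbf{Case $|\cX|=1$ with $m$ even.} The stabiliser is $\A_{\cX}[\hGammam]\times\langle r\rangle\cong \mathbb{Z}\times\mathbb{Z}_{/2}$, where $\A_{\cX}[\hGammam]=\langle \delta_\beta\rangle$ with $\beta=w(\alpha_s)$ (or $w(\alpha_t)$), and $r\in\W_m$ commutes with $\delta_\beta$ since $r(\beta)=\beta$ (this is how the product decomposition was obtained). The torsion elements of $\mathbb{Z}\times\mathbb{Z}_{/2}$ are exactly the elements $(0,\epsilon)$. Therefore the finite subgroup $H$ is contained in $\{1,r\}=\langle r\rangle$, which lies in $\iota_\W(\W_m)$. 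So $H\leq\W_m$, up to the conjugation by $a$ performed at the start.

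The only point requiring care is the torsion-freeness of $\A_{\cX}[\hGammam]$ in the $|\cX|=2$ case. I would justify it by noting that $\A_{\cX}[\hGammam]\cong\A[\hGamma_{\cX}]$ by van der Lek's theorem, and that this is a dihedral Artin group, whose Deligne complex is a CAT(0) tree-like $2$-complex; alternatively one uses its Garside structure. Everything else is immediate from Lemma~\ref{lem-stabilisers}.
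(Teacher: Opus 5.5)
Your proof is correct and follows the paper's argument: the same reduction to $a=\idK$, the same case split via Lemma~\ref{lem-stabilisers}, and the same $\mathbb{Z}\times\mathbb{Z}_{/2}$ observation in the case $|\cX|=1$, $m$ even. The only difference is that you get torsion-freeness locally, from $\A_{\cX}[\hGammam]$ being $\mathbb{Z}$ or the dihedral Artin group $\A_m$, whereas the paper cites torsion-freeness of all of $\A[\hGammam]$ from \cite[Corollary 6.5]{BellParThiel}; for $\A_m$, justify this by the Garside structure or Deligne's theorem rather than the Deligne-complex argument you mention, which gives nothing because that complex is a cone whose apex is fixed by the whole group.
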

\begin{proof}
Recall that, for $\Gamma$ of spherical type, $\A[\hGamma]$ is torsion-free by \cite[Corollary 6.5]{BellParThiel}.\medskip\\
\noindent Thus, for $\cX=\varnothing$, the statement follows immediately by Lemma \ref{lem-stabilisers}. If $|\cX|=2$, or $|\cX|=1$ with $m$ odd, the stabiliser is $a\,\A_{\cX}[\hGammam]\,a^{-1}$, which is torsion-free by the above result. Hence any finite subgroup must be trivial, and therefore $H<\W_m$. If $|\cX|=1$ and $m$ is even, the stabilizer is conjugated to \[\A_{\cX}[\hGammam]\times \langle r \rangle\cong \mathbb{Z}\times \mathbb{Z}_{/2}.\] \noindent  As the only finite subgroup of $\mathbb{Z}\times \mathbb{Z}_{/2}$ is $\{0\} \times \mathbb{Z}_{/2}$, it follows that, up to conjugacy, we have $H\leq \langle r \rangle<\W_m$.
\end{proof}
\noindent We are now ready to prove that dihedral virtual Artin groups satisfy the torsion conjecture.

\begin{proof}[Proof of Theorem \ref{thm-finitesubgroups-dihedral}]
    By Proposition \ref{prop-actionofVAm-on-Deligne}, the dihedral virtual Artin group $\VA_m$ acts without inversion on the Deligne complex $\hDm$, which is CAT(0) since $\hGammam$ is 2 dimensional. It follows from the Bruhat–Tits' fixed point theorem (see \cite[Corollary II.2.8]{BriHaefl}) that any finite subgroup $H<\VA_m$  fixes a vertex $v$,
    hence $H\leq \Stab(v)$. By Lemma \ref{lem-finitesubs&stabilisers}, any finite subgroup of a vertex stabiliser is, up to conjugacy, contained in $\W_m$, as we wanted to show.
\end{proof}

\noindent A consequence of Theorem \ref{thm-finitesubgroups} is the following. Note that the Artin group $\A[\hGamma]$ was known to be torsion-free for $\Gamma$ of spherical or of affine type (see \cite[Section 6]{BellParThiel}).

\begin{cor}\label{thm-A[gammahat]-torfree-locred}
    If $\Gamma$ is locally reducible, then the Artin group $\A[\hGamma]$ is torsion-free and $\VA[\Gamma]$ is virtually torsion-free.
\end{cor}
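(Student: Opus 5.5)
Both claims follow quickly from Theorem~\ref{thm-finitesubgroups} and the semidirect decomposition $\VA[\Gamma]=\A[\hGamma]\rtimes\W[\Gamma]$ of Subsection~\ref{subs-decomp-VA}.

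\emph{Torsion-freeness of $\A[\hGamma]$.} Let $k\in\A[\hGamma]=\KVA[\Gamma]$ have finite order, and set $H=\langle k\rangle$, a finite subgroup of $\VA[\Gamma]$. By Theorem~\ref{thm-finitesubgroups} there is $g\in\VA[\Gamma]$ with $gHg^{-1}\le\iota_{\W}(\W[\Gamma])$. Since $\KVA[\Gamma]$ is normal, $gkg^{-1}\in\KVA[\Gamma]\cap\iota_{\W}(\W[\Gamma])$. This intersection is trivial, because $\pi_K\circ\iota_{\W}=\id_{\W[\Gamma]}$: if $\iota_{\W}(w)\in\ker\pi_K$ then $w=\pi_K(\iota_{\W}(w))=\idW$. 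Hence $gkg^{-1}=\id$, so $k=\idK$, and $\A[\hGamma]$ is torsion-free.

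\emph{Virtual torsion-freeness of $\VA[\Gamma]$.} The natural candidate $\A[\hGamma]$ has finite index only when $\W[\Gamma]$ is finite, so a different subgroup is needed in general. Coxeter groups are linear via the canonical representation of Subsection~\ref{subs-rootsystems}, hence virtually torsion-free by Selberg's lemma. So pick a torsion-free finite-index subgroup $W'\le\W[\Gamma]$ and set $G':=\pi_K^{-1}(W')=\A[\hGamma]\rtimes W'$, which has finite index in $\VA[\Gamma]$.

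Let $g\in G'$ have finite order. Then $\pi_K(g)\in W'$ has finite order, so $\pi_K(g)=\idW$ and $g\in\A[\hGamma]$. By the first part $g=\id$, so $G'$ is torsion-free.

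No step is really an obstacle once Theorem~\ref{thm-finitesubgroups} is available. The only points needing care are that the kernel meets the conjugated Coxeter copy trivially, and that one passes through a torsion-free finite-index subgroup of $\W[\Gamma]$ rather than using $\A[\hGamma]$ directly.
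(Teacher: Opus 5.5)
Your proof is correct and follows the paper's argument. Both rest on Theorem~\ref{thm-finitesubgroups}, the fact that $\pi_K$ is injective on conjugates of $\iota_{\W}(\W[\Gamma])$ (equivalently, $\A[\hGamma]$ meets them trivially), and passing to $\pi_K^{-1}$ of a torsion-free finite-index subgroup of $\W[\Gamma]$. The only cosmetic difference is the order: you prove $\A[\hGamma]$ torsion-free first and then deduce it for $\pi_K^{-1}(W')$, whereas the paper shows $\pi_K^{-1}(\W_0)$ is torsion-free directly and obtains $\A[\hGamma]$ as a subgroup of it.
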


\begin{proof} Coxeter groups are virtually torsion-free (see for instance \cite[Corollary~6.12.12]{Davis2008}), thus for $\W[\Gamma]$ locally reducible, there exists a finite index subgroup $\W_0$ that is torsion-free. Consider now the $\pi_K$-pre-image of $\W_0$ in $\VA[\Gamma]$, and call it $H:=\pi_K^{-1}(\W_0)$. Clearly, $\A[\hGamma]=\KVA[\Gamma]=\pi_K^{-1}(\id)$, so $\A[\hGamma]<H$. Suppose now that $H$ has torsion, and let $h \in H$ be a non-trivial element of finite-order. By Theorem \ref{thm-finitesubgroups}, there exists $g \in \VA[\Gamma]$ such that $h \in g\,\iota_\W(\W[\Gamma])\,g^{-1}$. Since $\iota_\W$ is a section of $\pi_K$, the projection $\pi_K: \VA[\Gamma]\rightarrow \W[\Gamma]$ is injective on $g\,\iota_\W(\W[\Gamma])\,g^{-1}$, in particular $\pi_K(h)$ is a non-trivial element of finite-order of $\W[\Gamma]$. But by construction of $H = \pi_K^{-1}(\W_0)$, we have that $\pi_K(h)\in \W_0$, which is torsion-free, a contradiction. Thus there cannot be torsion in $H$, and, consequently, in $\A[\hGamma]$. The subgroup $H=\pi_K^{-1}(\W_0)$ is a finite index torsion-free subgroup of $\VA[\Gamma]$, thus $\VA[\Gamma]$ is virtually torsion-free.
\end{proof}

\begin{rmk}
    Locally reducible Artin groups have a finite dimensional classifying space (the Salvetti complex, see \cite{CharDav95,Charney2000}), thus they are torsion-free. However, when $\Gamma$ is locally reducible, we do not know whether $\hGamma$ is locally reducible or not. In affirmative case, this would give another proof of the fact that $\A[\hGamma]$ is torsion-free.
\end{rmk}

\noindent The fact that $\VA[\Gamma]$ possesses a finite index torsion-free subgroup motivates the study of its cohomological dimension, carried out in the next section.

\section{Classifying space for proper actions}\label{sec-cohom+classifying}

\noindent In this section, we construct classifying space for proper actions of minimal dimension for locally reducible virtual Artin groups.

\begin{defn}
    A \textit{classifying space for proper actions} for a group $G$ is a contractible CW-complex with properly discontinuous action of $G$, such that for every finite subgroup $H$ of $G$, the fixed-point set $X^H$ is contractible. (In particular, $X$ itself is contractible.) Such a space always exists and is unique up to $G$-equivariant homotopy equivalence, and is denoted $\underline{\mathrm{E}}G$.
    
   \noindent We denote by $\underline{\mathrm{gd}}(G)$ the minimum dimension of a classifying space for proper actions for $G$. 
\end{defn}

\noindent To state our main result, we need the following definition:

\begin{defn}
    Let $\cl(\Gamma)$ denote the \textit{clique number} of $\Gamma$, that is, the number of vertices in a maximal clique of $\Gamma$. Let $\cl^f(\Gamma)$ denote the \textit{spherical clique number} of $\Gamma$, that is, the number of vertices in a maximal spherical clique of $\Gamma$.
\end{defn}

\noindent The goal of this section is to prove the following theorem:

\begin{thm}\label{thm:cocompact_EG_bar}
    Let $\VA[\Gamma]$ be a locally-reducible virtual Artin group. Then $\VA[\Gamma]$ admits a cocompact model of classifying space for proper actions of dimension $\cl^f(\Gamma)$.
\end{thm}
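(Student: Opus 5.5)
The plan follows the strategy announced in the introduction: first reduce to spherical standard parabolic subgroups using the CAT(0) virtual Deligne complex, then build the local models by hand in the dihedral case.

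\textbf{Step 1 (reduction).} We use the action of $\VA[\Gamma]$ on $\vD_\Gamma$. By Corollary~\ref{cor-CAT0-locred} this complex is complete CAT(0), and the action is cocompact, simplicial and without inversion (Lemma~\ref{prop-vD-complete-action-no-inversion}). The cell stabilisers are conjugates of $\VA_X$ with $X\in\Sf$. We then invoke a L\"uck-type / Haefliger-type gluing principle, in the form used for graph products and Artin groups (e.g.\ Martinez-P\'erez, or the ``classifying spaces from actions on contractible complexes'' construction). The input is a cocompact $\underline{\mathrm{E}}\VA_X$ of dimension $d_X$ for each spherical $X$. The output is a cocompact $\underline{\mathrm{E}}\VA[\Gamma]$ of dimension $\max_X (d_X + \dim \text{of the cell } \VA_X \text{ ``above''})$. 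This is realised as a homotopy colimit, i.e.\ a complex of spaces over $K_\Gamma$ glued from the $\underline{\mathrm{E}}\VA_X$ equivariantly. The fixed sets $\vD_\Gamma^H$ are convex, hence contractible, and this is exactly what is needed to check that fixed sets of finite $H$ in the glued space are contractible. A cleaner bookkeeping is to take, for the vertex $\VA_X$, a model for $\VA_X$ relative to the family of subgroups $\VA_Y$, $Y\subsetneq X$. We aim to arrange that the total dimension at the top cube is $|X|$.

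\textbf{Step 2 (local models).} By local reducibility, $\VA_X=\prod\VA_{X_i}$ with $|X_i|\le 2$, and products of classifying spaces for proper actions are models for the product, with dimensions adding. It therefore suffices to handle the two factor types. For $|X_i|=1$, the group is $\mathbb Z\ast\mathbb Z/2$, and its Bass--Serre tree is a cocompact model of dimension $1$. For $|X_i|=2$, we use the action of $\VA_m$ on $\hDm$ from Lemma~\ref{prop-actionofVAm-on-Deligne}. This complex is CAT(0) and $2$-dimensional, but it is not proper, since the stabilisers in Lemma~\ref{lem-stabilisers} contain $\A_{\cX}[\hGammam]$. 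Its quotient is $K_{\hGammam}/\W_m$, which is finite, so the action is cocompact. We then blow up the non-finite stabilisers: the edge-type vertices have stabiliser $\mathbb Z$ or $\mathbb Z\times\mathbb Z/2$, which acts on $\mathbb R$ properly; the top vertices have stabiliser the dihedral Artin group $\A_{\cX}[\hGammam]$. Replacing these vertices equivariantly (again by the gluing of Step 1) keeps the dimension at $2$ provided the replacements respect the link structure. For the top vertices this means using the $2$-dimensional Deligne-type/Salvetti model of the dihedral Artin group, which is torsion-free, so no fixed-set issue arises there. The expected result is a cocompact $\underline{\mathrm{E}}\VA_m$ of dimension $2$. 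Fixed sets of finite $H\le\W_m$ are contractible because $\hDm^H$ is convex and nonempty, and the blown-up pieces over it are contractible fixed sets of the local models.

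\textbf{Step 3 (dimension count and main obstacle).} Assembling the pieces, the dimension over a cube $cube(X)$ comes out as $\sum_i \dim\underline{\mathrm{E}}\VA_{X_i}=\sum_i |X_i|=|X|\le \cl^f(\Gamma)$. The main obstacle is making the gluing in Step 1 dimension-efficient. A naive homotopy colimit over $K_\Gamma$ adds the cube dimension $|X|$ to the dimension $|X|$ of the local model, giving roughly $2\cl^f$. To avoid this, I would first try building the model so that the vertex $\VA_X$ of $\vD_\Gamma$ is replaced by a space of dimension $|X|$ that equivariantly contains the pieces for smaller $Y$. That is, the relative model should only attach cells of dimension at most $|X|$ over the ``star'' of $\VA_X$, mirroring how the Salvetti complex has dimension $\cl^f$ while the Deligne complex also does. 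If this is too delicate, a fallback is to define the model directly as a generalised Salvetti-type complex. Cocompactness and the dimension bound are then immediate, and the work shifts to proving contractibility of fixed sets by comparison with $\vD_\Gamma^H$.
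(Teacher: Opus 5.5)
\textbf{The dimension bound is never established.} Your architecture matches the paper's: act on the complete CAT(0) complex $\vD_\Gamma$, use convexity of fixed sets of finite subgroups, reduce to products of rank $\le 2$ factors, and treat rank $2$ via the action of $\VA_m$ on $\hDm$. But the dimension $\cl^f(\Gamma)$ is the actual content of the statement. In Step 3 you say the gluing naively gives roughly $2\cl^f(\Gamma)$, and then you only list things you would try: relative models, or a Salvetti-type complex whose fixed sets would still need to be analysed. The same unresolved issue reappears in Step 2 as ``provided the replacements respect the link structure''. As written, you get a cocompact model, but not one of dimension $\cl^f(\Gamma)$. (Your Step 1 formula, pairing $\VA_X$ with cells ``above'' it, is actually the right pairing, but Step 3 abandons it.)

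\textbf{The missing idea.} The obstacle comes from pairing stabilisers with the wrong cells. Use a combination theorem with a per-simplex bound, such as Theorem~\ref{thm:combination_gdim}: $\underline{\mathrm{gd}}(G)\le\max_\sigma(\dim\sigma+\underline{\mathrm{gd}}(\Stab\sigma))$.

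A simplex $\sigma$ of $\vD_\Gamma$ is a chain $g\VA_{X_0}\subsetneq\cdots\subsetneq g\VA_{X_n}$. Since there is no inversion (Lemma~\ref{prop-vD-complete-action-no-inversion}), $\Stab(\sigma)$ fixes the bottom vertex. Conversely, every element of $g\VA_{X_0}g^{-1}$ fixes all the larger cosets. Hence $\Stab(\sigma)=g\VA_{X_0}g^{-1}$, and strictness of the chain gives $n+|X_0|\le|X_n|\le\cl^f(\Gamma)$.

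So the large stabiliser $\VA_X$ only accompanies simplices going up from $\VA_X$, which have dimension at most $\cl^f(\Gamma)-|X|$. Every positive-dimensional simplex of $cube(X)$ containing $\VA_X$ has the strictly smaller stabiliser $\VA_{Y_0}$ with $Y_0\subsetneq X$. The cube dimension and $|X|$ therefore never add, and no relative models or Salvetti-type complexes are needed.

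The same count works for $\VA_m$ acting on $\hDm$. The stabiliser of a chain $a\A_{\cX_0}[\hGammam]\subsetneq\cdots\subsetneq a\A_{\cX_n}[\hGammam]$ contains $a\A_{\cX_0}[\hGammam]a^{-1}$. It is contained in the stabiliser of the bottom vertex, which by Lemma~\ref{lem-stabilisers} is a conjugate of $\A_{\cX_0}[\hGammam]\times Q$ with $Q$ finite. Hence it is a conjugate of $\A_{\cX_0}[\hGammam]\times Q'$ with $Q'\le Q$. It therefore has a cocompact model of dimension $|\cX_0|$: the universal cover of the Salvetti complex, with the finite factor acting trivially. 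Moreover $n+|\cX_0|\le|\cX_n|\le 2$. This is exactly how the paper concludes.
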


\noindent Since we proved that locally reducible virtual Artin groups are virtually torsion-free by Corollary~\ref{thm-A[gammahat]-torfree-locred}, the following is immediate:

\begin{cor}
    Let $\VA[\Gamma]$ be a locally-reducible virtual Artin group. Then $\VA[\Gamma]$ is of type VF, i.e. it contains a finite-index subgroup that admits a compact Eilenberg--MacLane space. 
\end{cor}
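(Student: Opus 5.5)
The plan is to apply Corollary~\ref{thm-A[gammahat]-torfree-locred} and Theorem~\ref{thm:cocompact_EG_bar} directly. Let $H\le \VA[\Gamma]$ be the finite-index torsion-free subgroup constructed there, $H=\pi_K^{-1}(\W_0)$, where $\W_0$ is a torsion-free finite-index subgroup of $\W[\Gamma]$. Let $Y$ be the cocompact model of $\underline{\mathrm{E}}\VA[\Gamma]$ of dimension $\cl^f(\Gamma)$ given by Theorem~\ref{thm:cocompact_EG_bar}.

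The first step is to show that $H$ acts freely on $Y$. Point stabilisers of a proper action on a classifying space for proper actions are finite. Their intersections with $H$ are finite subgroups of a torsion-free group, so they are trivial. Hence $H$ acts freely and properly discontinuously on the contractible CW-complex $Y$. If the action is cellular but possibly with inversions, we pass to a suitable subdivision, or use that $\underline{\mathrm{E}}G$ models are taken as $G$-CW-complexes. In either case $Y\to H\backslash Y$ is a covering map with contractible total space, so $H\backslash Y$ is a $K(H,1)$.

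The second step is compactness. Since $\VA[\Gamma]\backslash Y$ is compact and $H$ has finite index, $H\backslash Y$ is a finite-sheeted cover of the compact space $\VA[\Gamma]\backslash Y$. More concretely, $Y$ has finitely many $\VA[\Gamma]$-orbits of cells, and each $\VA[\Gamma]$-orbit splits into at most $[\VA[\Gamma]:H]$ many $H$-orbits. Thus $H\backslash Y$ is a finite CW-complex, that is, a compact Eilenberg--MacLane space for $H$, so $\VA[\Gamma]$ is of type VF.

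The only point requiring care is that the model from Theorem~\ref{thm:cocompact_EG_bar} is a genuine proper $\VA[\Gamma]$-CW-complex, so that the quotient by the free $H$-action inherits a CW structure. This holds for the standard definition of $\underline{\mathrm{E}}G$, so I expect no real obstacle.
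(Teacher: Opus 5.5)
Your proposal is correct and follows the paper's route: the paper calls the corollary immediate from the virtual torsion-freeness in Corollary~\ref{thm-A[gammahat]-torfree-locred} together with the cocompact model of Theorem~\ref{thm:cocompact_EG_bar}. You supply the standard details the paper leaves implicit, namely that the torsion-free finite-index subgroup acts freely on the cocompact proper $\VA[\Gamma]$-CW-complex, so its quotient is a finite $K(H,1)$.
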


\noindent We also get the following:

\begin{cor}\label{cor_vcd-VA}
    Let $\VA[\Gamma]$ be a locally-reducible virtual Artin group. Then 
    $$ \underline{\mathrm{gd}}(\VA[\Gamma]) = \mathrm{vcd}(\VA[\Gamma]) = \mathrm{cd}(\A[\Gamma]) = \cl^f(\Gamma) $$
\end{cor}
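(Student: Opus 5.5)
The plan is to prove the chain of inequalities
\[
\cl^f(\Gamma)\;\leq\;\mathrm{cd}(\A[\Gamma])\;\leq\;\mathrm{vcd}(\VA[\Gamma])\;\leq\;\underline{\mathrm{gd}}(\VA[\Gamma])\;\leq\;\cl^f(\Gamma).
\]

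\textbf{Upper bound.} The last inequality is exactly Theorem~\ref{thm:cocompact_EG_bar}, which provides a model of $\underline{\mathrm{E}}\VA[\Gamma]$ of dimension $\cl^f(\Gamma)$.

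\textbf{Comparing $\mathrm{vcd}$ and $\underline{\mathrm{gd}}$.} This is the standard inequality for virtually torsion-free groups. By Corollary~\ref{thm-A[gammahat]-torfree-locred} there is a torsion-free finite-index subgroup $H$. Restricting the action on any model of $\underline{\mathrm{E}}\VA[\Gamma]$ to $H$ gives a free proper action on a contractible complex. Its quotient is a $K(H,1)$ of the same dimension, so $\mathrm{vcd}=\mathrm{cd}(H)\le\underline{\mathrm{gd}}$.

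\textbf{Comparing $\mathrm{cd}(\A[\Gamma])$ and $\mathrm{vcd}$.} The subgroup generated by $\mathcal{S}$ is isomorphic to $\A[\Gamma]$ (via \cite{BellParThiel}) and is torsion-free. Intersecting it with $H$ gives a finite-index subgroup $H'$ of $\A[\Gamma]$ contained in $H$. Cohomological dimension is monotone under passing to subgroups, and Serre's theorem says it is unchanged under finite-index subgroups of torsion-free groups. Hence $\mathrm{cd}(\A[\Gamma])=\mathrm{cd}(H')\le\mathrm{cd}(H)=\mathrm{vcd}(\VA[\Gamma])$.

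\textbf{Lower bound.} Choose a maximal spherical clique $X$ with $|X|=\cl^f(\Gamma)$. Then $\A[\Gamma_X]\le\A[\Gamma]$ by van der Lek, and it is a spherical Artin group. By Deligne and the Brieskorn--Saito theory, its cohomological dimension is $|X|$. Concretely, in the locally reducible case $\A[\Gamma_X]$ is a direct product of copies of $\mathbb{Z}$ and dihedral Artin groups, so it contains $\mathbb{Z}^{|X|}$: each dihedral factor contains $\mathbb{Z}^2$ generated by a standard generator and the central element. Since $\mathrm{cd}(\mathbb{Z}^{|X|})=|X|$, monotonicity gives $\cl^f(\Gamma)\le\mathrm{cd}(\A[\Gamma])$.

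\textbf{Main obstacle.} The only non-formal input is the upper bound, which is deferred to Theorem~\ref{thm:cocompact_EG_bar}. Everything else is standard bookkeeping with cohomological dimension, so I expect no obstacle here beyond citing those facts correctly.
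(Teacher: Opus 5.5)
Your argument is correct and has the same overall shape as the paper's. The upper bound for all three invariants comes from the model of Theorem~\ref{thm:cocompact_EG_bar}, and the lower bound comes from a free abelian subgroup of rank $\cl^f(\Gamma)$ in $\A[\Gamma]$.

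Where you differ is in how you produce that subgroup. The paper uses Lemma~\ref{lem:free_abelian_subgroup_spherical_number}, which is valid for \emph{every} Artin group. Its proof takes generators of the centres of a nested chain of standard parabolic subgroups, and shows they are independent using parabolic closures and the absence of proper normal standard parabolic subgroups. You instead use local reducibility: $\A[\Gamma_X]$ is a product of copies of $\mathbb{Z}$ and dihedral Artin groups, and you take $\langle a,z\rangle\cong\mathbb{Z}^2$ in each dihedral factor $\A_m$, with $a$ a standard generator and $z$ a generator of the centre. This needs $\langle a\rangle\cap Z(\A_m)=\{1\}$. That holds because, for $m\geq 3$, the quotient $\A_m/Z(\A_m)$ is $\mathbb{Z}/m\ast\mathbb{Z}/2$ or $\mathbb{Z}\ast\mathbb{Z}/(m/2)$ according to the parity of $m$, and $a$ maps to an element of infinite order. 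Your route is more elementary and suffices for this corollary; the paper's lemma is more general. You also make explicit the comparisons $\mathrm{cd}(\A[\Gamma])\leq\mathrm{vcd}(\VA[\Gamma])\leq\underline{\mathrm{gd}}(\VA[\Gamma])$, which the paper leaves implicit.

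One justification is missing: Serre's theorem requires $\A[\Gamma]$ to be torsion-free, which you assert without reason. It holds because $\pi_K(\sigma_s)=1$ places the $\sigma$-copy of $\A[\Gamma]$ inside $\KVA[\Gamma]\cong\A[\hGamma]$, which is torsion-free by Corollary~\ref{thm-A[gammahat]-torfree-locred}. In fact, taking $H=\pi_K^{-1}(\W_0)$ as in that corollary gives $\A[\Gamma]\leq H$ directly. Monotonicity then yields $\mathrm{cd}(\A[\Gamma])\leq\mathrm{cd}(H)=\mathrm{vcd}(\VA[\Gamma])$ without invoking Serre's theorem.
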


\noindent Before giving the proof, we mention the following result, which is probably known to experts:

\begin{lem}\label{lem:free_abelian_subgroup_spherical_number}
    Let $\A[\Gamma]$ be an arbitrary  Artin group. Then $\A[\Gamma]$ contains a free abelian subgroup of rank $\cl^f(\Gamma)$.
\end{lem}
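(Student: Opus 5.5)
The plan is to reduce to a single spherical standard parabolic subgroup and then use the classical structure of spherical type Artin groups. Let $X\subseteq S$ be a spherical clique of $\Gamma$ with $|X|=\cl^f(\Gamma)$, so that $\W_X[\Gamma]$ is finite. By van der Lek's theorem \cite[Theorem 4.13]{van1983homotopy}, the natural map $\A[\Gamma_X]\to\A[\Gamma]$ is injective with image $\A_X[\Gamma]$. A free abelian subgroup of rank $|X|$ in $\A[\Gamma_X]$ therefore also sits inside $\A[\Gamma]$, and it suffices to treat the case where $\Gamma=\Gamma_X$ is of spherical type with $n=|X|$ vertices.

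Next, decompose $\Gamma_X$ into its irreducible components $X=X_1\sqcup\cdots\sqcup X_k$. Then $\A[\Gamma_X]\cong\A[\Gamma_{X_1}]\times\cdots\times\A[\Gamma_{X_k}]$, since generators in different components commute. It is then enough to produce, in each irreducible spherical type Artin group $\A[\Gamma_{X_i}]$, a free abelian subgroup of rank $|X_i|$, and take the product.

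For an irreducible spherical type Artin group on a vertex set $Y$, I would use the standard nested chain $Y_1\subset Y_2\subset\cdots\subset Y_{|Y|}=Y$ with $|Y_j|=j$. Choose an ordering of $Y$ so that each $\Gamma_{Y_j}$ is connected, which is possible because $\Gamma_Y$ is a tree in the spherical irreducible case. Let $\Delta_j$ be the Garside element of $\A[\Gamma_{Y_j}]$ and set $z_j=\Delta_j^2$, which is central in $\A_{Y_j}$. For $i<j$ we have $z_i\in\A_{Y_i}\subseteq\A_{Y_j}$, and $z_j$ is central there, so the $z_j$ pairwise commute.

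The main obstacle is proving that the $z_j$ generate a free abelian group of rank $|Y|$, i.e. that they are linearly independent. I would handle this by the abelianisation map $\A[\Gamma_Y]\to\mathbb Z^{c}$, or more robustly by the length homomorphism restricted to each $\A_{Y_j}$. Each $z_j$ has positive length $2\ell(w_0^{Y_j})$ and lies in $\A_{Y_j}$ but is not in $\A_{Y_{j-1}}\times$ anything smaller. Suppose $\prod z_j^{e_j}=1$ with $e_N\neq0$ for the largest such $N$. Map to the Coxeter-length-type invariants, or use the pure braid image: in $\A_{Y_N}$, $z_N^{e_N}$ would lie in $\A_{Y_{N-1}}$. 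This is impossible because a nontrivial power of the central Garside element is never in a proper parabolic, as parabolic membership is detected by support of positive normal forms. Alternatively, I would simply cite the well-known fact that spherical Artin groups of rank $n$ contain $\mathbb Z^n$ (via these centres; e.g. for braid groups, the pure braid full twists), since $\mathrm{cd}=n$ is realised. Combining the components gives a rank-$\cl^f(\Gamma)$ free abelian subgroup.
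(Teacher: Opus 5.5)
Your proposal has the same skeleton as the paper's proof. You reduce via van der Lek to a maximal spherical clique, take a nested chain $\A_{Y_1}\subset\cdots\subset\A_{Y_n}$ of standard parabolic subgroups with central elements $z_j$, note that the $z_j$ commute, and show that no nontrivial power of $z_N$ lies in $\A_{Y_{N-1}}$. The difference is in how you justify that last step.

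\textbf{The paper's route.} It takes $z_i$ to generate the centre of $P_i$. This presupposes that $P_i$ is irreducible, so it needs an ordering with connected initial segments like yours. It then uses the existence of parabolic closures (Cumplido--Gebhardt--Gonz\'alez-Meneses--Wiest). The closure of the central element $z_i$ is normal in $P_i$, so it equals $P_i$, because an irreducible spherical Artin group has no proper nontrivial normal parabolic subgroup. This relies on fairly deep recent results. In return it gives the stronger conclusion that $z_i$ lies in no proper parabolic subgroup at all.

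\textbf{Your route.} It is more elementary. For $e>0$, $\Delta_N^{2e}$ is a positive element whose letter-support is all of $Y_N$. A positive element of $\A_Z$ is represented by a positive word in $Z$. This uses only the embedding of the positive monoid and the invariance of supports under the braid relations. It also does not need irreducibility: $\Delta^2$ is central in every spherical type Artin group, so your splitting into components and your connected ordering are harmless but unnecessary. Your formulation via a relation $\prod_j z_j^{e_j}=1$ handles powers explicitly. The paper's reduction to ``$z_i\notin\langle z_1,\dots,z_{i-1}\rangle$ suffices by torsion-freeness'' leaves this implicit, although its argument applies verbatim to $z_i^k$.

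\textbf{What to fix in the write-up.} You should justify the support fact rather than assert it. If $x\in\A^+\cap\A_Z$, write $x=\Delta_Z^{-2k}y$ with $y\in\A_Z^+$. Since $\Delta_Z^{2}$ is central in $\A_Z$, we get $x\Delta_Z^{2k}=y$, and this holds in the positive monoid. Hence every letter of a positive word for $x$ occurs in $y$, so it lies in $Z$.

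You should also delete the false starts. Neither the abelianisation nor the length homomorphism can detect rank $n$; in type $A_n$ the abelianisation is $\mathbb{Z}$. The fact that $\mathrm{cd}=n$ does not by itself produce a $\mathbb{Z}^n$ subgroup; surface groups have $\mathrm{cd}=2$ and no $\mathbb{Z}^2$.
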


\begin{proof}
It is enough to prove the result when $\Gamma$ is spherical. Moreover, up to decomposing such a spherical-type $\A[\Gamma]$ into a direct product of its maximal irreducible standard parabolic subgroups, it is enough to prove the lemma when $\A[\Gamma]$ is irreducible.

\noindent Let $v_1, \ldots, v_n$ denote the vertices of $\Gamma$ (with $n=\cl^f(\Gamma)) $, define $P_i \coloneqq  \A_{v_1, \ldots, v_i}[\Gamma]$, and let $z_i$ denote a generator of the centre of the standard parabolic subgroup $P_i$. We claim that $z_1, \ldots, z_n$ generate a $\mathbb{Z}^n$-subgroup. Since $\A[\Gamma]$ is torsion-free by \cite{Deligne1972}, it is enough to prove that for each $i> 1$, $z_{i}\notin \langle z_1, \ldots, z_{i-1}\rangle$. Since $\langle z_1, \ldots, z_{i-1} \rangle \subseteq P_{i-1}$ by construction, it is enough to prove that $z_{i}$ does not belong to a strict parabolic subgroup of $P_i$. Let $P$ denote the parabolic closure of $z_i$, which exists by Theorem~1.1 of \cite{CGGMW19}. Since $z_i$ is central in $P_i$, $P$ is normal in $P_i$. Since the only non-trivial normal standard parabolic subgroup of $P_i$ is $P_i$ itself by Lemma~3.3 of \cite{CMMW26}, we get $P=P_i$, which concludes the proof. 
\end{proof}

\begin{proof}[Proof of Corollary~\ref{cor_vcd-VA}]
    The existence of a classifying space for proper actions of dimension $\cl^f(\Gamma)$ implies that $\underline{\mathrm{gd}}(\VA[\Gamma])$, $\mathrm{vcd}(\VA[\Gamma])$, and $ \mathrm{cd}(\A[\Gamma])$ are bounded above by $\cl^f(\Gamma)$. Since the Artin group $\A[\Gamma]< \VA[\Gamma]$ contains a free abelian subgroup of rank $\cl^f(\Gamma)$ by Lemma~\ref{lem:free_abelian_subgroup_spherical_number}, we also get that $\underline{\mathrm{gd}}(\VA[\Gamma])$, $\mathrm{vcd}(\VA[\Gamma])$, and $ \mathrm{cd}(\A[\Gamma])$ are bounded below by $\cl^f(\Gamma)$.
\end{proof}

\subsection{Reduction to parabolic subgroups}\label{subs-reduction-parabolic}
\noindent The rest of this section is dedicated to proving Theorem~\ref{thm:cocompact_EG_bar}. The key result we will be using is the following combination theorem for classifying space for proper actions: 

\begin{thm}[{\cite[Theorem~1 and Corollary~6.3]{Martin2015EG}}]\label{thm:combination_gdim}
Let $G$ be a group acting cocompactly on a contractible simplicial complex $X$. Suppose that:
\begin{itemize}
    \item for each simplex $\sigma$ of $X$, the stabiliser $\Stab(\sigma)$ admits a cocompact model of classifying space for proper actions.
    \item for each finite subgroup $H$ of $G$, the fixed-point set $X^H$ is contractible. 
\end{itemize} 
Then $G$ admits a cocompact model of classifying space for proper actions. Moreover, we have
$$\underline{\mathrm{gd}}(G) \leq \mathrm{max}_{\sigma} \big(\dim(\sigma) + \underline{\mathrm{gd}}{(\Stab(\sigma))} \big).$$
\end{thm}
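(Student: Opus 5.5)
The plan is to build the model directly as a $G$-equivariant \emph{complex of spaces} over $X$. The vertex spaces will be cocompact models for the simplex stabilisers, and the contractibility of $X$ and of the fixed sets $X^H$ will be transported to the total space through a fibre-wise argument.

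\textbf{Set-up and local models.} First I replace $X$ by its barycentric subdivision. This does not change contractibility, the fixed sets $X^H$, or cocompactness, and it makes the action without inversion. The point of this is that the stabiliser of a simplex then fixes it pointwise, and that $\Stab$ of a subdivided simplex is the intersection of the stabilisers of the corresponding chain of simplices. For a subdivided simplex $\sigma'$ contained in an original simplex $\sigma$, its stabiliser is still a subgroup of $\Stab(\sigma)$. I will phrase the dimension count in terms of the original simplices at the end.

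Next, choose a set of representatives of the finitely many $G$-orbits of simplices. For each representative $\sigma$, fix a cocompact model $E_\sigma$ of $\underline{\mathrm{E}}\Stab(\sigma)$ of dimension $\underline{\mathrm{gd}}(\Stab(\sigma))$, and extend equivariantly to $E_{g\sigma}:=gE_\sigma$. Now let $\tau$ be a face of $\sigma$. Since $\Stab(\sigma)\le\Stab(\tau)$, the restriction of $E_\tau$ to $\Stab(\sigma)$ is still a proper $\Stab(\sigma)$-space with contractible $H$-fixed sets. The universal property of $\underline{\mathrm{E}}$ therefore gives a $\Stab(\sigma)$-equivariant map $f_{\tau\sigma}\colon E_\sigma\to E_\tau$, which is unique up to equivariant homotopy. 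By equivariant cellular approximation I can take it cellular, and I choose it compatibly with the $G$-action on orbits.

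\textbf{Total space.} I then define $\mathcal{E}$ as the realisation
$$\mathcal{E}=\Big(\bigsqcup_{\sigma}\sigma\times E_\sigma\Big)\Big/\sim,$$
gluing $\tau\times E_\sigma\subset\sigma\times E_\sigma$ to $\tau\times E_\tau$ via $f_{\tau\sigma}$. The main obstacle is that the maps $f_{\tau\sigma}$ are only functorial up to homotopy, so $f_{\rho\tau}\circ f_{\tau\sigma}\neq f_{\rho\sigma}$ in general. I would handle this in one of two ways. The first is to build $\mathcal{E}$ by induction over skeleta of $X$, attaching $G\times_{\Stab(\sigma)}(\sigma\times E_\sigma)$ along $\partial\sigma\times E_\sigma$ by a map into the already built part; such a map exists because the boundary piece of the previous stage, restricted to $\Stab(\sigma)$, is an $\underline{\mathrm{E}}$-type space in the homotopical sense below. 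The second is to use homotopy-coherent mapping cylinders. Either way, the cells of $\mathcal{E}$ lying over $\sigma$ are products of a cell of $\sigma$ with a cell of $E_\sigma$. This gives $\dim\mathcal{E}\le\max_\sigma(\dim\sigma+\underline{\mathrm{gd}}(\Stab(\sigma)))$. Cocompactness follows from finiteness of orbits together with cocompactness of each $E_\sigma$. Cell stabilisers in $\mathcal{E}$ are cell stabilisers of some $E_\sigma$, hence finite, so the action is proper.

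\textbf{Fixed sets.} For a finite subgroup $H\le G$, the projection $p\colon\mathcal{E}^H\to X^H$ has fibre $E_\sigma^H$ over an interior point of a simplex $\sigma$ fixed by $H$, and such an $H$ lies in $\Stab(\sigma)$. These fibres are contractible. Over simplices not fixed by $H$ there is nothing, since the action has no inversions. I then show by induction on skeleta of $X^H$, using the gluing lemma for homotopy equivalences of pushouts, that a complex of spaces with contractible fibres is homotopy equivalent to its base. This yields $\mathcal{E}^H\simeq X^H$, which is contractible by hypothesis; the case $H=1$ gives contractibility of $\mathcal{E}$ itself. Hence $\mathcal{E}$ is a cocompact model of $\underline{\mathrm{E}}G$, and its dimension gives the stated bound on $\underline{\mathrm{gd}}(G)$.
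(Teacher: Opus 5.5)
\textbf{There is a genuine gap: the gluing step, which you flag yourself, is not carried out.}

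The paper gives no proof of its own here; it imports the result from Martin. Your overall architecture is the natural one: a $G$-complex of spaces over $X$ with fibres $E_\sigma$, dimension and cocompactness read off cell by cell, and fixed sets computed fibrewise. But the step you call ``the main obstacle'' is left unresolved, and the justification you give for route (1) is false.

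\emph{The boundary piece is not an $\underline{\mathrm{E}}$-space.} If $H\le\Stab(\sigma)$ is finite, then $H$ fixes $\sigma$ pointwise. By your own fibre argument, $p^{-1}(\partial\sigma)^H\simeq\partial\sigma\cong S^{\dim\sigma-1}$. So the boundary piece is not an $\underline{\mathrm{E}}$-type space for $\Stab(\sigma)$, and the universal property does not provide an attaching map.

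\emph{A strictly fibrewise attaching map generally does not exist.} Your fixed-set paragraph uses a projection $p\colon\mathcal{E}\to X$ whose fibre over an interior point of $\sigma$ is $E_\sigma$. This forces the attaching map $\partial\sigma\times E_\sigma\to p^{-1}(\partial\sigma)$ to lie strictly over $\partial\sigma$. Continuity at a vertex $v$ of $\sigma$ then forces the map $E_\sigma\to E_v$ to take values in the image of the previously chosen gluing map $E_\tau\to E_v$, for every edge $\tau\ni v$ of $\sigma$. This fails in general, since two such gluing maps can have disjoint images.

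So route (1) runs straight back into the strict-functoriality problem. Route (2) is only named, and the required $G$-equivariance of the coherent data is not addressed. As a result, your ``contractible fibres'' argument has nothing to apply to.

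\textbf{Two ways to close the gap.}

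(a) Run the skeletal induction homotopically.
\begin{itemize}
\item Inductively, $p\colon\mathcal{E}^{(k-1)}\to X^{(k-1)}$ is an equivalence on $H$-fixed sets for every finite $H$.
\item Apply the equivariant Whitehead theorem to the proper $\Stab(\sigma)$-complex $\partial\sigma\times E_\sigma$. It gives a cellular $\Stab(\sigma)$-map into $\mathcal{E}^{(k-1)}$ lifting $\mathrm{pr}_1$ up to $\Stab(\sigma)$-homotopy.
\item Attach $\sigma\times E_\sigma$ along this lift through a collar $\partial\sigma\times[0,1]\times E_\sigma$ on which $p$ is given by the homotopy. The collar does not raise dimension.
\item Replace your fibre lemma by the gluing lemma for homotopy-commutative maps of pushouts on $H$-fixed sets, using $(G\times_K Y)^H=\bigsqcup_{gK,\ g^{-1}Hg\le K} gY^{g^{-1}Hg}$.
\end{itemize}

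(b) Rectify first.
\begin{itemize}
\item By downward induction on $\dim\tau$, let $\tilde E_\tau$ be the mapping cylinder of a $\Stab(\tau)$-map to $E_\tau$. Its source is the union of the already built $\tilde E_\sigma$, $\sigma\supsetneq\tau$, glued along their common subspaces.
\item The gluing maps then become strictly compatible inclusions, and $\tilde E_\tau\simeq E_\tau$ equivariantly.
\item The bound $\dim\tilde E_\tau\le\max_{\sigma\supseteq\tau}(\dim E_\sigma+\dim\sigma-\dim\tau)$ preserves your dimension estimate.
\end{itemize}

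\textbf{Two smaller points.}
\begin{itemize}
\item You may not assume a single model of $\underline{\mathrm{E}}\Stab(\sigma)$ that is both cocompact and of dimension $\underline{\mathrm{gd}}(\Stab(\sigma))$. Run the construction once with cocompact models and once with minimal-dimensional ones.
\item After subdividing, the new simplex stabilisers must have finite index in the old ones, not merely be subgroups, in order to inherit cocompact models. This does hold, since $\Stab(\sigma)$ permutes the finitely many faces of $\sigma$.
\end{itemize}
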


\begin{rmk}\label{rem:fixed-point-contractible}
    In what follows, we will apply Theorem~\ref{thm:combination_gdim} to the case of group actions on certain complete CAT(0) spaces. We note that in that case, the second condition in the above theorem  is automatically satisfied: the fixed-point set $X^H$ is non-empty by the Bruhat--Tits fixed-point theorem~\cite[Corollary II.2.8]{BriHaefl}, and the uniqueness of geodesics in a CAT(0) space guaranties that $X^H$ is convex, hence contractible. 
\end{rmk}

\noindent In order to prove Theorem~\ref{thm:cocompact_EG_bar}, we first prove the following reduction result:

\begin{prop}\label{prop:reduction_Ebar}
    Let $\Gamma$ be a Coxeter graph and let $\VA[\Gamma]$ be the corresponding virtual Artin group. Suppose that the following holds:
    \begin{itemize}
        \item For every spherical clique $\Lambda$ of $\Gamma$, the standard parabolic subgroup $\VA[\Lambda]$ admits a cocompact model of classifying space for proper actions of dimension $|V(\Lambda)|=\cl^f(\Lambda)$.
        \item The virtual Deligne complex $\vD_\Gamma$ is CAT(0).
    \end{itemize}
    Then $\VA[\Gamma]$ admits a cocompact model of classifying space for proper actions of dimension~$\cl^f(\Gamma)$.
\end{prop}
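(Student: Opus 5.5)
The plan is to apply the combination Theorem~\ref{thm:combination_gdim} to the action of $\VA[\Gamma]$ on the virtual Deligne complex $\vD_\Gamma$. This action is cocompact, since the strict fundamental domain $K_\Gamma$ is a finite complex. By hypothesis $\vD_\Gamma$ is CAT(0), and by Lemma~\ref{prop-vD-complete-action-no-inversion} it is complete. It is therefore contractible, and by Remark~\ref{rem:fixed-point-contractible} every fixed-point set $\vD_\Gamma^H$ of a finite subgroup $H$ is non-empty and convex, hence contractible. So the second hypothesis of Theorem~\ref{thm:combination_gdim} holds, and it remains to control the simplex stabilisers and the resulting dimension bound.

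For the stabilisers I would use the fact that the action is without inversion (Lemma~\ref{prop-vD-complete-action-no-inversion}). A simplex $\sigma$ of $\vD_\Gamma$ is a chain $g\VA_{X_0}\subsetneq\cdots\subsetneq g\VA_{X_k}$ with $X_0\subsetneq\cdots\subsetneq X_k$ in $\Sf$. Its stabiliser is the intersection of the vertex stabilisers, and these are nested, so it equals $g\VA_{X_0}g^{-1}$. Two facts follow. First, $\Stab(\sigma)$ is a conjugate of $\VA[\Gamma_{X_0}]$, using the identification $\VA_{X_0}\cong\VA[\Gamma_{X_0}]$ from \cite[Thm 1.1]{GaGaPa26}. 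Second, $\dim\sigma=k\le |X_k|-|X_0|$, because the sets $X_i$ strictly increase.

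Since $X_0$ is spherical, $\Gamma_{X_0}$ is a spherical clique $\Lambda$ (allowing $X_0=\varnothing$, where the stabiliser is trivial and a point serves as a model). The first hypothesis then gives a cocompact model of $\underline{\mathrm{E}}\,\Stab(\sigma)$ of dimension $|X_0|$. Theorem~\ref{thm:combination_gdim} now yields a cocompact model for $\VA[\Gamma]$ of dimension at most
\[
\max_\sigma \big(\dim\sigma+|X_0|\big)\le \max_\sigma |X_k|\le \cl^f(\Gamma),
\]
since every $X_k\in\Sf$ spans a spherical clique of $\Gamma$.

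The main point needing care is the phrasing of the combination theorem. It bounds $\underline{\mathrm{gd}}$, while we want an actual cocompact model of exactly dimension $\cl^f(\Gamma)$. I would check that the construction in \cite{Martin2015EG} produces a cocompact model whose dimension is the maximum of $\dim\sigma+\dim(\text{model for }\Stab\sigma)$, and not merely an abstract bound on $\underline{\mathrm{gd}}$. If that fails, I would fall back on the standard fact that a cocompact model of dimension $d$ can be thickened to dimension $\cl^f(\Gamma)\ge d$ by taking the product with a disc. The lower bound $\ge\cl^f(\Gamma)$ is not needed here, since it is the content of Corollary~\ref{cor_vcd-VA}.
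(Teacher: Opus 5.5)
Your proposal is correct and is essentially the paper's proof: you act on $\vD_\Gamma$, identify $\Stab(\sigma)$ with a conjugate of $\VA_{X_0}$, bound $\dim\sigma+|X_0|\le|X_n|\le\cl^f(\Gamma)$, obtain contractible fixed-point sets from completeness and the CAT(0) hypothesis via Remark~\ref{rem:fixed-point-contractible}, and then apply Theorem~\ref{thm:combination_gdim}. The paper resolves your caveat simply by relying on the cited combination theorem to deliver a cocompact model of the stated dimension, which is the right resolution; note that your disc-thickening fallback only raises dimension, so it could not by itself repair a model that came out too large.
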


\begin{proof}
    We consider the action of $\VA[\Gamma]$ on its virtual Deligne complex $\vD_\Gamma$. An $n$-simplex $\sigma$ of $\vD_{\Gamma}$ corresponds to a chain of inclusions $g\VA_{X_0} \subset \cdots \subset g\VA_{X_n}$, with each $X_i$ spanning a spherical clique. The stabiliser $\mathrm{Stab}(\sigma)$ of such a simplex is isomorphic to the standard parabolic subgroup $\VA_{X_0}$ by construction, and by hypothesis admits a cocompact model of classifying space of dimension $|X_0|$. Moreover,  we have 
    $$\dim(\sigma) + \underline{\mathrm{gd}}(\mathrm{Stab}(\sigma)) =  n + |X_0| \leq  |X_n| \leq \cl^f(\Gamma).$$
    Additionally, $\vD_\Gamma$ is a complete CAT(0) space by assumption. It thus follows from Remark~\ref{rem:fixed-point-contractible} that, for every finite subgroup $H$ of $\VA[\Gamma]$, the corresponding fixed-point set is contractible. Thus, it  follows from Theorem~\ref{thm:combination_gdim} that $\VA[\Gamma]$ admits a cocompact model of classifying space for proper actions of dimension $\cl^f(\Gamma)$, which concludes the proof.
\end{proof}

\subsection{The case of spherical parabolic subgroups}

\noindent In this subsection, we prove the following:

\begin{prop}\label{cor:gd_VA_complete}
    If $\Gamma$ is a complete graph, spherical, and locally reducible, then  $\VA[\Gamma]$ admits a cocompact model of classifying space for proper actions of dimension $|V(\Gamma)| =\cl^f(\Gamma).$
\end{prop}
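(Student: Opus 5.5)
Since $\Gamma$ is spherical and locally reducible, $\VA[\Gamma]$ splits as a direct product $\VA[\Gamma_{X_1}]\times\cdots\times\VA[\Gamma_{X_k}]$ with $|X_i|\in\{1,2\}$, using the identification of standard parabolic subgroups with $\VA[\Gamma_{X_i}]$ from \cite[Thm 1.1]{GaGaPa26}; commuting generators of different factors give the product decomposition directly from the presentation. The product of cocompact models of $\underline{\mathrm{E}}$ for the factors is a cocompact model for the product. Indeed, a finite subgroup $H$ of the product lies in the product of its projections $H_1\times\cdots\times H_k$, and the fixed set of $H$ in a product of spaces, where each factor acts on its own coordinate, is $\prod_i X_i^{p_i(H)}$, which is contractible. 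Properness and cocompactness are clear. Dimensions add, so it suffices to treat $|X_i|=1$ (dimension $1$) and $|X_i|=2$ (dimension $2$).

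For $|X|=1$, $\VA_s\cong\mathbb{Z}\ast\mathbb{Z}_{/2}$ acts on its Bass--Serre tree with finite (trivial or order $2$) vertex stabilisers and trivial edge stabilisers. This tree is a cocompact $1$-dimensional model, since fixed sets of finite subgroups are nonempty subtrees.

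For $|X|=2$, I use the action of $\VA_m$ on the Deligne complex $\hDm$ from Lemma~\ref{prop-actionofVAm-on-Deligne}. This complex is CAT(0) and complete because $\hGammam$ is $2$-dimensional by Proposition~\ref{propdihedralgammahat}. It is cocompact for $\VA_m$ because $\A[\hGammam]$ already acts cocompactly with fundamental domain $K_{\hGammam}$, which is finite since $\Phi_m$ is finite. By Lemma~\ref{lem-stabilisers}, the simplex stabilisers are:
\begin{itemize}
\item finite, for type-$0$ vertices;
\item $\mathbb{Z}$ or $\mathbb{Z}\times\mathbb{Z}_{/2}$, for type-$1$ vertices;
\item the dihedral Artin group $\A_{\cX}[\hGammam]\cong\A_m$, for type-$2$ vertices.
\end{itemize}
The stabiliser of a simplex is the stabiliser of its minimal vertex, because the action is without inversion. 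I then apply Theorem~\ref{thm:combination_gdim} together with Remark~\ref{rem:fixed-point-contractible}, which gives fixed sets that are contractible. The input $\underline{\mathrm{gd}}$ values are:
\begin{itemize}
\item $0$ for finite groups;
\item $1$ for $\mathbb{Z}$ and for $\mathbb{Z}\times\mathbb{Z}_{/2}$, using the real line with $\mathbb{Z}_{/2}$ acting trivially;
\item $2$ for $\A_m$, using its Salvetti complex universal cover, which is a cocompact $2$-dimensional model since $\A_m$ is torsion-free.
\end{itemize}

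The bound $\max_\sigma(\dim\sigma+\underline{\mathrm{gd}}\,\Stab(\sigma))$ then has to be checked, and this is the delicate step. A type-$2$ vertex alone gives $0+2=2$. An edge from a type-$1$ vertex to a type-$2$ vertex has stabiliser equal to the type-$1$ stabiliser, giving $1+1=2$. A simplex whose minimal vertex has type $0$ has finite stabiliser, giving at most $2+0=2$. Hence the maximum is $2$ and we get a cocompact model of dimension $2=|V(\Gamma)|$. The main obstacle is exactly this dimension bookkeeping, in particular making sure each stabiliser is that of the minimal vertex, so that the stabiliser of a higher simplex never contributes more. If Theorem~\ref{thm:combination_gdim} only gave a weaker bound, I would fall back to the explicit construction of blowing up each vertex orbit by a model for its stabiliser.
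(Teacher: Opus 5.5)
Your proposal is correct and follows the paper's proof, which defers the factor cases to Lemma~\ref{lem:gd_VA_complete_irreducible}: split $\VA[\Gamma]$ into irreducible factors of rank $1$ or $2$, use a Bass--Serre tree with finite vertex stabilisers in rank $1$, apply Theorem~\ref{thm:combination_gdim} to the action of $\VA_m$ on $\hDm$ from Lemmas~\ref{prop-actionofVAm-on-Deligne} and~\ref{lem-stabilisers} in rank $2$, and take the product of the resulting models. One small correction, which the paper's lemma shares: acting without inversion only makes a simplex stabiliser the intersection of its vertex stabilisers, which is a finite-index subgroup of the minimal vertex's stabiliser still containing $a\,\A_{\cX_0}[\hGammam]\,a^{-1}$, but not always equal to it (e.g.\ for $m$ even the reflection $r$ swaps the two type-$2$ vertices adjacent to $\A_{\{w(\alpha_s)\}}[\hGammam]$, so those edges have stabiliser $\mathbb{Z}$ rather than $\mathbb{Z}\times\mathbb{Z}_{/2}$); since a cocompact model for a group restricts to one for a finite-index subgroup, the $\underline{\mathrm{gd}}$ values never increase and your bound of $2$ is unaffected.
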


\begin{defn}
    A Coxeter graph is said \textit{irreducible} if it cannot be written as a join of two subgraphs $\Gamma_1$ and $\Gamma_2$, where all the edges joining $\Gamma_1$ to $\Gamma_2$ are labelled by 2.

    \noindent Given a general Coxeter graph $\Gamma$, there is a decomposition 
    $$\VA[\Gamma] = \prod_i \VA[\Gamma_i]$$
    where the $\VA[\Gamma_i]$ are maximal irreducible standard parabolic subgroups.
\end{defn}

\noindent In order to prove Theorem~\ref{thm:cocompact_EG_bar}, we start with the  case of irreducible cliques:

\begin{lem}\label{lem:gd_VA_complete_irreducible}
    Let $\Gamma$ be an irreducible clique on $|V(\Gamma)|\leq 2$ vertices.  
    Then $\VA[\Gamma]$ admits a cocompact model of classifying space for proper actions of dimension $|V(\Gamma)| = \cl^f(\Gamma) $
\end{lem}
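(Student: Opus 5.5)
Two cases, according to $|V(\Gamma)|$; the case of two vertices is the real content.

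\emph{One vertex.} Here $\VA[\Gamma]=\VA_s\cong\mathbb{Z}\frpp\mathbb{Z}_{/2}$, generated by $\sigma_s$ of infinite order and $\tau_s$ of order $2$ with no relation. I would take the Bass--Serre tree of this free product: $\VA_s$ acts on it cocompactly with one edge orbit, trivial edge stabilisers, and vertex stabilisers conjugate to $\langle\sigma_s\rangle\cong\mathbb{Z}$ or to $\langle\tau_s\rangle\cong\mathbb{Z}_{/2}$. To make the action proper, replace the $\mathbb{Z}$-vertex by a line: equivalently, use the tree of the HNN-type splitting $\mathbb{Z}_{/2}\frpp\mathbb{Z}$, in which $\sigma_s$ acts as a translation and stabilisers are conjugates of $\mathbb{Z}_{/2}$ or trivial. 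This is a tree, hence CAT(0), and the action is proper and cocompact. By Remark~\ref{rem:fixed-point-contractible}, fixed-point sets of finite subgroups are nonempty and convex. So the tree is a cocompact $1$-dimensional model of $\underline{\mathrm{E}}\VA_s$.

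\emph{Two vertices.} Irreducible forces $\Gamma=\Gamma_m$ with $3\le m<\infty$; the label $2$ is reducible, and $m=\infty$ is not spherical. I would use the action of $\VA_m$ on $\hDm$ from Lemma~\ref{prop-actionofVAm-on-Deligne}. This complex is $2$-dimensional and complete CAT(0) by \cite{CharDav95}, since $\hGammam$ is $2$-dimensional by Proposition~\ref{propdihedralgammahat}. The action is cocompact because $\W_m$ is finite and $\A[\hGammam]$ is finitely generated: the finitely many translates $u\cdot K_{\hGammam}$ cover a fundamental domain. Fixed-point sets of finite subgroups are contractible by Remark~\ref{rem:fixed-point-contractible}, so Theorem~\ref{thm:combination_gdim} applies once simplex stabilisers are controlled. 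By Lemma~\ref{lem-stabilisers}, vertex stabilisers are conjugates of
\begin{itemize}
\item $\W_m$ (finite), for the type-$0$ vertex;
\item $\mathbb{Z}$ or $\mathbb{Z}\times\mathbb{Z}_{/2}$, for type-$1$ vertices;
\item the dihedral Artin group $\A_m$, for type-$2$ vertices.
\end{itemize}
A simplex stabiliser equals the stabiliser of its lowest-type vertex, since the action is without inversion and lower cosets are contained in higher ones. Each of these groups admits a cocompact $\underline{\mathrm{E}}$:
\begin{itemize}
\item a point, of dimension $0$, for the finite group;
\item a line, of dimension $1$, for $\mathbb{Z}$ and $\mathbb{Z}\times\mathbb{Z}_{/2}$ (the $\mathbb{Z}_{/2}$ acting trivially);
\item a $2$-dimensional space for $\A_m$, e.g.\ its Salvetti complex, or $\mathbb{R}\times$tree via the central extension structure.
\end{itemize}

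\emph{Dimension bound and main obstacle.} The main obstacle is the bound $\dim\sigma+\underline{\mathrm{gd}}(\Stab\sigma)\le 2$. Simplices whose lowest vertex has type $2$ are single vertices, contributing $0+2$. Type-$1$ lowest vertices give at most an edge, contributing $1+1$. Type-$0$ lowest vertices give at most a $2$-simplex, contributing $2+0$. So the maximum is $2$, and Theorem~\ref{thm:combination_gdim} yields a cocompact model of dimension $2=|V(\Gamma)|$.

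The point needing care is that Theorem~\ref{thm:combination_gdim} requires the simplex stabilisers themselves to have cocompact models of the stated dimensions. This is immediate except for $\A_m$, where I would cite torsion-freeness together with its $2$-dimensional compact Salvetti complex.
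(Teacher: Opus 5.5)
Your proposal is correct and follows the paper's proof: a Bass--Serre tree for one vertex, and for an $m$-labelled edge the action of $\VA_m$ on $\hDm$ from Lemma~\ref{prop-actionofVAm-on-Deligne}, the stabilisers of Lemma~\ref{lem-stabilisers}, and Theorem~\ref{thm:combination_gdim}. Your HNN-type tree in the one-vertex case is more careful than the paper's one-line treatment, since the Bass--Serre tree of the free-product splitting $\mathbb{Z}\frpp\mathbb{Z}_{/2}$ has the infinite groups $g\langle\sigma_s\rangle g^{-1}$ as vertex stabilisers and so does not give a proper action.

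One inaccuracy, which the paper shares, does not affect the conclusion. Here a simplex stabiliser can be strictly smaller than the stabiliser of its lowest vertex, because $\W_m$ moves the subsets $\cX$. For example, $s$ fixes $\A_{\varnothing}[\hGammam]$ but sends $\A_{\{\alpha_s\}}[\hGammam]$ to $\A_{\{-\alpha_s\}}[\hGammam]$. However, the stabiliser of a chain $\A_{\cX_0}[\hGammam]\subset\cdots\subset\A_{\cX_n}[\hGammam]$ is $\A_{\cX_0}[\hGammam]\times Q_\sigma$ with $Q_\sigma=\{u\in\W_m : u(\cX_i)=\cX_i \text{ for all } i\}$ finite. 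So your models and the bound $\dim\sigma+\underline{\mathrm{gd}}(\Stab\sigma)\le 2$ go through unchanged.
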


\begin{proof}
    If $\Gamma$ is empty, then $\VA[\Gamma]$ is trivial and the result holds. If $\Gamma$ is a single vertex, then $\VA[\Gamma]$ is isomorphic to the free product $\mathbb{Z}\ast \mathbb{Z}_2$, for which the Bass-Serre tree of the splitting is a cocompact model of classifying space for proper actions of dimension $1$.\medskip\\
\noindent    Let us now assume that $\Gamma=\Gamma_m$ is an edge with label $m\geq 3$. We consider the induced action of $\VA[\Gamma]$ on the 2-dimensional Deligne complex $\D_{\hGammam}$ of $\A[\hGammam]$ constructed in Lemma~\ref{prop-actionofVAm-on-Deligne}. Up to the action of the group, an $n$-simplex $\sigma$ of $\D_{\hGammam}$ corresponds to a chain of inclusions $\A_{{\cX}_0} \subset \cdots \subset \A_{{\cX}_n}$, with ${\cX}_i$ a clique of $\hGammam$ of size $|{\cX}_i|\leq 2$ for all $i$. Moreover, the stabiliser of $\sigma$ coincides with the stabiliser of the vertex $\A_{\cX_0}$. By Lemma~\ref{lem-stabilisers}, the stabiliser of $\A_{\cX_0}$ is isomorphic to $\A_{\cX_0}\times Q$ for some finite group $Q$. In particular, since Artin groups on at most two generators satisfy the $K(\pi,1)$-conjecture (they are either spherical-type or $\mathbb{F}_2$, hence follows from \cite{Deligne1972} and \cite{CharDav95} respectively), and such a stabiliser admits a cocompact model of classifying space for proper actions of dimension $|{\cX}_0|$, namely the universal cover of the Salvetti complex of $\A_{\cX_0}$. In particular, we get 
    $$\dim(\sigma) + \underline{\mathrm{gd}}(\mathrm{Stab}(\sigma)) \leq  n + |\cX_0| \leq  |\cX_n| \leq 2.$$
    \noindent Moreover, it follows from Proposition~\ref{propdihedralgammahat} that $\hGammam$ is 2 dimensional, so the Moussong metric on $\D_{\hGammam}$ turns it into a CAT(0) space by ~\cite[Theorem~B]{CharDav95}. It thus follows from Remark~\ref{rem:fixed-point-contractible} that, for every finite subgroup $H$ of $\VA[\Gamma]$, the corresponding fixed-point set is contractible. Thus, it  follows from Theorem~\ref{thm:combination_gdim} that $\VA[\Gamma]$ admits a cocompact model of classifying space for proper actions of dimension $2$.
\end{proof}

\begin{proof}[Proof of Proposition~\ref{cor:gd_VA_complete}]
    Each such virtual Artin group splits as a direct product of irreducible factors of the form $\VA[\Gamma_i]$ with $|V(\Gamma_i)| =1$ or $2$.  Each such factor admits a cocompact model $\underline{\mathrm{E}}\VA[\Gamma_i]$ of classifying space for proper actions of dimension $|V(\Gamma_i)|$ by Lemma~\ref{lem:gd_VA_complete_irreducible}. It is then straightforward to check that the direct product $\prod_i\underline{\mathrm{E}}\VA[\Gamma_i]$ is a cocompact model for proper actions for $\VA[\Gamma]$ of dimension $\sum_i |V(\Gamma_i)| = |V(\Gamma)|$.
\end{proof}

\noindent We are now ready to prove the main result of this section. 

\begin{proof}[Proof of Theorem~\ref{thm:cocompact_EG_bar}]
    If $\Gamma$ is locally reducible, then its virtual Deligne complex is a complete CAT(0) space by Lemma \ref{prop-vD-complete-action-no-inversion} and Corollary \ref{cor-CAT0-locred}. It thus follows from Remark~\ref{rem:fixed-point-contractible} that, for every finite subgroup $H$ of $\VA[\Gamma]$, the corresponding fixed-point set is contractible. Moreover, for every spherical clique $\Lambda$ of $\Gamma$, the corresponding parabolic subgroup $\VA[\Lambda]$ admits a cocompact model of classifying space for proper actions of dimension $|V(\Lambda)| = \cl^f(\Lambda)$ by Proposition~\ref{cor:gd_VA_complete}. The result now follows from Proposition~\ref{prop:reduction_Ebar}.
\end{proof}

\printbibliography

\vspace{0.4cm}

\bigskip\noindent
\textbf{Federica Gavazzi}, 

\noindent Address: Department of Mathematics and the Maxwell Institute for the Mathematical Sciences, Heriot-Watt University, Edinburgh EH14 4AS, UK.

\noindent Email: \texttt{F.Gavazzi@hw.ac.uk}

\smallskip

\bigskip\noindent
\textbf{Alexandre Martin}, 

\noindent Address: Department of Mathematics and the Maxwell Institute for the Mathematical Sciences, Heriot-Watt University, Edinburgh EH14 4AS, UK.

\noindent Email: \texttt{alexandre.martin@hw.ac.uk}

\end{document}